\documentclass[11pt]{article}

\usepackage[a4paper,margin=1in]{geometry}
\usepackage[T1]{fontenc}
\usepackage[utf8]{inputenc}
\usepackage{lmodern}
\usepackage{microtype}
\usepackage{amsmath,amssymb,amsthm,mathtools,mathrsfs}
\usepackage{graphicx}
\usepackage{tikz}
\usetikzlibrary{arrows.meta}
\usepackage{enumitem}
\usepackage{needspace}
\usepackage{hyperref}

\graphicspath{{figures/}}

\hypersetup{
  colorlinks=true,
  linkcolor=blue,
  citecolor=blue,
  urlcolor=blue
}

\newtheorem{theorem}{Theorem}[section]
\newtheorem{proposition}[theorem]{Proposition}
\newtheorem{lemma}[theorem]{Lemma}
\newtheorem{corollary}[theorem]{Corollary}
\theoremstyle{definition}
\newtheorem{definition}[theorem]{Definition}
\theoremstyle{remark}

\DeclareMathOperator{\Interleave}{Interleave}
\DeclareMathOperator{\Rec}{Rec}
\DeclareMathOperator{\FP}{FP}
\DeclareMathOperator{\Img}{Im}
\DeclareMathOperator{\Comp}{Comp}
\DeclareMathOperator{\rev}{rev}

\newcommand{\Qt}{\widetilde Q}
\newcommand{\N}{\mathbb N}
\newcommand{\Star}{*}
\newcommand{\Lam}{\Lambda}
\newcommand{\multiset}[1]{\{\!\{#1\}\!\}}

\title{The Mantovanelli-Hofstadter Sequence}
\author{Beno\^it Cloitre\\
\small Independent researcher, Paris, France\\
\small Corresponding author: \href{mailto:benoit.cloitre@proton.me}{benoit.cloitre@proton.me}\\
\small ORCID: \href{https://orcid.org/0009-0001-6778-153X}{0009-0001-6778-153X}}
\date{}

\begin{document}

\maketitle

\begin{abstract}
We study the perturbed Hofstadter recurrence introduced by Mantovanelli,
\[
 \Qt(n)=\Qt(n-\Qt(n-1))+\Qt(n-\Qt(n-2))+(-1)^n,
 \qquad \Qt(1)=\Qt(2)=1.
\]
We prove that this recurrence is well-defined for every positive integer and
that
\[
 \lim_{n\to\infty}\frac{\Qt(n)}n=\frac12.
\]
We establish the optimal order
\[
 \Qt(n)=\frac n2+O\!\left(\frac{n}{\sqrt{\log n}}\right),
\]
and give explicit positive lower and upper bounds for the corresponding
normalized limsup.  The proof is primarily combinatorial.  An odd-even split
turns the recurrence into two exact interleavings of binary words, and the
resulting Dyck paths define plane forests.  Catalan numbers also arise in the
enumeration.
\end{abstract}

\medskip
\noindent\textit{2020 Mathematics Subject Classification.}
11B37, 05A15, 05A16, 68R15.

\noindent\textit{Key words and phrases.}
Hofstadter recurrence, meta-Fibonacci recurrence, combinatorics on words,
Dyck paths, plane forests, succession rules, Catalan numbers.

\section{Introduction}

Hofstadter's \(Q\)-sequence is OEIS A005185 \cite{Hofstadter79,OEIS}.  It is
the best-known nested recurrence
\[
 Q(n)=Q(n-Q(n-1))+Q(n-Q(n-2)),
 \qquad Q(1)=Q(2)=1.
\]
It is not known whether this recurrence defines a value at every positive
integer, nor whether \(Q(n)/n\) has a limit.  Deane and Gentile studied a
diluted family obtained by retaining one nested term of Hofstadter's
recurrence and replacing the other by a prescribed slow sequence.  They
proved well-definedness for this family \cite{DeaneGentile25}.  Related
meta-Fibonacci recurrences and variants of \(Q\) are studied in
\cite{AFA17,Alkan18,BKT07,Stoll09,Fox17}.
Among the early tractable relatives of \(Q\), Tanny's well-behaved cousin is
monotone and attains every positive integer \cite{Tanny92}.
Ruskey showed that suitable initial conditions can make the same recurrence
produce values of the Fibonacci sequence on one residue class
\cite{Ruskey11}.

For comparison, Conway's \$10,000 sequence is OEIS A004001 \cite{OEIS}.  It
is defined by
\[
 C(1)=C(2)=1,
 \qquad
 C(n)=C(C(n-1))+C(n-C(n-1)).
\]
Its ratio tends to \(1/2\), and its convergence and recursive structure were
analyzed by Mallows and by Kubo and Vakil \cite{Mallows91,KuboVakil96}.
Grytczuk studied a triply nested variant \cite{Grytczuk04}.  The corresponding
conclusions remain open for \(Q\).

Mantovanelli introduced the perturbed sequence \(\Qt\), OEIS A394051
\cite{Mantovanelli26,OEIS}, defined by
\begin{equation}
\label{eq:Qt}
 \Qt(n)=\Qt(n-\Qt(n-1))+\Qt(n-\Qt(n-2))+(-1)^n,
 \qquad \Qt(1)=\Qt(2)=1.
\end{equation}
Mantovanelli's initial numerical study revealed the dyadic self-similarity of
the sequence \cite{Mantovanelli26}.  A subsequent preprint established
well-definedness by certified finite-state induction
\cite{MantovanelliFSI}.  The present paper gives a direct self-contained proof
without a finite-state certificate.  It describes the words and forests that
generate the arches and determines their amplitudes and asymptotic
consequences.

Figure~\ref{fig:comparison} contrasts the original and perturbed recurrences.
Exact identities on words and forests determine the arch structure of the
perturbed sequence.

\begin{figure}[htbp]
\centering
\includegraphics[width=\textwidth]{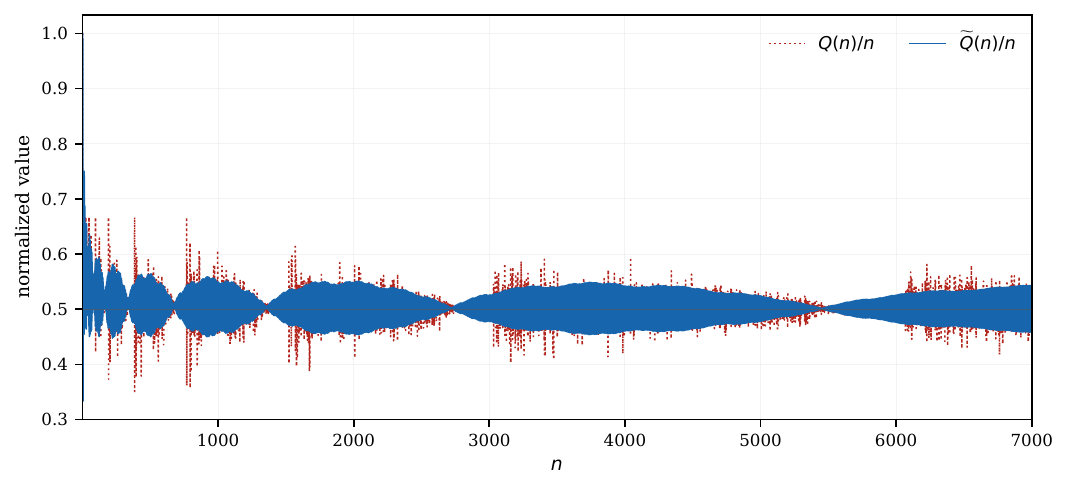}
\caption{Normalized values of Hofstadter's sequence \(Q(n)/n\) and of the
perturbed sequence \(\widetilde Q(n)/n\), for \(1\leq n\leq7000\).}
\label{fig:comparison}
\end{figure}

All values of \(\Qt\) are odd.  Splitting odd and even positions gives two
slowly increasing sequences
\[
 \mathfrak a(m)=\frac{\Qt(2m-1)+1}{2},
 \qquad
 \mathfrak b(m)=\frac{\Qt(2m)+1}{2}.
\]
Their difference \(\delta(m)=\mathfrak b(m)-\mathfrak a(m)\) is a walk made of alternating
positive and negative arches.  The increments of \(\mathfrak a\) and \(\mathfrak b\) are binary,
so each positive arch is encoded by a word \(P_r\).  For a binary word \(w\),
the associated lattice path has height
\[
 h_w(t)=Z_w(t)-O_w(t),
\]
where \(Z_w(t)\) and \(O_w(t)\) count zeros and ones in the prefix of length
\(t\).  Put \(Z(w)=Z_w(|w|)\) and \(O(w)=O_w(|w|)\).  Thus
\[
 V_r=\max_{0\leq t\leq |P_r|}h_{P_r}(t)
\]
is the maximum height reached by the walk of the \(r\)-th positive arch, not
a maximum of the letters of the word.

We use the standard encoding of a Dyck word by a plane forest.  For the core
\(A_r=P_r[1:|P_r|-1)\), the \(m\)-th zero represents vertex \(m\).  The
number \(c_m\) of ones since the preceding zero is its degree, and
\(\omega_m=c_m+1\) is the corresponding zero-gap label.  The orbital cores
have two additional properties.  Each is
fixed by reversal followed by the exchange of zeros and ones.  Moreover, at a
vertex of degree \(L\), the labels of its children occur in a prescribed order
and form a permutation of \(1,\ldots,L\).  This local rule is denoted by
\(\mathsf S\) in Section~\ref{sec:gaps-symmetry}.

Two exact-fit interleaving laws generate every arch and prove constructively
that \eqref{eq:Qt} never requests an undefined value.  On the associated
forests, the symmetry and the local gap rule force any two nonterminal edges
of the same span to carry the same label.  This is the Toeplitz property used
below.  It synchronizes the cuts at which a new path height is first attained
and allows these cuts to be transported through the word transform.  A
compression map then groups record heights with the same image into ordered
fibers.  The fiber sizes determine the component sizes at the next level.
Thus a component of a given size produces a prescribed list of new component
sizes.  These are the succession rules of the Enumerating Combinatorial
Objects (ECO) method.

The binary-word notation is standard in combinatorics on words
\cite{Lothaire02}, and we use the usual correspondence between Dyck paths and
plane forests.  Related morphic descriptions of Hofstadter-type functions
appear in \cite{LLS26,Shallit25}.  The ECO method enumerates
recursively generated classes from succession rules \cite{Barcucci99}.  Here
the local gap rule, the Toeplitz condition on forest edges, and the
record-to-fiber correspondence are specific to the present sequence.  The
resulting generating function counts full binary trees with a marked vertex
and yields the exact binomial arch amplitudes.  Similar labeled-tree encodings
occur for nested recurrences \cite{JR05,Fox22} and for constructions of
non-monotonic solutions \cite{SunoharaTannyVerberne19}.
Generating functions and asymptotic estimates are used only after the word
and forest identities have been proved \cite{FlajoletSedgewick09}.

The amplitude law determines the optimal order of the error.  Binomial layers
below the first positive summit yield a family of explicit lower bounds for
the normalized limsup, while domination of the negative arches gives an upper
bound uniform over all indices.  Determining the exact limsup constant remains
open.

All words in this article are indexed from zero.  A factor \(w[a:b)\) contains
the letters with indices \(a,a+1,\ldots,b-1\).  Heights are evaluated at
cuts.  We set \(\N=\{0,1,2,\ldots\}\).  The notation \(\log\) means the
natural logarithm, while \(\log_2\) means the base-two logarithm.

\subsection{Main theorem}

Section~\ref{sec:construction} gives positive arch words \(P_r\), their
starting positions \(u_r\), and negative arch words \(N_r\).  We write
\[
 a_r=\frac{|P_r|}{2},
 \qquad v_r=u_r+2a_r,
 \qquad
 V_r=\max_{0\leq t\leq |P_r|}h_{P_r}(t),
 \qquad
 W_r=V_r-V_{r-1}\quad(r\geq1).
\]
\Needspace{5\baselineskip}
\begin{theorem}
\begin{enumerate}[label=\textup{(\roman*)}]
\item The recurrence \eqref{eq:Qt} defines a unique integer \(\Qt(n)\) for
every \(n\geq1\), and
\[
 \lim_{n\to\infty}\frac{\Qt(n)}n=\frac12.
\]

\item For every \(r\geq1\),
\[
 W_r=\binom{2r+1}{r}.
\]
This is the term of index \(r\) of OEIS A001700 \cite{OEIS}.
Consequently,
\[
 V_r=2+\sum_{j=1}^r\binom{2j+1}{j}
 \sim\frac{8}{3\sqrt\pi}\frac{4^r}{\sqrt r}.
\]
Here \(V_r\) is the term of index \(r+1\) of OEIS A024718 \cite{OEIS}.

\item One has
\[
 \Qt(n)=\frac n2+O\!\left(\frac{n}{\sqrt{\log n}}\right).
\]
The error is not \(o(n/\sqrt{\log n})\).

\item The explicit constant \(C_{\mathrm{layer}}\) defined in
\eqref{eq:layer-constant} satisfies
\[
 C_{\mathrm{layer}}
 \leq\limsup_{n\to\infty}
 \left(\frac{\Qt(n)}n-\frac12\right)\sqrt{\log_2 n}
 \leq\limsup_{n\to\infty}
 \left|\frac{\Qt(n)}n-\frac12\right|\sqrt{\log_2 n}
 \leq\frac1{2\sqrt{2\pi}}.
\]
Moreover,
\[
 C_{\mathrm{layer}}>\frac1{3\sqrt{2\pi}}.
\]
For each fixed \(z>0\), Theorem~\ref{thm:layer-subsequences} gives the
corresponding explicit even subsequence and its exact limit.
\end{enumerate}
\end{theorem}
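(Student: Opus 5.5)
This is the paper's main theorem, so the proof will assemble results from later sections. The plan follows the architecture announced in the introduction.

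\emph{Part (i): well-definedness.} Work with the odd--even split \(\mathfrak a(m)=(\Qt(2m-1)+1)/2\) and \(\mathfrak b(m)=(\Qt(2m)+1)/2\).
\begin{itemize}
\item First rewrite \eqref{eq:Qt} separately on odd and even indices. Under the inductive hypothesis that all values so far are odd and that the increments of \(\mathfrak a,\mathfrak b\) are binary, the two nested arguments \(n-\Qt(n-1)\) and \(n-\Qt(n-2)\) land on opposite parities.
\item Each half-recurrence then becomes an exact-fit interleaving law. The next block of increments of \(\mathfrak a\) (respectively \(\mathfrak b\)) is obtained by interleaving already-constructed binary words.
\item Next, define the positive arch words \(P_r\), the negative arch words \(N_r\) and the starting positions \(u_r\) recursively from these laws. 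Prove by strong induction on \(r\) that concatenating the arches reproduces the increment words of \(\mathfrak a\) and \(\mathfrak b\).
\item The key invariant is that every index requested by the recurrence lies strictly before the current position and is \(\ge1\). This is a lower bound \(\Qt(k)<k\), which follows from \(\delta=\mathfrak b-\mathfrak a\) being an arch walk of height far below \(m\).
\end{itemize}

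\emph{Part (i): the limit.} Since \(\mathfrak a(m)\) and \(\mathfrak b(m)\) differ from the number of zeros in a prefix by at most the current arch height, we get \(\Qt(n)=n/2+O(\max_r V_r)\) on the relevant range. The limit \(1/2\) then follows once \(V_r=o(u_r)\).

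\emph{Part (ii): the amplitudes.} This is the main obstacle. The plan is to pass from the cores \(A_r\) to plane forests via the zero-gap labels \(\omega_m\).
\begin{itemize}
\item Establish two properties of the cores: the reversal-complement symmetry and the local rule \(\mathsf S\).
\item From these, derive the Toeplitz property: nonterminal edges of equal span carry equal labels.
\item The Toeplitz property lets me locate the record cuts, where a new height is first reached, and transport them through the interleaving transform \(P_{r-1}\mapsto P_r\).
\item Group the records by the compression map into ordered fibers. The fiber sizes at level \(r\) are the component sizes at level \(r+1\), which gives an ECO succession rule.
\item Translate the rule into a functional equation for the generating function. I expect it to match full binary trees with a marked vertex, with generating function related to \((1-\sqrt{1-4x})/(2x)\) and its derivative.
\item Extract coefficients to get \(W_r=\binom{2r+1}{r}\). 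Summing gives \(V_r\), and Stirling's formula gives \(V_r\sim\frac{8}{3\sqrt\pi}4^r/\sqrt r\), since \(\sum_{j\le r}4^j/\sqrt{\pi j}\cdot 2\sim\frac{8}{3}\cdot\frac{4^r}{\sqrt{\pi r}}\).
\end{itemize}
The difficulty is proving the Toeplitz property and the fiber correspondence rigorously. I would first verify them on small \(r\) and then induct on the forest depth.

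\emph{Parts (iii) and (iv).}
\begin{itemize}
\item Compute the lengths \(a_r\), hence \(u_r\). The arch recursion should give \(u_r\asymp c\,4^r\) up to polynomial factors, so \(\log_2 n\approx 2r\). Then \(V_r/u_r\asymp 1/\sqrt r\asymp 1/\sqrt{\log n}\), which gives the \(O\)-bound.
\item Non-optimality of \(o(\cdot)\) comes from evaluating at the summit index of each positive arch. For a lower bound on the limsup, take the binomial layers below the first positive summit: at a position \(z\)-scaled inside arch \(r\), the height is a partial binomial sum. Dividing by the index and letting \(r\to\infty\) gives explicit limits along even subsequences. Their supremum over \(z\) is \(C_{\mathrm{layer}}\), and I would show \(C_{\mathrm{layer}}>1/(3\sqrt{2\pi})\) by evaluating at one good \(z\).
\item For the upper bound \(1/(2\sqrt{2\pi})\), bound \(|\Qt(n)-n/2|\) by the height of the current arch. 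Check that negative arches are dominated by the adjacent positive ones, and use \(V_r/u_r\) at the worst position, which is the summit.
\end{itemize}
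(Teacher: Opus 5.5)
Your outline follows the paper's architecture. The gap is in part (ii), which carries most of the theorem: the decisive step is asserted, not supplied. Saying that fibers at level \(j\) become components of \(I_{j+1}\) (the adjacency correspondence) does not by itself give a succession rule. You also need the ordered fiber sizes carried by each component. An ordinary component of length \(L\) carries \(\Pi(L)\), a permutation of \(2,\ldots,L+1\). The special component containing \(1\) carries \(\Sigma(L)=(L+2,2L-2,\ldots,2)\). In the paper this is the substitution \(f_{j+1}=\Sigma(f_{j,0}-1)\,\Pi(f_{j,1})\cdots\Pi(f_{j,m_j-1})\), which combines three facts you do not mention.
\begin{enumerate}[label=(\alph*)]
\item \(T^2(A_r)=P_{r+1}\): both laws are one transform applied twice, and the root words evolve by \(R\mapsto(1,R+2)\) from \(R_0=(1)\).
\item Fiber sizes are read off the Toeplitz labels: \(f_Q=(\varrho+3,\alpha_1,\ldots,\alpha_D)\).
\item The suspended profile is \((R_\varrho+3,\ldots,R_1+3,2)\prod_h\mathcal A_{\alpha_h+1}\) with \(\mathcal A_{L+1}=\Pi(L)\), which rests on the breadth-first separator count.
\end{enumerate}
That suspension theorem, with the root classification reducing every anti-\(\mathsf S\) root to three base families, is also how Toeplitz is proved. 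Your ``induction on forest depth'' has no stated mechanism. You also never say which ECO statistic equals \(V_r\). The paper needs \(|I_{r+1}|=V_r\), hence \(\#\Comp(I_{r+2})=V_r\), from record transport (RPP and RC). The special component supplies the inhomogeneous term \((1-z^{2n+2})/(1-z^2)\). Without it and without the identification of \(V_r\), the marked-binary-tree equation is guessed from the answer rather than derived.

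Two further points. In part (iv), the summit is not where \(|\delta(m)|/m\) is largest. The first summit, at \(m\sim3a_r\), gives the normalized value \(1/(3\sqrt{2\pi})\), the \(z\to0\) limit of \(F(z)/(3\sqrt{2\pi})\). The layer point at \(z=1/4\) gives \(F(1/4)/(3\sqrt{2\pi})\) with \(F(1/4)>1\). If the ratio peaked at the summit, then, since negative arches are dominated by positive ones, the limsup would be at most \(1/(3\sqrt{2\pi})\), contradicting the strict lower bound you are proving. Your upper bound still holds, because \(V_r/u_r\) pairs the largest height with the smallest index. On negative arches, even the crude Law~1 bound \(-\delta\le 2V_r-2\) with \(m\ge v_r\sim 2u_r\) gives the same constant \(1/(2\sqrt{2\pi})\). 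Your non-optimality argument at the summit, by contrast, is correct and more economical than the paper's Gaussian subsequences. There \(\delta=V_r\) and \(m\le v_r\), so \((\Qt(2m)/(2m)-\tfrac12)\sqrt{\log m}\) stays bounded below. In part (i), admissibility of the indices comes from the binary increments, \(1\le\mathfrak a(m),\mathfrak b(m)\le m\), hence \(1\le\Qt(k)\le k\), not from smallness of \(\delta\). The substantive step there is checking RA and RB on the constructed sequences, via the bridge identities and the parked-head invariant. The limit \(1/2\) also needs the negative-arch bound just mentioned.
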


The formula defining the lower constant, its Gaussian subsequences, and the
numerical orientation are postponed to Section~\ref{sec:asymptotics}, where they are used.  The
strict lower inequality is exact, not numerical.  Whether the full limsup
equals \(C_{\mathrm{layer}}\) is open.

\subsection{Proof outline}

The parity split gives two exact interleaving laws.  They construct every arch
and establish well-definedness.  Each positive core is fixed by
reversal-complementation and satisfies the local gap rule.  Its gap data
define a plane forest, and the possible root words form three parametric
families.

Reflection and the gap rule give the Toeplitz property.  The resulting labels
define a potential whose minima locate the record heights.  Record transport
under \(T\) and \(T^2\) preserves the positions needed to follow the maximum
\(V_r\).  Compression gives ordered fibers.  Their sizes agree with the
Toeplitz labels, and adjacency supplies the component recurrence.  The
resulting ECO rules yield
\(W_r=\binom{2r+1}{r}\) through a generating function.

The binomial amplitude gives the optimal order of the error.  It bounds the
positive arches, and the domination theorem gives the same estimate on the
negative arches.  The forest layers and their Gaussian limits give the
explicit subsequences and the limsup bounds.

\subsection{A running example}

The smallest orbital arch displays the passage from a word to its Dyck path,
canonical forest, record cuts, and first fiber.  This section is a preview.
The zero coordinates are defined in Subsection~\ref{sec:notation}, the transformation \(T\)
in Section~\ref{sec:transform-T}, and the compression map and fiber notation
in Section~\ref{sec:fiber-map}.  Start from
\[
 P_0=001011,
 \qquad A_0=P_0[1:5)=0101.
\]
The height vector of \(P_0\), at its seven cuts, is
\[
 (0,1,2,1,2,1,0).
\]
The word \(P_0\) is a primitive Dyck word and its first record cuts are
\(\rho=(0,1,2)\), with \(V_0=2\).  For the core \(A_0\), the zero cuts and
their derived data are
\[
 p=(0,1,3),\qquad \omega=(1,2),\qquad c=(0,1),
 \qquad q=(0,0,1),\qquad d=(0,1,1).
\]
The canonical forest has root \(2\), of degree one, and its only
child is the leaf \(1\).  Its root word is \(R=(1)\).  The record cuts of
\(A_0\) are \(0\) and \(1\).

In the notation of Section~\ref{sec:fiber-map}, the compression of \(P_0\) sends
all three first-passage parameters to the first zero cut:
\[
 \bigl(\psi_{P_0}(\rho_{P_0,h})\bigr)_{h=0}^2=(1,1,1),
 \qquad g_{P_0}=(1,1,1).
\]
The resulting fiber data are
\[
 I(P_0)=\{1\},\qquad
 \operatorname{Fib}_{P_0}(1)=\{0,1,2\},\qquad f_0=(3).
\]
The two transformations are also explicit:
\[
 T(A_0)=0010011011=0N_0
\]
and
\[
 T^2(A_0)=0001000101100101110111=P_1.
\]
For comparison, the first record cuts of \(P_1\) are
\[
 (0,1,2,3,6,7),
\]
and its compression values are \((1,1,1,1,2,2)\).  Its fibers are
\[
 \operatorname{Fib}_{P_1}(1)=\{0,1,2,3\},\qquad
 \operatorname{Fib}_{P_1}(2)=\{4,5\},\qquad f_1=(4,2).
\]
These finite identities follow by direct calculation.  They preview the
definitions below and are not used in the proof of any general statement.
Figure~\ref{fig:running-example} summarizes these data.

\begin{figure}[htbp]
\centering
\begin{tikzpicture}[>=Stealth,scale=0.82,transform shape,
  every node/.style={font=\small}]
  \tikzset{panel/.style={rounded corners,draw=black!55,minimum width=3cm,
    minimum height=3.3cm}}

  \node[panel] at (0,0) {};
  \node[font=\bfseries] at (0,1.25) {word};
  \node[align=center] at (0,0.25) {$P_0=001011$\\[-1mm]
    $A_0=0101$};
  \node[font=\scriptsize,align=center] at (0,-1.05)
    {delete the first\\and last letters};

  \begin{scope}[xshift=3.7cm]
    \node[panel] at (0,0) {};
    \node[font=\bfseries] at (0,1.25) {Dyck path};
    \draw[->,black!45] (-1.25,-0.9)--(1.35,-0.9);
    \draw[->,black!45] (-1.25,-0.95)--(-1.25,0.9);
    \draw[very thick,blue!65!black]
      (-1.2,-0.9)--(-0.8,-0.2)--(-0.4,0.5)--(0,-0.2)--
      (0.4,0.5)--(0.8,-0.2)--(1.2,-0.9);
    \node[font=\scriptsize] at (0,-1.25) {$h=(0,1,2,1,2,1,0)$};
  \end{scope}

  \begin{scope}[xshift=7.4cm]
    \node[panel] at (0,0) {};
    \node[font=\bfseries] at (0,1.25) {forest of $A_0$};
    \node[draw,circle,inner sep=2pt] (r) at (0,0.45) {$2$};
    \node[draw,circle,inner sep=2pt] (l) at (0,-0.65) {$1$};
    \draw[->] (r)--(l);
    \node[font=\scriptsize] at (0,-1.25) {$c=(0,1)$, $R=(1)$};
  \end{scope}

  \begin{scope}[xshift=11.1cm]
    \node[panel] at (0,0) {};
    \node[font=\bfseries] at (0,1.25) {record cuts};
    \node[align=center] at (0,0.25) {$\rho=(0,1,2)$\\$V_0=2$};
    \node[font=\scriptsize,align=center] at (0,-1.05)
      {$V_0$ is the final\\first-passage height};
  \end{scope}

  \begin{scope}[xshift=14.8cm]
    \node[panel] at (0,0) {};
    \node[font=\bfseries] at (0,1.25) {fiber};
    \node (h0) at (-0.75,0.55) {$0$};
    \node (h1) at (-0.75,0) {$1$};
    \node (h2) at (-0.75,-0.55) {$2$};
    \node[draw,circle,inner sep=2pt] (u) at (0.75,0) {$1$};
    \draw[->] (h0)--(u); \draw[->] (h1)--(u); \draw[->] (h2)--(u);
    \node[font=\scriptsize] at (0,-1.25) {$f_0=(3)$};
  \end{scope}

  \draw[->,thick] (1.55,0)--(2.15,0);
  \draw[->,thick] (5.25,0)--(5.85,0);
  \draw[->,thick] (8.95,0)--(9.55,0);
  \draw[->,thick] (12.65,0)--(13.25,0);
\end{tikzpicture}
\caption{The running example \(P_0=001011\), from its core word to its first
fiber.}
\label{fig:running-example}
\end{figure}

\section{Arch construction}
\label{sec:construction}

\subsection{Notation}
\label{sec:notation}

The zero coordinates of a Dyck word are used in the word and forest
constructions.
\begin{definition}
\label{def:zero-coordinates}
Let \(W\) be a Dyck word of length \(2n\).  Its zero cuts are
\[
 0=p_0<p_1<\cdots<p_n,
\]
where \(p_m\) is the cut immediately after the \(m\)-th zero.  For
\(1\leq m\leq n\), set
\[
 \omega_m=p_m-p_{m-1},
 \qquad c_m=\omega_m-1.
\]
Thus \(\omega_m\) is the \(m\)-th zero-gap length, and \(c_m\) is the number
of intervening ones.  Later, \(c_m\) is the degree of vertex \(m\).  For
\(0\leq m\leq n\), set
\[
 q_m=p_m-m,
 \qquad d_m=m-q_m.
\]
The number \(q_m\) counts the ones before the \(m\)-th zero, and
\(d_m=h_W(p_m)\).  In particular,
\[
 p_0=q_0=d_0=0.
\]
Let \(s_m\), \(1\leq m\leq n\), be the cut immediately after the \(m\)-th
one, and put \(s_0=0\).  On the extended domain
\(\{1,\ldots,2n+1\}\), the terminal sentinels are
\[
 p_{n+1}=s_{n+1}=2n+1.
\]
They mark the end of the domain.  They are not zero or one cuts, and neither
\(\omega_{n+1}\) nor \(c_{n+1}\) is defined.
\end{definition}
For \(0\leq t\leq|W|\), put
\begin{equation*}
 \theta_W(t)=p_{O_W(t)},
 \qquad e_W(t)=t-\theta_W(t).
\end{equation*}

\subsection{Parity split}

Whenever the values involved are already defined, they are odd.  Put
\[
 \widehat Q(n)=\frac{\Qt(n)+1}{2},
 \qquad \mathfrak a(m)=\widehat Q(2m-1),
 \qquad \mathfrak b(m)=\widehat Q(2m).
\]

\Needspace{5\baselineskip}
The recurrence then splits along the two parities.
\begin{lemma}
\label{lem:parity-split}
Let \(m\geq2\).  Assume that the values of \(\Qt\) needed below are defined,
odd, and evaluated at positive arguments.  Then
\begin{align}
\label{eq:RA}
 \mathfrak a(m)&=\mathfrak b(m-\mathfrak b(m-1))+\mathfrak b(m-\mathfrak a(m-1))-1,\tag{RA}\\
\label{eq:RB}
 \mathfrak b(m)&=\mathfrak a(m+1-\mathfrak a(m))+\mathfrak a(m+1-\mathfrak b(m-1)).\tag{RB}
\end{align}
Conversely, these two recurrences recover \(\Qt\) through
\[
 \Qt(2m-1)=2\mathfrak a(m)-1,
 \qquad \Qt(2m)=2\mathfrak b(m)-1.
\]
\end{lemma}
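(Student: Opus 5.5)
The plan is a direct substitution. Under the hypothesis, every value of \(\Qt\) that occurs is odd, so \(\Qt(k)=2\widehat Q(k)-1\) with \(\widehat Q(k)\) an integer. I will treat the two parities of \(n\) in \eqref{eq:Qt} separately. In each case I will compute the parity of every nested argument, which determines whether the inner value is read off \(\mathfrak a\) or \(\mathfrak b\).

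For \(n=2m-1\) we have \((-1)^n=-1\), and the previous two values are \(\Qt(2m-2)=2\mathfrak b(m-1)-1\) and \(\Qt(2m-3)=2\mathfrak a(m-1)-1\). The nested arguments are then
\[
 2m-1-\bigl(2\mathfrak b(m-1)-1\bigr)=2\bigl(m-\mathfrak b(m-1)\bigr),
 \qquad
 2m-1-\bigl(2\mathfrak a(m-1)-1\bigr)=2\bigl(m-\mathfrak a(m-1)\bigr).
\]
Both are even, so their values are \(2\mathfrak b(m-\mathfrak b(m-1))-1\) and \(2\mathfrak b(m-\mathfrak a(m-1))-1\). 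Substituting into \eqref{eq:Qt} gives
\[
 2\mathfrak a(m)-1=2\mathfrak b(m-\mathfrak b(m-1))+2\mathfrak b(m-\mathfrak a(m-1))-3.
\]
Adding \(1\) and halving yields \eqref{eq:RA}.

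For \(n=2m\) the sign is \(+1\), and the previous values are \(\Qt(2m-1)=2\mathfrak a(m)-1\) and \(\Qt(2m-2)=2\mathfrak b(m-1)-1\). The nested arguments are
\[
 2m-(2\mathfrak a(m)-1)=2\bigl(m+1-\mathfrak a(m)\bigr)-1,
 \qquad
 2m-(2\mathfrak b(m-1)-1)=2\bigl(m+1-\mathfrak b(m-1)\bigr)-1.
\]
Both are odd, so they are read off \(\mathfrak a\). This gives \(2\mathfrak b(m)-1=2\mathfrak a(\cdot)+2\mathfrak a(\cdot)-2+1\), which simplifies to \eqref{eq:RB}.

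The restriction \(m\geq2\) ensures that \(2m-3\geq1\), so all indices \(m-1\geq1\) are legitimate. Positivity of the nested arguments is part of the hypothesis. For the converse, suppose \(\mathfrak a,\mathfrak b\) are integer sequences satisfying \eqref{eq:RA} and \eqref{eq:RB}, with initial values matching \(\Qt(1)=\Qt(2)=1\), namely \(\mathfrak a(1)=\mathfrak b(1)=1\). Define \(\Qt\) by the displayed formulas. Reading the two computations above backwards shows that \eqref{eq:Qt} holds at every index, and by construction every value is odd. This also shows that oddness propagates: a sum of two odd numbers \(\pm1\) is odd. Hence the oddness hypothesis is self-sustaining wherever the recursion is defined.

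The only real obstacle is bookkeeping. I must track the parity of each nested argument correctly, and in particular notice that in the even case the index shifts to \(m+1\) rather than \(m\). Since \(m+1-\mathfrak a(m)\) could a priori equal \(m+1\), the case \(\mathfrak a(m)=0\) would make \eqref{eq:RB} reference \(\mathfrak a(m+1)\). The hypothesis that arguments are positive and already defined excludes this. I will note explicitly that \(\mathfrak a(m)\geq1\) because \(\Qt(2m-1)\) is a positive odd value.
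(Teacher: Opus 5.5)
Your proposal is correct and follows the paper's own proof. At the odd index \(2m-1\) both nested arguments are even and are read from \(\mathfrak b\); at the even index \(2m\) both have the form \(2(\cdot)-1\) and are read from \(\mathfrak a\); substituting \(\Qt=2\widehat Q-1\) and simplifying gives the two recurrences, oddness propagates by induction, and reversing the substitution gives the converse.
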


\begin{proof}
An induction gives odd values whenever the recurrence is defined.  At the
odd index \(2m-1\), its two arguments are
\[
 2(m-\mathfrak b(m-1)),\qquad 2(m-\mathfrak a(m-1)).
\]
Substitution of \(\Qt(2k)=2\mathfrak b(k)-1\) gives \eqref{eq:RA}.  At the even index
\(2m\), the two arguments are
\[
 2(m+1-\mathfrak a(m))-1,
 \qquad 2(m+1-\mathfrak b(m-1))-1.
\]
Substitution of \(\Qt(2k-1)=2\mathfrak a(k)-1\) gives \eqref{eq:RB}.
\end{proof}

We construct \(\mathfrak a\) and \(\mathfrak b\) without assuming that the original recurrence
continues.  Lemma~\ref{lem:parity-split} is applied only after the
construction has been completed.

\subsection{Interleaving and arch words}

\Needspace{5\baselineskip}
The basic operation is the following two-tape interleaving.
\begin{definition}
Let \(X\) and \(Y\) be finite binary words and let
\(\varepsilon\in\{0,1\}\).  The word
\(\Interleave(X,Y,\allowbreak\varepsilon)\) is emitted by a two-tape transducer.  In
state zero it reads the next letter of \(X\).  In state one it reads the next
letter of \(Y\).  The emitted letter becomes the next state.  If the
prescribed tape is empty and the other tape is not, the machine must read
from the other tape.  It stops exactly when both tapes are empty.  An
interleaving has \emph{exact fit} if this fallback never occurs and both tapes
are exhausted.
\end{definition}

For \(Z=\Interleave(X,Y,\varepsilon)\), let \(i_t,j_t\) be the numbers of
letters consumed from \(X,Y\) after \(t\) outputs.

\Needspace{5\baselineskip}
Heights are additive along the two tapes.
\begin{lemma}

For every cut \(t\),
\[
 h_Z(t)=h_X(i_t)+h_Y(j_t).
\]
\end{lemma}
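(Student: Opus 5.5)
The plan is a direct induction on the number \(t\) of outputs. The identity only needs two facts: the counts \(Z\) and \(O\) are additive over disjoint sets of letters, and every emitted letter is copied verbatim from exactly one tape. For each cut \(t\), the prefix \(Z[0:t)\) is a shuffle of the prefixes \(X[0:i_t)\) and \(Y[0:j_t)\). From this we get
\[
 Z_Z(t)=Z_X(i_t)+Z_Y(j_t),\qquad O_Z(t)=O_X(i_t)+O_Y(j_t).
\]
Subtracting the second equation from the first gives \(h_Z(t)=h_X(i_t)+h_Y(j_t)\) at once.

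To make the shuffle claim precise, I will argue by induction on \(t\). The base case \(t=0\) holds because \(i_0=j_0=0\) and all heights vanish at the empty cut. For the step from \(t\) to \(t+1\), the transducer reads one letter \(x\), either from \(X\) (so \(i_{t+1}=i_t+1\) and \(j_{t+1}=j_t\)) or from \(Y\) (so \(j_{t+1}=j_t+1\) and \(i_{t+1}=i_t\)). This covers both the prescribed read and the fallback read, since in each case exactly one letter is consumed from exactly one tape. The emitted letter is \(Z[t]=x\). The height of \(Z\) then changes by \(+1\) if \(x=0\) and by \(-1\) if \(x=1\), and the height of the tape that was read changes by exactly the same amount, while the other tape's height is unchanged. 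Adding the increments to the inductive hypothesis gives the identity at cut \(t+1\).

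The only point needing care is that the statement does not assume exact fit, so the fallback rule must be included; this is handled by observing that fallback still consumes exactly one letter from a single tape. The state \(\varepsilon\) and the rule choosing which tape to read play no role. I therefore expect no real obstacle.
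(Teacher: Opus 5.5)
Your proposal is correct and follows the paper's own argument: the prefix \(Z[0:t)\) is a shuffle of \(X[0:i_t)\) and \(Y[0:j_t)\), and zero and one counts are additive under a shuffle. Your induction on \(t\) only makes the shuffle claim explicit, and you rightly observe that a fallback read still consumes exactly one letter from one tape, so exact fit is not needed.
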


\begin{proof}
The prefix \(Z[0:t)\) is a shuffle of \(X[0:i_t)\) and \(Y[0:j_t)\).
Zero counts and one counts are additive under a shuffle.
\end{proof}

\Needspace{5\baselineskip}
Before any fallback, the head positions are determined by the emitted prefix.
\begin{lemma}
\label{lem:head-counts}
Suppose no fallback occurs before output \(Z_t\), where \(t\geq1\).  Before
that output, the two heads are
\[
 i_t=\mathbf1_{\{\varepsilon=0\}}+Z_Z(t-1),
 \qquad
 j_t=\mathbf1_{\{\varepsilon=1\}}+O_Z(t-1).
\]
Here \(Z_Z(t-1)\) and \(O_Z(t-1)\) count the letters among
\(Z_0,\ldots,Z_{t-2}\).
\end{lemma}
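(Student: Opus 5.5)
The plan is a direct induction on \(t\), with one observation driving it: each output letter consumes exactly one letter from exactly one tape, and which tape that is depends only on the current state. The state is \(\varepsilon\) at the start and, from then on, is the previously emitted letter. Here \(i_t\) and \(j_t\) are the consumption counts after the \(t\) outputs \(Z_0,\ldots,Z_{t-1}\), so they are exactly the head positions before \(Z_t\) is emitted.

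First I record which tape each output comes from when there is no fallback. The output \(Z_0\) is read in state \(\varepsilon\), so it comes from \(X\) if \(\varepsilon=0\) and from \(Y\) if \(\varepsilon=1\). For \(k\geq1\), the output \(Z_k\) is read in state \(Z_{k-1}\), so it comes from \(X\) exactly when \(Z_{k-1}=0\) and from \(Y\) exactly when \(Z_{k-1}=1\). The hypothesis that no fallback occurs before \(Z_t\) is what guarantees this. It ensures that every output \(Z_0,\ldots,Z_{t-1}\) was read from the tape prescribed by the state, and not from the other tape.

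Next I sum over \(k=0,\ldots,t-1\). The number of letters read from \(X\) is then \(\mathbf1_{\{\varepsilon=0\}}\) plus the number of indices \(1\leq k\leq t-1\) with \(Z_{k-1}=0\). That second count is the number of zeros among \(Z_0,\ldots,Z_{t-2}\), namely \(Z_Z(t-1)\), which gives the formula for \(i_t\). The formula for \(j_t\) follows in the same way with ones in place of zeros.

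To make this rigorous I would phrase it as an induction. The base case is \(t=1\): one output \(Z_0\) has been taken from tape \(\varepsilon\), and \(Z_Z(0)=O_Z(0)=0\). For the step from \(t\) to \(t+1\), the single letter \(Z_{t-1}\) is added to the counts \(Z_Z\) and \(O_Z\), and the tape it selects is the one read for \(Z_t\). As a consistency check, \(i_t+j_t=1+(t-1)=t\).

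There is no real obstacle. The only point needing care is the indexing: the state governing \(Z_k\) is \(Z_{k-1}\), which is why the counts stop at \(Z_{t-2}\). The no-fallback hypothesis is what makes the prescribed tape the actual tape, and that hypothesis must cover all outputs up to and including \(Z_{t-1}\).
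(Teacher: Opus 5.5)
Your proposal is correct and takes essentially the same approach as the paper. The first read comes from the tape prescribed by \(\varepsilon\), each later read comes from the tape named by the preceding output, and counting these reads gives both formulas; your explicit induction just formalizes the paper's one-line count.
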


\begin{proof}
The first output is read from the tape prescribed by \(\varepsilon\).  Every
later output is read from \(X\) or \(Y\) according as the preceding output is
zero or one.  Counting those reads gives the two formulas.
\end{proof}

Define
\begin{equation*}
 a_0=3,
 \qquad a_{r+1}=4a_r-1,
 \qquad u_r=2a_r-r-2,
 \qquad v_r=4a_r-r-2.
\end{equation*}
Thus
\[
 a_r=\frac{2\cdot4^{r+1}+1}{3}.
\]
Set \(P_0=001011\).  Given \(P_r\), define
\begin{equation}
\label{eq:law1}
 N_r=\Interleave(P_r[2:|P_r|),P_r[0:|P_r|-1),1)
\end{equation}
and
\begin{equation}
\label{eq:law2}
 P_{r+1}=\Interleave(00N_r1,0N_r,0).
\end{equation}

\Needspace{5\baselineskip}
The lengths of the two families follow at once from the two laws.
\begin{lemma}

For every \(r\geq0\),
\[
 |P_r|=2a_r,
 \qquad |N_r|=4a_r-3.
\]
\end{lemma}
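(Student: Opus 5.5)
Since \(\Interleave\) consumes every letter of both tapes exactly once, I will argue by induction on \(r\) using only length counts. The output always has length \(|X|+|Y|\). This is true whether or not the interleaving has exact fit: the fallback rule only changes which tape is read next, never the total number of letters read. The machine stops exactly when both tapes are empty.

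The base case is \(|P_0|=|001011|=6=2a_0\), since \(a_0=3\). For the inductive step, assume \(|P_r|=2a_r\). Law \eqref{eq:law1} interleaves \(P_r[2:|P_r|)\), of length \(2a_r-2\), with \(P_r[0:|P_r|-1)\), of length \(2a_r-1\). Hence
\[
 |N_r|=(2a_r-2)+(2a_r-1)=4a_r-3.
\]
Law \eqref{eq:law2} interleaves \(00N_r1\), of length \(|N_r|+3\), with \(0N_r\), of length \(|N_r|+1\). Hence
\[
 |P_{r+1}|=2|N_r|+4=2(4a_r-3)+4=8a_r-2=2(4a_r-1)=2a_{r+1},
\]
which completes the induction and gives both formulas for every \(r\geq0\).

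The only point needing care, and the only possible obstacle, is that the length identity \(|\Interleave(X,Y,\varepsilon)|=|X|+|Y|\) must not rely on exact fit, which has not yet been established. I will read it directly off the definition. Each step of the transducer consumes exactly one letter and emits exactly one letter, and the machine halts precisely when both tapes are empty. I will also note that the slice \(P_r[2:|P_r|)\) is well-defined because \(|P_r|=2a_r\geq6\). As a consistency check, \(r=0\) gives \(|N_0|=9\), matching \(0N_0=0010011011\) in the running example, and \(|P_1|=22=2a_1\).
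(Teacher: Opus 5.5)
Your proposal is correct and follows essentially the same approach as the paper: induction on \(r\), adding the two tape lengths in Law~1 to get \(4a_r-3\) and in Law~2 to get \(8a_r-2=2a_{r+1}\). Your remark that the output length equals the total tape length whether or not the interleaving has exact fit is a useful clarification that the paper leaves implicit.
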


\begin{proof}
The base has length \(6=2a_0\).  If \(|P_r|=2a_r\), the two tapes in
\eqref{eq:law1} have total length \(4a_r-3\).  The two tapes in
\eqref{eq:law2} then have total length \(8a_r-2=2a_{r+1}\).
\end{proof}

The positions and word lengths fit without gaps:
\[
 u_0=4,
 \qquad
 v_r-u_r=2a_r=|P_r|,
 \qquad
 u_{r+1}-v_r=4a_r-3=|N_r|.
\]
Consequently, the half-open intervals \([u_r,v_r)\) and
\([v_r,u_{r+1})\), for \(r\geq0\), partition all increment indices
\(m\geq4\).

Let \(\mathrm{AP}(r)\) denote the following assertions.
\[
\begin{gathered}
 |P_r|=2a_r,
 \qquad Z(P_r)=O(P_r)=a_r,
 \qquad P_r[0:2)=00,\\
 h_{P_r}(0)=h_{P_r}(2a_r)=0,
 \qquad h_{P_r}(t)\geq1\quad(0<t<2a_r).
\end{gathered}
\tag{AP}
\]

\Needspace{5\baselineskip}
Under (AP), the letter counts and the terminal letters of both words are forced.
\begin{lemma}
\label{lem:terminal-counts}
If \(\mathrm{AP}(r)\) holds, then \(P_r\) ends with \(11\).  The word
\(N_r\) has \(2a_r-2\) zeros and \(2a_r-1\) ones, has final height \(-1\),
and ends with a one.
\end{lemma}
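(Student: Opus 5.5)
Two claims need proof: that \(P_r\) ends in \(11\), and the letter counts, final height and last letter of \(N_r\). Both follow from \(\mathrm{AP}(r)\) together with the additivity of heights under interleaving.

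\emph{The word \(P_r\).} We have \(h_{P_r}(2a_r)=0\) and \(h_{P_r}(2a_r-1)\geq1\). A single letter changes the height by exactly \(\pm1\), so \(h_{P_r}(2a_r-1)=1\) and the last letter is a one. If \(a_r\geq2\), then \(2a_r-2>0\), so \(h_{P_r}(2a_r-2)\geq1\). Since this height has the same parity as \(0\), it equals \(0\) or \(2\); the bound \(\geq1\) forces \(2\). Hence the second-to-last letter is also a one. The hypothesis \(a_r\geq2\) holds because \(a_r\geq a_0=3\), which follows from \(a_{r+1}=4a_r-1\).

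\emph{Tape counts for \(N_r\).} Put \(X=P_r[2:2a_r)\) and \(Y=P_r[0:2a_r-1)\). Since \(P_r\) begins with \(00\), removing these letters gives \(Z(X)=a_r-2\) and \(O(X)=a_r\). Since \(P_r\) ends with a one, removing it gives \(Z(Y)=a_r\) and \(O(Y)=a_r-1\). The tape letters are emitted in full whether or not fallback occurs, because the machine stops only when both tapes are empty. So \(N_r\) has \(2a_r-2\) zeros and \(2a_r-1\) ones, and its length is \(4a_r-3\), as the length lemma already states. Its final height is \(h_X(|X|)+h_Y(|Y|)=-2+1=-1\), by the additivity lemma at the final cut or simply by counting.

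\emph{Last letter of \(N_r\).} This is the main obstacle, since it depends on the transducer's dynamics and not just on counts. The last emitted letter is the last letter read. Suppose no fallback occurs. By Lemma~\ref{lem:head-counts} with \(\varepsilon=1\), the tape read at the final step is determined by the previous output. The tape read last is whichever tape empties second; its final letter is either the last letter of \(X\) (a one) or the last letter of \(Y\), which is the letter \(P_r[2a_r-2]=1\) shown above.

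If fallback does occur, the machine eventually reads the remaining letters of one tape consecutively. The last emitted letter is then again the final letter of \(X\) or of \(Y\), and both are ones. So in every case the final letter read comes from the end of one of the tapes, and both tape endings are \(1\). This settles the claim without analysing the transducer further, and completes the proof.
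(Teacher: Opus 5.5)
Your proof is correct and follows the paper's argument. Interior positivity together with terminal height zero forces the final \(11\) of \(P_r\), and the two Law~1 tapes lose the initial \(00\) and the terminal one, which gives the counts and the final height \(-1\); since both tapes end in one, whichever tape is exhausted last supplies the terminal one of \(N_r\). That last argument works with or without fallback, so your case split is harmless but unnecessary, and you were right not to rely on the no-fallback lemma, which the paper proves only afterwards.
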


\begin{proof}
Interior positivity and the terminal height zero force the final step of
\(P_r\) to descend from height one.  The preceding letter cannot be zero,
since that would place the previous cut at height zero.  Thus the last two
letters are \(11\).

The first Law~1 tape loses the initial \(00\), while the second loses the
terminal one.  Their combined zero and one counts are therefore
\(2a_r-2\) and \(2a_r-1\).  Both tapes end in one.  Whichever tape is
exhausted last supplies a terminal one.
\end{proof}

\Needspace{5\baselineskip}
Neither law ever uses the fallback transition.
\begin{lemma}

If \(\mathrm{AP}(r)\) holds, neither interleaving
\eqref{eq:law1} nor \eqref{eq:law2} uses fallback.
\end{lemma}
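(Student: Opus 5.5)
The plan is to argue by contradiction at the first fallback instant. Lemma~\ref{lem:head-counts} gives the head positions exactly up to that instant. At that instant, the emitted prefix is a shuffle of the two consumed tape prefixes, so its zeros and ones split additively between the tapes. Writing these two descriptions of the same counts against each other should give a linear identity that cannot hold. The only inputs are $\mathrm{AP}(r)$ and the letter counts of Lemma~\ref{lem:terminal-counts}. I will do Law~1 in detail and then repeat the bookkeeping for Law~2.

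\emph{Law~1.} Put $a=a_r$, $X=P_r[2:2a)$ and $Y=P_r[0:2a-1)$. By $\mathrm{AP}(r)$ and Lemma~\ref{lem:terminal-counts}, $X$ has $a-2$ zeros and $a$ ones, and $Y$ has $a$ zeros and $a-1$ ones. Suppose the first fallback occurs just before output $Z_t$. Let $i=i_t$ and $j=j_t$ be the heads then. The emitted prefix is a shuffle of $X[0:i)$ and $Y[0:j)$. Lemma~\ref{lem:head-counts} with $\varepsilon=1$ gives $i=Z_Z(t-1)$ and $j=1+O_Z(t-1)$, where the extra $1$ in $j$ is the initial read.

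If the prescribed tape is $X$ and it is empty, then $i=2a-2$. The zeros consumed from $Y$ then number $(2a-2)-(a-2)=a$, so $Y[0:j)$ already contains every zero of $Y$. Now count ones: $j-Z_Y(j)=O_Y(j)$, so $O_Y(j)=j-a$. On the other hand, $O_Z(t-1)=O_X(2a-2)+O_Y(j)=a+(j-a)=j$. This contradicts $j=1+O_Z(t-1)$.

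If the prescribed tape is $Y$ and it is empty, then $j=2a-1$, so $O_Z(t-1)=2a-2$ and hence $O_X(i)=(2a-2)-(a-1)=a-1$. Also $i=Z_Z(t-1)=Z_X(i)+a$. Together with $i=Z_X(i)+O_X(i)$, this forces $O_X(i)=a$, again a contradiction.

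Care is needed with the indexing convention for $t$ and with the possibility that the fallback happens at the very first output. The fallback must also be distinguished from normal termination, when both tapes are empty. None of this should change the arithmetic.

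\emph{Law~2.} Take $X=00N_r1$, $Y=0N_r$ and $\varepsilon=0$. By Lemma~\ref{lem:terminal-counts}, $N_r$ has $2a-2$ zeros and $2a-1$ ones. So $X$ has $2a$ zeros and $2a$ ones, and $Y$ has $2a-1$ zeros and $2a-1$ ones. Lemma~\ref{lem:head-counts} now reads $i=1+Z_Z(t-1)$ and $j=O_Z(t-1)$. I repeat the same two cases, $X$ exhausted and $Y$ exhausted. In each case the exhausted tape fixes one of the two letter counts of the emitted prefix. The head formula for the other tape then gives an identity that differs from the shuffle count by exactly one.

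The main obstacle is Law~2. There the tapes are balanced and no longer differ by a single letter, so it is possible that the pure counting identity does not close. If it fails, I would bring in heights through the additivity lemma, $h_Z(t)=h_X(i_t)+h_Y(j_t)$. In $Y=0N_r$ the final height is $0$, and in $X$ the final height is also $0$. Combined with the interior positivity in $\mathrm{AP}(r)$, which passes to the prefixes of $N_r$ through Law~1, this should exclude the one remaining configuration, namely the case where one tape has only its tail of ones left.
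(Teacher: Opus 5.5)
There is a genuine gap, and it lies in the indexing you set aside as harmless. The prefix $Z_0\cdots Z_{t-1}$ that you write as a shuffle of $X[0:i)$ and $Y[0:j)$ has length $t=i+j$. The counts $Z_Z(t-1)$ and $O_Z(t-1)$ of Lemma~\ref{lem:head-counts} cover only $Z_0,\ldots,Z_{t-2}$, so both of your case computations drop the state-creating letter $Z_{t-1}$.

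Restoring that letter, the state-zero case still closes, and more directly than you wrote. In Law~1, $i=2a-2$ means $2a-1$ zeros have been emitted against $2a-2$ available, so $Z_Y(j)=a+1>a$. In Law~2, $i=4a$ gives $4a$ emitted zeros against $4a-1$. That overflow is the real contradiction. The ones identity you display actually balances once the count is corrected: $O_X(i)+O_Y(j)=j-1=O_Z(t-1)$.

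In the state-one case the corrected count gives no contradiction at all. In Law~1, $j=2a-1$ yields $O_Z(t)=2a-1$, hence $O_X(i)=a$. That is exactly the value your second identity produces, and your clash with $a-1$ comes solely from the dropped letter. The correct counts only say that every one has already been emitted, which is consistent. The same happens for a state-one request in Law~2, where $j=4a-2$ gives $4a-1$ emitted ones, equal to the total.

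The missing ingredient is the terminal letter. By Lemma~\ref{lem:terminal-counts}, $P_r$ ends with $11$ and $N_r$ ends with a one, so both tapes of each law end with a one. Whichever tape is exhausted last therefore makes the final output a one, fallback or not. At a state-one request for an output $Z_t$ with $t\geq1$, this terminal one and $Z_{t-1}$ are two distinct ones not counted in $O_Z(t-1)$. Hence $j\leq1+(2a-1)-2<2a-1$ in Law~1 and $j\leq(4a-1)-2<4a-2$ in Law~2. Equivalently, once all ones are out, the state-zero tape, which ends with a one, is already empty, so the machine has stopped rather than fallen back. This is the paper's argument.

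The argument is pure counting, so the height detour you sketch for Law~2 is unnecessary. As stated, that detour would also draw on Lemma~\ref{lem:law1-height}, which presupposes exact fit of Law~1, the very thing your first part was meant to establish. The configuration to exclude is not a tape holding only a tail of ones. It is the state-zero tape holding only zeros.
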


\begin{proof}
Assume a first fallback.  All earlier reads obey the head formulas of
Lemma~\ref{lem:head-counts}.  The output that creates the current state has
not yet been counted by the head of the requested tape.  For a state-one
request before the last output, that state-creating one and the terminal one
are distinct uncounted letters.

For Law~1, the tape lengths are \(2a_r-2\) and \(2a_r-1\), equal to the
total zero and one counts of \(N_r\).  At a state-zero request, the first head
is at most \(Z(N_r)-1\).  At a state-one request, the second head is at most
\(1+O(N_r)-2\).  Both bounds are strictly below the corresponding tape
length.

For Law~2, the tape lengths are \(4a_r\) and \(4a_r-2\).  Their combined
letters give \(4a_r-1\) zeros and the same number of ones in \(P_{r+1}\).
Both tapes end with one, so the final output is one without using any
positivity assertion about \(P_{r+1}\).  A state-zero request has first head
at most \(1+(4a_r-2)=4a_r-1\).  A state-one request before the final output
has second head at most \(4a_r-3\).  These are the last valid indices of the
two tapes.  A first fallback is impossible.
\end{proof}

\Needspace{5\baselineskip}
Exact fit turns Law 1 into an exact height identity.
\begin{lemma}
\label{lem:law1-height}
Assume \(\mathrm{AP}(r)\).  If \(\xi_t,\upsilon_t\) are the two Law~1 head positions
after \(t\) outputs, then
\[
 h_{N_r}(t)=h_{P_r}(\xi_t+2)+h_{P_r}(\upsilon_t)-2.
\]
Moreover,
\[
 h_{N_r}(t)\geq0\quad(0\leq t<|N_r|),
 \qquad h_{N_r}(|N_r|)=-1,
\]
and
\[
 \max_t h_{N_r}(t)\leq2V_r-2.
\]
\end{lemma}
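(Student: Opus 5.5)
The plan is to combine the additivity lemma for heights along the two tapes with the exact fit of Law~1 established in the preceding lemma. Since Law~1 has exact fit, the output prefix \(N_r[0:t)\) is a shuffle of \(X[0:\xi_t)\) and \(Y[0:\upsilon_t)\), where \(X=P_r[2:|P_r|)\) and \(Y=P_r[0:|P_r|-1)\). Additivity gives \(h_{N_r}(t)=h_X(\xi_t)+h_Y(\upsilon_t)\). Because \(P_r\) begins with \(00\) under \(\mathrm{AP}(r)\), we have \(h_X(\xi)=h_{P_r}(\xi+2)-2\). Since \(Y\) is a prefix of \(P_r\), we have \(h_Y(\upsilon)=h_{P_r}(\upsilon)\). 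Together these give the displayed identity.

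For the sign statements, the final value is immediate: Lemma~\ref{lem:terminal-counts} gives \(h_{N_r}(|N_r|)=-1\). Alternatively, the identity at \(\xi=2a_r-2\), \(\upsilon=2a_r-1\) gives \(0+1-2\). For nonnegativity at \(t<|N_r|\), note that \(h_{P_r}(s)\geq1\) for \(0<s<2a_r\). So \(h_{P_r}(\xi_t+2)\geq1\) whenever \(\xi_t<2a_r-2\), and \(h_{P_r}(\upsilon_t)\geq1\) whenever \(1\leq\upsilon_t\leq 2a_r-1\). The sum is therefore at least \(2\), giving \(h_{N_r}(t)\geq0\), unless one of the two boundary situations occurs.

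\emph{Boundary cases, the main obstacle.}
\begin{enumerate}[label=\textup{(\alph*)}]
\item If \(\upsilon_t=0\), then nothing has been read from \(Y\). Since \(\varepsilon=1\), the first output comes from \(Y\), so this forces \(t=0\), where \(h_{N_r}(0)=0\).
\item If \(\xi_t=2a_r-2\), tape \(X\) is exhausted and \(h_{P_r}(\xi_t+2)=0\), so we need \(h_{P_r}(\upsilon_t)\geq2\). Here I would use Lemma~\ref{lem:head-counts} and exact fit: after \(X\) is exhausted, every further output must come from \(Y\) and hence equal one, except possibly the last. I would then compare counts. The remaining letters of \(Y\), namely \(Y[\upsilon_t:)\), consist of ones only: a zero read later would put the machine in state zero with \(X\) empty, which is a fallback. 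The tail of \(Y=P_r[0:2a_r-1)\) after \(\upsilon_t\) is thus all ones, ending at height \(1\). So \(h_{P_r}(\upsilon_t)=1+(2a_r-1-\upsilon_t)\), which is at least \(2\) when \(\upsilon_t<2a_r-1\).
\item If \(\upsilon_t=2a_r-1\) with \(t<|N_r|\), the analogous argument applies: \(Y\) is exhausted, so subsequent outputs are zeros read from \(X\). This gives \(h_{P_r}(\xi_t+2)\geq1\) and hence \(h_{N_r}(t)\geq 1+1-2=0\).
\end{enumerate}
I expect the careful bookkeeping of which tape empties first, using the state-equals-last-output rule, to be the main obstacle.

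Finally, the maximum bound is immediate from the identity. Since \(\xi_t+2\) and \(\upsilon_t\) are cuts of \(P_r\), each of \(h_{P_r}(\xi_t+2)\) and \(h_{P_r}(\upsilon_t)\) is at most \(V_r\). Hence \(h_{N_r}(t)\leq 2V_r-2\) for every \(t\).
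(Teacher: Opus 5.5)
Your proposal is correct and follows essentially the paper's proof: height additivity with exact fit gives the identity, and nonnegativity comes from a case split on whether a head sits at a tape boundary. The terminal value comes from Lemma~\ref{lem:terminal-counts}, and bounding both $P_r$-heights by $V_r$ gives the maximum.

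The one difference is in the case $\xi_t=2a_r-2$. There the paper reads off $h_{N_r}(t)=4a_r-4-t\geq0$ directly from $\xi_t=Z_{N_r}(t-1)$, since all zeros of $N_r$ have already occurred. You instead argue via fallback that the rest of $Y$ is all ones, which tacitly uses that $Y$ ends in a one; this holds because $P_r$ ends with $11$.
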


\begin{proof}
Height addition and exact fit give the displayed identity.  At \(t=0\), the
heads are \((\xi_0,\upsilon_0)=(0,0)\), and the formula reads
\(0=h_{P_r}(2)+h_{P_r}(0)-2\).  For \(t\geq1\), the initial state one and
Lemma~\ref{lem:head-counts} give
\[
 \xi_t=Z_{N_r}(t-1),
 \qquad
 \upsilon_t=1+O_{N_r}(t-1).
\]

Fix \(0<t<|N_r|\).  For an interior pair of heads, both \(P_r\) heights are
at least one.  If \(\xi_t+2=2a_r\), the first head formula shows that all
zeros of \(N_r\) have occurred, and
\[
 h_{N_r}(t)=4a_r-4-t\geq0.
\]
If \(\upsilon_t=2a_r-1\), then \(h_{P_r}(\upsilon_t)=1\) because \(P_r\)
ends with \(11\).  Exact fit prevents both tapes from being exhausted before
the final cut, so the first contribution is at least one.
The terminal value follows from Lemma~\ref{lem:terminal-counts}.  Replacing
both \(P_r\) heights by \(V_r\) gives the final bound.
\end{proof}

\Needspace{5\baselineskip}
The assertions (AP) propagate through all levels.
\begin{lemma}

The assertions \(\mathrm{AP}(r)\) hold for every \(r\geq0\).  In addition,
every \(N_r\) ends with \(11\).
\end{lemma}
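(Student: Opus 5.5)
We argue by induction on \(r\).  The base case \(\mathrm{AP}(0)\) is a direct check on \(P_0=001011\).  Its length is \(6=2a_0\), it has three zeros and three ones, and it begins with \(00\).  Its heights \((0,1,2,1,2,1,0)\) vanish only at the endpoints.  For the inductive step we assume \(\mathrm{AP}(r)\).  The lemmas already proved then give exact fit of both laws, the letter counts of \(N_r\), and the properties of \(h_{N_r}\) in Lemma~\ref{lem:law1-height}.  These are \(h_{N_r}(t)\ge0\) for \(t<|N_r|\) and \(h_{N_r}(|N_r|)=-1\).  The length and letter counts of \(P_{r+1}\) were already obtained in the no-fallback lemma: \(|P_{r+1}|=8a_r-2=2a_{r+1}\) and \(Z=O=4a_r-1=a_{r+1}\).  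The remaining tasks are the prefix \(00\), the terminal height, interior positivity, and the extra claim that \(N_r\) ends with \(11\).

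\emph{Heights of \(P_{r+1}\).}  Put \(X=00N_r1\) and \(Y=0N_r\), with initial state \(0\).  By height additivity and exact fit,
\[
 h_{P_{r+1}}(t)=h_X(i_t)+h_Y(j_t).
\]
We have \(h_X(i)=2+h_{N_r}(i-2)\) for \(i\ge2\), with \(h_X(1)=1\) and \(h_X(0)=0\).  We also have \(h_Y(j)=1+h_{N_r}(j-1)\) for \(j\ge1\).  Since \(h_{N_r}\ge0\) before its end, \(h_X\ge1\) on interior heads \(0<i<|X|\); indeed \(h_X\ge2\) for \(2\le i\le |X|-2\), \(h_X(|X|-1)=1\), and \(h_X(|X|)=0\).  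Likewise \(h_Y\ge1\) for \(0<j<|Y|\) and \(h_Y(|Y|)=0\).  The first output is \(X_0=0\).  The state stays \(0\), so the second output is \(X_1=0\).  Hence \(P_{r+1}[0:2)=00\).  The terminal height is \(0\) by the letter counts.

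For \(0<t<2a_{r+1}\) we need \(h_X(i_t)+h_Y(j_t)\ge1\).  Once \(i_t\ge1\) the first summand is positive unless \(X\) is exhausted.  At that moment all \(4a_r-1\) zeros have been emitted, so the remaining letters are ones and the height is \(2a_{r+1}-t>0\).  We expect this case analysis to go through.

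\emph{Main obstacle: \(N_r\) ends with \(11\).}  By Lemma~\ref{lem:terminal-counts} the last letter of \(N_r\) is \(1\), and the final height is \(-1\).  If the penultimate letter were \(0\), the height two steps before the end would be \(-1\), contradicting \(h_{N_r}(t)\ge0\) for \(t<|N_r|\).  So \(N_r\) ends with \(11\).  This argument needs \(|N_r|\ge2\), which holds since \(|N_r|=4a_r-3\).  Finally, \(P_{r+1}\) ends with \(11\) by Lemma~\ref{lem:terminal-counts} once \(\mathrm{AP}(r+1)\) is established.  This completes the induction.
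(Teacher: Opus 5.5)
Your proof is correct, and its skeleton matches the paper's: induction on \(r\), height additivity for the Law~2 tapes \(X=00N_r1\) and \(Y=0N_r\), and balance and the prefix \(00\) from exact fit. Two sub-steps are argued differently.

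For interior positivity, the paper rules out the corner head pairs \((0,|Y|)\) and \((|X|,0)\). The first read comes from \(X\), and \(X\) contains a one before its last letter. It then uses strict positivity of both tape profiles. You instead use that the \(X\)-head is at least one after the first output, and you settle the exhausted case by counting. That count needs its justification, which is Lemma~\ref{lem:head-counts}: \(i_t=1+Z_{P_{r+1}}(t-1)=|X|=4a_r\) places all \(4a_r-1\) zeros among the first \(t-1\) outputs. With that, your case analysis is complete and the closing hedge is unnecessary.

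For the terminal \(11\) of \(N_r\), the paper argues on the transducer. Both Law~1 tapes end in one. If the last output came from the first tape, the second tape's final one would request an empty tape, contradicting exact fit. You instead reuse the height argument of Lemma~\ref{lem:terminal-counts}: a penultimate zero would give \(h_{N_r}(|N_r|-2)=-1\), contradicting the nonnegativity in Lemma~\ref{lem:law1-height}. Your route is shorter once that lemma is available. The paper's route needs only exact fit and the tape endings, not the nonnegativity of \(h_{N_r}\).
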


\begin{proof}
The base word \(001011\) has interior heights \(1,2,1,2,1\).  Assume
\(\mathrm{AP}(r)\).  Lemmas~\ref{lem:terminal-counts} through
\ref{lem:law1-height}
apply.  Put \(S_0=00N_r1\) and \(S_1=0N_r\).  Their exact height profiles
satisfy
\[
 h_{S_i}(0)=h_{S_i}(|S_i|)=0,
 \qquad
 h_{S_i}(q)>0\quad(0<q<|S_i|),
 \qquad i\in\{0,1\}.
\]
For an interior cut of the output, the head pair is neither
\((0,0)\) nor \((|S_0|,|S_1|)\).  The pair \((0,|S_1|)\) is impossible
because the first read comes from \(S_0\).  The pair \((|S_0|,0)\) is also
impossible.  Before its last letter, \(S_0\) contains a one, which prescribes
a subsequent read from \(S_1\).  At least one head is therefore interior,
and the other contributes a nonnegative height.  Height addition gives a
strictly positive height for \(P_{r+1}\) at each interior cut.
Exact fit gives balance, and the first tape begins with \(00\).  This proves
\(\mathrm{AP}(r+1)\).

Both Law~1 tapes end with one.  If its final output came from the first tape,
the second tape would have been exhausted after emitting its own last one.
The next state would still request the second tape, contrary to exact fit.
The final output therefore comes from the second tape in state one, so the
last two outputs are \(11\).
\end{proof}

\section{Well-definedness}

\subsection{The slow sequences}

For any sequence \(X\), write
\[
 \Delta X(m)=X(m+1)-X(m).
\]
Set
\[
 \mathfrak a(1),\mathfrak a(2),\mathfrak a(3),\mathfrak a(4)=1,1,2,3,
 \qquad
 \mathfrak b(1),\mathfrak b(2),\mathfrak b(3),\mathfrak b(4)=1,2,2,3.
\]
For \(u_r\leq m<v_r\), define
\[
 \Delta \mathfrak a(m)=P_r[m-u_r],
 \qquad \Delta \mathfrak b(m)=1-P_r[m-u_r].
\]
For \(x=v_r+s\), where \(0\leq s<|N_r|\), define
\[
 \Delta \mathfrak b(v_r+s)=N_r[s],
 \qquad \Delta \mathfrak a(v_r)=1,
\]
and
\[
 \Delta \mathfrak a(v_r+s)=1-N_r[s-1]
 \qquad(1\leq s<|N_r|).
\]

\Needspace{5\baselineskip}
All increments are binary, and the two sequences agree at the arch boundaries.
\begin{lemma}
\label{lem:endpoints}
All increments of \(\mathfrak a\) and \(\mathfrak b\) lie in \(\{0,1\}\).  For every \(m\geq1\),
\[
 1\leq \mathfrak a(m)\leq m,
 \qquad 1\leq \mathfrak b(m)\leq m.
\]
For every \(r\geq0\),
\[
 \mathfrak a(u_r)=\mathfrak b(u_r)=a_r,
 \qquad \mathfrak a(v_r)=\mathfrak b(v_r)=2a_r,
\]
and
\[
 \mathfrak a(u_{r+1})=\mathfrak b(u_{r+1})=a_{r+1}.
\]
\end{lemma}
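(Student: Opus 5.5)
Binary increments are immediate from the definitions, since every increment is a letter of \(P_r\), \(N_r\), or their complements, or the constant \(\Delta\mathfrak a(v_r)=1\). The partition of the indices \(m\ge4\) into \([u_r,v_r)\) and \([v_r,u_{r+1})\) shows that every increment is defined exactly once. The initial values \(1,1,2,3\) and \(1,2,2,3\) supply the increments at \(m=1,2,3\). The plan is then to prove the endpoint identities by induction on \(r\), and to derive the bounds \(1\le\mathfrak a(m),\mathfrak b(m)\le m\) afterwards from binary increments and the starting values.

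For the base, \(u_0=2a_0-2=4\) and \(\mathfrak a(4)=\mathfrak b(4)=3=a_0\). For the induction step, suppose \(\mathfrak a(u_r)=\mathfrak b(u_r)=a_r\). Across \([u_r,v_r)\), \(\mathfrak a\) gains \(O(P_r)=a_r\) by (AP), and \(\mathfrak b\) gains \(|P_r|-O(P_r)=Z(P_r)=a_r\). Hence \(\mathfrak a(v_r)=\mathfrak b(v_r)=2a_r\).

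Across \([v_r,u_{r+1})\), \(\mathfrak b\) gains \(O(N_r)=2a_r-1\) by Lemma~\ref{lem:terminal-counts}, so \(\mathfrak b(u_{r+1})=4a_r-1=a_{r+1}\). For \(\mathfrak a\) the gain is
\[
1+\sum_{s=1}^{|N_r|-1}\bigl(1-N_r[s-1]\bigr)=1+Z\bigl(N_r[0:|N_r|-1)\bigr).
\]
Lemma~\ref{lem:terminal-counts} states that \(N_r\) ends with a one, so the prefix \(N_r[0:|N_r|-1)\) contains all \(2a_r-2\) zeros of \(N_r\). The gain is therefore \(2a_r-1\), and \(\mathfrak a(u_{r+1})=a_{r+1}\). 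This closes the induction. The only delicate point is the index shift in the definition of \(\Delta\mathfrak a\) on negative arches, which is exactly why the terminal-letter information from Lemma~\ref{lem:terminal-counts} is needed.

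The lower bound \(1\le\mathfrak a(m),\mathfrak b(m)\) follows because the increments are nonnegative and \(\mathfrak a(1)=\mathfrak b(1)=1\). The upper bound \(\mathfrak a(m),\mathfrak b(m)\le m\) holds for \(m\le4\) by inspection. For \(m>4\) it follows by induction, since each increment is at most \(1\), so \(\mathfrak a(m+1)\le\mathfrak a(m)+1\le m+1\), and likewise for \(\mathfrak b\).
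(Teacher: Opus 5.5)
Your proposal is correct and follows the paper's proof essentially step for step. Binary increments come straight from the definitions, and the bounds \(1\leq\mathfrak a(m),\mathfrak b(m)\leq m\) follow from binary increments starting at \(\mathfrak a(1)=\mathfrak b(1)=1\). The endpoint identities go by induction on \(r\), using the balance of \(P_r\) on the positive interval; on the negative interval, the letter counts of \(N_r\) and its terminal one (Lemma~\ref{lem:terminal-counts}) absorb the index shift in \(\Delta\mathfrak a\). The only difference is that you prove the endpoints before the bounds, which changes nothing.
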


\begin{proof}
The three initial increments are binary.  Every later increment is a letter
of some \(P_r\) or \(N_r\), or the complement of such a letter.  The bounds follow by
induction from \(\mathfrak a(1)=\mathfrak b(1)=1\).

For the endpoints, argue by induction on \(r\).  The base is
\(u_0=4\) and \(\mathfrak a(4)=\mathfrak b(4)=3=a_0\).  Assume the equality
at \(u_r\).  The balanced word \(P_r\)
raises both sequences by \(a_r\) on the positive interval.  On the negative
interval, \(\mathfrak b\) increases by the number \(2a_r-1\) of ones in \(N_r\).  The
increase of \(\mathfrak a\) is
\[
 1+\sum_{s=1}^{|N_r|-1}(1-N_r[s-1])=2a_r-1,
\]
because the final one of \(N_r\) is excluded from the sum.  Both sequences
therefore reach \(4a_r-1=a_{r+1}\).
\end{proof}

Put
\[
 \delta(m)=\mathfrak b(m)-\mathfrak a(m).
\]
On the positive interval,
\begin{equation*}
 \delta(u_r+t)=h_{P_r}(t)
 \qquad(0\leq t\leq2a_r).
\end{equation*}
Thus \([u_r,v_r]\) is a positive arch and its first and last cuts are its
only zero-height cuts.

Figure~\ref{fig:positive-walk} illustrates this identity at level \(r=2\).

\begin{figure}[htbp]
\centering
\includegraphics[width=\textwidth]{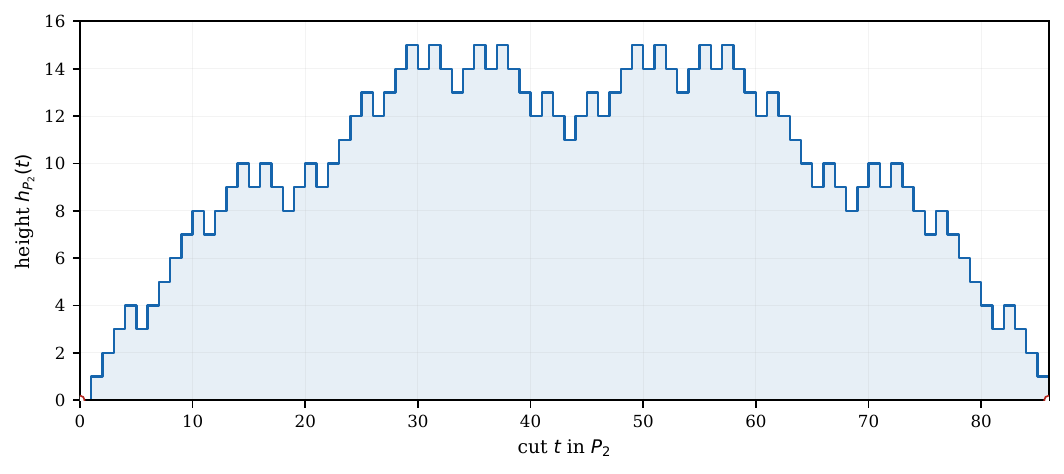}
\caption{The positive-arch word \(P_2\) and its height path.}
\label{fig:positive-walk}
\end{figure}

On the negative interval, direct summation gives
\begin{equation}
\label{eq:negative-dictionary}
 -\delta(v_r+t)=h_{N_r}(t)+N_r[t-1]
 =\max\{h_{N_r}(t-1),h_{N_r}(t)\}
\end{equation}
for \(1\leq t\leq|N_r|\).  The formula never reads \(N_r[|N_r|]\).
Since \(N_r[0]=0\), \(N_r\) ends with \(11\), and its final two heights are
zero and \(-1\), the two boundary depths are one and zero.

Figure~\ref{fig:delta-arches} shows the beginning of the complete signed walk.

\begin{figure}[htbp]
\centering
\includegraphics[width=\textwidth]{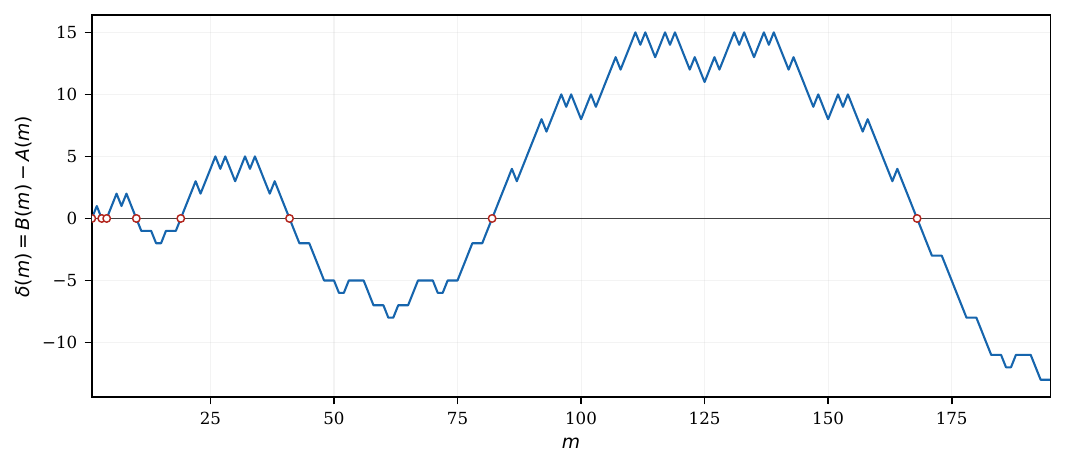}
\caption{The walk \(\delta(m)=\mathfrak b(m)-\mathfrak a(m)\) for
\(1\leq m\leq195\). Marked cuts separate successive arches.}
\label{fig:delta-arches}
\end{figure}

\Needspace{5\baselineskip}
Four linear identities tie $\mathfrak a$ and $\mathfrak b$ along the arches.
\begin{lemma}

For every \(r\geq0\),
\begin{equation}
\label{eq:ZP}
 \mathfrak a(m)+\mathfrak b(m)=m+r+2
 \qquad(u_r\leq m\leq v_r),
\end{equation}
and
\begin{equation}
\label{eq:ZN}
 \mathfrak b(x)+\mathfrak a(x+1)=x+r+3
 \qquad(v_r\leq x<u_{r+1}).
\end{equation}
For \(r\geq1\),
\begin{equation}
\label{eq:bridge-left}
 \mathfrak a(z+1)+\mathfrak b(z)=z+r+2
 \qquad(v_{r-1}-2\leq z\leq u_r),
\end{equation}
and for \(r\geq0\),
\begin{equation}
\label{eq:bridge-right}
 \mathfrak a(z+1)+\mathfrak b(z)=z+r+3
 \qquad(v_r-1\leq z<u_{r+1}).
\end{equation}
\end{lemma}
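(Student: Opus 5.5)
All four identities are telescoping statements about the increments fixed in the definition of \(\mathfrak a,\mathfrak b\). For each one I will check it at a single base point using Lemma~\ref{lem:endpoints}, and then show that the relevant sum of increments equals \(1\) at every step of the range. I will use only the arithmetic relations \(u_r=2a_r-r-2\), \(v_r=4a_r-r-2\), \(|N_r|=4a_r-3\), and the terminal-letter facts: \(P_r\) and \(N_r\) begin with \(0\) and end with \(11\).

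For \eqref{eq:ZP}, take \(u_r\leq m<v_r\). Then \(\Delta\mathfrak a(m)+\Delta\mathfrak b(m)=P_r[m-u_r]+1-P_r[m-u_r]=1\), so \(\mathfrak a(m)+\mathfrak b(m)-m\) is constant on \([u_r,v_r]\). At \(m=u_r\), Lemma~\ref{lem:endpoints} gives the value \(2a_r-u_r=r+2\). For \eqref{eq:ZN}, write \(x=v_r+s\). For \(0\leq s\leq|N_r|-2\) the step from \(x\) to \(x+1\) changes \(\mathfrak b(x)+\mathfrak a(x+1)\) by \(\Delta\mathfrak b(v_r+s)+\Delta\mathfrak a(v_r+s+1)=N_r[s]+1-N_r[s]=1\). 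This uses the second clause of the definition of \(\Delta\mathfrak a\), which is valid because \(1\leq s+1<|N_r|\). The range \(v_r\leq x<u_{r+1}=v_r+|N_r|\) requires exactly these steps. At the base \(x=v_r\), Lemma~\ref{lem:endpoints} and \(\Delta\mathfrak a(v_r)=1\) give \(2a_r+2a_r+1=4a_r+1=v_r+r+3\).

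The bridge identities follow from these two. For \eqref{eq:bridge-right}, the range \(v_r\leq z<u_{r+1}\) is \eqref{eq:ZN} verbatim. Only \(z=v_r-1\) remains. Since \(P_r\) ends in \(1\), we have \(\Delta\mathfrak b(v_r-1)=0\), so \(\mathfrak b(v_r-1)=2a_r\), and the sum is \(\mathfrak a(v_r)+\mathfrak b(v_r-1)=4a_r=(v_r-1)+r+3\).

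For \eqref{eq:bridge-left} with \(r\geq1\), I split the range into three parts.
\begin{itemize}
\item On \(v_{r-1}\leq z<u_r\), apply \eqref{eq:ZN} at level \(r-1\); this gives \(z+(r-1)+3=z+r+2\).
\item At \(z=u_r\), use \(P_r[0]=0\), so \(\mathfrak a(u_r+1)=a_r\), and the sum is \(2a_r=u_r+r+2\).
\item At \(z\in\{v_{r-1}-2,\,v_{r-1}-1\}\), write \(\mathfrak a(z+1)+\mathfrak b(z)=\mathfrak a(z)+\mathfrak b(z)+\Delta\mathfrak a(z)\). Apply \eqref{eq:ZP} at level \(r-1\) to get \(z+r+1\). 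Since \(P_{r-1}\) ends with \(11\), both increments \(\Delta\mathfrak a(z)\) equal \(1\), which gives \(z+r+2\).
\end{itemize}

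There is no real obstacle here. The only delicate point is bookkeeping at the ends of the ranges: the special increment \(\Delta\mathfrak a(v_r)=1\), the shifted index \(N_r[s-1]\), and the terminal letters \(11\) of \(P_{r-1}\) and \(P_r\). I will check each endpoint explicitly against the definition rather than rely on the generic step.
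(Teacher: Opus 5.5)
Your proof is correct and matches the paper's argument: you telescope \eqref{eq:ZP} and \eqref{eq:ZN} from the endpoint values of Lemma~\ref{lem:endpoints}, then add the boundary cuts using the terminal \(11\) of \(P_{r-1}\) and \(P_r\) and the initial \(0\) of \(P_r\). The differences are cosmetic. You stop the \eqref{eq:ZN} telescoping at \(s=|N_r|-2\), so you never need the cross-boundary step into \(P_{r+1}\) that the paper mentions, and you obtain the two extra left values of \eqref{eq:bridge-left} from \eqref{eq:ZP} at level \(r-1\) rather than by backtracking from \(\mathfrak a(v_{r-1})=\mathfrak b(v_{r-1})=2a_{r-1}\).
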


\begin{proof}
On a positive arch, \(\Delta\mathfrak a+\Delta\mathfrak b=1\).  The first endpoint identity
in Lemma~\ref{lem:endpoints} gives \eqref{eq:ZP}.  On a negative arch,
\(\Delta \mathfrak b(x)+\Delta \mathfrak a(x+1)=1\), including its last step because \(N_r\)
ends in one and \(P_{r+1}\) begins in zero.  The value at \(v_r\) gives
\eqref{eq:ZN}.

Equation \eqref{eq:bridge-right} adds the cut \(v_r-1\) to \eqref{eq:ZN}.
The last letter of \(P_r\) is one, so \(\Delta \mathfrak b(v_r-1)=0\).
Together with the endpoint values, this gives
\[
 \mathfrak a(v_r)+\mathfrak b(v_r-1)=4a_r=v_r+r+2,
\]
which is \eqref{eq:bridge-right} at that extra cut.

For \eqref{eq:bridge-left}, equation \eqref{eq:ZN} at level \(r-1\) covers
\(v_{r-1}\leq z<u_r\).  Put \(a=a_{r-1}\) and \(v=v_{r-1}\).  The terminal
\(11\) of \(P_{r-1}\) gives
\[
 \Delta\mathfrak a(v-2)=\Delta\mathfrak a(v-1)=1,
 \qquad
 \Delta\mathfrak b(v-2)=\Delta\mathfrak b(v-1)=0.
\]
Since \(\mathfrak a(v)=\mathfrak b(v)=2a\), the two additional left values
are
\[
 \mathfrak a(v)+\mathfrak b(v-1)=4a=(v-1)+r+2,
\]
and
\[
 \mathfrak a(v-1)+\mathfrak b(v-2)=4a-1=(v-2)+r+2.
\]
Finally, \(P_r[0]=0\) gives
\(\mathfrak a(u_r+1)=\mathfrak a(u_r)=a_r\), and hence
\[
 \mathfrak a(u_r+1)+\mathfrak b(u_r)=2a_r=u_r+r+2.
\]
These are exactly the remaining three boundary values.
\end{proof}

\subsection{The coupled recurrences}

\Needspace{5\baselineskip}
A head abandoned by the machine is parked immediately after a one and stays there during the ensuing reads.
\begin{lemma}
\label{lem:parked-head-invariant}
In an exact-fit interleaving, immediately after a noninitial transition into
state one, the head of the state-zero tape is immediately to the right of a
one on that tape.  It remains there throughout the ensuing consecutive
state-one reads.
\end{lemma}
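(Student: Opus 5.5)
The plan is to read the claim directly off the transducer mechanics in the definition of \(\Interleave\), using exact fit only to exclude fallback.  Exact fit guarantees that every read is taken from the tape prescribed by the current state.  The current state is always the last emitted letter.  Consider a noninitial transition into state one, that is, an output \(Z_t=1\) with \(t\geq1\) such that \(Z_{t-1}=0\) (with \(t-1\geq0\)).  I first identify the tape that produced the letter \(Z_t\).  The state before output \(Z_t\) is \(Z_{t-1}=0\); noninitiality means this state was created by an actual output rather than by the initial choice of \(\varepsilon\).  Hence \(Z_t\) was read from the state-zero tape \(X\).  So the letter just consumed from \(X\) is the one \(Z_t=1\), and the head of \(X\) now sits at the cut immediately to the right of that one.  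If the preceding state was already one, there is no transition; the statement concerns the first state-one read after a zero output.  The initial case \(t=0\) with \(\varepsilon=1\) is excluded by hypothesis, and this is exactly where the argument would fail, since no letter of \(X\) has been consumed.

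Next I check that the head stays put.  While the machine stays in state one, every output is read from the state-one tape \(Y\), with no fallback by exact fit.  The head of \(X\) therefore does not move during the ensuing consecutive state-one reads.  This remains true until some output from \(Y\) is a zero, which switches the state back to zero.  The claim "remains there" is then immediate.

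For bookkeeping, I would tie this to Lemma~\ref{lem:head-counts}.  There \(i_t=\mathbf1_{\{\varepsilon=0\}}+Z_Z(t-1)\), so \(i\) is constant along a run of ones in \(Z\).  The letter at index \(i-1\) of \(X\) is the one emitted at the start of the run.

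The only delicate point, and the one I would state carefully, is the convention on what "head position" means.  It is the number of letters already consumed, and "immediately to the right of a one" means \(X[i-1]=1\).  I also need to confirm that the transition-creating letter really came from \(X\) and not from a fallback read of \(Y\).  Exact fit settles this, so the main obstacle is notational rather than mathematical.
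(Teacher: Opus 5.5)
Your proof is correct and follows the paper's argument: by exact fit (no fallback), the one that creates state one was read from the state-zero tape, so that head sits just past it, and the subsequent state-one reads advance only the other head. One small point is that your formalization with \(t\geq1\) and \(Z_{t-1}=0\) needlessly leaves out the case \(\varepsilon=0\), \(Z_0=1\), which your argument covers verbatim; the hypothesis ``noninitial'' is needed only to exclude the initial state \(\varepsilon=1\), as you note at the end.
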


\begin{proof}
The output that causes such a transition is a one read from the state-zero
tape, whose head has just advanced past it.  Subsequent state-one reads move
only the other head.
\end{proof}

\Needspace{5\baselineskip}
The head positions translate into coordinates of the two slow sequences.
\begin{lemma}

On the negative arch \(v_r\leq x<u_{r+1}\), put
\[
 k=x+1-\mathfrak a(x),
 \qquad j=x+1-\mathfrak b(x-1),
 \qquad d=\Delta \mathfrak b(x-1).
\]
If \(x=v_r+s\) with \(s\geq1\), then, for either value of \(d\),
\begin{equation}
\label{eq:negative-head-coordinates}
 j=u_r+2+Z_{N_r}(s-1),
 \qquad
 k=u_r+1+O_{N_r}(s-1).
\end{equation}
Then
\begin{align}
\label{eq:L1B}
 \Delta \mathfrak b(x)&=(1-d)\Delta \mathfrak a(j)+d\Delta \mathfrak a(k),\\
\label{eq:L1A}
 \Delta \mathfrak a(x)&=1-d\Delta \mathfrak a(j-1).
\end{align}
On the positive arch \(u_{r+1}\leq m<v_{r+1}\), put
\[
 \ell=m-\mathfrak a(m-1),
 \qquad j=m-\mathfrak b(m-1),
 \qquad c=\Delta \mathfrak a(m-1).
\]
Then
\begin{align}
\label{eq:L2A}
 \Delta \mathfrak a(m)&=(1-c)\Delta \mathfrak b(\ell)+c\Delta \mathfrak b(j),\\
\label{eq:L2B}
 \Delta \mathfrak b(m)&=1-\Delta \mathfrak a(m).
\end{align}
If \(c=1\), which occurs only after the first output, the parked Law~2 head satisfies
\begin{equation}
\label{eq:parked-positive}
 \Delta \mathfrak b(m-\mathfrak a(m-1)-1)=1.
\end{equation}
If \(d=1\), the parked Law~1 head satisfies
\begin{equation}
\label{eq:parked-negative}
 \Delta \mathfrak a(j-1)=1.
\end{equation}
\end{lemma}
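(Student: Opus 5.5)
The proof translates the slow-sequence coordinates into head positions of the two interleavings, Law~1 for the negative arch and Law~2 for the positive arch, and then reads off the increments as letters emitted by the transducer.

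\emph{Negative arch.} The first step is \eqref{eq:negative-head-coordinates}.
\begin{itemize}
\item For \(x=v_r+s\) with \(s\geq1\), compute \(\mathfrak a(x)\) and \(\mathfrak b(x-1)\) by summing the increments from \(v_r\), starting from the endpoint values \(\mathfrak a(v_r)=\mathfrak b(v_r)=2a_r\) of Lemma~\ref{lem:endpoints}.
\item By definition, \(\mathfrak b(x-1)=2a_r+O_{N_r}(s-1)\).
\item Also \(\mathfrak a(x)=2a_r+1+\sum_{i=0}^{s-2}(1-N_r[i])=2a_r+1+Z_{N_r}(s-1)\).
\item Together with \(v_r=u_r+2a_r\), this gives \(j=u_r+2+Z_{N_r}(s-1)\) and \(k=u_r+1+O_{N_r}(s-1)\). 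This is the pair of formulas in \eqref{eq:negative-head-coordinates}, possibly with the names of \(j\) and \(k\) exchanged according to the bookkeeping; I will check the indices against \(\xi_s\) and \(\upsilon_s\).
\end{itemize}

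Now compare with Lemma~\ref{lem:head-counts} for Law~1, which has initial state one. There \(\xi_s=Z_{N_r}(s-1)\) and \(\upsilon_s=1+O_{N_r}(s-1)\). The first tape is \(P_r[2:)\), so position \(\xi_s\) on it is \(P_r[\xi_s+2]\), which is \(\Delta\mathfrak a(u_r+2+\xi_s)\). The second tape is \(P_r[0:)\), so position \(\upsilon_s\) is \(\Delta\mathfrak a(u_r+\upsilon_s)\). Thus the next letter read from each tape is \(\Delta\mathfrak a\) at \(j\), respectively at \(k\).

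The state before output \(s\) is the previous letter, \(N_r[s-1]=\Delta\mathfrak b(x-1)=d\). Exact fit, from the no-fallback lemma, therefore gives
\[
N_r[s]=(1-d)\,\Delta\mathfrak a(j)+d\,\Delta\mathfrak a(k),
\]
which is \eqref{eq:L1B}.

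For \eqref{eq:L1A}, note \(\Delta\mathfrak a(x)=1-N_r[s-1]=1-d\). If \(d=1\), Lemma~\ref{lem:parked-head-invariant} places the state-zero head just after a one, so \(\Delta\mathfrak a(j-1)=1\). This is \eqref{eq:parked-negative}, and then both sides of \eqref{eq:L1A} equal \(0\). If \(d=0\), both sides equal \(1\).

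The case \(s=0\) needs separate treatment. Here \(\Delta\mathfrak a(v_r)=1\) and \(\Delta\mathfrak b(v_r)=N_r[0]=0\), while \(d=\Delta\mathfrak b(v_r-1)=0\) because \(P_r\) ends in \(11\). So \eqref{eq:L1A} holds, and \eqref{eq:L1B} requires \(\Delta\mathfrak a(j)=0\). This is checked directly from the endpoint values: \(j=v_r+1-2a_r+\dots\) lands at \(u_r+2\), and \(P_r[2]=1\)?  This must be reconciled, presumably by \(j=u_r+1\) with \(P_r[1]=0\).

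\emph{Positive arch.} The argument is identical, using \eqref{eq:ZN} and \eqref{eq:bridge-right} for \(m-1\) in the preceding negative region to express \(\ell\) and \(j\). Law~2 has tapes \(00N_r1\) and \(0N_r\), initial state zero, and the dictionary \(\Delta\mathfrak b(v_r+s)=N_r[s]\).
\begin{itemize}
\item By Lemma~\ref{lem:head-counts}, the heads before output \(t=m-u_{r+1}\) are \(1+Z_{P_{r+1}}(t-1)\) and \(O_{P_{r+1}}(t-1)\).
\item Using \eqref{eq:ZP} for \(\mathfrak a(m-1)\) and \(\mathfrak b(m-1)\), the next letters of the two tapes are \(\Delta\mathfrak b(\ell)\) and \(\Delta\mathfrak b(j)\). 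The shifts by the prefixes \(00\) and \(0\) are absorbed by the offset \(v_r-u_{r+1}\) and by the values of \(\mathfrak b\) near \(v_r\).
\item The state is \(c=P_{r+1}[t-1]=\Delta\mathfrak a(m-1)\), and exact fit gives \eqref{eq:L2A}.
\item \eqref{eq:L2B} is the definition of \(\Delta\mathfrak b\) on the positive arch.
\item The parked-head statement \eqref{eq:parked-positive} is again Lemma~\ref{lem:parked-head-invariant}, applied at position \(\ell-1\).
\end{itemize}

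\emph{Main obstacle.} The delicate part is the boundary bookkeeping. Tape positions at the very start and end of each tape fall outside the arch where the dictionary applies, for example the padded letters of \(00N_r1\) and the first letter \(m=u_{r+1}\). These must be matched using the bridge identities \eqref{eq:bridge-left} and \eqref{eq:bridge-right}, together with the terminal \(11\) letters and initial \(00\) letters. I would verify these edge cases one by one.
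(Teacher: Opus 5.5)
Your route is the paper's: summation from the endpoint values of Lemma~\ref{lem:endpoints} gives the coordinates, Lemma~\ref{lem:head-counts} plus exact fit matches the emitted letter with the letter under the active head, and Lemma~\ref{lem:parked-head-invariant} handles the parked heads. Your negative-arch argument for $s\geq1$ is complete. It yields \eqref{eq:negative-head-coordinates} exactly as stated, with no exchange of $j$ and $k$. One precision is needed when you use the parked-head lemma at $d=1$: Law~1 starts in state one, so you need $N_r[0]=0$ to know the current run of ones was entered by a noninitial transition. No idea is missing, but the boundary bookkeeping you deferred is not done. At $s=0$ your first value $u_r+2$ is wrong, and the correction you guess is right but unjustified.

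\textbf{Negative arch at $x=v_r$.} Formula \eqref{eq:negative-head-coordinates} does not apply there. The terminal one of $P_r$ gives $\Delta\mathfrak b(v_r-1)=0$, so $\mathfrak b(v_r-1)=2a_r$, $d=0$ and $j=k=u_r+1$. The first Law~1 output is $P_r[0]=0=N_r[0]=\Delta\mathfrak b(v_r)$, and $\Delta\mathfrak a(u_r+1)=P_r[1]=0$. So \eqref{eq:L1B} holds, and \eqref{eq:L1A} reads $1=1$.

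\textbf{Positive arch, $s\geq1$.} Your appeal to \eqref{eq:ZN} and \eqref{eq:bridge-right} is misplaced here, since $m-1$ already lies in $[u_{r+1},v_{r+1})$. What you need is
\begin{itemize}
\item the sums $\mathfrak a(m-1)=a_{r+1}+O_{P_{r+1}}(s-1)$ and $\mathfrak b(m-1)=a_{r+1}+Z_{P_{r+1}}(s-1)$;
\item the identity $u_{r+1}-a_{r+1}=v_r-2$, which together with these sums gives $\ell=v_r-1+Z_{P_{r+1}}(s-1)$ and $j=v_r-1+O_{P_{r+1}}(s-1)$;
\item the tape dictionaries $S_0[p]=\Delta\mathfrak b(v_r-2+p)$ and $S_1[q]=\Delta\mathfrak b(v_r-1+q)$ for $S_0=00N_r1$ and $S_1=0N_r$.
\end{itemize}
The dictionaries cover the padding letters because $\Delta\mathfrak b(v_r-2)=\Delta\mathfrak b(v_r-1)=0$ by the terminal $11$ of $P_r$, and $\Delta\mathfrak b(u_{r+1})=1-P_{r+1}[0]=1$.

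\textbf{Positive arch at $m=u_{r+1}$.} The terminal $11$ of $N_r$ gives $c=\Delta\mathfrak a(u_{r+1}-1)=0$. Hence $\ell=u_{r+1}-a_{r+1}=v_r-2$, and $\Delta\mathfrak b(v_r-2)=0=P_{r+1}[0]$, so \eqref{eq:L2A} holds.

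\textbf{Parked Law~2 head.} The parked head sits at $1+Z_{P_{r+1}}(s-1)=\ell-v_r+2$. By the $S_0$ dictionary, its being just after a one is exactly $\Delta\mathfrak b(\ell-1)=1$.

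No bridge identity is needed anywhere in this lemma.
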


\begin{proof}
Exact fit makes each head position equal to the number of outputs previously
charged to its tape.

For Law~1, the tapes are \(X=P_r[2:|P_r|)\) and
\(Y=P_r[0:|P_r|-1)\), with
\[
 X[p]=\Delta\mathfrak a(u_r+2+p),
 \qquad
 Y[q]=\Delta\mathfrak a(u_r+q).
\]
Write \(x=v_r+s\).  For \(s\geq1\), direct summation of both negative
increment sequences gives the two unconditional identities
\eqref{eq:negative-head-coordinates}.  The state before the output is
\(d=N_r[s-1]=\Delta\mathfrak b(x-1)\).  When \(d=0\), the \(X\)-head is
\(Z_{N_r}(s-1)\), and the symbol read is \(\Delta\mathfrak a(j)\).  When
\(d=1\), the \(Y\)-head is \(1+O_{N_r}(s-1)\), and the symbol read is
\(\Delta\mathfrak a(k)\).  At \(s=0\), one has
\(d=0\) and \(j=u_r+1\).  The initial output is \(P_r[0]=0\), while the
formula reads \(\Delta\mathfrak a(u_r+1)=P_r[1]=0\).  This proves
\eqref{eq:L1B} at every negative increment.

If \(d=0\), the definition gives \(\Delta\mathfrak a(x)=1\).  If \(d=1\),
Lemma~\ref{lem:parked-head-invariant} says that the parked \(X\)-head is just
past a one, which in the displayed coordinates is
\(\Delta\mathfrak a(j-1)=1\).  The negative definition then gives
\(\Delta\mathfrak a(x)=0\).  These two cases prove \eqref{eq:L1A} and
\eqref{eq:parked-negative}.

For Law~2, put \(S_0=00N_r1\) and \(S_1=0N_r\).  The exact coordinate
identities are
\[
 S_0[p]=\Delta\mathfrak b(v_r-2+p),
 \qquad
 S_1[q]=\Delta\mathfrak b(v_r-1+q).
\]
Write \(m=u_{r+1}+s\).  At \(s=0\), the first output is zero, so
\(\Delta\mathfrak a(u_{r+1})=0\).  The terminal \(11\) of \(N_r\) gives
\(c=\Delta\mathfrak a(u_{r+1}-1)=0\), and the endpoint identities give
\[
 \ell=u_{r+1}-\mathfrak a(u_{r+1}-1)=v_r-2.
\]
The terminal \(11\) of \(P_r\) gives
\(\Delta\mathfrak b(v_r-2)=0\), so \eqref{eq:L2A} holds at this boundary.

For \(s\geq1\), the state is
\(c=P_{r+1}[s-1]=\Delta\mathfrak a(m-1)\).  If \(c=0\), the \(S_0\)-head
is \(1+Z_{P_{r+1}}(s-1)\), and
\[
 \ell=m-\mathfrak a(m-1)
 =v_r-2+1+Z_{P_{r+1}}(s-1).
\]
If \(c=1\), the \(S_1\)-head is \(O_{P_{r+1}}(s-1)\), and
\[
 j=m-\mathfrak b(m-1)=v_r-1+O_{P_{r+1}}(s-1).
\]
The emitted symbol is \(\Delta\mathfrak a(m)\), which proves
\eqref{eq:L2A}.  The positive-arch definition proves \eqref{eq:L2B}.
When \(c=1\), Lemma~\ref{lem:parked-head-invariant} places the \(S_0\)-head
immediately after a one.  Its coordinate in the full input word is \(\ell\), so
\(\Delta\mathfrak b(\ell-1)=1\), which is
\eqref{eq:parked-positive}.
\end{proof}

\Needspace{5\baselineskip}
The construction meets the two recurrences of the parity split.
\begin{theorem}
\label{thm:coupled-recurrences}
The constructed sequences \(\mathfrak a,\mathfrak b\) satisfy \eqref{eq:RA} and \eqref{eq:RB}
for every \(m\geq2\).
\end{theorem}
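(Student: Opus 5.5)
The plan is to prove \eqref{eq:RA} and \eqref{eq:RB} together by induction on \(m\), in increment form. For small \(m\), say \(2\leq m\leq u_0+1\), both recurrences are checked directly from the initial values \(1,1,2,3\) and \(1,2,2,3\) and the first letters of \(P_0\). For the inductive step, assume \eqref{eq:RA} and \eqref{eq:RB} at \(m\). It then suffices to show that the increment of each right-hand side from \(m\) to \(m+1\) equals \(\Delta\mathfrak a(m)\), respectively \(\Delta\mathfrak b(m)\).

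The key observation is that every argument appearing in \eqref{eq:RA} or \eqref{eq:RB} has the form \(m+\text{const}-X(m')\), where \(X\in\{\mathfrak a,\mathfrak b\}\) has binary increments by Lemma~\ref{lem:endpoints}. Hence, when \(m\) increases by one, such an argument either stays fixed or advances by exactly one. Concretely, for \eqref{eq:RB} between \(m=x\) and \(m=x+1\), the first argument \(x+1-\mathfrak a(x)=k\) advances iff \(\Delta\mathfrak a(x)=0\). The second argument \(x+1-\mathfrak b(x-1)=j\) advances iff \(d=\Delta\mathfrak b(x-1)=0\). The increment of the right-hand side is therefore
\[
 (1-\Delta\mathfrak a(x))\,\Delta\mathfrak a(k)+(1-d)\,\Delta\mathfrak a(j).
\]
On a negative arch \(v_r\leq x<u_{r+1}\), \eqref{eq:L1A} supplies \(\Delta\mathfrak a(x)\). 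If \(d=0\), then \(\Delta\mathfrak a(x)=1\), so only the \(j\)-term survives. If \(d=1\), then \(\Delta\mathfrak a(x)=1-\Delta\mathfrak a(j-1)=0\) by \eqref{eq:parked-negative}, so only the \(k\)-term survives. In both cases the increment equals the right side of \eqref{eq:L1B}, namely \(\Delta\mathfrak b(x)\). The same bookkeeping applies to \eqref{eq:RA} on a positive arch \(u_{r+1}\leq m<v_{r+1}\). There the two arguments are \(\ell\) and \(j\), the selector is \(c=\Delta\mathfrak a(m-1)\), and the parked identity \eqref{eq:parked-positive} kills the frozen term. This gives exactly \eqref{eq:L2A}.

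The remaining cases are the cross cases: \eqref{eq:RA} on negative arches and \eqref{eq:RB} on positive arches. For these I would use the linear identities rather than the transducer. On a positive arch, \eqref{eq:ZP} gives \(\mathfrak b(m)=m+r+2-\mathfrak a(m)\). The arguments \(m+1-\mathfrak a(m)\) and \(m+1-\mathfrak b(m-1)\) then lie in the preceding negative arch or on its bridges, where \eqref{eq:ZN}, \eqref{eq:bridge-left} and \eqref{eq:bridge-right} express \(\mathfrak a\) at an argument through \(\mathfrak b\) at the neighbouring one. Substituting these should turn the increment of the right-hand side of \eqref{eq:RB} into \(1\) minus the increment of the right-hand side of \eqref{eq:RA}, which is already known to equal \(\Delta\mathfrak a(m)\). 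By \eqref{eq:L2B} this is \(\Delta\mathfrak b(m)\). Symmetrically, on a negative arch \eqref{eq:ZN} pairs \(\mathfrak b(x)\) with \(\mathfrak a(x+1)\), and \eqref{eq:RA} at \(x+1\) reduces to \eqref{eq:RB} at \(x\) through the bridge identities.

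I expect the main obstacle to be this last step together with the boundary indices \(m\in\{u_r,u_r+1,v_r-2,v_r-1,v_r\}\). There the relevant arguments cross from one arch interval into the next, and the applicable linear identity changes its constant from \(r+2\) to \(r+3\). This is precisely why the bridge identities are stated on the slightly extended ranges \(v_{r-1}-2\leq z\leq u_r\) and \(v_r-1\leq z<u_{r+1}\). I would first locate the arguments exactly, using \(\mathfrak a\leq m\), the endpoint values, and the head coordinates \eqref{eq:negative-head-coordinates}, to confirm that they always fall in a range covered by one of these identities. I would then treat each boundary index by direct substitution, using the terminal \(11\) of \(P_r\) and \(N_r\) and the initial \(00\) of \(P_{r+1}\).
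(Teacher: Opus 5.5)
Your architecture matches the paper's. The same-side steps are exactly its increment computations: \eqref{eq:RA} across a positive arch from \eqref{eq:L2A}, and \eqref{eq:RB} across a negative arch from \eqref{eq:L1A}, \eqref{eq:L1B} and \eqref{eq:parked-negative}. The cross cases are likewise closed with \eqref{eq:ZP}, \eqref{eq:ZN} and the bridges.

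The gap is in those cross cases, which you propose to handle with the linear identities ``rather than the transducer''. That does not close. On a positive arch put $x_1=m-\mathfrak b(m-1)$, $x_2=m-\mathfrak a(m-1)$ and $\varepsilon=\Delta\mathfrak a(m-1)$. The right side of \eqref{eq:RB} at $m$ is $\mathfrak a(x_1+1)+\mathfrak a(x_2+1-\varepsilon)$, while the right side of \eqref{eq:RA} involves $\mathfrak b(x_1)$ and $\mathfrak b(x_2)$. When $\varepsilon=1$, the bridge \eqref{eq:bridge-left} at $x_2-1$ expresses $\mathfrak a(x_2)$ through $\mathfrak b(x_2-1)$, not through $\mathfrak b(x_2)$. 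The letter $\Delta\mathfrak b(x_2-1)$ of the Law~2 tape is not determined by any of the linear identities.

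You need the parked-head identity \eqref{eq:parked-positive}, namely $\Delta\mathfrak b(x_2-1)=1$. At $m=v_r$, where that identity is unavailable, you need $\Delta\mathfrak b(u_r)=1-P_r[0]=1$ instead. Conversely, \eqref{eq:parked-positive} is not what drives \eqref{eq:RA} across a positive arch. There $\Delta\mathfrak b(m-1)=1-\Delta\mathfrak a(m-1)$ holds by definition inside the arch, and by the terminal $11$ of $N_{r-1}$ at $m=u_r$, so exactly one argument advances without any parked identity.

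The same issue arises on a negative arch. There the right side of \eqref{eq:RA} at $x+1$ is $\mathfrak b(j-d)+\mathfrak b(k)-1$. The Law~1 heads place $k$ and $j-d$ in the preceding positive arch $[u_r,v_r]$, so the conversion goes through \eqref{eq:ZP}, not the bridges. For $d=1$ it also needs \eqref{eq:parked-negative}, to replace $\mathfrak a(j-1)$ by $\mathfrak a(j)-1$. The identity \eqref{eq:bridge-right} at $x-1$ only supplies $j+k=x-r$. With these inputs the paper proves the cross cases directly as value identities: \eqref{eq:RB} at $m$ from \eqref{eq:RA} at $m$, and \eqref{eq:RA} at $x+1$ from \eqref{eq:RB} at $x$. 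Your differencing is the same computation and does not avoid these inputs.

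Second, the direct check must run through $v_0=10$, not just through $u_0+1$. The identity \eqref{eq:L2A} is proved only for $u_{r+1}\le m<v_{r+1}$, since $P_0$ is not produced by Law~2. The identity \eqref{eq:bridge-left} is proved only for $r\ge1$. Your inductive step therefore does not cover the positive arch of level zero.
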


\begin{proof}
The values through \(v_0=10\) are
\[
 \mathfrak a=(1,1,2,3,3,3,4,4,5,6),
 \qquad
 \mathfrak b=(1,2,2,3,4,5,5,6,6,6).
\]
Direct substitution verifies both recurrences on this finite initial segment.
More explicitly, their right-hand sides for \(2\leq m\leq10\) are
\[
\begin{array}{c|ccccccccc}
 m&2&3&4&5&6&7&8&9&10\\ \hline
 \mathrm{RA}&1&2&3&3&3&4&4&5&6\\
 \mathrm{RB}&2&2&3&4&5&5&6&6&6
\end{array}
\]
and equal the displayed values of \(\mathfrak a(m)\) and \(\mathfrak b(m)\).
The induction runs along the alternating arches.  At each level, RA at the left
endpoint propagates RA across the positive arch, the bridge identities give
RB there, Law~1 propagates RB across the negative arch, and the bridges give
RA at the next left endpoint.

Fix a positive arch of level \(r\).  RA is known at \(u_r\), either from the
initial table or from the preceding negative arch.  Define
\[
 \operatorname{rhs}_{\mathfrak a}(m)
 =\mathfrak b(m-\mathfrak b(m-1))+\mathfrak b(m-\mathfrak a(m-1))-1.
\]
For \(u_r\leq m<v_r\), put \(c=\Delta \mathfrak a(m-1)\).  One has
\[
 \Delta\mathfrak b(m-1)=1-c.
\]
For \(m>u_r\), this is the positive-arch definition.  At \(m=u_r\), with
\(r\geq1\), the terminal \(11\) of \(N_{r-1}\) gives
\[
 \Delta\mathfrak b(u_r-1)=1,
 \qquad
 \Delta\mathfrak a(u_r-1)=0.
\]
The case \(r=0\) is in the initial table.  Comparison of two consecutive values gives
\[
 \operatorname{rhs}_{\mathfrak a}(m+1)-\operatorname{rhs}_{\mathfrak a}(m)
 =(1-c)\Delta \mathfrak b(m-\mathfrak a(m-1))
 +c\Delta \mathfrak b(m-\mathfrak b(m-1)).
\]
This is \(\Delta \mathfrak a(m)\) by \eqref{eq:L2A}.  Thus \eqref{eq:RA}
propagates across the positive arch.

For \(r\geq1\), the value of \eqref{eq:RB} at \(m=u_r\) has already been
obtained at the end of the preceding negative arch.  For
\(u_r<m\leq v_r\), put
\[
 x_1=m-\mathfrak b(m-1),
 \qquad x_2=m-\mathfrak a(m-1),
 \qquad \varepsilon=\Delta \mathfrak a(m-1).
\]
Equation \eqref{eq:ZP} at \(m-1\) gives
\[
 x_1=\mathfrak a(m-1)-r-1,
 \qquad
 x_2-\varepsilon=m-\mathfrak a(m).
\]
Using the endpoint values and
\(a_r-r-1=v_{r-1}-1\), \(2a_r-r-2=u_r\), one obtains
\[
 v_{r-1}-1\leq x_1\leq u_r,
 \qquad
 v_{r-1}-1\leq x_2-\varepsilon\leq u_r.
\]
Thus the bridge identity \eqref{eq:bridge-left} gives
\[
 \mathfrak a(x_1+1)=x_1+r+2-\mathfrak b(x_1)
\]
and
\[
 \mathfrak a(x_2+1-\varepsilon)=x_2+r+2-\mathfrak b(x_2).
\]
For the second identity, apply the bridge at \(x_2\) when \(\varepsilon=0\).
When \(\varepsilon=1\) and \(m<v_r\), equation
\eqref{eq:parked-positive} gives
\(\mathfrak b(x_2)=\mathfrak b(x_2-1)+1\), and the bridge at \(x_2-1\)
gives the same formula.  At the remaining cut \(m=v_r\), the terminal one of
\(P_r\) gives
\[
 x_2=v_r-\mathfrak a(v_r-1)=u_r+1,
 \qquad
 \Delta\mathfrak b(x_2-1)=\Delta\mathfrak b(u_r)=1-P_r[0]=1,
\]
so the bridge at \(u_r=x_2-1\) gives the formula there as well.  Adding these
equations, using \eqref{eq:RA}, and using
\(x_1+x_2=m-r-1\), obtained from \eqref{eq:ZP} at \(m-1\), gives
\[
 \mathfrak a(m+1-\mathfrak a(m))+\mathfrak a(m+1-\mathfrak b(m-1))=m+r+2-\mathfrak a(m)=\mathfrak b(m).
\]

On the following negative arch, define
\[
 \operatorname{rhs}_{\mathfrak b}(x)
 =\mathfrak a(x+1-\mathfrak a(x))+\mathfrak a(x+1-\mathfrak b(x-1)).
\]
Put
\[
 k=x+1-\mathfrak a(x),
 \qquad
 j=x+1-\mathfrak b(x-1),
 \qquad
 d=\Delta\mathfrak b(x-1).
\]
When \(x\) increases by one, the two arguments increase by
\(1-\Delta\mathfrak a(x)\) and \(1-d\), respectively.  If \(d=0\),
equation \eqref{eq:L1A} gives \(\Delta\mathfrak a(x)=1\), so only the second
argument advances, and
\(\operatorname{rhs}_{\mathfrak b}(x+1)-\operatorname{rhs}_{\mathfrak b}(x)
=\Delta \mathfrak a(j)=\Delta \mathfrak b(x)\).  If \(d=1\),
\eqref{eq:parked-negative} and \eqref{eq:L1A} show that its first argument
advances, and
\(\operatorname{rhs}_{\mathfrak b}(x+1)-\operatorname{rhs}_{\mathfrak b}(x)
=\Delta \mathfrak a(k)=\Delta \mathfrak b(x)\).  Hence \eqref{eq:RB}
propagates.

To obtain \eqref{eq:RA} at \(m=x+1\), equations
\eqref{eq:ZN} and \eqref{eq:RB} give
\[
 \mathfrak a(x+1)=x+r+3-\mathfrak a(k)-\mathfrak a(j).
\]
The explicit head coordinates, together with
\(k=j=u_r+1\) and \(d=0\) at the boundary \(x=v_r\), give
\[
 k,\ j-d\in[u_r,v_r],
\]
and, when \(d=1\), also \(j-1\in[u_r,v_r]\).  Thus equation \eqref{eq:ZP},
together with \eqref{eq:parked-negative} when \(d=1\), gives
\[
 \mathfrak b(k)=k+r+2-\mathfrak a(k),
 \qquad \mathfrak b(j-d)=j+r+2-\mathfrak a(j).
\]
Equation \eqref{eq:bridge-right} at \(x-1\) yields \(j+k=x-r\).  Therefore
\[
 \mathfrak b(j-d)+\mathfrak b(k)-1=\mathfrak a(x+1),
\]
which is \eqref{eq:RA}.  The alternating arches exhaust all indices.
\end{proof}

\Needspace{5\baselineskip}
\begin{theorem}
\label{thm:well-definedness}

The recurrence \eqref{eq:Qt} is well-defined for every positive integer.
Moreover,
\[
 1\leq\Qt(n)\leq n\qquad(n\geq1).
\]
\end{theorem}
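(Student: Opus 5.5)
The plan is to define a candidate sequence from the constructed slow sequences and show that the recurrence \eqref{eq:Qt} reproduces it step by step. Set
\[
 F(2m-1)=2\mathfrak a(m)-1,\qquad F(2m)=2\mathfrak b(m)-1\qquad(m\geq1).
\]
This is a total function on the positive integers. By Lemma~\ref{lem:endpoints} it is odd and satisfies \(1\leq F(n)\leq n\), since \(1\leq\mathfrak a(m)\leq m\) gives \(1\leq 2\mathfrak a(m)-1\leq 2m-1\), and similarly for \(\mathfrak b\). We have \(F(1)=F(2)=1\). I would then prove by strong induction on \(n\geq3\) that \(\Qt(n)\) is defined and equals \(F(n)\). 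Uniqueness is automatic, because \eqref{eq:Qt} determines each value from earlier ones whenever the requested arguments are valid.

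For the inductive step, assume \(\Qt(k)=F(k)\) for all \(k<n\). The two requested arguments are \(n-F(n-1)\) and \(n-F(n-2)\). The bound \(F(k)\leq k\) puts them in \([1,n-1]\), so both are earlier positive indices and the values requested are already defined. Because these values are odd and the arguments are positive, the hypotheses of Lemma~\ref{lem:parity-split} hold at index \(n\). I would then split by parity.
\begin{itemize}
\item For \(n=2m-1\) with \(m\geq2\), the computation in that lemma writes the right side of \eqref{eq:Qt} as
\[
 2\bigl(\mathfrak b(m-\mathfrak b(m-1))+\mathfrak b(m-\mathfrak a(m-1))-1\bigr)-1 .
\]
By Theorem~\ref{thm:coupled-recurrences} and \eqref{eq:RA}, this equals \(2\mathfrak a(m)-1=F(n)\).
\item For \(n=2m\) with \(m\geq2\), the right side becomes \(2(\text{RB})-1\), which equals \(2\mathfrak b(m)-1\) by \eqref{eq:RB}.
\end{itemize}
Since \(n\geq3\) forces \(m\geq2\), both recurrences are available in every case.

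The main care point is a possible circularity. Lemma~\ref{lem:parity-split} assumes the needed values of \(\Qt\) are already defined, which is exactly what we are proving. The induction order resolves this: at step \(n\) only indices below \(n\) are used, and those are covered by the inductive hypothesis together with the bound \(F\leq\mathrm{id}\).

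One detail needs an explicit check. In the even case the argument \(2(m+1-\mathfrak a(m))-1\) involves \(\mathfrak a(m)\), that is, \(F(2m-1)=F(n-1)\). That index is still below \(n\), so it is covered. Positivity of this argument needs \(\mathfrak a(m)\leq m\), which Lemma~\ref{lem:endpoints} supplies. I would also confirm that the small cases \(n=3,4\) do not fall outside \(m\geq2\); they do not, and the initial table in Theorem~\ref{thm:coupled-recurrences} covers them anyway.
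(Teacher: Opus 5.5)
Your proposal is correct and follows the paper's proof. Both define \(\Qt\) from \(\mathfrak a,\mathfrak b\), use the bounds of Lemma~\ref{lem:endpoints} to show that every requested argument is a positive earlier index, and then obtain the recurrence at each parity by combining Theorem~\ref{thm:coupled-recurrences} with the substitution of Lemma~\ref{lem:parity-split}. The only difference is cosmetic: you read the argument range directly off \(1\leq F(k)\leq k\) inside an explicit strong induction, whereas the paper lists four inequalities in \(\mathfrak a\) and \(\mathfrak b\) that say the same thing.
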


\begin{proof}
Define
\[
 \Qt(2m-1)=2\mathfrak a(m)-1,
 \qquad \Qt(2m)=2\mathfrak b(m)-1.
\]
The bounds in Lemma~\ref{lem:endpoints} give the four exact inequalities
\[
 1\leq m-\mathfrak a(m-1),\ m-\mathfrak b(m-1)\leq m-1,
\]
\[
 1\leq m+1-\mathfrak a(m)\leq m,
 \qquad
 2\leq m+1-\mathfrak b(m-1)\leq m.
\]
At the odd index \(2m-1\), the recurrence arguments are twice the first two
quantities and hence lie in \([2,2m-2]\).  At the even index \(2m\), they
are twice the last two quantities minus one and hence lie in
\([1,2m-1]\).  Thus every evaluation of \eqref{eq:Qt} uses positive earlier
arguments.  Theorem
\ref{thm:coupled-recurrences} and Lemma~\ref{lem:parity-split} prove the
recurrence at every odd and even index.  Determinism gives uniqueness.  The
bound on \(\Qt\) follows from \(1\leq \mathfrak a(m),\mathfrak b(m)\leq m\).
\end{proof}

\section{Words and forests}

\subsection{The word transform \texorpdfstring{\(T\)}{T}}
\label{sec:transform-T}

For a Dyck word \(W\), define
\[
 T(W)=\Interleave(0W1,W,0),
 \qquad
 \mathcal C(W)=T(W)[1:|T(W)|-1),
\]
and
\[
 \Lam(W)=T^2(W)[1:|T^2(W)|-1).
\]
\Needspace{5\baselineskip}
The zero data of an image under $T$ admit a closed form.
\begin{lemma}
\label{lem:zero-data}
Let \(W\) be a Dyck word of length \(2n\), and let \(p'_k,q'_k,\omega'_k\)
denote the zero data of \(T(W)\).  The interleaving defining \(T(W)\) has
exact fit.  For \(1\leq k\leq2n+1\),
\begin{equation}
\label{eq:T-zero-data}
 q'_k=\theta_W(k-1),
 \qquad p'_k=k+\theta_W(k-1).
\end{equation}
For every \(0\leq t<2n\),
\begin{equation}
\label{eq:T-gap-data}
 \omega'_1=1,
 \qquad
 \omega'_{t+2}=
 \begin{cases}
 1,&W_t=0,\\
 1+\omega_{O_W(t+1)},&W_t=1,
 \end{cases}
\end{equation}
The gaps of \(\mathcal C(W)\) are obtained by removing
the first term of \eqref{eq:T-gap-data}.
\end{lemma}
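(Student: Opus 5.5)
The plan is to run the transducer defining \(T(W)=\Interleave(0W1,W,0)\) explicitly. Put \(X=0W1\) and \(Y=W\). The first step is to use Lemma~\ref{lem:head-counts} to locate the heads at each step, and the second is to read off where the zeros of the output fall.

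\emph{Exact fit.} The tapes have lengths \(2n+2\) and \(2n\). Their combined content has \(n+1\) zeros and \(n+1\) ones, so \(T(W)\) would have \(2n+2\) zeros and \(2n\) ones if no fallback occurs. I would argue as in the fallback lemma for Law~2. Before the final output, a state-zero request finds the \(X\)-head at most \(1+Z\) of the emitted prefix, which is at most \(2n+1\). A state-one request before the last output finds the \(Y\)-head at most \(O-1\), which is at most \(2n-1\), because the state-creating one has not yet been charged to \(Y\). Both bounds are valid indices. Both tapes end in a one, so the final output exhausts the last tape.

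\emph{Zero data.} The structure of the output is the following. Each zero of \(T(W)\) is followed by a read from \(X\), and each one by a read from \(Y\). Hence, after the \(k\)-th zero of the output, exactly \(k\) letters of \(X\) have been read (counting the initial read). The run of ones between consecutive zeros is produced as follows. A one from \(X\) switches to \(Y\). The ones on \(Y\) then continue until \(Y\) emits a zero, and that zero switches back to \(X\).

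I would prove by induction on \(k\) that \(q'_k=\theta_W(k-1)\). That is, the number of ones before the \(k\)-th zero of the output equals the number of letters of \(Y=W\) consumed up to and including the zero that ends the \(O_W(k-1)\)-th block of \(W\). The key point is the following. When the \(X\)-head reads \(X[k-1]\), which is \(W_{k-2}\) for \(k\geq2\) and the leading \(0\) for \(k=1\), a \(0\) costs no \(Y\)-letters. A \(1\) forces the \(Y\)-head to advance to just past the next zero of \(W\), consuming one more zero-gap block \(\omega_{O}\) of \(W\). Tracking how many ones of \(X\) have been read shows that the \(Y\)-head sits at \(p_{O_W(\cdot)}=\theta_W\). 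The only care needed is the off-by-one caused by the leading \(0\) of \(X\). Then \(p'_k=k+q'_k\) follows from the definition \(q_m=p_m-m\).

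\emph{Gap data.} Differencing \(p'_k\) gives \(\omega'_1=p'_1-p'_0=1+\theta_W(0)=1\). For the remaining gaps, \(\omega'_{t+2}=1+\theta_W(t+1)-\theta_W(t)\). This equals \(1\) when \(W_t=0\), since \(O_W\) is unchanged. It equals \(1+p_{O+1}-p_O=1+\omega_{O_W(t+1)}\) when \(W_t=1\).

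\emph{The core \(\mathcal C(W)\).} Deleting the first letter, which is \(0\), removes the first zero and its gap \(\omega'_1=1\). Deleting the final letter, which is a \(1\) after the last zero, does not change any zero gap. Hence the gaps of \(\mathcal C(W)\) are those of \eqref{eq:T-gap-data} with the first term removed.

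The main obstacle will be the induction identifying the \(Y\)-head with \(\theta_W\). This requires care at the boundaries: at \(k=1\), and for the last index \(k=2n+1\), where \(X[2n+1]=1\) drives \(Y\) to exhaustion. It also requires checking that the sentinel \(p_{n+1}\) is never invoked, since \(O_W(2n)=n\).
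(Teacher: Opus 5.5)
Your proposal is correct and matches the paper's proof. After the \(k\)-th output zero the \(X\)-head has read \(0\,W[0:k-1)\), each one there drives the \(Y\)-head through the next zero of \(W\), and the \(Y\)-head equals the one count; this gives \(q'_k=p_{O_W(k-1)}\), and the gaps follow by differencing. The paper obtains exact fit directly from this picture, using the terminal one of \(0W1\) and the trailing ones of \(W\), rather than from head bounds as you do.

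One slip to fix: the two tapes together hold \(2n+1\) zeros and \(2n+1\) ones, so \(T(W)\) has \(2n+1\) of each, not \(2n+2\) zeros and \(2n\) ones. Your bounds \(2n+1\) and \(2n-1\) in fact rely on the correct count.
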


\begin{proof}
At a cut immediately following a zero of \(T(W)\), the state is zero.  The
numbers of letters consumed from \(0W1\) and \(W\) equal the zero and one
counts of the emitted prefix.  This follows by induction from one such cut to
the next.  A factor \(0\) advances the first tape once.  A factor \(1^a0\)
advances the first tape once and the second tape \(a\) times.

After the \(k\)-th zero, the first head has consumed \(k\) letters.  It has
read the initial zero of \(0W1\) and the prefix \(W[0:k-1)\).  Every one in
that prefix makes the second tape advance through the following zero.  The
second head is therefore at
\(p_{O_W(k-1)}=\theta_W(k-1)\).  This is the first formula in
\eqref{eq:T-zero-data}.  The second follows from \(p'_k=k+q'_k\).

At the last zero, the first tape has only its terminal one left.  Emitting it
puts the transducer in state one, after which the terminal ones of the second
tape are consumed.  Both tapes are exhausted and no fallback occurs.

Subtracting consecutive cuts in \eqref{eq:T-zero-data} gives
\eqref{eq:T-gap-data}.  A zero leaves \(O_W\) unchanged.  At the \(s\)-th one,
\(\theta_W\) increases by \(p_s-p_{s-1}=\omega_s\).  Removing the first zero of
\(T(W)\) removes exactly its first gap.
\end{proof}

\Needspace{5\baselineskip}
The transform preserves Dyck words.
\begin{lemma}
\label{lem:T-dyck}
If \(W\) is a Dyck word, then \(T(W)\) is a primitive Dyck word and
\(\mathcal C(W)\) is a Dyck word.
\end{lemma}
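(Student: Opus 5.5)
We prove the claim through the height-addition lemma for interleavings, applied to \(T(W)=\Interleave(0W1,W,0)\). Lemma~\ref{lem:zero-data} already gives exact fit. So for every cut \(t\) of \(T(W)\), with \(i_t,j_t\) the heads on \(X=0W1\) and \(Y=W\), we have
\[
 h_{T(W)}(t)=h_{0W1}(i_t)+h_W(j_t).
\]
Since \(W\) is Dyck, \(h_W\geq0\) everywhere and \(h_W(|W|)=0\). The word \(0W1\) is a primitive Dyck word: its height is \(1+h_W(i-1)\geq1\) at interior cuts \(0<i<|W|+2\), and it is zero at both ends. Balance of \(T(W)\) follows from exact fit, since it is a shuffle of two balanced words. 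The final height is therefore \(0\), and all heights are nonnegative.

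For primitivity we must show \(h_{T(W)}(t)\geq1\) for \(0<t<|T(W)|\). By the displayed identity this holds whenever \(0<i_t<|W|+2\). The case \(i_t=0\) with \(t>0\) is impossible, because the first output is read from \(X\) (initial state \(0\)). So the remaining case is \(i_t=|W|+2\), meaning the first tape is exhausted while \(t<|T(W)|\). The proof of Lemma~\ref{lem:zero-data} describes exactly this phase. After the last zero, the terminal one of \(0W1\) is emitted, and then only the terminal ones of \(W\) remain to be read. At such a cut, the letters still to be emitted are the \(|W|-j_t\) letters \(W[j_t:|W|)\), all of which are ones. Since \(h_W(|W|)=0\), this gives \(h_W(j_t)=|W|-j_t\), and this is \(\geq1\) because \(t<|T(W)|\) forces \(j_t<|W|\). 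This settles interior positivity.

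For \(\mathcal C(W)\), delete the first and last letters of a primitive Dyck word. The first letter is \(0\) and the last is \(1\), since the heights at the cuts \(1\) and \(|T(W)|-1\) equal \(1\). The resulting heights are \(h_{T(W)}(t+1)-1\geq0\), ending at \(0\), so \(\mathcal C(W)\) is a Dyck word.

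The main obstacle is the exhausted-first-tape case. There we must be sure that the residual second-tape suffix consists entirely of ones, rather than merely knowing that no fallback occurs. I would justify this as in Lemma~\ref{lem:zero-data}. The \((|W|+1)\)-st zero of \(T(W)\) is its last zero, and the first head then stands before the terminal one of \(0W1\). By \eqref{eq:T-zero-data} with \(k=2n+1\), the second head is at \(\theta_W(2n)=p_{O_W(2n)}=p_n\), the cut after the last zero of \(W\). Hence everything left on \(Y\) is a block of ones. An alternative would be a direct check via the zero data \(p'_k\), using \(h_{T(W)}(p'_k)=k-\theta_W(k-1)\) and comparing with one-counts, but the head argument above is shorter.
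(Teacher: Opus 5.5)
Your proof is correct, but it takes a different route from the paper. The paper never looks at the two heads individually. It reads both conditions off the closed form \(q'_k=\theta_W(k-1)\) of Lemma~\ref{lem:zero-data}. The bound \(\theta_W(t)\leq t\) gives \(q'_k\leq k-1\), which is the zero-cut form of the Dyck condition. The strict bound \(\theta_W(t)<t\) for \(t\geq1\) (equality would force a prefix of height \(-1\)) gives positivity at the cut just before every noninitial zero, and hence strict positivity between the endpoints.

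You instead use the height additivity \(h_{T(W)}(t)=h_{0W1}(i_t)+h_W(j_t)\). This is the same template the paper uses to propagate (AP) through Law~2. Because the second tape \(W\) is only Dyck rather than primitive, you need an extra case for the tail after the first tape is exhausted. Your treatment of that case, via the head positions at the last zero, is sound.

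The paper's route is shorter and stays in the \(\theta_W\) coordinates that are reused later (Lemma~\ref{lem:theta-e}, Theorem~\ref{thm:T-records}). Your route makes the pointwise bound \(h_{T(W)}(t)\geq h_{0W1}(i_t)\) explicit. It also avoids the standard but unstated passage from zero cuts to all cuts.

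One small correction to your closing remark: exact fit alone does settle the tail. Since \(W\) ends in a one, any zero left on \(W\) after the first tape is exhausted would be followed by a further letter. Emitting that zero would then cause a state-zero request on an empty tape, which is a fallback.
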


\begin{proof}
The two tapes contain the same total number of zeros and ones.  Lemma
\ref{lem:zero-data} gives exact consumption.  Since a Dyck prefix has at
least as many zeros as ones,
\[
 \theta_W(t)\leq t.
\]
Thus \(q'_k\leq k-1\), which is the zero-cut form of the Dyck condition.  If
\(t\geq1\), equality \(\theta_W(t)=t\) would force the last letter of the
prefix to be zero and the preceding height to be \(-1\).  Hence
\(\theta_W(t)<t\).  The word \(T(W)\) is strictly positive between its
endpoints.  Removing its first zero and its last one leaves a Dyck word.
\end{proof}

Define the orbital cores
\begin{equation*}
 A_r=P_r[1:|P_r|-1).
\end{equation*}

\Needspace{5\baselineskip}
Two applications of $T$ reproduce the two laws.
\begin{lemma}
\label{lem:normal-form}
For every \(r\geq0\),
\[
 T(A_r)=0N_r,
 \qquad T^2(A_r)=P_{r+1},
 \qquad A_{r+1}=\Lam(A_r).
\]
\end{lemma}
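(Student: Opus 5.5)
The plan is to compare the two transducers configuration by configuration, using only (AP), the definitions \eqref{eq:law1} and \eqref{eq:law2}, and Lemma~\ref{lem:T-dyck}. I view a run of \(\Interleave\) as a deterministic machine whose configuration is the triple (remaining suffix of the first tape, remaining suffix of the second tape, current state). The output emitted from a configuration onward depends only on that configuration. So if two runs reach the same configuration, their remaining outputs coincide.

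First I check that \(A_r\) is a Dyck word, so that \(T(A_r)\) is defined in the sense of Section~\ref{sec:transform-T}. By \(\mathrm{AP}(r)\), \(P_r=0A_r1\) and \(h_{P_r}(t)\geq1\) for \(0<t<2a_r\). Hence \(h_{A_r}(t)=h_{P_r}(t+1)-1\geq0\) for every cut of \(A_r\), and \(h_{A_r}(|A_r|)=0\).

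Next, the identity \(T(A_r)=0N_r\). Since \(0A_r1=P_r\), we have \(T(A_r)=\Interleave(P_r,A_r,0)\).
\begin{itemize}
\item In this run, state zero reads \(P_r[0]=0\) and then \(P_r[1]=0\), since \(P_r[0:2)=00\). After emitting \(00\), the configuration is \((P_r[2:|P_r|),\,A_r,\,0)\).
\item The Law~1 run \eqref{eq:law1} starts in state one on the tapes \(P_r[2:|P_r|)\) and \(P_r[0:|P_r|-1)=0A_r\). It first reads the leading zero of the second tape. After emitting \(0\), its configuration is \((P_r[2:|P_r|),\,A_r,\,0)\).
\end{itemize}
The two configurations agree, and no fallback has occurred in either run. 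By determinism the remaining outputs coincide, which gives \(T(A_r)=0\cdot N_r\). Exact fit of both runs, from the no-fallback lemma and Lemma~\ref{lem:zero-data}, guarantees that both runs terminate at the same moment. Alternatively, the length identity \(|T(A_r)|=2|A_r|+2=4a_r-2=1+|N_r|\) confirms this.

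For the second identity, Lemma~\ref{lem:T-dyck} shows that \(0N_r=T(A_r)\) is a (primitive) Dyck word, so \(T\) may be applied to it. By definition,
\[
 T^2(A_r)=T(0N_r)=\Interleave(00N_r1,\,0N_r,\,0),
\]
which is literally the right-hand side of \eqref{eq:law2}. Hence \(T^2(A_r)=P_{r+1}\). The third identity then follows by deleting the first and last letters: \(\Lam(A_r)=T^2(A_r)[1:|T^2(A_r)|-1)=P_{r+1}[1:|P_{r+1}|-1)=A_{r+1}\).

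The only delicate point is the configuration matching in the first identity. I expect it to be the main obstacle in terms of care rather than difficulty. One must make sure that the fallback rule never distinguishes the two runs, and exact fit of both interleavings settles this. One must also make sure that the initial state of Law~1 is one while that of \(T\) is zero. This mismatch is exactly what produces the extra leading \(0\) in \(T(A_r)=0N_r\).
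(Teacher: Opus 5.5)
Your proof is correct and is essentially the paper's argument. The paper reads off the state-zero and state-one subsequences of \(0N_r\) (namely \(P_r\) and \(A_r\)) and invokes the inverse description of interleaving. You instead run \(\Interleave(P_r,A_r,0)\) and Law~1 forward until both reach the configuration \((P_r[2:|P_r|),A_r,0)\), and conclude by determinism. This makes your appeal to exact fit unnecessary, whereas the paper's reading implicitly relies on it. The second and third identities are handled exactly as in the paper, since Law~2 is literally \(T(0N_r)\).
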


\begin{proof}
Consider the word \(0N_r\) with initial state zero.  Its first two letters
come from the first Law~1 input and are \(00=P_r[0:2)\).  After the initial
external zero, the remaining letters assigned to state zero are exactly
\(P_r[2:|P_r|)\).  Together they form \(P_r=0A_r1\).  The initial Law~1
letter assigned to state one has been reassigned to state zero by the
external initial zero.  The remaining state-one letters are therefore
\(P_r[1:|P_r|-1)=A_r\).  The inverse description of interleaving gives
\[
 0N_r=\Interleave(P_r,A_r,0)=T(A_r).
\]
Law~2 is
\[
 P_{r+1}=\Interleave(0(0N_r)1,0N_r,0)=T(0N_r)=T^2(A_r).
\]
Removing the envelope proves the last identity.
\end{proof}

\subsection{Gaps and symmetry}
\label{sec:gaps-symmetry}

For \(L\geq1\) and \(1\leq i\leq L\), define
\begin{equation}
\label{eq:sigma-gap}
 \sigma_L(i)=
 \begin{cases}
 2i,&2i\leq L,\\
 2(L-i)+1,&2i>L,
 \end{cases}
\end{equation}
and
\begin{equation*}
 \eta_L(i)=
 \begin{cases}
 2i-1,&2i\leq L+1,\\
 2(L-i+1),&2i>L+1.
 \end{cases}
\end{equation*}
Set \(\sigma_0=\eta_0=()\).  Both words are permutations of
\(1,\ldots,L\), and
\begin{equation}
\label{eq:gap-toggle-identities}
 \eta_L=(1,1+\sigma_{L-1}),
 \qquad \sigma_L=(1+\eta_{L-1},1).
\end{equation}
Put
\[
 \varphi(L)=\sigma_L-1,
 \qquad \varphi(0)=(),
\]
and extend \(\varphi\) to words by concatenation.

For a Dyck word with degree word \(c=(c_1,\ldots,c_n)\), define the source
factor of the \(m\)-th zero by
\[
 \mathcal B_m=(c_{q_{m-1}+1},\ldots,c_{q_m}).
\]
It is empty when \(c_m=0\).
Its gap version is
\[
 \mathcal G_m=(\omega_{q_{m-1}+1},\ldots,\omega_{q_m})
 =1+\mathcal B_m.
\]

\begin{definition}
The word \(W\) has property \(\mathsf S\) if
\begin{equation}
\label{eq:property-S}
 \mathcal B_m=\varphi(c_m)
 \qquad(1\leq m\leq n).
\end{equation}
Equivalently, \(\mathcal G_m=\sigma_{c_m}\).  It has the companion property
\(\mathsf R\) if \(\mathcal G_m=\eta_{c_m}\) at every \(m\).
\end{definition}

\Needspace{5\baselineskip}
The properties $\mathsf S$ and $\mathsf R$ toggle under the transform.
\begin{lemma}
\label{lem:profile-toggle}
For every Dyck word \(W\),
\[
 \mathsf S(W)\Longrightarrow\mathsf R(T(W)),
 \qquad
 \mathsf R(W)\Longrightarrow\mathsf S(\mathcal C(W)).
\]
\end{lemma}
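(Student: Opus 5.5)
The plan is to compute the degree word of \(T(W)\), and of \(\mathcal C(W)\), directly from Lemma~\ref{lem:zero-data}. We then identify each source factor of the image with a shifted copy of a source factor of \(W\), and finish by applying the toggle identities \eqref{eq:gap-toggle-identities}.

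\emph{Degree word of the image.} Write \(c'_k=\omega'_k-1\) for the degrees of \(T(W)\). By \eqref{eq:T-gap-data} we have \(c'_1=0\). For \(0\le t<2n\) we have \(c'_{t+2}=0\) when \(W_t=0\). When \(W_t\) is the \(s\)-th one of \(W\) we have \(c'_{t+2}=\omega_s=c_s+1\). So zeros of \(W\) give leaves of the image, and the \(s\)-th one of \(W\) gives a vertex of degree \(c_s+1\).

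\emph{Source factors of \(T(W)\).} By \eqref{eq:T-zero-data}, \(q'_k=\theta_W(k-1)\). Fix \(m=t+2\) with \(W_t\) the \(s\)-th one; the other vertices have degree zero and empty source factor, so there is nothing to check there. Since \(\theta_W\) jumps by \(\omega_s\) exactly at that one, we get \(q'_{m-1}=p_{s-1}\) and \(q'_m=p_s\). Hence
\[
\mathcal B'_m=(c'_{p_{s-1}+1},\ldots,c'_{p_s}),
\]
and these indices correspond to the letters \(W_{t'}\) with \(p_{s-1}-1\le t'\le p_s-2\). The factor \(W[p_{s-1}:p_s)\) equals \(1^{c_s}0\). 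After the shift by one, the block read is therefore the letter just before it, followed by the \(c_s\) ones. That preceding letter is the \((s-1)\)-th zero, or the sentinel term \(c'_1=0\) when \(s=1\); in either case it contributes \(0\). The ones are the ones numbered \(q_{s-1}+1,\ldots,q_s\), and their image degrees are \(1+c_{q_{s-1}+1},\ldots,1+c_{q_s}\). Thus
\[
\mathcal B'_m=(0,\,1+\mathcal B_s).
\]
Under \(\mathsf S(W)\) we have \(1+\mathcal B_s=\sigma_{c_s}\). Hence \(\mathcal G'_m=(1,1+\sigma_{c_s})=\eta_{c_s+1}=\eta_{c'_m}\) by \eqref{eq:gap-toggle-identities}, which proves \(\mathsf R(T(W))\).

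\emph{Source factors of \(\mathcal C(W)\).} Deleting the first zero and the last one gives, for the zero data of \(\mathcal C(W)\),
\[
p''_k=p'_{k+1}-1,\qquad q''_k=q'_{k+1},\qquad c''_k=c'_{k+1};
\]
the final one does not affect these data. For the vertex coming from the \(s\)-th one, the source factor is \((c'_{q'_{m}+2},\ldots,c'_{q'_{m+1}+1})\), which is the previous window shifted right by one. It therefore reads the letters \(W[p_{s-1}:p_s)=1^{c_s}0\) exactly, and gives
\[
\mathcal B''=(1+\mathcal B_s,\,0).
\]
Under \(\mathsf R(W)\) this yields \(\mathcal G''=(1+\eta_{c_s},1)=\sigma_{c_s+1}\), again by \eqref{eq:gap-toggle-identities}. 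This is \(\mathsf S(\mathcal C(W))\).

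\emph{Main obstacle.} The only delicate point is the bookkeeping of indices: matching \(q'_{m-1},q'_m\) with \(p_{s-1},p_s\) through \(\theta_W\), and checking that the off-by-one windows behave correctly at the boundary cases. These are \(s=1\), where the leading \(0\) comes from \(c'_1\), and, for \(\mathcal C(W)\), the last vertex, where the index \(q'_{m+1}+1\) must stay within the \(2n+1\) zeros. I would verify these boundary cases explicitly using the sentinel conventions of Definition~\ref{def:zero-coordinates}.
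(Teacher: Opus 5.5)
Your proof is correct and is the same argument as the paper's: both use Lemma~\ref{lem:zero-data} to show that the vertex of \(T(W)\) coming from the \(s\)-th one has source factor \((1,1+\mathcal G_s(W))\), that after removal of the envelope it becomes \((1+\mathcal G_s(W),1)\), and then apply \eqref{eq:gap-toggle-identities}. There is one small index slip: if \(m\) is that vertex's index in \(T(W)\), its index in \(\mathcal C(W)\) is \(m-1\), so its window is \((c'_{q'_{m-1}+2},\ldots,c'_{q'_m+1})\) rather than the one you display; your verbal description (the previous window shifted right by one) and your conclusion are nonetheless right.
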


\begin{proof}
A gap of \(T(W)\) produced by a zero of \(W\) equals one and has an empty
source factor.  A gap produced by the \(s\)-th one has value \(1+\omega_s(W)\).
Lemma~\ref{lem:zero-data} shows that its source factor is
\[
 (1,1+\mathcal G_s(W)).
\]
Under \(\mathsf S(W)\), this is \(\eta_{\omega_s(W)}\) by
\eqref{eq:gap-toggle-identities}.  After removal of the enveloping gap, the
factor becomes
\[
 (1+\mathcal G_s(W),1).
\]
Under \(\mathsf R(W)\), it is \(\sigma_{\omega_s(W)}\).  Empty factors and the
first and last gaps are included in this calculation.
\end{proof}

\Needspace{5\baselineskip}
The orbit inherits $\mathsf S$ from its base.
\begin{corollary}
\label{cor:orbital-S}
Every core \(A_r\) is a Dyck word satisfying \(\mathsf S\).
\end{corollary}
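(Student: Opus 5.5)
The plan is an induction on \(r\), driven by Lemma~\ref{lem:normal-form} and Lemma~\ref{lem:profile-toggle}. Lemma~\ref{lem:normal-form} gives \(A_{r+1}=\Lam(A_r)\), where
\[
 \Lam(A_r)=T^2(A_r)[1:|T^2(A_r)|-1)=\mathcal C(T(A_r)).
\]
It therefore suffices to show two things: that \(\mathsf S\) holds for the base core \(A_0\), and that \(\Lam\) preserves both the Dyck property and \(\mathsf S\).

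For the base, \(A_0=0101\). Its zero data, already recorded in the running example, are \(c=(0,1)\) and \(q=(0,0,1)\). The source factor \(\mathcal B_1\) is empty, which matches \(\varphi(c_1)=\varphi(0)=()\). The source factor \(\mathcal B_2=(c_1)=(0)\), and \(\varphi(1)=\sigma_1-1=(1)-1=(0)\), since \(2\cdot1>1\) gives \(\sigma_1(1)=2(1-1)+1=1\). Hence \(A_0\) is a Dyck word with property \(\mathsf S\).

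For the inductive step, assume \(A_r\) is a Dyck word satisfying \(\mathsf S\). First, Lemma~\ref{lem:T-dyck} shows that \(T(A_r)\) is a (primitive) Dyck word, and Lemma~\ref{lem:profile-toggle} gives \(\mathsf R(T(A_r))\). Second, apply Lemma~\ref{lem:T-dyck} again to the Dyck word \(W=T(A_r)\): it shows that \(\mathcal C(W)\) is a Dyck word. The second implication of Lemma~\ref{lem:profile-toggle} gives \(\mathsf S(\mathcal C(W))\). Since \(\mathcal C(T(A_r))=\Lam(A_r)=A_{r+1}\), the induction closes.

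The step that needs care is that the second implication of Lemma~\ref{lem:profile-toggle} is stated for an arbitrary Dyck word carrying \(\mathsf R\). This covers \(T(A_r)\), even though \(T(A_r)\) is primitive rather than a core. The only subtlety is the boundary: the first gap, removed in passing to \(\mathcal C\), and the terminal gap. The proof of Lemma~\ref{lem:profile-toggle} explicitly includes the first and last gaps and empty factors, so no extra verification is needed beyond the base computation above.
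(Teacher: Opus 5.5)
Your proof is correct and follows the paper's argument. You check the base case \(A_0=0101\) directly, then induct through \(A_{r+1}=\Lam(A_r)=\mathcal C(T(A_r))\), using Lemma~\ref{lem:T-dyck} for the Dyck property and the chain \(\mathsf S\Rightarrow\mathsf R\Rightarrow\mathsf S\) from Lemma~\ref{lem:profile-toggle}; the paper compresses exactly this into one line citing Lemmas~\ref{lem:profile-toggle}, \ref{lem:normal-form}, and \ref{lem:T-dyck}.
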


\begin{proof}
The degree word of \(A_0=0101\) is \((0,1)\), and its two source factors are
\(\varphi(0)\) and \(\varphi(1)=(0)\).  Lemma
\ref{lem:profile-toggle}, Lemma~\ref{lem:normal-form}, and Lemma
\ref{lem:T-dyck} give the induction.
\end{proof}

For a binary word \(W\), write
\[
 W^{\Star}=\overline{\rev(W)}.
\]

\Needspace{5\baselineskip}
The transform commutes with reversal-complementation.
\begin{lemma}
\label{lem:T-star}
For every Dyck word \(W\),
\[
 T(W^{\Star})=T(W)^{\Star},
 \qquad
 \mathcal C(W^{\Star})=\mathcal C(W)^{\Star}.
\]
\end{lemma}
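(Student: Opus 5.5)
We plan to prove the first identity by comparing the zero data of the two sides. A Dyck word is determined by its zero cuts, and by Lemma~\ref{lem:T-dyck} both \(T(W^{\Star})\) and \(T(W)^{\Star}\) are Dyck words of length \(4n+2\). So it suffices to show that their gap sequences \(\omega'\) coincide. The second identity then follows at once. Reversal-complementation of a word of the form \(0X1\) gives \(0X^{\Star}1\), so deleting the first and last letters commutes with \({}^{\Star}\).

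First we record how \({}^{\Star}\) acts on zero data. Under \(W\mapsto W^{\Star}\), the \(m\)-th zero of \(W^{\Star}\) comes from the \((n+1-m)\)-th one of \(W\), read backwards. Hence the zero gaps of \(W^{\Star}\) are the one gaps of \(W\), in reverse order. The one gaps are the differences \(s_m-s_{m-1}\) of the one cuts, with the convention that a final block of zeros contributes nothing. Concretely,
\[
 \omega_m(W^{\Star})=s_{n+1-m+1}-s_{n-m}
\]
in suitable indexing; we would fix the indexing carefully with the sentinels of Definition~\ref{def:zero-coordinates}. So the claim becomes a statement relating the zero gaps of \(T(W^{\Star})\) to the one gaps of \(T(W)\), read in reverse.

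Next we compute both sides from Lemma~\ref{lem:zero-data}. Applied to \(W^{\Star}\), formula \eqref{eq:T-gap-data} expresses the gaps of \(T(W^{\Star})\) in terms of the letters of \(W^{\Star}\) and the zero gaps of \(W^{\Star}\), that is, in terms of reversed complemented letters and one gaps of \(W\). We then need the dual of Lemma~\ref{lem:zero-data}: a formula for the one gaps of \(T(W)\). We would derive it by the same head-counting argument, using Lemma~\ref{lem:head-counts}. At a cut immediately after a one of \(T(W)\), the state is one. Each maximal factor \(0^a1\) advances the second tape once. It also advances the first tape \(a\) times, across a zero block of \(0W1\).

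This yields a one-cut formula symmetric to \eqref{eq:T-zero-data}. After the \(k\)-th one, the second head has consumed \(k-1\) letters of \(W\), taking into account the initial state zero and the terminal one of the first tape. The first head sits at one plus a position determined by the zero count of \(W[0:k-1)\), namely the cut just before the next one of \(W\). Taking differences, the \(k\)-th one gap of \(T(W)\) equals \(1\) when \(W_{k-1}=1\). When \(W_{k-1}=0\), it equals \(1\) plus the length of the corresponding one gap of \(W\). This is exactly \eqref{eq:T-gap-data} with zeros and ones exchanged and the word read from the other end, so after reversal it matches the gaps of \(T(W^{\Star})\) term by term.

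The main obstacle is the boundary bookkeeping. The first tape is \(0W1\) rather than \(W\), and the transducer starts in state zero, so the roles of the initial zero and the terminal one are not literally symmetric. Under reversal they must exchange correctly, and the exceptional gap \(\omega'_1=1\) must correspond to the last one gap of \(T(W)\). We would settle these extreme indices by a direct check on the first and last factors, using the exact-fit description at the last zero given in the proof of Lemma~\ref{lem:zero-data}: emitting the terminal \(1\) of the first tape, then exhausting the trailing ones of \(W\). Before writing it out we would test the identity on \(W=0101\) against \(T(A_0)=0010011011\) to catch any off-by-one error.
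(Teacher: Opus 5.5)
Your plan is essentially the paper's argument. The paper also derives a one-cut analogue of Lemma~\ref{lem:zero-data} for \(Z=T(W)\), namely \(s_r^Z=H+r-\theta_{W^{\Star}}(H-r)\) with \(H=2n+1\) (by backward induction on \(r\)), and reflects it to get \(p_k^{Z^{\Star}}=2H+1-s^Z_{H+1-k}=k+\theta_{W^{\Star}}(k-1)\), which is the zero-cut formula for \(T(W^{\Star})\). Because it writes the one cuts directly in terms of \(W^{\Star}\) and compares cuts rather than gaps, it absorbs most of the boundary bookkeeping you anticipate. If you keep the gap version, your displayed \(\omega_m(W^{\Star})=s_{n+2-m}-s_{n-m}\) spans two gaps and must be fixed: take one gaps in the forward sense \(s_{m+1}-s_m\) with sentinel \(s_{n+1}=2n+1\), so that \(\omega_m(W^{\Star})=s_{n+2-m}-s_{n+1-m}\) and \(\omega'_1=1\) matches the terminal sentinel gap of \(T(W)\).
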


\begin{proof}
Let \(W\) have length \(2n\), let \(H=2n+1\), and put \(Z=T(W)\).  Denote by
\(s_r^Z\) the cut after the \(r\)-th one of \(Z\).  Reading the two heads
backward from the terminal cut gives
\begin{equation}
\label{eq:dual-one-cuts}
 s_r^Z=H+r-\theta_{W^{\Star}}(H-r)
 \qquad(1\leq r\leq H).
\end{equation}
Indeed, both sides agree at \(r=H\).  Their backward increments are zero
when the corresponding letter of \(W^{\Star}\) is zero.  When it is the
\(a\)-th one, both increments equal the gap \(p_a-p_{a-1}\).  This proves
\eqref{eq:dual-one-cuts} by backward induction.

The \(k\)-th zero of \(Z^{\Star}\) is the reflection of the
\((H+1-k)\)-th one of \(Z\).  With the terminal cut \(2H+1\),
\[
 p_k^{Z^{\Star}}
 =2H+1-s_{H+1-k}^Z
 =k+\theta_{W^{\Star}}(k-1).
\]
Lemma~\ref{lem:zero-data} gives the same zero cuts for \(T(W^{\Star})\).
Balanced words are determined by their zero cuts, so the first identity
holds.  Removing the first and last letters gives the second.
\end{proof}

\Needspace{5\baselineskip}
Anti-palindromicity propagates along the orbit.
\begin{corollary}
\label{cor:orbital-anti}
For every \(r\geq0\),
\[
 A_r=A_r^{\Star}.
\]
\end{corollary}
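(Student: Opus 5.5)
The plan is an induction on \(r\), using Lemma~\ref{lem:normal-form} to write \(A_{r+1}=\Lam(A_r)\) and Lemma~\ref{lem:T-star} to move the operation \(\Star\) through the transform.

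For the base case, \(A_0=0101\) has \(\rev(A_0)=1010\), and complementing gives \(0101=A_0\). So \(A_0=A_0^{\Star}\).

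For the inductive step, assume \(A_r=A_r^{\Star}\). The key fact is that the full transform commutes with \(\Star\), and then we strip the envelope.

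\begin{itemize}
\item Apply the first identity of Lemma~\ref{lem:T-star} to the Dyck word \(A_r\). This gives \(T(A_r)^{\Star}=T(A_r^{\Star})=T(A_r)\).
\item By Lemma~\ref{lem:T-dyck}, \(T(A_r)\) is again a Dyck word, so Lemma~\ref{lem:T-star} applies to it as well. Hence
\[
 T^2(A_r)^{\Star}=T\bigl(T(A_r)^{\Star}\bigr)=T^2(A_r).
\]
\item The map \(\Star\) reverses positions and complements letters. A word fixed by \(\Star\) therefore stays fixed after deleting its first and last letters, since these occupy mirror-image positions. Applying this to \(T^2(A_r)\) gives \(\Lam(A_r)^{\Star}=\Lam(A_r)\).
\item By Lemma~\ref{lem:normal-form}, \(\Lam(A_r)=A_{r+1}\), so \(A_{r+1}=A_{r+1}^{\Star}\).
\end{itemize}

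There is an alternative route for the middle steps. Use the second identity of Lemma~\ref{lem:T-star} to get \(\mathcal C(A_r)^{\Star}=\mathcal C(A_r)\), then apply it to \(\mathcal C(A_r)\). This is slightly riskier, because \(\Lam\) is the core of \(T^2\) applied to \(A_r\), not \(T\) applied to \(\mathcal C(A_r)\). For that reason I would use the first identity twice, as above.

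The main obstacle is bookkeeping rather than substance. Lemma~\ref{lem:T-star} is stated for Dyck words, so its second application needs \(T(A_r)\) to be Dyck, which Lemma~\ref{lem:T-dyck} supplies. The remaining point is that deleting the first and last letters commutes with \(\Star\), which is immediate from the definition. With these two points in place, the induction closes.
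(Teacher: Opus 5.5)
Your proof is correct and is the same induction the paper uses. You start from the base case \(A_0=0101\), then apply Lemma~\ref{lem:T-star} (to \(A_r\), and via Lemma~\ref{lem:T-dyck} to \(T(A_r)\)) together with Lemma~\ref{lem:normal-form} to pass to \(A_{r+1}=\Lam(A_r)\); you spell out details the paper leaves implicit, and the one fact you use without citing, that \(A_r\) is a Dyck word, follows from \(\mathrm{AP}(r)\) or Corollary~\ref{cor:orbital-S}.
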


\begin{proof}
The base \(0101\) is fixed by the involution.  Lemma~\ref{lem:T-star} and
Lemma~\ref{lem:normal-form} give the induction.
\end{proof}

\subsection{Plane forests}

The degree coordinates satisfy
\begin{equation}
\label{eq:degree-expansion}
 W=1^{c_1}0\,1^{c_2}0\cdots1^{c_n}0\,1^{d_n}.
\end{equation}
The Dyck condition is \(q_m\leq m-1\) for \(1\leq m\leq n\).

\Needspace{5\baselineskip}
A word satisfying $\mathsf S$ is generated by its root word through an explicit grammar.
\begin{lemma}
\label{lem:root-grammar}
Let \(W\) be a nonempty Dyck word of length \(2n\) satisfying \(\mathsf S\).
Give vertex \(m\) the children
\[
 (q_{m-1},q_m]=\{q_{m-1}+1,\ldots,q_m\}.
\]
The roots are \(q_n+1,\ldots,n\).  Put
\[
 \varrho=d_n=n-q_n,
 \qquad R=(c_{q_n+1},\ldots,c_n).
\]
Then
\begin{equation}
\label{eq:root-fixed-point}
 c=\varphi(c)R.
\end{equation}
If \(M=\max R\), then
\begin{equation}
\label{eq:root-grammar}
 c=\varphi^M(R)\varphi^{M-1}(R)\cdots\varphi(R)R.
\end{equation}
Conversely, every nonempty root word \(R\) defines through
\eqref{eq:root-grammar} a unique Dyck word satisfying \(\mathsf S\).  Its
half-length is
\begin{equation}
\label{eq:root-weight}
 n=\sum_{x\in R}2^x.
\end{equation}
\end{lemma}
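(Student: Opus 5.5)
The plan is to read the forest structure off the degree expansion \eqref{eq:degree-expansion} and then unwind property \(\mathsf S\) level by level.

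\emph{Step 1: the forest.} Vertex \(m\) receives the children \((q_{m-1},q_m]\), and there are \(q_m-q_{m-1}=c_m\) of them. These intervals are disjoint and tile \((0,q_n]\). The Dyck condition \(q_m\leq m-1\) places every child of \(m\) strictly below \(m\). Hence the structure is acyclic, each vertex \(\leq q_n\) has exactly one parent, and the vertices \(q_n+1,\ldots,n\) are the parentless roots. Their number is \(n-q_n=d_n=\varrho\).

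\emph{Step 2: the fixed point \eqref{eq:root-fixed-point}.} Concatenating \(\mathcal B_1\cdots\mathcal B_n\) lists \(c_1,\ldots,c_{q_n}\) in order, since the source factors are consecutive blocks of \(c\). By \(\mathsf S\) this concatenation equals \(\varphi(c_1)\cdots\varphi(c_n)=\varphi(c)\). Thus \(c[1..q_n]=\varphi(c)\), and the remaining letters are \(R\), which gives \(c=\varphi(c)R\).

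\emph{Step 3: the grammar \eqref{eq:root-grammar}.} Iterating Step 2 gives \(c=\varphi^k(c)\varphi^{k-1}(R)\cdots\varphi(R)R\) for every \(k\), because \(\varphi\) is a morphism. Each letter \(L\) becomes \(\sigma_L-1\), whose letters are at most \(L-1\), and \(\varphi(0)\) is empty. So \(\varphi^{M+1}(R)\) is empty. Letters of \(\varphi^{M+1}(c)\) descend from letters of \(c\) that are themselves bounded, but the cleanest route is a length argument. Iterating \(k\) times shows \(|c|\geq\sum_{i<k}|\varphi^i(R)|\). Once \(\varphi^{i}(R)\) vanishes, the tail \(\varphi^{k}(c)\) must also vanish for large \(k\), since the maximum letter strictly decreases under \(\varphi\) and \(c\) is finite. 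Taking \(k\) beyond \(\max c\) kills \(\varphi^k(c)\) and yields \eqref{eq:root-grammar}, where terms \(\varphi^i(R)\) with \(i>M\) are empty.

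\emph{Step 4: the weight \eqref{eq:root-weight}.} Let \(\nu(L)=|\varphi^0(L)|+|\varphi^1(L)|+\cdots\). Since \(\sigma_L-1\) is a permutation of \(0,\ldots,L-1\), we get \(\nu(L)=1+\sum_{j<L}\nu(j)\). Induction gives \(\nu(L)=2^L\), so \(n=|c|=\sum_{x\in R}2^x\).

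\emph{Step 5: the converse.} Given a nonempty \(R\), define \(c\) by \eqref{eq:root-grammar}. Then \(c=\varphi(c)R\) holds by construction. Set \(q_m=c_1+\cdots+c_m\) and \(W=1^{c_1}0\cdots1^{c_n}0\,1^{n-q_n}\). The main obstacle is verifying the Dyck condition \(q_m\leq m-1\), with \(q_n<n\) automatic since \(R\) is nonempty. For this, note that the prefix of \(c\) of length \(q_m\) is exactly \(\varphi(c_1\cdots c_m)\). This can be shown by induction on \(m\), using \(c=\varphi(c)R\) to match lengths. Therefore
\[
q_m=\sum_{i\leq m}c_i=\sum_{i\leq m}|\varphi(c_i)|,
\]
because \(|\varphi(L)|=L\). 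The claim \(q_m\leq m-1\) then says the prefix \(\varphi(c_1\cdots c_m)\) has length less than \(m\). I would prove this by strong induction: the prefix \(c_1\cdots c_{q_m}\) is the \(\varphi\)-image of an earlier prefix, so letters appear in \(c\) only after their \(\varphi\)-source, which forces strict lag.

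Property \(\mathsf S\) then holds by the block identity \(\mathcal B_m=\varphi(c_m)\), read directly from \(c=\varphi(c)R\). Uniqueness is immediate because \(W\) is determined by \(c\) and \(\varrho=n-q_n\).
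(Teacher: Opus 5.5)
Your Steps 1--4 essentially match the paper's argument: concatenating the source factors gives \(c=\varphi(c)R\), you iterate until \(\varphi^k(c)\) is empty, and you count with \(\nu(L)=1+\sum_{j<L}\nu(j)=2^L\). These steps are correct.

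The gap is in Step 5, at the point you yourself call the main obstacle. There the Dyck condition \(q_m\leq m-1\) is asserted rather than proved. Your sketch has the direction reversed. Under \(c=\varphi(c)R\), the letters of \(\varphi(c_i)\) occupy positions \(q_{i-1}+1,\ldots,q_i\), and what must be shown is that these positions lie \emph{before} position \(i\). More importantly, the sketch never uses the fact that makes the claim true: every letter of \(\varphi(L)\) is at most \(L-1\). The identities \(c=\varphi(c)R\) and \(|\varphi(L)|=L\) alone do not suffice. For the substitution \(1\mapsto(1)\), \(0\mapsto()\), the word \(c=(1,0)\) is a fixed point with \(R=(0)\), yet \(q_1=1\), and the resulting word \(1001\) is not Dyck.

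The missing step is short, and either of two arguments closes it.
\begin{itemize}
\item Following the paper: \(\varphi\) maps the block \(\varphi^k(R)\) of \eqref{eq:root-grammar} onto the block \(\varphi^{k+1}(R)\) immediately to its left. Hence every child has a smaller index than its parent. The \(q_m\) edges leaving vertices \(1,\ldots,m\) then stay inside a forest on those \(m\) vertices, which gives \(q_m\leq m-1\).
\item By minimal counterexample: if \(m\) is minimal with \(q_m\geq m\), then \(q_{m-1}<m\leq q_m\). So position \(m\) lies in the source factor of \(m\) itself, and \(c_m\) would be a letter of \(\varphi(c_m)\). This contradicts the strict decrease you already invoked in Step 3.
\end{itemize}
With this inserted, your converse is a valid and slightly more direct alternative to the paper's tree-of-types construction. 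That converse defines \(q\) by partial sums, uses the fixed point, and reads \(\mathsf S\) off the blocks.
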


\begin{proof}
Concatenating \eqref{eq:property-S} for \(m=1,\ldots,n\) covers the indices
from one through \(q_n\).  The remaining \(\varrho\) degrees form \(R\),
which proves \eqref{eq:root-fixed-point}.  If the maximal degree of \(c\) is
positive, the maximal degree of \(\varphi(c)\) is one smaller.  Hence the
maximum of \(c\) equals \(M\).  If this maximum is zero, then \(c=R\), the
root word is nonempty, and \(M=0\), so the asserted grammar also holds.
Otherwise, iterating \eqref{eq:root-fixed-point} until
\(\varphi^{M+1}(c)\) is empty gives \eqref{eq:root-grammar}.

Conversely, make a rooted tree of type \(L\) whose root has degree \(L\) and
whose ordered children have types \(\varphi(L)\).  Types decrease along every
edge.  In the order \eqref{eq:root-grammar}, each child precedes its parent.
The forest induced by its first \(m\) vertices has at least one component, so
it has at most \(m-1\) edges.  Hence \(q_m\leq m-1\), which proves the Dyck
condition.  The source factors have the required types by construction.

The forest has \(n\) vertices and \(|R|\) components.  It consequently has
\(n-|R|\) edges, so
\[
 \sum_{m=1}^n c_m=n-|R|,
 \qquad d_n=n-q_n=|R|.
\]
This verifies both the terminal suffix in \eqref{eq:degree-expansion} and the
balance of the reconstructed binary word.  If \(N_L\) is the size of a tree
of type \(L\), then
\[
 N_0=1,
 \qquad N_L=1+\sum_{j=0}^{L-1}N_j.
\]
Thus \(N_L=2^L\).  Summing over the roots gives
\eqref{eq:root-weight}.  The degree expansion \eqref{eq:degree-expansion}
then gives a unique binary word.
\end{proof}

\begin{definition}
The unique Dyck word satisfying \(\mathsf S\) obtained from a nonempty root
word \(R\) in Lemma~\ref{lem:root-grammar} is the \emph{canonical word of
\(R\)}.  Its plane forest is the \emph{canonical forest of \(R\)}.
\end{definition}

For a single letter \(x\geq0\), put
\(\ell_k(x)=|\varphi^k(x)|\).  Directly from the definition of \(\varphi\),
\[
 \ell_0(x)=1,
 \qquad
 \ell_{k+1}(x)=\sum_{y=0}^{x-1}\ell_k(y),
\]
where an empty sum is zero.  Induction and the hockey-stick identity give
\(\ell_k(x)=\binom{x}{k}\), with the convention that the binomial coefficient
vanishes for \(k>x\).  Thus
\begin{equation}
\label{eq:phi-binomial-length}
 |\varphi^k(L)|=\binom Lk
\end{equation}
for all \(L,k\geq0\).  This identity will be used both in the root
classification and in the Gaussian layer analysis.

\subsection{Root words}

If \(c=0^n\), the run vector of \eqref{eq:degree-expansion} is, by convention,
\((n,n)\).  Otherwise let \(j_1<\cdots<j_s\) be the indices at which
\(c_{j_i}>0\).  The run lengths in \eqref{eq:degree-expansion} are
\begin{equation}
\label{eq:run-vector}
 (j_1-1,c_{j_1},j_2-j_1,c_{j_2},\ldots,
 c_{j_s},n-j_s+1,\varrho).
\end{equation}
The word is anti-palindromic exactly when this vector is a palindrome.  In
particular, the initial zero run and the terminal one run both have length
\(\varrho\).

For \(M\geq1\), put
\[
 \mathcal J_M=(1,3,5,\ldots,2\lfloor(M-1)/2\rfloor-1).
\]
This word is empty for \(M=1,2\).

\Needspace{5\baselineskip}
Suspending the root word amounts to two applications of $T$ on the canonical word.
\begin{lemma}
\label{lem:root-suspension}
Let \(U\) be the canonical word of a nonempty root word \(R\).  Then the
canonical word of
\[
 \Gamma(R)=(1,R+2)
\]
is
\[
 \Lam(U)=T^2(U)[1:|T^2(U)|-1).
\]
The map \(\Lam\) is injective and commutes with \(\Star\).
\end{lemma}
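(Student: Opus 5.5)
The plan is to work entirely in degree-word coordinates and use the gap formula of Lemma~\ref{lem:zero-data} twice. Let \(c\) be the degree word of \(U\), so \(c=\varphi(c)R\) by \eqref{eq:root-fixed-point}, and write \(n=|c|\).

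\emph{Degree word of \(\mathcal C(U)\).} By Lemma~\ref{lem:zero-data}, \(T(U)\) has gap word
\[
 (1,\ \omega'_2,\ldots,\omega'_{2n+1}),
\]
where each zero of \(U\) contributes the gap \(1\) and the \(s\)-th one contributes \(1+\omega_s\). Removing the first gap gives the gaps of \(\mathcal C(U)\). In degree terms, the degree word of \(\mathcal C(U)\) is obtained by reading \(U\) letter by letter. A zero is sent to \(0\), and the \(s\)-th one is sent to \(1+c_s\), since its gap \(\omega_s+1\) gives degree \(\omega_s=c_s+1\). There is also the terminal adjustment coming from removing the final one, which shortens the last gap.

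By the degree expansion \eqref{eq:degree-expansion}, the degree word of \(\mathcal C(U)\) can therefore be written block by block. Each block \(1^{c_m}0\) of \(U\) produces the degrees of the ones it contains, followed by \(0\).

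\emph{Apply \(T\) again and compare.} I would iterate this rule once more to obtain the degree word of \(\Lam(U)=\mathcal C(T(U))\)-type data. This requires care, because \(\Lam(U)\) strips the envelope of \(T^2(U)\), not of \(T(\mathcal C(U))\). Instead of expanding directly, I would verify the structural conditions:
\begin{enumerate}[label=\textup{(\alph*)}]
\item \(\Lam(U)\) is a Dyck word. This follows from Lemma~\ref{lem:T-dyck} applied twice, since \(T(U)\) is a Dyck word and hence \(T^2(U)\) is primitive.
\item \(\Lam(U)\) satisfies \(\mathsf S\). By Lemma~\ref{lem:profile-toggle}, \(\mathsf S(U)\) gives \(\mathsf R(T(U))\), and then \(\mathsf S(\mathcal C(T(U)))=\mathsf S(\Lam(U))\).
\item Its root word equals \((1,R+2)\).
\end{enumerate}
Uniqueness in Lemma~\ref{lem:root-grammar} then identifies \(\Lam(U)\) as the canonical word of \(\Gamma(R)\).

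\emph{The root word, the main obstacle.} Step (c) is where the work lies. The root word is the final \(\varrho'\) degrees, where \(\varrho'\) is the length of the terminal one-run of \(\Lam(U)\). I would track the terminal structure. \(U\) ends with the block \(1^{c_n}0\,1^{\varrho}\), and the final \(\varrho\) ones of \(U\) correspond to the roots \(R\).

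Under \(T\), the last \(\varrho\) ones become vertices of degree \(1+c_s\). Under the second \(T\), together with the envelope removal, each of them gains one more unit, giving \(R+2\). One extra root of degree \(1\) comes from the initial zero of the inner envelope. As a consistency check, I would confirm the half-length: the canonical word of \(\Gamma(R)\) has half-length
\[
 2+4n=2+\sum_{x\in R}2^{x+2}.
\]
This is consistent with \(|T^2(U)|=4(2n+1)+2\), since stripping two letters leaves half-length \(4n+2\).

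Where bookkeeping gets tangled, I would fall back on the closed formula \(p'_k=k+\theta(k-1)\), composed twice, evaluated only on the last few zero cuts.

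\emph{Injectivity and commutation with \(\Star\).} Injectivity is immediate: \(R\mapsto\Gamma(R)\) is injective, canonical words are determined by root words, and every Dyck word satisfying \(\mathsf S\) is canonical of a unique root word. More directly, \(\Lam\) is injective because Lemma~\ref{lem:zero-data} recovers \(\theta_W\), and hence \(W\), from the zero cuts of \(T(W)\). For commutation, apply Lemma~\ref{lem:T-star} twice:
\[
 T^2(U^\Star)=T^2(U)^\Star.
\]
Reversal-complementation maps the stripped first and last letters onto each other, so \(\Lam(U^\Star)=\Lam(U)^\Star\).
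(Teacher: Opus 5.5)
\textbf{The gap is in step (c).} Your framework matches the paper's. You get the Dyck property from Lemma~\ref{lem:T-dyck} and \(\mathsf S(U)\Rightarrow\mathsf R(T(U))\Rightarrow\mathsf S(\Lam(U))\) from Lemma~\ref{lem:profile-toggle}. You identify \(\Lam(U)\) through uniqueness in Lemma~\ref{lem:root-grammar}, and you get \(\Star\)-commutation from Lemma~\ref{lem:T-star}. But step (c) carries the entire content of the lemma, and it is not proved.

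\emph{The root count is missing.} You never compute the number of roots of \(\Lam(U)\), i.e.\ the length of its terminal one-run. So nothing shows that its root word has exactly \(\varrho+1\) entries. The half-length check only confirms that the half-length equals the weight \(\sum_{x\in\Gamma(R)}2^x\), which does not determine the root word.

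\emph{The source of the leading \(1\) is misidentified.} The initial zero of the envelope \(0U1\) produces the first degree \(\beta_1=0\) of \(T(U)\). After the second pass this becomes a degree-one vertex at the first one of \(T(U)\), near the start of \(\Lam(U)\), not a root. For \(U=0101\), \(T(U)=0010011011\) has degree word \((0,0,1,0,2)\), and \(\Lam(U)\) has degree word \((0,0,1,0,0,1,2,0,1,3)\). The entry produced by \(\beta_1\) is the third one, while the root word is \((1,3)\). The degree-one root actually traces back to the \emph{last} zero of \(U\). Relatedly, removing the final one of \(T(U)\) does not shorten any gap. That one lies after the last zero and belongs to the terminal one-run, which is precisely the quantity you need to control.

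\textbf{How to close it.} Do the direct computation you abandoned: apply the gap rule to \(T(U)\) itself, keeping \(\beta_1=0\), rather than to \(\mathcal C(U)\). Then count roots as half-length minus degree sum.
\begin{itemize}
\item With \(N=|U|/2\), the degrees of \(T(U)\) are \(\beta_1=0\), then \(0\) for each zero of \(U\) and \(c_k+1\) for its \(k\)-th one. Hence \(\sum\beta=2N-\varrho\), and \(T(U)\), of half-length \(2N+1\), has exactly \(\varrho+1\) roots.
\item \(U\) ends with \(0\,1^{\varrho}\), and those ones have ranks \(N-\varrho+1,\ldots,N\). So the root word of \(T(U)\) is \((0,R+1)\).
\item One more pass gives \(\Lam(U)\) the degree \(\beta_k+1\) at the \(k\)-th one of \(T(U)\). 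The sum is \(4N+1-\varrho\) over \(4N+2\) vertices, so again \(\varrho+1\) roots.
\item The terminal ones \(1^{\varrho+1}\) of \(T(U)\) index \((0,R+1)\), giving root word \((1,R+2)\).
\end{itemize}
Your fallback also works if carried out: \(p'_{2N+1}=2N+1+\theta_U(2N)=4N+1-\varrho\) yields the same terminal run lengths.

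\textbf{Injectivity.} Keep your second argument, which recovers any Dyck \(W\) from the zero cuts of \(T(W)\). Your first argument only gives injectivity among canonical words. Theorem~\ref{thm:root-classification} applies injectivity to \(U^{\Star}\), which is not known to satisfy \(\mathsf S\).
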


\begin{proof}
Write \(N=|U|/2\), let \(r=|R|\), and put \(B_T=T(U)\).  If
\(\beta\) is the degree word of \(B_T\), Lemma~\ref{lem:zero-data} gives
\[
 \beta_1=0,
 \qquad
 \beta_{s+1}=
 \begin{cases}
 0,&U_s=0,\\
 c_k+1,&U_s\text{ is the }k\text{-th one}.
 \end{cases}
\]
The sum of these degrees is
\[
 \sum_{k=1}^N(c_k+1)=(N-r)+N=2N-r.
\]
The word \(B_T\) has half-length \(2N+1\), so it has \(r+1\) roots.  Its
last \(r+1\) degrees are \((0,R+1)\).  To see the suffix directly, the last
zero of \(U\) is followed by its \(r\) terminal ones, whose one-ranks are
\(N-r+1,\ldots,N\).  These are precisely the root indices of \(U\), so the
formula for \(\beta\) gives the terminal degree zero followed by the root
degrees increased by one.

Apply \(\mathcal C\) once more.  Since a canonical word satisfies
\(\mathsf S\), Lemma~\ref{lem:profile-toggle} gives the exact implication
chain
\[
 \mathsf S(U)\Longrightarrow\mathsf R(B_T)
 \Longrightarrow\mathsf S(\mathcal C(B_T)).
\]
Thus \(\mathcal C(B_T)=\Lam(U)\) satisfies \(\mathsf S\).  Its number of roots is
still \(r+1\).  Indeed, if \(N_B=2N+1\) is the half-length of \(B_T\), and
\(\gamma\) is the degree word of \(\mathcal C(B_T)\), then
\[
 \sum\gamma=\sum_{k=1}^{N_B}(\beta_k+1)=2N_B-(r+1).
\]
The new core has half-length \(2N_B\), so it has exactly \(r+1\) roots.
The same terminal ones of \(B_T\), now indexed by the root degrees
\((0,R+1)\), produce its last \(r+1\) degrees after the second application:
\[
 (0,R+1)+1=(1,R+2).
\]
Uniqueness in Lemma~\ref{lem:root-grammar} identifies this word with the
canonical word of \(\Gamma(R)\).

Put \(V=\Lam(U)\).  The lengths recover the two preceding words exactly:
\[
 |B_T|=\frac{|V|}{2},
 \qquad
 |U|=\frac{|B_T|}{2}-1.
\]
With bits indexed from zero and degrees from one, the gap formula gives
\[
 (B_T)_t=1\quad\Longleftrightarrow\quad\gamma_{t+1}>0
 \qquad(0\leq t<|B_T|).
\]
The degree word \(\beta\) of the recovered word \(B_T=T(U)\) then gives
\[
 U_t=1\quad\Longleftrightarrow\quad\beta_{t+2}>0
 \qquad(0\leq t<|U|).
\]
Thus \(U\) is recovered letter by letter, and \(\Lam\) is injective.
Commutation with \(\Star\) follows from Lemma~\ref{lem:T-star}.
\end{proof}

\Needspace{5\baselineskip}
The anti-palindromic words satisfying $\mathsf S$ are completely classified by their roots.
\begin{theorem}
\label{thm:root-classification}
The roots of the nonempty anti-palindromic Dyck words satisfying
\(\mathsf S\) are exactly
\begin{equation*}
 R=0^b\qquad(b\geq1)
\end{equation*}
and
\begin{equation}
\label{eq:classified-root-positive}
 R=\mathcal J_M(M-1)^aM^b
 \qquad(M\geq1,\ a\geq0,\ b\geq1).
\end{equation}
\end{theorem}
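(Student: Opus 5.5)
The plan is to prove sufficiency and necessity separately, reducing both to small values of \(M=\max R\) by means of the suspension \(\Gamma(R)=(1,R+2)\) of Lemma~\ref{lem:root-suspension}. Throughout I use the palindromic run-vector criterion \eqref{eq:run-vector}. Written out, anti-palindromicity says three things: the initial zero run \(j_1-1\) equals \(\varrho=|R|\); the positive degrees \(c_{j_1},\dots,c_{j_s}\), read forwards, coincide with the zero-run lengths \(j_2-j_1,\dots,j_s-j_{s-1},n-j_s+1\), read backwards; and conversely.

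\emph{Sufficiency.} The key observation is that \(\Gamma\) maps the family \eqref{eq:classified-root-positive} at level \(M\) to the same family at level \(M+2\). Indeed, \((1,\mathcal J_M+2)=\mathcal J_{M+2}\), so
\[
 \Gamma\bigl(\mathcal J_M(M-1)^aM^b\bigr)=\mathcal J_{M+2}(M+1)^a(M+2)^b .
\]
Moreover, \(\Lam\) commutes with \(\Star\), so \(\Gamma\) preserves anti-palindromicity of canonical words. It therefore suffices to check the base levels:
\begin{itemize}
\item \(R=0^b\), whose canonical word is \(0^b1^b\);
\item \(R=0^a1^b\) (the case \(M=1\));
\item \(R=1^a2^b\) (the case \(M=2\)).
\end{itemize}
For these I would write the canonical word explicitly from \eqref{eq:root-grammar}, using \(\varphi(1)=(0)\) and \(\varphi(2)=(1,0)\). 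I would then verify directly that the run vector is a palindrome. This is a finite calculation with the parameters \(a,b\) kept symbolic.

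\emph{Necessity.} I would argue by induction on \(M\). Let \(W\) be anti-palindromic with property \(\mathsf S\) and root \(R\). The central structural claim is the following: if \(M\geq3\), then \(R=(1,R'+2)\) for some nonempty word \(R'\), that is, the first root has degree one and every later root has degree at least two. Granting this, let \(U\) be the canonical word of \(R'\). Lemma~\ref{lem:root-suspension} gives \(\Lam(U)=W\). Then
\[
 \Lam(U^{\Star})=\Lam(U)^{\Star}=W^{\Star}=W=\Lam(U),
\]
so injectivity of \(\Lam\) yields \(U^{\Star}=U\). Since \(\max R'=M-2\), the induction hypothesis puts \(R'\) in the list, and hence \(R=\Gamma(R')\) is in the list as well. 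The base cases \(M\in\{0,1,2\}\) are handled by direct analysis of the run condition:
\begin{itemize}
\item for \(M=0\) one gets \(c=R=0^b\);
\item for \(M=1,2\) the degree word is short and explicit, and the palindrome equations force the zeros of \(R\) to precede the ones when \(M=1\). When \(M=2\) they force a block \(1^a\) before \(2^b\), with no zeros allowed.
\end{itemize}

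\emph{Main obstacle.} I expect the hard part to be the structural claim for \(M\geq3\), and in general the extraction of the shape of \(R\) from the palindrome equations. My first attempt would match the two ends of the run vector against the grammar. The head of \(c\) is \(\varphi^M(R)\varphi^{M-1}(R)\cdots\), which begins with a zero run whose length is computable from the multiplicities of the letters \(M\) and \(M-1\) in \(R\) via \eqref{eq:phi-binomial-length}. The tail of \(c\) is \(\varphi(R)R\), and the final zero run of \(c\) together with the last positive degrees are read directly from \(R\). Equating the first zero run with \(\varrho=|R|\), and the first positive degree with the final zero run, should force the positive letters of \(R\) to be nondecreasing from the back and bounded below. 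Peeling matched runs successively from both ends should then show that a degree-\(0\) root is impossible when \(M\geq3\), that exactly one root of degree \(1\) occurs, and that it occurs first. If this direct matching becomes unwieldy, the fallback is to compare the run vector of \(W\) with that of the word obtained by stripping one suspension layer, using the explicit degree formulas for \(T\) in Lemma~\ref{lem:zero-data}.
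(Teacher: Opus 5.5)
Your outer structure matches the paper's. Sufficiency checks the three base families and lifts them with \(\Gamma\) via \((1,\mathcal J_M+2)=\mathcal J_{M+2}\). Necessity reduces \(M\geq3\) to \(M-2\) through \(R=(1,R'+2)\), Lemma~\ref{lem:root-suspension}, and injectivity of \(\Lam\). The gap is that the structural claim \(R=(1,R'+2)\) for \(M\geq3\), which carries essentially all the content of necessity, is never proved. ``Peeling matched runs should force\ldots'' is a hope, not an argument. Equating only the first positive degree with the final zero run gives one equation where \(\varrho\) pairs are needed.

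The missing chain has three steps.
\begin{enumerate}[label=\textup{(\roman*)}]
\item Use the fixed-point form \(c=\varphi(c)R\), not only the layer expansion. Since \(\varphi(0)\) is empty, \(\varphi(1)=(0)\), and \(\varphi(L)\) begins with \(1\) for \(L\geq2\), the initial zero run of \(c\) counts the letters \(1\) of \(c\) before its first letter \(\geq2\). That run has length \(\varrho\), so the positive degrees begin \(v_1=\cdots=v_\varrho=1\) and \(v_{\varrho+1}\geq2\).
\item The only entry \(1\) of \(\varphi(L)\) is its first, \(\varphi(2)=(1,0)\), and for \(L\geq3\) the word \(\varphi(L)\) begins with two adjacent positive letters. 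This gives \(z_i\geq2\) for \(i<\varrho\) and \(z_\varrho=1\).
\item Palindromicity gives \(z_{s+1-i}=v_i=1\), so the last \(\varrho\) letters of \(c\) are positive. It also gives \(v_{s+1-i}=z_i\), so \(R\) read backwards is \((z_1,\ldots,z_\varrho)\), i.e.\ \(R=(1,R'+2)\).
\end{enumerate}

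Step (ii) also needs a margin: \(v_1,\ldots,v_{\varrho+1}\) must lie in the prefix \(\varphi(c)\), not in the root suffix, so that they are outputs of \(\varphi\)-factors. This margin fails for \(M\leq2\): for \(R=1^a2^b\) the prefix has only \(b\leq\varrho\) positive letters. It is exactly what separates the inductive step from the base cases, and your plan never addresses it.

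The paper obtains the margin from \(\varrho=a+b+t\), with \(t=\lfloor(M-1)/2\rfloor\). Two details of that computation differ from your sketch:
\begin{itemize}
\item It needs the position of the unique letter one in \(\varphi^{M-1}(M)=0^t1\,0^{M-t-1}\), which the binomial lengths alone do not give.
\item It uses the number of letters \(M-1\) before the first \(M\), not their total multiplicity.
\end{itemize}
The paper then counts the \(2^{x-1}-1\) positive-degree nonroot descendants of a root of type \(x\), which gives \(N_+(\varphi(c))-\varrho\geq1\). You compute the initial zero run but never use it for this bound.
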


\begin{proof}
The cases \(M=0,1,2\) follow directly from the run vector.  For
\(M=0\), one has \(R=0^b\).  Suppose \(M=1\), let \(b\) be the number of
letters one in \(R\), and let \(z\) be the number of its initial zeros.
Since \(c=0^bR\), the initial zero run has length \(b+z\), whereas the
terminal one run has length \(|R|\).  Their equality forces every zero of
\(R\) to occur before every one.  Thus \(R=0^a1^b\).

Suppose \(M=2\).  Let \(a\) count the letters one before the first letter two
of \(R\), and let \(b\) count all letters two.  The identities
\[
 \varphi(1)=(0),
 \qquad \varphi(2)=(1,0),
 \qquad \varphi^2(2)=(0)
\]
show that the initial zero run has length \(a+b\).  Equality with the
terminal run \(|R|\) excludes both a zero anywhere in \(R\) and a one after
the first two.  Hence \(R=1^a2^b\).

Assume \(M\geq3\).  Equation \eqref{eq:phi-binomial-length} shows that
\(\varphi^{M-1}(M)\) has length \(M\) and contains only zeros and ones.  For
every word \(w\) of nonnegative integers,
\[
 |\varphi(w)|=\sum_i w_i.
\]
Since \(|\varphi^M(M)|=1\), the sum of the letters of
\(\varphi^{M-1}(M)\) is one.  Thus this word contains one letter one and
\(M-1\) zeros.

Let \(h_M\) be the number of zeros before that one.  In \(\varphi(M)\), after
another \(M-2\) iterations, only the letter \(M-1\) contributes the unique
one.  The letter \(M-2\) contributes one zero, and every smaller letter has
disappeared.  The explicit definition of \(\sigma_M\) places \(M-2\) before
\(M-1\) in \(\varphi(M)\) exactly when \(M\) is odd.  Hence
\[
 h_1=0,
 \qquad
 h_M=h_{M-1}+\mathbf 1_{\{M\text{ is odd}\}}.
\]
Therefore \(h_M=\lfloor(M-1)/2\rfloor\), and
\begin{equation*}
 \varphi^M(M)=(0),
 \qquad
 \varphi^{M-1}(M)=0^t1\,0^{M-t-1},
 \qquad t=\left\lfloor\frac{M-1}{2}\right\rfloor.
\end{equation*}
Let \(b\) be the number of root letters \(M\), and let \(a\) be the number
of root letters \(M-1\) before the first \(M\).  The first zero run in the
degree word has length \(a+b+t\).  Anti-palindromicity gives
\begin{equation*}
 \varrho=|R|=a+b+t.
\end{equation*}

The positive nonroot degrees cannot end inside this initial comparison.  Let
\(\nu_x\) be the number of positive-degree nonroot descendants below a root of
type \(x\).  The root has one child of each type from zero through \(x-1\),
so
\[
 \nu_1=0,
 \qquad
 \nu_x=(x-1)+\sum_{y=1}^{x-1}\nu_y.
\]
Induction gives \(\nu_x=2^{x-1}-1\).  Let \(N_+(w)\) denote the number of
positive letters of an integer word \(w\).  Consequently,
\[
 N_+(\varphi(c))\geq b\nu_M+a\nu_{M-1}.
\]
Subtracting \(\varrho=a+b+t\) yields
\begin{equation*}
 N_+(\varphi(c))-\varrho
 \geq b(2^{M-1}-2)+a(2^{M-2}-2)-t\geq1.
\end{equation*}
For \(M=3\), the last expression is at least \(2b-1\).  For \(M\geq4\), the
term containing \(b\) suffices.  This is a lower bound, not an equality.
In particular, the first \(\varrho+1\) positive letters of \(c\) lie in the
prefix \(\varphi(c)=c_1\cdots c_{n-\varrho}\), strictly before \(R\).

Write
\[
 v_i=c_{j_i}>0,
 \qquad
 z_i=j_{i+1}-j_i\quad(1\leq i<s),
 \qquad
 z_s=n-j_s+1.
\]
The word \(c\) begins with \(0^{\varrho}\).  In the factorization
\(c=\varphi(c)R\), only a factor \(\varphi(1)=(0)\) can produce this initial
zero run, because \(\varphi(L)\) begins with one for \(L\geq2\).  There are
therefore exactly \(\varrho\) source letters one before the first source
letter at least two.  The preceding margin places their positive outputs
strictly before the root suffix.  Consequently,
\[
 v_1=\cdots=v_{\varrho}=1,
 \qquad v_{\varrho+1}\geq2.
\]
For \(i<\varrho\), the output \(v_i=1\) must be the unique positive output of
\(\varphi(2)=(1,0)\).  Otherwise it would be the first output of a factor
\(\varphi(L)\) with \(L\geq3\), whose next positive output is at least two,
contrary to \(v_{i+1}=1\).  Hence \(z_i\geq2\).  For \(i=\varrho\), the
factor cannot be \(\varphi(2)\), since the next positive output would again
begin with one, contrary to \(v_{\varrho+1}\geq2\).  It begins a factor
\(\varphi(L)\) with \(L\geq3\), whose first two positive outputs are adjacent.
Thus \(z_{\varrho}=1\).

Palindromicity of \eqref{eq:run-vector} gives, for
\(1\leq i\leq\varrho\), the two families of equalities
\[
 z_{s+1-i}=v_i=1,
 \qquad
 v_{s+1-i}=z_i.
\]
The first family says
\(j_s=n,j_{s-1}=n-1,\ldots,j_{s+1-\varrho}=n+1-\varrho\).  Thus the last
\(\varrho\) degree positions are positive and form the root word.  The
second family gives
\[
 R_{\varrho+1-i}=z_i
 \qquad(1\leq i\leq\varrho).
\]
Consequently,
\begin{equation}
\label{eq:root-descent}
 R=(1,R'+2)
\end{equation}
for a root word \(R'\) of maximum \(M-2\).  It is nonempty, because
\(M\geq3\) gives \(t\geq1\), and hence
\(\varrho=a+b+t\geq b+t\geq2\).

Let \(U\) be the canonical word of \(R'\).  Lemma
\ref{lem:root-suspension} identifies the canonical word of \(R\) with
\(\Lam(U)\).  Since this word is fixed by \(\Star\), injectivity and
commutation give \(U=U^{\Star}\).  Thus \(R'\) is again an anti-\(\mathsf S\)
root.  Iterating \eqref{eq:root-descent} reaches a base of maximum zero, one,
or two.  The identity
\[
 \mathcal J_M=(1,\mathcal J_{M-2}+2)
\]
then gives \eqref{eq:classified-root-positive}.

Conversely, the three base families have canonical words
\[
 0^b1^b,
 \qquad
 0^{a+b}(10)^b1^{a+b},
\]
and
\[
 0^{a+b}(100)^b(10)^a(110)^b1^{a+b}.
\]
Their outer runs are interchanged by \(\Star\), while
\[
 ((10)^b)^{\Star}=(10)^b
\]
and
\[
 \bigl((100)^b(10)^a(110)^b\bigr)^{\Star}
 =(100)^b(10)^a(110)^b.
\]
Thus the extreme runs \(0^{a+b}\) and \(1^{a+b}\) correspond exactly, the
three words satisfy \(\mathsf S\), and they are fixed by \(\Star\).  Lemma
\ref{lem:root-suspension} preserves both properties.  This proves the
converse and completes the classification.
\end{proof}

\Needspace{5\baselineskip}
The orbital root words are the initial odd segments.
\begin{corollary}
\label{cor:orbital-roots}
The root word of \(A_r\) is
\begin{equation*}
 R_r=(1,3,5,\ldots,2r+1).
\end{equation*}
It has \(\varrho_r=r+1\) letters.
\end{corollary}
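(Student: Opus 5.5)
We argue by induction on \(r\), combining Lemma~\ref{lem:normal-form} with the suspension Lemma~\ref{lem:root-suspension}.

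\emph{Base case.} For \(r=0\), the running example gives \(A_0=0101\) with degree word \(c=(0,1)\) and \(q=(0,0,1)\). Hence \(q_n=q_2=1\), and by Lemma~\ref{lem:root-grammar} the roots are the indices \(q_n+1,\ldots,n\), that is, the single vertex \(2\). The root word is therefore \(R_0=(c_2)=(1)\), and \(\varrho_0=d_2=1\). This is the claimed formula at \(r=0\).

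\emph{Inductive step.} Suppose \(A_r\) is the canonical word of \(R_r=(1,3,\ldots,2r+1)\). By Corollary~\ref{cor:orbital-S}, \(A_r\) is a Dyck word satisfying \(\mathsf S\). By uniqueness in Lemma~\ref{lem:root-grammar}, it coincides with the canonical word of its own root word, which is \(R_r\) by hypothesis. Lemma~\ref{lem:normal-form} gives \(A_{r+1}=\Lam(A_r)\). Applying Lemma~\ref{lem:root-suspension} with \(U=A_r\), the word \(A_{r+1}\) is the canonical word of
\[
\Gamma(R_r)=(1,R_r+2)=(1,3,5,\ldots,2r+3)=R_{r+1}.
\]
Since the root word of a canonical word is the root word it was built from (again by uniqueness in Lemma~\ref{lem:root-grammar}), the root word of \(A_{r+1}\) is \(R_{r+1}\).

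\emph{Length of the root word.} The count \(\varrho_r=|R_r|=r+1\) is immediate from the explicit form of \(R_r\). Lemma~\ref{lem:root-grammar} also identifies \(\varrho\) with \(d_n=|R|\).

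\emph{Consistency check.} Equation~\eqref{eq:root-weight} gives
\[
n=\sum_{j=0}^{r}2^{2j+1}=\frac{2(4^{r+1}-1)}{3}.
\]
This agrees with \(|A_r|/2=a_r-1\), because \(a_r=(2\cdot4^{r+1}+1)/3\). The agreement is not needed for the proof but confirms the formula. It also matches Theorem~\ref{thm:root-classification} with \(M=2r+1\), \(a=0\), \(b=1\), where \(\mathcal J_{2r+1}=(1,3,\ldots,2r-1)\). Corollary~\ref{cor:orbital-anti} guarantees that \(A_r\) falls within that classification.

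\emph{Main point to watch.} The delicate step is matching the "root word of \(A_r\)" in the sense of Lemma~\ref{lem:root-grammar} (the last \(n-q_n\) degrees) with the word \(R\) whose canonical word Lemma~\ref{lem:root-suspension} produces. This identification follows from the uniqueness statement of Lemma~\ref{lem:root-grammar}, since each Dyck \(\mathsf S\)-word determines its root word. With that settled, the induction is purely formal.
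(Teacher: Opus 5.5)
Your proposal is correct and follows the paper's proof. The base case is the root \((1)\) of \(A_0=0101\), and the induction uses \(A_{r+1}=\Lam(A_r)\) with the suspension \(R\mapsto\Gamma(R)=(1,R+2)\) of Lemma~\ref{lem:root-suspension}. You additionally spell out the uniqueness identifications from Lemma~\ref{lem:root-grammar} that the paper leaves implicit.
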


\begin{proof}
The root of \(A_0=0101\) is \((1)\).  Lemma
\ref{lem:root-suspension} sends each root to \((1,R+2)\).
\end{proof}

\section{Toeplitz forests}

\subsection{Reflection}

For an anti-palindromic Dyck word of length \(2n\), define
\[
 \mathcal R(i)=n+1-i.
\]

\Needspace{5\baselineskip}
Anti-palindromicity makes the zero data self-conjugate.
\begin{lemma}
Let \(W\) be an anti-palindromic Dyck word of length \(2n\), with zero
coordinates \(p_m,q_m,c_m,d_m\).
For \(0\leq r\leq n\),
\begin{equation}
\label{eq:self-conjugate}
 q_r=\#\{1\leq i\leq n:q_i\geq n+1-r\}.
\end{equation}
Consequently, with
\[
 \mathcal M_{i,t}=\mathbf1_{\{t\leq q_i\}},
\]
one has
\begin{equation}
\label{eq:incidence-centro}
 \mathcal M_{i,t}=\mathcal M_{\mathcal R(t),\mathcal R(i)}
 \qquad(1\leq i,t\leq n).
\end{equation}
\end{lemma}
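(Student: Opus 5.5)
The plan is to read \eqref{eq:self-conjugate} as a statement about Young diagrams. The sequence \(q_1\leq q_2\leq\cdots\leq q_n\) is nondecreasing, with \(q_1=0\) (the first letter of a Dyck word is zero). So the region \(\{(i,t):1\leq t\leq q_i\}\) is a staircase shape. It is the lattice path of \(W\) written in the zero/one coordinate system: the \(i\)-th zero is preceded by exactly \(q_i\) ones. The involution \(\Star\) reverses the word and exchanges letters, so it should act on this shape by a transposition composed with the rotation \(i\mapsto n+1-i\). Both claims then reduce to this one identification.

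\emph{Step 1: counting formula.} For \(0\leq r\leq n\), let \(s_r\) be the cut after the \(r\)-th one, as in Definition~\ref{def:zero-coordinates}. The number of zeros before the \(r\)-th one is \(s_r-r\). Hence \(\#\{i:q_i\leq r-1\}=s_r-r\), because the \(i\)-th zero precedes the \(r\)-th one exactly when fewer than \(r\) ones precede that zero. Equivalently,
\[
\#\{i:q_i\geq r\}=n-(s_r-r)\qquad(1\leq r\leq n).
\]

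\emph{Step 2: apply \(W=W^{\Star}\).} The \(m\)-th zero of \(W^{\Star}\) is the reflection of the \((n+1-m)\)-th one of \(W\), exactly as in the proof of Lemma~\ref{lem:T-star}. This gives \(p_m(W^{\Star})=2n+1-s_{n+1-m}(W)\), and since \(W^{\Star}=W\), also \(p_m=2n+1-s_{n+1-m}\). Subtracting \(m\) gives
\[
q_m=2n+1-m-s_{n+1-m}=n-\bigl(s_{n+1-m}-(n+1-m)\bigr).
\]
By Step 1 with \(r'=n+1-m\), this equals \(\#\{i:q_i\geq n+1-m\}\), which is \eqref{eq:self-conjugate} for \(1\leq r\leq n\). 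The case \(r=0\) is separate: the left side is \(q_0=0\), and the right side counts \(i\) with \(q_i\geq n+1\), which is impossible since \(q_i\leq i-1\leq n-1\). So \(r=0\) holds trivially.

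\emph{Step 3: incidence symmetry.} We need \(\mathbf 1_{\{t\leq q_i\}}=\mathbf 1_{\{n+1-i\leq q_{n+1-t}\}}\). Apply \eqref{eq:self-conjugate} with \(r=n+1-t\). This says \(q_{n+1-t}\) is the number of indices \(j\) with \(q_j\geq t\). Because \(q\) is nondecreasing, that set is the final segment \(\{j\geq j_0\}\) of size \(n+1-j_0\). So \(q_{n+1-t}\geq n+1-i\) holds iff \(i\geq j_0\), which holds iff \(q_i\geq t\), as required.

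The main obstacle is Step 2, where the index bookkeeping is off-by-one prone. One must check that the reflection of cuts is \(x\mapsto 2n+1-x\) (cuts measured after a letter), not \(2n-x\), consistent with the sentinel convention. I would verify this on \(A_0=0101\). There \(q=(0,0,1)\) and \(n=2\). For \(r=1\): \(q_1=0\), and \(\#\{q_i\geq2\}=0\). For \(r=2\): \(q_2=1\), and \(\#\{q_i\geq1\}=1\). Both match.
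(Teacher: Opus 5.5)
Your proof is correct and is essentially the paper's argument. The paper gets the counting identity in one step by noting that reflection-complementation turns ``the \((n+1-r)\)-th one precedes the \(i\)-th zero'' into ``the \((n+1-i)\)-th one precedes the \(r\)-th zero''; your Steps~1--2 say the same thing in one-cut coordinates via \(p_m=2n+1-s_{n+1-m}\), and your Step~3 and separate \(r=0\) case match the paper's monotonicity argument.
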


\begin{proof}
For \(r=0\), one has \(q_0=0\), and the set on the right of
\eqref{eq:self-conjugate} is empty because \(q_i\leq n-1\).  Assume now that
\(1\leq r\leq n\).
The inequality \(q_i\geq n+1-r\) means that the \((n+1-r)\)-th one occurs
before the \(i\)-th zero.  Reflection and complementation send this ordered
pair to the \((n+1-i)\)-th one before the \(r\)-th zero.  As \(i\) varies,
these are precisely the \(q_r\) ones preceding that zero.  This proves
\eqref{eq:self-conjugate}.

Since \(q_i\) is nondecreasing,
\[
 q_{\mathcal R(t)}=\#\{j:q_j\geq t\}.
\]
The right side is at least \(\mathcal R(i)\) exactly when \(q_i\geq t\).
This is \eqref{eq:incidence-centro}.
\end{proof}

Extend the reflection by
\[
 \mathcal R(0)=n+1,
 \qquad \mathcal R(n+1)=0,
\]
and put \(\mathcal M_{0,t}=\mathcal M_{i,n+1}=0\).  These are the boundary values used below.

An edge \(t\to m\) is \emph{active} if
\[
 q_{m-1}<t<q_m.
\]
Add the virtual active edges \(t\to n+1\) for \(q_n<t\leq n\).  The child
\(t=q_m\) is terminal and is not active.

\Needspace{5\baselineskip}
The reflection exchanges active edges and leaves.
\begin{lemma}
\label{lem:edge-leaf}
Let \(W\) be an anti-palindromic Dyck word of length \(2n\), with the
reflection and active edges defined above.
The reflection \(t\mapsto r=n+1-t\) is a bijection from active edges,
including virtual edges, to leaves \(r\) with \(c_r=0\).  If the parent is
\(m\), where \(m=n+1\) for a virtual edge, then
\begin{equation*}
 d_r=m-t,
 \qquad
 \Delta_r=\omega_t,
 \qquad
 \Delta_r:=p_{n+1-r}-p_{n-r}.
\end{equation*}
\end{lemma}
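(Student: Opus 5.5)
The plan is to deduce everything from the self-conjugacy identity \eqref{eq:self-conjugate}, through the incidence matrix \(\mathcal M\), together with the explicit description of letters of \(W^{\Star}\).

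\emph{Step 1: edges and leaves as boundary positions of the staircase.}  Encode \(W\) by the staircase \(q_1\le q_2\le\cdots\le q_n\).  For \(1\le t\le n\), let \(m(t)\) be the least index with \(q_{m}\ge t\), with \(m(t)=n+1\) when \(t>q_n\).  The edge \(t\to m(t)\) is active exactly when \(q_{m(t)}>t\).  Virtual edges are the case \(m(t)=n+1\); they are active by convention.  Vertex \(r\) is a leaf exactly when \(q_r=q_{r-1}\).  By \eqref{eq:self-conjugate},
\[
 q_{\mathcal R(t)}=\#\{i:q_i\ge t\}=n+1-m(t),
\]
and
\[
 q_{\mathcal R(t)-1}=q_{n-t}=\#\{i:q_i\ge t+1\}=n+1-m(t+1),
\]
where \(m(n+1)=n+1\) by the boundary convention.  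Hence \(c_{\mathcal R(t)}=m(t+1)-m(t)\).  This vanishes iff no \(q_i\) equals \(t\), which happens iff \(q_{m(t)}>t\) or \(m(t)=n+1\), that is, iff \(t\to m(t)\) is active (possibly virtual).  Since \(\mathcal R\) is a bijection of \(\{1,\ldots,n\}\), this gives the bijection between active edges and leaves.

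\emph{Step 2: depth.}  For \(r=\mathcal R(t)\), one has \(d_r=r-q_r=(n+1-t)-(n+1-m)=m-t\).  This is immediate from the formula for \(q_{\mathcal R(t)}\) in Step 1.

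\emph{Step 3: the gap identity.}  Write \(\Delta_r=p_{n+1-r}-p_{n-r}=p_t-p_{t-1}=\omega_t\) when \(t=n+1-r\).  Read literally, this is just the definition of \(\omega_t\), so the content lies in interpreting \(\Delta_r\) as the gap measured on the reflected side.  Under \(\Star\), the gap between consecutive ones of \(W\) around the reflected position equals the corresponding zero gap.  I would verify this through \(p_k^{W^\Star}=2n+1-s^W_{n+1-k}\), in the style of the proof of Lemma~\ref{lem:T-star}, and note that \(W=W^\Star\).

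\emph{Main obstacle.}  The delicate point is the boundary bookkeeping in Step 1.  It covers the virtual parent \(n+1\), the case \(t=n\), and consistency with the conventions \(\mathcal R(0)=n+1\) and \(\mathcal M_{0,t}=\mathcal M_{i,n+1}=0\).  I would check these cases directly via \eqref{eq:incidence-centro}.
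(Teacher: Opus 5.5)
Your proposal is correct and follows the paper's proof: both obtain \(q_{n+1-t}=\#\{i:q_i\ge t\}=n+1-m\) from \eqref{eq:self-conjugate}, which gives \(d_r=m-t\), and then difference the same identity to get \(c_r=\#\{i:q_i=t\}\), which vanishes exactly when \(t\) is the child of an active or virtual edge. Your single ``iff'' simply merges the paper's forward and converse directions. As you noticed, \(\Delta_r=\omega_t\) is just the substitution \(n+1-r=t\), so the reinterpretation through \(W^{\Star}\) in Step~3 and the extra boundary checks via \eqref{eq:incidence-centro} are unnecessary; the boundary cases are already covered by \eqref{eq:self-conjugate} at \(r=0\) and your convention \(m(n+1)=n+1\).
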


\begin{proof}
For a real active edge, \(m\) is the first index for which \(q_m>t\).  No
value \(q_i\) equals \(t\).  Equation \eqref{eq:self-conjugate} gives
\[
 q_r=\#\{i:q_i\geq t\}=n-m+1.
\]
Hence \(d_r=m-t\).  Differencing \eqref{eq:self-conjugate} shows \(c_r=0\),
and reflection gives \(\Delta_r=\omega_t\).  For \(t>q_n\), one has \(q_r=0\),
which gives the same formulas with parent \(n+1\).

Conversely, if \(c_r=0\), then \(t=n+1-r\) is not a value of the sequence
\(q\).  It either lies above \(q_n\) or strictly inside the unique jump that
crosses it.  This recovers one active edge.
\end{proof}

For the remainder of this section, assume in addition that the word satisfies
\(\mathsf S\).  The terminal child \(q_j\) in a source factor
of positive degree has degree \(\varphi(c_j)_{c_j}=0\).  Hence \(c_m>0\)
implies that \(m\) is not a value of the sequence \(q\).  Lemma
\ref{lem:edge-leaf}, applied to the unique active edge whose child is \(m\),
gives
\begin{equation}
\label{eq:positive-degree-mirror}
 c_m>0\Longrightarrow c_{n+1-m}=0,
 \qquad
 \omega_m>1\Longrightarrow\Delta_m=1.
\end{equation}

\begin{definition}
The canonical forest is \emph{Toeplitz} if two active edges of the same span
\(h=m-t\) have the same label \(\omega_t\).  This common label is denoted by
\(\alpha_h\).
\end{definition}

Let \(D=\max_m d_m\).  The first passage of \(d\) at each height
\(1\leq h\leq D\) is a leaf, so Lemma~\ref{lem:edge-leaf} supplies an active
edge of every span in this range.  If \(c_m=L>0\), property \(\mathsf S\)
gives \(\omega_{q_{m-1}+j}=\sigma_L(j)\), and its \(j\)-th child is
\(t=q_{m-1}+j\).  The active children are those with \(1\leq j<L\), and
\[
 m-t=d_{m-1}+1-j.
\]
Toeplitz is therefore equivalent to
\begin{equation}
\label{eq:toeplitz-leaf}
 c_m=0\Longrightarrow\Delta_m=\alpha_{d_m}
\end{equation}
and
\begin{equation}
\label{eq:toeplitz-node}
 c_m=L>0\Longrightarrow
 (\alpha_{d_m+1},\ldots,\alpha_{d_{m-1}})
 =\rev(\sigma_L(1),\ldots,\sigma_L(L-1)).
\end{equation}
For \(L=1\), both words in \eqref{eq:toeplitz-node} are empty.

\subsection{Suspension}

Read a canonical forest in breadth-first order from roots to leaves.  If a
vertex has degree \(L\), its children have degrees
\[
 \mathscr P_L=\rev(\varphi(L)).
\]
The explicit definition of \(\sigma_L\) gives
\begin{equation}
\label{eq:gadget-identity}
 \mathscr P_{L+2}=(0,\mathscr P_L+2,1).
\end{equation}

Number the old vertices in breadth-first order by \(u=1,\ldots,n\), and set
\[
 \check c_u=c_{n+1-u}.
\]
Under \(R\mapsto\Gamma(R)=(1,R+2)\), add a separate root of degree one.  An
old vertex \(u\) of degree \(\check c_u\) becomes a core \(\operatorname{core}(u)\) of degree
\(\check c_u+2\).  If \(v_1,\ldots,v_{\check c_u}\) are its old children, its new children
are
\begin{equation*}
 x_u,\operatorname{core}(v_1),\ldots,\operatorname{core}(v_{\check c_u}),y_u.
\end{equation*}
The vertices \(x_u\) and the child \(\zeta_u\) of \(y_u\) have degree zero,
while \(y_u\) has degree one.

Let \(K_u\) be the difference between the breadth-first ranks of \(x_u\) and
\(\operatorname{core}(u)\).  The active edges out of \(\operatorname{core}(u)\) have consecutive spans
\[
 K_u+1,\ldots,K_u+\check c_u+1.
\]

\Needspace{5\baselineskip}
Consecutive core ranks differ by an explicit binomial correction.
\begin{lemma}
\label{lem:separators}
Let \(W\) be a nonempty anti-palindromic canonical word of length \(2n\), and apply the
suspension \(R\mapsto(1,R+2)\) described above.  If the old root word has
length \(\varrho\), then
\begin{equation*}
 K_1=\varrho+1
\end{equation*}
and, for \(1\leq u<n\),
\begin{equation}
\label{eq:separator-step}
 K_{u+1}-K_u
 =\check c_u+1-\binom{c_{u+1}+1}{2}.
\end{equation}
\end{lemma}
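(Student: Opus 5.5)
The plan is to compute breadth-first ranks directly from the standard queue identity for plane forests. In a breadth-first listing where the roots come first, the first child of a vertex \(w\) has rank
\[
 (\text{number of roots})+\sum_{w'\prec w}\deg(w')+1,
\]
where \(w'\) runs over the vertices listed strictly before \(w\). Children of a single vertex then occupy consecutive ranks.

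For \(K_1\), I will fix the order of the new roots: the separate root of degree one comes first, followed by \(\operatorname{core}(1),\ldots,\operatorname{core}(\varrho)\). This matches the root word \(\Gamma(R)=(1,R+2)\) read in breadth-first order. Then \(\operatorname{core}(1)\) has rank \(2\). Its first child \(x_1\) has rank \((\varrho+1)+1+1=\varrho+3\), because the only earlier vertex is the new root, which has degree one. Hence \(K_1=\varrho+1\).

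For the step \eqref{eq:separator-step}, I subtract the queue identities for \(x_{u+1}\) and \(x_u\). This gives \(\operatorname{rank}(x_{u+1})-\operatorname{rank}(x_u)\) as the sum of the degrees of all vertices from \(\operatorname{core}(u)\) up to, but excluding, \(\operatorname{core}(u+1)\). The vertex \(\operatorname{core}(u)\) contributes \(\check c_u+2\). The other vertices in this range are inserted gadget vertices: each \(x_w\) and \(\zeta_w\) contributes \(0\), and each \(y_w\) contributes \(1\). In the same way, \(\operatorname{rank}(\operatorname{core}(u+1))-\operatorname{rank}(\operatorname{core}(u))\) equals one plus the number of inserted vertices in that range. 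Subtracting the two, I obtain
\[
 K_{u+1}-K_u=\check c_u+1-\bigl(\#\{x_w\}+\#\{\zeta_w\}\text{ strictly between }\operatorname{core}(u)\text{ and }\operatorname{core}(u+1)\bigr).
\]
It then remains to show that this number of degree-zero inserted vertices equals \(\binom{c_{u+1}+1}{2}\).

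To identify these inserted vertices, I will use the fact that breadth-first order lists siblings consecutively. It also places the whole children block of an earlier parent before that of a later parent. Consequently the only gadget vertices between two consecutive cores are the trailing \(y\) and leading \(x\) of sibling blocks that end or begin there, together with the \(\zeta\)'s. Each \(\zeta\) sits one level below its \(y\), so I have to locate where the \(\zeta\)'s fall in the breadth-first order. The gadget identity \eqref{eq:gadget-identity}, \(\mathscr P_{L+2}=(0,\mathscr P_L+2,1)\), shows that the suspended children order of a core is exactly \(x\), then the cores of the old children, then \(y\). So only boundaries between sibling blocks matter: at each such boundary one \(x\) and one \(y\) are inserted, plus the \(\zeta\)'s that are queued in that gap.

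Anti-palindromicity (Corollary~\ref{cor:orbital-anti}) is what converts the reversed indexing \(\check c_u=c_{n+1-u}\) into a statement about \(c_{u+1}\). By the reflection lemma and Lemma~\ref{lem:edge-leaf}, the breadth-first boundary structure is governed by the unreflected degree word. The binomial \(\binom{c_{u+1}+1}{2}=\sum_{k=1}^{c_{u+1}}k\) will then be obtained as the number of degree-zero gadget vertices accumulated across the nested levels. I expect this count, via \eqref{eq:phi-binomial-length}, to be the main obstacle, and I would first check it by hand on \(A_0\) and \(A_1\).
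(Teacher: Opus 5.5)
Your reduction is correct and is the paper's. Subtracting the first-child queue identities gives $K_{u+1}-K_u=\sum_{\operatorname{core}(u)\preceq X\prec\operatorname{core}(u+1)}(\deg X-1)$, that is, $\check c_u+1$ minus the number of degree-zero non-core vertices strictly between the two cores.

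\textbf{The root order is wrong.} With the order you fix, \eqref{eq:separator-step} fails. Breadth-first order is the reversal of the zero order. That is what $\check c_u=c_{n+1-u}$ encodes, and it is why a core's children read $x_u,\ldots,y_u$, i.e.\ $\mathscr P_{L+2}=\rev(\varphi(L+2))$. You reverse the children but not the roots. The suspended word is again anti-palindromic with property $\mathsf S$ (Lemma~\ref{lem:root-suspension}). So its roots in breadth-first order are $(1,R+2)$ read backwards: $\operatorname{core}(1),\ldots,\operatorname{core}(\varrho)$, and then the separate root, last. For instance, $A_1=00100010110010111011$ has $c_9=1$ and $c_{10}=3$, so its degree-three root comes first. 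Your value $K_1=\varrho+1$ survives only because the separate root has degree one.

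Write $Y_0$ for that root and $Z_1$ for its child, and take $A_0=0101$, with $c=(0,1)$, $\check c=(1,0)$, $\varrho=1$. Your order is $Y_0,\operatorname{core}(1),Z_1,x_1,\operatorname{core}(2),y_1,x_2,\ldots$, which gives $K_1=K_2=2$. But \eqref{eq:separator-step} requires $K_2-K_1=1$. The correct order $\operatorname{core}(1),Y_0,x_1,\operatorname{core}(2),y_1,Z_1,x_2,\ldots$ gives $K_1=2$ and $K_2=3$, as required. Your inventory of gadget vertices also omits $Y_0$ and $Z_1$, which is exactly where the root boundary matters.

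\textbf{The central count is not proved.} The count of degree-zero separators is the whole content of the lemma, and you leave it undone. Your sketch of it (one $x$ and one $y$ per boundary, $\zeta$'s accumulated over nested levels) is also inaccurate. The missing idea is the breadth-first parent map. Differencing \eqref{eq:incidence-centro} shows that old vertex $v$ has breadth-first parent $q_v$, where $q_v=0$ means a root.

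Put $p=q_u$ and $p'=q_{u+1}$. Then $\operatorname{core}(u)$ is the last core child of $\operatorname{core}(p)$, and $\operatorname{core}(u+1)$ is the first core child of $\operatorname{core}(p')$. Every old $v$ with $p<v<p'$ is childless, since $q$ is nondecreasing. Strong induction on $u$ then identifies the separators exactly:
\begin{itemize}
\item $x_v$ for $p<v\le p'$;
\item $y_v$ for $p\le v<p'$, where index $0$ means $Y_0$;
\item the children of the $y$'s lying strictly between $\operatorname{core}(p)$ and $\operatorname{core}(p')$, of which there are $q_{p'}-q_p$.
\end{itemize}
The cases $p=p'$ and $p=0<p'$ need separate treatment.

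This gives $L=c_{u+1}=p'-p$ vertices of type $x$, and $\sum_{v=p+1}^{p'}c_v$ of type $\zeta$. Property $\mathsf S$ at $u+1$ says these degrees form $\varphi(L)$, a permutation of $0,\ldots,L-1$. So the second count is $\binom L2$, which is your \eqref{eq:phi-binomial-length} with $k=2$, and the total is $\binom{L+1}2$. Without this identification of the separators, the step formula is not established.
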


The ordered proof, including the root boundary, child separators,
grandchild separators, and the case \(n=1\), is given in
Appendix~\ref{app:separators}.
Anti-palindromicity is used there to identify reversed depth order with the
breadth-first order.  Canonicity alone does not provide that identification.

Suppose the old forest is Toeplitz.  Put
\begin{equation*}
 H_0=\varrho+1,
 \qquad H_h=\varrho+1+\sum_{s=1}^h\alpha_s.
\end{equation*}

\Needspace{5\baselineskip}
In a Toeplitz forest, the labels locate every core in the breadth-first queue.
\begin{lemma}
\label{lem:core-queue}
Assume that the old forest is Toeplitz, with labels \(\alpha_h\) and numbers
\(H_h\) defined above.  For \(1\leq u\leq n\),
\begin{equation}
\label{eq:core-queue-position}
 K_u=H_{d_u}-\check c_u-1.
\end{equation}
\end{lemma}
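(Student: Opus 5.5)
Since both sides of \eqref{eq:core-queue-position} are determined by an initial value and a one-step increment, I will argue by induction on \(u\), comparing increments with Lemma~\ref{lem:separators}. Put \(F_u=H_{d_u}-\check c_u-1\).

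\emph{Base case.} Here \(u=1\), so \(\check c_1=c_n\). Vertex \(n\) is the last vertex and hence a root. We have \(d_n=\varrho\), and by Theorem~\ref{thm:root-classification}, or directly from anti-palindromicity via \eqref{eq:positive-degree-mirror}, \(c_n\) is the largest root letter. The target is \(F_1=\varrho+1=H_0\), that is,
\[
 \sum_{s=1}^{\varrho}\alpha_s=c_n+1.
\]
I will obtain this from the Toeplitz labels of the virtual active edges \(t\to n+1\), \(q_n<t\leq n\). Their spans are \(n+1-t\in\{1,\ldots,\varrho\}\), and their labels are \(\omega_t=c_t+1\) with \(t\) running over the root positions. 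Hence \(\sum_{s=1}^{\varrho}\alpha_s=\sum_{x\in R}(x+1)\). This does not obviously equal \(c_n+1\), so the normalisation of \(H\) has to be matched more carefully. Most likely \(d_u\) refers to the reflected vertex \(n+1-u\) under the breadth-first renumbering, since \(\check c_u=c_{n+1-u}\). I will fix the indexing convention so that the base case reduces to \(d=0\) at the first breadth-first vertex, giving \(F_1=H_0-\check c_1-1\). I will then check this against \(K_1=\varrho+1\) using the root-boundary computation of Appendix~\ref{app:separators}.

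\emph{Inductive step.} Granting the base case, the step requires
\[
 F_{u+1}-F_u=\check c_u+1-\binom{c_{u+1}+1}{2},
\]
that is,
\[
 H_{d_{u+1}}-H_{d_u}=\check c_{u+1}+\check c_u-\binom{c_{u+1}+1}{2}+\dots,
\]
expressed as a sum of consecutive \(\alpha\)'s over the height change between neighbouring vertices. I will split into two cases according to the degree. If the relevant vertex has degree \(L>0\), the height drops by \(L-1\). The Toeplitz node identity \eqref{eq:toeplitz-node} then gives those \(\alpha\)'s as \(\rev(\sigma_L(1),\ldots,\sigma_L(L-1))\), whose sum is \(\binom{L+1}{2}-\sigma_L(L)=\binom{L+1}{2}-1\), because \(\sigma_L\) is a permutation of \(1,\ldots,L\) and \(\sigma_L(L)=1\). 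This is exactly where the binomial correction \(\binom{c+1}{2}\) comes from. If the vertex is a leaf, the height rises by one. The leaf identity \eqref{eq:toeplitz-leaf} together with \(\Delta=\omega\) from Lemma~\ref{lem:edge-leaf} contributes a single label, which is matched by the \(\check c_u+1\) term through the mirror relation \eqref{eq:positive-degree-mirror}.

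\emph{Main obstacle.} The hard part is the bookkeeping between three orders: the depth order used in \(d_u\) and \(c\), the reversed order \(\check c\), and the breadth-first order. Anti-palindromicity through the reflection \(\mathcal R\) is what identifies them, just as in the appendix. I will first verify the increment identity on the running example and on \(A_1\) to pin down the sign and offset conventions, and only then write the general telescoping.
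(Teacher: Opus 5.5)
\textbf{There is a genuine gap, and it is in the base case.}

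In \eqref{eq:core-queue-position} only the degree is reflected, \(\check c_u=c_{n+1-u}\). The depth is the unreflected \(d_u\), so at \(u=1\) you must use \(d_1\), not \(d_n=\varrho\).

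\begin{itemize}
\item With \(d_n\) you reach the target \(\sum_{s=1}^{\varrho}\alpha_s=c_n+1\), which is false in general. For \(A_1\), with root word \((1,3)\), the left side is \(4+2=6\) and the right side is \(4\).
\item Your proposed repair is also false. Taking depth \(0\) gives \(H_0-\check c_1-1=\varrho-c_n\), not \(\varrho+1=K_1\).
\item Re-indexing is not available anyway. It changes the statement, and it would break your own inductive step, which uses \(d_{u+1}=d_u+1-c_{u+1}\) in the unreflected indexing. That matches the unreflected \(\binom{c_{u+1}+1}{2}\) in \eqref{eq:separator-step}.
\end{itemize}

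The missing observation is short. The Dyck condition forces \(c_1=0\), hence \(d_1=1\). By Lemma~\ref{lem:edge-leaf}, the leaf \(r=1\) is dual to the virtual edge \(n\to n+1\), which has span \(1\) and label \(\omega_n=c_n+1=\check c_1+1\). Thus \(\alpha_1=\check c_1+1\), and \(H_1-\check c_1-1=\varrho+1=K_1\).

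\textbf{The inductive step has the right shape but the wrong target.} You split correctly into two cases, and your evaluation \(\sum_{j=1}^{L-1}\sigma_L(j)=\binom{L+1}{2}-1\) is exactly the node computation needed. However, the target identity is misstated and the ingredients are attached to the wrong cases. Subtracting consecutive instances, the \(\check c_u\) terms cancel, so what must be shown is
\(H_{d_{u+1}}-H_{d_u}=\check c_{u+1}+1-\binom{L+1}{2}\), with \(L=c_{u+1}\).

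\begin{itemize}
\item If \(L=0\), the single new label is \(\alpha_{d_{u+1}}=\Delta_{u+1}=\omega_{n-u}=\check c_{u+1}+1\), by \eqref{eq:toeplitz-leaf} and edge-leaf duality. It is matched by \(\check c_{u+1}+1\), not by \(\check c_u+1\), and no mirror relation is involved.
\item If \(L>0\), this is where you need \eqref{eq:positive-degree-mirror}, to get \(\check c_{u+1}=0\). Without it the node sum does not close the identity.
\end{itemize}

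Finally, the obstacle you anticipate does not arise. All the breadth-first bookkeeping between the three orders is already contained in Lemma~\ref{lem:separators}. This lemma is only a two-case arithmetic induction against \eqref{eq:separator-step}, and that is how the paper proves it.
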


\begin{proof}
The Dyck condition gives \(c_1=0\) and \(d_1=1\).  The leaf \(r=1\) is dual
to the virtual edge \(n\to n+1\).  Its label is
\(\omega_n=\check c_1+1\), so \(\alpha_1=\check c_1+1\).  This gives
\(H_1-\check c_1-1=\varrho+1=K_1\).

Assume \eqref{eq:core-queue-position} at \(u\), and put \(L=c_{u+1}\).
Since \(d_{u+1}=d_u+1-L\), there are two cases.  If \(L=0\), edge-leaf
duality gives \(\check c_{u+1}+1=\alpha_{d_{u+1}}\), and the right side of
\eqref{eq:core-queue-position} increases by \(\check c_u+1\).  This equals
\eqref{eq:separator-step}.

If \(L>0\), equation \eqref{eq:positive-degree-mirror} gives
\(\check c_{u+1}=0\), while \eqref{eq:toeplitz-node} gives
\[
 \sum_{h=d_{u+1}+1}^{d_u}\alpha_h
 =\sum_{j=1}^{L-1}\sigma_L(j)
 =\binom{L+1}{2}-1.
\]
The change is again \(\check c_u+1-\binom{L+1}{2}\).  Lemma
\ref{lem:separators} completes the induction.  Empty sums include \(L=1\).
\end{proof}

For \(L\geq1\), define
\[
 \mathcal A_L=\rev(\sigma_L(1),\ldots,\sigma_L(L-1)).
\]

\Needspace{5\baselineskip}
Suspension preserves the Toeplitz property and rewrites the profile explicitly.
\begin{theorem}
\label{thm:toeplitz-suspension}
If a canonical anti-\(\mathsf S\) forest of root word
\(R=(R_1,\ldots,R_{\varrho})\) is Toeplitz, then the forest of root word
\(\Gamma(R)=(1,R+2)\) is Toeplitz.  Its ordered profile is
\begin{equation}
\label{eq:toeplitz-suspension}
 \alpha^{\Gamma}
 =(R_{\varrho}+3,R_{\varrho-1}+3,\ldots,R_1+3,2)
 \prod_{h=1}^{D}\mathcal A_{\alpha_h+1}.
\end{equation}
The product denotes concatenation in increasing order of \(h\).
\end{theorem}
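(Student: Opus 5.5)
I will classify every active edge of the suspended forest by its parent and compute its span and label directly. By Lemma~\ref{lem:root-suspension} and Theorem~\ref{thm:root-classification}, the new canonical word is again anti-palindromic and satisfies \(\mathsf S\). Edge-leaf duality (Lemma~\ref{lem:edge-leaf}) and the reformulations \eqref{eq:toeplitz-leaf}, \eqref{eq:toeplitz-node} therefore apply to it. It suffices to show that every span \(h\) is carried by active edges with a single label, and to read off these labels in increasing order of \(h\).

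The new vertices come in three kinds.

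\textbf{The degree-one vertices.} The new root of degree one and each \(y_u\) have degree one, so they have no active edges; their unique child is terminal.

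\textbf{The cores.} The core \(\operatorname{core}(u)\) has degree \(L=\check c_u+2\). By \(\mathsf S\) and the ordering \(\mathscr P_{L}=(0,\mathscr P_{L-2}+2,1)\) of \eqref{eq:gadget-identity}, its active children are \(x_u,\operatorname{core}(v_1),\ldots,\operatorname{core}(v_{\check c_u})\). Their labels are the first \(L-1\) entries of \(\sigma_L\), and their spans are \(K_u+1,\ldots,K_u+\check c_u+1\), as stated before Lemma~\ref{lem:separators}. By Lemma~\ref{lem:core-queue}, \(K_u+\check c_u+1=H_{d_u}\). So the spans out of \(\operatorname{core}(u)\) form the window \((H_{d_u}-\check c_u-1,H_{d_u}]\), read in reversed order.

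\textbf{Matching the window to \(\mathcal A_{\alpha_h+1}\).} I need \(\check c_u+1=\alpha_{d_u}\), so that the window is exactly \((H_{h-1},H_h]\) with \(h=d_u\), and the labels placed on it are \(\rev(\sigma_{\alpha_h+1}(1),\ldots,\sigma_{\alpha_h+1}(\alpha_h))=\mathcal A_{\alpha_h+1}\). When \(c_u=0\), Toeplitz in the form \eqref{eq:toeplitz-leaf} for the old forest gives this via \(\Delta_u=\omega_{n+1-u}=\check c_u+1\). When \(c_u>0\), \eqref{eq:positive-degree-mirror} gives \(\check c_u=0\), and I must show \(\alpha_{d_u}=1\). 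I will derive this from \eqref{eq:toeplitz-node} applied at \(u\) together with \(\sigma_L(L)=1\) or \(2\). This parity-dependent case is where I expect trouble: it may instead require that all vertices of a given depth \(h\) produce windows of the same length \(\alpha_h\). Toeplitz guarantees this once each \(u\) is paired with its leaf dual.

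Granting the matching, all cores at depth \(h\) occupy the same window \((H_{h-1},H_h]\) and carry identical labels, which is Toeplitz for spans above \(H_0=\varrho+1\). Every depth \(1\le h\le D\) is attained, so the windows tile \((H_0,H_D]\) and produce the product in \eqref{eq:toeplitz-suspension}.

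\textbf{The spans \(1,\ldots,\varrho+1\).} These come from the virtual edges at the new roots, which have degrees \((1,R+2)\). By Lemma~\ref{lem:edge-leaf}, the virtual edges \(t\to n'+1\) for \(q_{n'}<t\le n'\) have spans \(n'+1-t\). Their labels \(\omega_t\) are the root degrees plus one, read from the last root backwards. This gives \(R_\varrho+3,\ldots,R_1+3\) for spans \(1,\ldots,\varrho\). The span \(\varrho+1\) falls on the edge to the new degree-one root, whose label is \(2\). One must also check that no core edge has span at most \(\varrho+1\); this follows from \(K_u\ge K_1=\varrho+1\), which is monotonicity of \(H\) in Lemma~\ref{lem:core-queue}.

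The main obstacle is the identity \(\check c_u+1=\alpha_{d_u}\) for every old vertex, i.e.\ synchronizing breadth-first degrees with depth labels. I would first try to prove it uniformly from edge-leaf duality combined with the anti-palindromic identification of breadth-first order with reversed depth order used in Appendix~\ref{app:separators}.
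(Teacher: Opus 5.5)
Your overall plan follows the paper's route. The root virtual edges give the prefix \((R_\varrho+3,\ldots,R_1+3,2)\), Lemma~\ref{lem:core-queue} places the active edges of \(\operatorname{core}(u)\) in a window ending at \(H_{d_u}\), and the other new vertex types carry no active edges. However, the step you call the main obstacle is false, so the argument cannot be completed as planned. The identity \(\check c_u+1=\alpha_{d_u}\) holds only when \(c_u=0\). When \(c_u>0\), nothing forces \(\alpha_{d_u}=1\): equation \eqref{eq:toeplitz-node} at \(u\) constrains only \(\alpha_{d_u+1},\ldots,\alpha_{d_{u-1}}\), never \(\alpha_{d_u}\) itself. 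Already for \(A_0=0101\) one has \(c=(0,1)\), \(c_2=1\), \(d_2=1\), but \(\alpha_1=2\). Your fallback, that every vertex of depth \(h\) produces a window of length \(\alpha_h\), fails for the same reason. For \(c_u>0\) the core has degree two and exactly one active edge.

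The missing idea is that such a vertex does not need to fill the block. With \(\check c_u=0\), Lemma~\ref{lem:core-queue} gives \(K_u=H_{d_u}-1\), so the single active edge has span \(H_{d_u}\). Its label is \(\sigma_2(1)=2\), which equals the last letter \(\sigma_{\alpha_h+1}(1)=2\) of \(\mathcal A_{\alpha_h+1}\). So this edge is consistent with the block at its top span and adds nothing new.

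That edge goes to \(y_u\), not to \(x_u\). The child \(x_u\) is first in breadth-first order but last in the word order, hence terminal with label \(\sigma_L(L)=1\), where \(L=\check c_u+2\). The active children are therefore \(\operatorname{core}(v_1),\ldots,\operatorname{core}(v_{\check c_u}),y_u\); this is the set your spans \(K_u+1,\ldots,K_u+\check c_u+1\) actually describe.

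Because full windows now come only from leaves, your tiling step also needs the observation \(d_u-d_{u-1}=1-c_u\leq1\). It implies that the first vertex reaching each depth \(h\) is a leaf, and that leaf realizes all of \((H_{h-1},H_h]\). Every later vertex at that depth repeats either the whole block (if it is a leaf, by edge-leaf duality and the old Toeplitz property) or only its top entry.
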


\begin{proof}
The virtual edges of the \(\varrho\) root cores, followed by the separate
degree-one root, have spans \(1,\ldots,\varrho+1\) and labels
\[
 (R_{\varrho}+3,\ldots,R_1+3,2).
\]

Fix an old vertex \(u\), and put \(h=d_u\).  If \(c_u=0\), then
\(\alpha_h=\check c_u+1\) by edge-leaf duality.  Lemma~\ref{lem:core-queue} gives
\(K_u=H_{h-1}\).  The \(\check c_u+1\) active edges from \(\operatorname{core}(u)\) occupy exactly
the spans from \(H_{h-1}+1\) through \(H_h\).  Identity
\eqref{eq:gadget-identity} gives their labels
\[
 \mathcal A_{\alpha_h+1}.
\]
If \(c_u>0\), equation \eqref{eq:positive-degree-mirror} gives \(\check c_u=0\).
The single active edge to \(y_u\) has span \(H_h\) and label two, the last
entry of the same word \(\mathcal A_{\alpha_h+1}\).

Because \(d_u-d_{u-1}=1-c_u\leq1\), the first vertex at every height \(h\)
is a leaf and fills the whole span interval
\([H_{h-1}+1,H_h+1)\).  Later vertices at that height repeat the same labels
at the same spans.

The list is exhaustive.  The only active edges in the suspended forest are
the virtual edges and the edges from cores to children occurring after
\(x_u\).  The unique edge of the separate degree-one root and every edge
\(y_u\to \zeta_u\) is terminal and therefore inactive, while \(x_u\) and \(\zeta_u\)
are leaves.  The consecutive span intervals end at
\[
 H_D=\varrho+1+\sum_{h=1}^D\alpha_h.
\]
Lemma~\ref{lem:root-suspension} shows that the suspended forest is again
anti-\(\mathsf S\).  Edge-leaf duality therefore says that every new depth is
the span of an active edge.  No depth can exceed \(H_D\), and the first leaf
at depth \(D\) supplies span \(H_D\).  Thus
\[
 D^{\Gamma}=H_D.
\]
Concatenating the displayed blocks proves Toeplitz and
\eqref{eq:toeplitz-suspension} without assuming the new Toeplitz property
during the argument.
\end{proof}

\Needspace{5\baselineskip}
Every forest in the classification is Toeplitz.
\begin{theorem}
Every canonical forest associated with an anti-palindromic Dyck word
satisfying \(\mathsf S\) is Toeplitz.
\end{theorem}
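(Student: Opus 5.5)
The plan is to combine the root classification of Theorem~\ref{thm:root-classification} with the suspension Theorem~\ref{thm:toeplitz-suspension}.

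By Theorem~\ref{thm:root-classification}, every canonical anti-\(\mathsf S\) forest has root word \(0^b\) or \(\mathcal J_M(M-1)^aM^b\). Moreover, the proof of that theorem shows that each such root is reached from a base of maximum \(0\), \(1\) or \(2\) by iterating \(\Gamma(R)=(1,R+2)\). Each intermediate root is again anti-\(\mathsf S\), because Lemma~\ref{lem:root-suspension} says \(\Lam\) is injective and commutes with \(\Star\). So the argument is an induction on \(\lfloor M/2\rfloor\).

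\emph{Inductive step.} Theorem~\ref{thm:toeplitz-suspension} says that \(\Gamma\) carries Toeplitz canonical anti-\(\mathsf S\) forests to Toeplitz ones. It therefore suffices to check the three base families.

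\emph{Base families.} Their canonical words are explicit:
\[
 0^b1^b,\qquad 0^{a+b}(10)^b1^{a+b},\qquad 0^{a+b}(100)^b(10)^a(110)^b1^{a+b}.
\]
For each one we list the active edges, including the virtual edges \(t\to n+1\) for \(q_n<t\leq n\), and check that equal spans carry equal labels \(\omega_t\).

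For \(0^b1^b\), all degrees vanish. Then \(q_m=0\) for \(m\leq n\), so the only active edges are the virtual edges \(t\to n+1\). Their spans \(n+1-t\) are pairwise distinct, so the Toeplitz condition holds vacuously.

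For \(R=0^a1^b\), the degree word is \(c=0^b0^a1^b\). Here \(q_m=0\) for \(m\leq a+b\), and then \(q\) increases by one at each of the last \(b\) vertices. Each jump of size one has no interior value, so again only virtual edges are active. Their spans are distinct, and the condition is vacuous.

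For \(M=2\) with \(R=1^a2^b\), the degree word is \(c=0^{a+b}\,(1,0)^b\,1^a2^b\) up to the ordering fixed by the grammar \eqref{eq:root-grammar}. We will compute \(q\) explicitly. The only nonvirtual active edges come from vertices of degree two, via the criterion \eqref{eq:toeplitz-node} with \(L=2\). There \(\mathcal A_2=(\sigma_2(1))=(2)\), so these edges all have label \(2\). The virtual edges carry labels \(\omega_t=c_t+1\in\{2,3\}\) at spans \(1,\ldots,a+2b\). The task is to check that an edge from a degree-two core always has a span in the range where the virtual labels equal \(2\); by \eqref{eq:toeplitz-node} this span is \(d_{m}+1\) for that core \(m\). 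This is a finite bookkeeping with the parameters \(a,b\).

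\emph{Main obstacle.} I expect the \(M=2\) base to be the main obstacle: the depths \(d_m\) of the degree-two vertices must be matched against the virtual spans carrying label \(2\). I would first organize it through the leaf form \eqref{eq:toeplitz-leaf}. For each leaf \(m\) we have \(\Delta_m=\alpha_{d_m}\), and \(\Delta_m\) is read off the reversed gap sequence. So the check reduces to showing that \(d_m\) determines \(\Delta_m\) along the explicit word, which the symmetry \(\Delta_m=\omega_{n+1-m}\) makes direct.

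The anti-palindromic hypothesis does not restrict \(a\) or \(b\), so no further cases arise. Combining the base check with the induction proves the theorem.
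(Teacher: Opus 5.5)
Your architecture is the paper's: descend through the classification of Theorem~\ref{thm:root-classification} to the bases $0^b$, $0^a1^b$, $1^a2^b$, lift with Theorem~\ref{thm:toeplitz-suspension}, and verify the bases by listing active edges, as in Appendix~\ref{app:toeplitz-bases}. Your checks for $0^b$ and $0^a1^b$ are correct.

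\textbf{The $M=2$ base is set up incorrectly.} There are only $n-q_n=|R|=a+b$ virtual edges, not $a+2b$. With $c=0^{a+b}(1,0)^b1^a2^b$ one has $n=2a+4b$ and $q_n=a+3b$. The degree-one roots give spans $b+1,\ldots,a+b$ with label $2$, and the degree-two roots give spans $1,\ldots,b$ with label $3$. The real active edges are the first children $t=a+b+2j-1$ of the degree-two vertices $m=2a+3b+j$. Each has label $\sigma_2(1)=2$ and span $d_m+1=a+2b+1-j$, so these spans are $a+b+1,\ldots,a+2b$. The check you propose, that each such span falls where the virtual labels equal $2$, is therefore false as stated: no virtual edge has any of these spans.

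What actually happens is that the real and virtual span ranges are disjoint. Every span in $\{1,\ldots,a+2b\}$ is carried by exactly one active edge, so the Toeplitz condition holds vacuously here too, with profile $3^b2^{a+b}$.

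Your fallback through \eqref{eq:toeplitz-leaf} reaches this correctly. By the bijection of Lemma~\ref{lem:edge-leaf}, the leaf condition alone is equivalent to Toeplitz.
\begin{itemize}
\item The leaves $m\le a+b$ have $d_m=m$. They have $\Delta_m=3$ for $m\le b$ and $\Delta_m=2$ for $b<m\le a+b$.
\item The leaves $m=a+b+2i$ have $d_m=a+b+i$ and $\Delta_m=2$.
\end{itemize}
Each depth occurs once. With this correction your argument is complete and coincides with the paper's.
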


\begin{proof}
The three base root families in Theorem~\ref{thm:root-classification} have
profiles
\begin{equation}
\label{eq:base-toeplitz-profiles}
 0^b\mapsto1^b,
 \qquad
 0^a1^b\mapsto2^b1^a,
 \qquad
 1^a2^b\mapsto3^b2^{a+b}.
\end{equation}
Appendix~\ref{app:toeplitz-bases} checks every active edge in these families.
The classification descends every anti-\(\mathsf S\) root to one of them.
Theorem~\ref{thm:toeplitz-suspension} carries Toeplitz through every upward
suspension.
\end{proof}

\subsection{The minimum law}

For \(0\leq m\leq n\), set
\begin{equation*}
 b_m=p_{n-m}+p_{q_m}.
\end{equation*}
For \(1\leq m\leq n\), set
\[
 \Delta_m=p_{n+1-m}-p_{n-m}=\omega_{n+1-m},
 \qquad
 \underline b_{m-1}=\min_{0\leq r<m}b_r.
\]
For a finite real sequence \(x\) on a domain containing zero, define its
first-passage set by
\[
 \FP(x)=\{0\}\cup
 \{t>0:x(t)>x(s)\text{ for every }0\leq s<t\}.
\]

\Needspace{5\baselineskip}
The potential is constrained by the following minimum law.
\begin{definition}
The word satisfies the \(b\)-minimum property (BM) if, for
\(1\leq m\leq n\),
\begin{equation}
\label{eq:BM}
 \begin{cases}
 b_m=\underline b_{m-1}-\Delta_m,&m\in\FP(d),\\
 b_m\geq\underline b_{m-1},&m\notin\FP(d).
 \end{cases}
\end{equation}
\end{definition}

Under \(\mathsf S\), the sum of the source gaps at degree \(L=c_m\) is
\[
 p_{q_m}-p_{q_{m-1}}
 =\sum_{j=1}^L\sigma_L(j)
 =\binom{L+1}{2}.
\]
Consequently,
\begin{equation}
\label{eq:b-increment}
 b_m-b_{m-1}=\binom{c_m+1}{2}-\Delta_m.
\end{equation}

\Needspace{5\baselineskip}
The potential has an exact expression in the labels.
\begin{lemma}

For every anti-palindromic Dyck word satisfying \(\mathsf S\),
\begin{equation}
\label{eq:potential}
 b_m=b_0-\sum_{h=1}^{d_m}\alpha_h
 \qquad(0\leq m\leq n).
\end{equation}
\end{lemma}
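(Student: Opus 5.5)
We argue by induction on \(m\), using the increment identity \eqref{eq:b-increment},
\[
 b_m-b_{m-1}=\binom{c_m+1}{2}-\Delta_m .
\]
We must show that this equals \(-\sum_{h=1}^{d_m}\alpha_h+\sum_{h=1}^{d_{m-1}}\alpha_h\). The canonical forest is Toeplitz by the preceding theorem, so the labels \(\alpha_h\) are well defined for \(1\leq h\leq D\), and the characterizations \eqref{eq:toeplitz-leaf} and \eqref{eq:toeplitz-node} are available. The base case \(m=0\) is trivial: \(d_0=0\) and the sum is empty.

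For the inductive step, recall that \(d_m=d_{m-1}+1-c_m\). We split into two cases according to the degree \(L=c_m\).

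In the first case, \(L=0\). Then \(d_m=d_{m-1}+1\), and the required increment is \(-\alpha_{d_m}\). Here \eqref{eq:b-increment} gives \(b_m-b_{m-1}=-\Delta_m\), and \eqref{eq:toeplitz-leaf} gives \(\Delta_m=\alpha_{d_m}\). Note that \(d_m\geq1\) because \(m\geq1\) and the word is Dyck, so \(\alpha_{d_m}\) is defined.

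In the second case, \(L>0\). Then \(d_m=d_{m-1}+1-L\leq d_{m-1}\), and the required increment is
\[
 +\sum_{h=d_m+1}^{d_{m-1}}\alpha_h .
\]
By \eqref{eq:toeplitz-node}, this block of labels is \(\rev(\sigma_L(1),\ldots,\sigma_L(L-1))\). Since \(\sigma_L\) is a permutation of \(1,\ldots,L\) with \(\sigma_L(L)=1\), the block sums to \(\binom{L+1}{2}-1\). On the other side, \eqref{eq:positive-degree-mirror} gives \(\Delta_m=1\) because \(\omega_m=L+1>1\). Hence \eqref{eq:b-increment} gives \(b_m-b_{m-1}=\binom{L+1}{2}-1\), which matches. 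This is the same computation already carried out in the proof of Lemma~\ref{lem:core-queue}, with \(\check c\) replaced by the mirrored quantity.

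The only delicate point is bookkeeping. The value \(\Delta_m=p_{n+1-m}-p_{n-m}\) is the reflected gap \(\omega_{n+1-m}\), and \eqref{eq:toeplitz-leaf} is stated for exactly this \(\Delta_m\) at leaves, via Lemma~\ref{lem:edge-leaf}. So I would check that the leaf \(r=m\) in Lemma~\ref{lem:edge-leaf} has \(d_m\) equal to the span of its dual active edge. That is precisely the content of \(d_r=m-t\) together with the Toeplitz definition, so the identification holds.

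I would also verify that \(\sigma_L(L)=1\) follows from \eqref{eq:sigma-gap}: \(2L>L\) gives \(2(L-L)+1=1\). This justifies the sum \(\binom{L+1}{2}-1\) in the second case. With these two checks the induction closes and gives \eqref{eq:potential}.
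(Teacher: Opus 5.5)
Your proposal is correct and follows essentially the same argument as the paper. The paper also inducts on \(m\) using \eqref{eq:b-increment}. When \(c_m=0\) it applies \eqref{eq:toeplitz-leaf}. When \(c_m=L>0\) it takes \(\Delta_m=1\) from \eqref{eq:positive-degree-mirror} and the block sum \(\binom{L+1}{2}-1\) from \eqref{eq:toeplitz-node}. Your side checks, \(\sigma_L(L)=1\) and \(d_m\geq1\) in the leaf case, are small facts the paper uses implicitly.
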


\begin{proof}
For \(m=0\), both sides of \eqref{eq:potential} equal \(b_0\).  If \(c_m=0\), then
\(d_m=d_{m-1}+1\).  Equations \eqref{eq:toeplitz-leaf} and
\eqref{eq:b-increment} give the change \(-\alpha_{d_m}\).

If \(c_m=L>0\), equation \eqref{eq:positive-degree-mirror} gives
\(\Delta_m=1\), and \eqref{eq:toeplitz-node} gives
\[
 b_m-b_{m-1}
 =\binom{L+1}{2}-1
 =\sum_{h=d_m+1}^{d_{m-1}}\alpha_h.
\]
This is exactly the change on the right side of \eqref{eq:potential}.  The
case \(L=1\) has an empty sum.
\end{proof}

\Needspace{5\baselineskip}
The minimum law holds throughout the classification.
\begin{theorem}
\label{thm:ASBM}
Every anti-palindromic Dyck word satisfying \(\mathsf S\) satisfies BM.
\end{theorem}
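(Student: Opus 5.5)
The plan is to reduce BM to the potential formula \eqref{eq:potential}, which holds for every anti-palindromic Dyck word satisfying \(\mathsf S\) because every such canonical forest is Toeplitz (the preceding theorem). Writing \(\Phi(k)=\sum_{h=1}^{k}\alpha_h\), we have \(b_m=b_0-\Phi(d_m)\). All labels \(\alpha_h=\omega_t\) are gap lengths, hence positive, so \(\Phi\) is strictly increasing on \(\{0,\dots,D\}\). Consequently
\[
 \underline b_{m-1}=b_0-\Phi\Bigl(\max_{0\leq r<m}d_r\Bigr),
\]
and \(b_m\geq\underline b_{m-1}\) holds exactly when \(d_m\leq\max_{r<m}d_r\). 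This settles the case \(m\notin\FP(d)\) at once.

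For \(m\in\FP(d)\) with \(m\geq1\), note that \(d_m-d_{m-1}=1-c_m\leq1\). A first passage must therefore be an up-step with \(c_m=0\), \(d_m=d_{m-1}+1\), and \(d_{m-1}=\max_{r<m}d_r\). Then
\[
 b_m=\underline b_{m-1}-\alpha_{d_m}.
\]
The Toeplitz leaf identity \eqref{eq:toeplitz-leaf} gives \(\Delta_m=\alpha_{d_m}\) because \(c_m=0\), and this is exactly the first line of \eqref{eq:BM}.

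I expect the only delicate points to be bookkeeping. First, the convention \(\FP\) includes \(0\) while BM ranges over \(m\geq1\). Second, \eqref{eq:toeplitz-leaf} was derived via Lemma~\ref{lem:edge-leaf} from an active edge of span \(d_m\), possibly a virtual one. I will check that the leaf \(m\) is dual to a real or virtual active edge of span \(m'-t=d_m\), so that its label is indeed \(\alpha_{d_m}\) under the Toeplitz definition, which includes virtual edges. Positivity of the \(\alpha_h\) is immediate from \(\omega_t\geq1\), so the main obstacle is already absorbed by the Toeplitz theorem, and no further case analysis is needed.
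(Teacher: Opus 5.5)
Your proposal is correct and follows the paper's proof essentially step for step. Both write \(b_m=b_0-\sum_{h\le d_m}\alpha_h\) via the Toeplitz potential formula, use positivity of the labels to identify \(\underline b_{m-1}\) with the value at the running maximum of \(d\), settle nonrecords by monotonicity, and settle records (necessarily up-steps with \(c_m=0\)) by the leaf identity \eqref{eq:toeplitz-leaf}. The paper's separate boundary remarks at \(m=1\), \(m=n\) and empty source factors are already covered by your uniform argument, since \(d_m-d_{m-1}=1-c_m\) always holds and \(\Delta_n=\omega_1\) is always defined.
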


\begin{proof}
Every Toeplitz label is a positive gap.  The right side of
\eqref{eq:potential} is therefore a strictly decreasing function of \(d_m\).
The minimum among \(b_0,\ldots,b_{m-1}\) occurs exactly at the indices where
\(d\) has reached its current maximum.  A new record of \(d\) is a step with
\(c_m=0\) and raises that maximum by one.  Equation
\eqref{eq:toeplitz-leaf} then gives the first clause of \eqref{eq:BM}.  At a
nonrecord, \(d_m\) does not exceed the previous maximum, and
\eqref{eq:potential} gives the second clause.

At \(m=1\), the Dyck condition gives \(c_1=0\), so \(d_1=1\) is a record.  At
\(m=n\), the quantity \(\Delta_n=p_1-p_0=\omega_1\) is defined.  If \(q_m=0\),
then \(c_1=\cdots=c_m=0\), so \(m\) is again a record.  These observations
cover the initial, terminal, and empty-factor boundaries.
\end{proof}

The records determine how the maximum \(V_r\), the last first-passage height
of \(P_r\), is transported.  Section~\ref{sec:records} tracks every first
passage, including the maximum, under \(T\) and \(T^2\).  These images order
the fibers in Section~\ref{sec:fibers}.

\section{Records}
\label{sec:records}

\subsection{Record synchronization}

For a word \(W\), write \(\Rec(h_W)=\FP(h_W)\).  Every positive record cut of
a Dyck word occurs immediately after a zero.

For a Dyck word \(W\) of length \(2n\), define
\begin{equation}
\label{eq:pi-E}
 \pi(0)=0,
 \qquad \pi(k)=p_{O_W(k-1)}\quad(1\leq k\leq2n+1),
 \qquad E(k)=k-\pi(\pi(k)).
\end{equation}

\Needspace{5\baselineskip}
The synchronization property couples the first passages of $E$ with the records of the height.
\begin{definition}
The word \(W\) satisfies record synchronization (RS) if
\begin{equation*}
 k\in\FP(E)
 \quad\Longleftrightarrow\quad
 \pi(k)\in\Rec(h_W)
 \qquad(1\leq k\leq2n+1).
\end{equation*}
\end{definition}

\Needspace{5\baselineskip}
The minimum law is equivalent to record synchronization.
\begin{theorem}
\label{thm:BM-RS}
For every anti-palindromic Dyck word, BM is equivalent to RS on the exact
domain \(1\leq k\leq|W|+1\).
\end{theorem}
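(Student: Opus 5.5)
We translate both sides of RS into statements about the zero-cut data \(p,q,d\) of \(W\), and then read BM off the resulting comparison.

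\emph{Step 1: the right side of RS.} Since \(W\) is a Dyck word, every positive record cut of \(h_W\) is a zero cut \(p_m\), and \(h_W(p_m)=d_m\). Hence \(p_m\in\Rec(h_W)\) if and only if \(m\in\FP(d)\). Now \(\pi(k)=p_{O_W(k-1)}\) is always a zero cut. So the right side of RS says exactly that the index \(O_W(k-1)\) lies in \(\FP(d)\), where \(k=1\) gives \(\pi(1)=0\in\Rec\).

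\emph{Step 2: the left side of RS.} Write \(j=O_W(k-1)\), so \(\pi(k)=p_j\). Then \(\pi(\pi(k))=p_{O_W(p_j-1)}\). For \(j\ge1\), the cut \(p_j\) sits right after the \(j\)-th zero, so \(O_W(p_j-1)=q_j\). This gives
\[
 E(k)=k-p_{q_j}.
\]
The function \(\pi\) is constant on each block of \(k\) sharing the same \(j\), and on such a block \(E\) increases by one per step. Inside a block, \(E\) can therefore have a first passage only at the block's first index \(k_j\), unless the block already began a passage. So it suffices to compare the block-initial values \(E(k_j)\) with the running maximum.

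The first \(k\) with \(O_W(k-1)=j\) is \(k_j=s_j+1\), where \(s_j\) is the cut after the \(j\)-th one. The previous block ends at \(k_j-1\) with value \(E(k_j-1)=s_j-p_{q_{j-1}}\). The passage condition at \(k_j\) therefore compares \(s_j+1-p_{q_j}\) with the previous maximum, which is attained at the end of some earlier block.

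\emph{Step 3: use anti-palindromicity.} Reflection gives \(s_j=2n-p_{n-j}\), via the terminal sentinel. Hence, up to a constant,
\[
 E(\text{block end } j)=2n-b_j .
\]
A first passage at block start \(j\) then reads \(2n+1-b_j>\max_{r<j}(2n-b_r)\), that is, \(b_j\le\underline b_{j-1}\).

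\emph{Step 4: match the two sides.} It remains to show that BM is equivalent to the statement
\[
 b_j\le\underline b_{j-1}\iff j\in\FP(d).
\]
BM gives one direction: a record yields \(b_j=\underline b_{j-1}-\Delta_j<\underline b_{j-1}\), and a nonrecord yields \(b_j\ge\underline b_{j-1}\). Two points need care:
\begin{itemize}
\item the tie case \(b_j=\underline b_{j-1}\) at a nonrecord, which affects whether the inequality is strict;
\item the exact decrement \(\Delta_j\) in BM.
\end{itemize}

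For the exact decrement, we track not only whether each block start is a first passage but how many first passages the block contains. If the block start exceeds the old maximum by \(x\), the first passages continue through the block, which has length \(s_{j+1}-s_j=\Delta\)-related by reflection. Matching this count forces \(b_j=\underline b_{j-1}-\Delta_j\) exactly. Conversely, we need that no first passage occurs at a nonrecord block, which is the inequality clause of BM. This is what makes the equivalence run in both directions.

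\emph{Main obstacle.} The hardest part is the index bookkeeping: the off-by-one between \(O_W(k-1)\) and block starts, the reflection identity for \(s_j\), and the boundary blocks. These are \(j=0\) (where \(q_0=0\) and \(\pi(1)=0\)) and the terminal index \(k=2n+1\) with the sentinel \(p_{n+1}\). I would handle these by checking \(j=0\) and \(j=n\) directly and running the main comparison only for \(1\le j\le n\). If the exact constant in Step 3 is off by one, the fallback is to verify the identity on \(A_0=0101\) and recalibrate.
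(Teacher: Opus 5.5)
Your plan matches the paper's: cut the domain into the blocks where \(\pi\) is constant, note that \(E\) is a unit ramp on each block, rewrite the ramp values through \(b\) by reflection, and compare with the running maximum. Steps~1 and~2 are sound, apart from the claim about where first passages can start. The execution, however, has a genuine gap exactly where the \(\Delta_j\) of BM is produced.

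First, your reflection identity is wrong. The \(j\)-th one reflects to the \((n+1-j)\)-th zero, so \(s_j=2n+1-p_{n+1-j}\), not \(2n-p_{n-j}\); for \(0101\) one has \(s_1=2\), not \(3\). Hence block \(j=\{s_j+1,\ldots,s_{j+1}\}\) has length \(\Delta_j\). On it, \(E\) runs from \(2n+2-b_j-\Delta_j\) up to \(2n+1-b_j\), and the previous maximum is \(2n+1-\underline b_{j-1}\). Your start value \(2n+1-b_j\) is off by \(\Delta_j-1\), which is not a constant, so recalibrating on an example cannot repair it. Your own formulas are also inconsistent: your start value exceeds your end value \(2n-b_j\), although \(E\) increases along the block.

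Second, the claim that a first passage inside a block can occur only at its first index is false. A ramp that starts below the running maximum can cross it midway. For a nonrecord block, RS must therefore be tested at the last value, and that test gives exactly \(b_j\geq\underline b_{j-1}\).

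Third, and most importantly, you are missing the observation that forces the equality clause. Since \(\pi\), and hence \(\pi^2\), is nondecreasing, \(E(k+1)-E(k)\leq1\) everywhere. So the start of block \(j\) is at most the previous maximum plus one. In a record block, RS makes that start a first passage, so it must equal the previous maximum plus one, which is precisely \(b_j=\underline b_{j-1}-\Delta_j\). Counting first passages in the block cannot replace this step: once the start of a ramp is a first passage, every later point of it is too, whatever the overshoot.

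Consequently your Step~4 target \(b_j\leq\underline b_{j-1}\iff j\in\FP(d)\) is equivalent to neither side. Already for \(0101\), the nonrecord \(j=2\) has \(b_2=\underline b_1=1\), while BM and RS both hold there. With the correct ramp endpoints and the increment bound, both directions of the theorem follow immediately block by block. Block \(0\), where \(E(k)=k\), is automatic.
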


\begin{proof}
Use the one cuts \(s_m\) and the terminal sentinels from
Definition~\ref{def:zero-coordinates}.
The fiber of \(\pi\) above \(p_m\), restricted to the RS domain, is
\begin{equation*}
 \mathcal K_m=\{s_m+1,\ldots,s_{m+1}\}.
\end{equation*}
Anti-palindromicity gives
\[
 s_m=2n+1-p_{n+1-m}.
\]
On \(\mathcal K_m\), one has \(\pi^2(k)=p_{q_m}\), so \(E\) is an increasing
ramp.  Its first and last values are
\begin{equation}
\label{eq:E-ramp-ends}
 2n+2-b_m-\Delta_m,
 \qquad 2n+1-b_m.
\end{equation}
Before this ramp, the largest value of \(E\) is
\begin{equation}
\label{eq:E-previous-max}
 2n+1-\underline b_{m-1}.
\end{equation}
The function \(\pi\) is nondecreasing, and hence
\[
 E(k+1)-E(k)\leq1.
\]

The cut \(p_m\) is a height record exactly when \(m\in\FP(d)\).  If it is a
record, RS requires every point of the ramp \(\mathcal K_m\) to be a first
passage of \(E\).  Its first value must therefore equal the previous maximum
plus one.  Equations \eqref{eq:E-ramp-ends} and
\eqref{eq:E-previous-max} give
\[
 b_m=\underline b_{m-1}-\Delta_m.
\]
If \(p_m\) is not a record, RS requires no point of the ramp to be a first
passage.  Its last value must not exceed the previous maximum, which gives
\[
 b_m\geq\underline b_{m-1}.
\]
These are the two BM clauses, and every implication is reversible.

For \(m=0\), one has \(\pi(k)=0\) and \(E(k)=k\) on the first ramp, so RS is
automatic.  The terminal sentinel of
Definition~\ref{def:zero-coordinates} includes the last domain point in
\(\mathcal K_n\).
\end{proof}

\Needspace{5\baselineskip}
Every orbital core synchronizes its records.
\begin{corollary}
\label{cor:orbital-RS}
Every orbital core \(A_r\) satisfies RS.
\end{corollary}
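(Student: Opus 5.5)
The corollary follows by chaining results already established for the orbital cores; no new combinatorial input is needed.

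By Corollary~\ref{cor:orbital-anti}, each core satisfies \(A_r=A_r^{\Star}\), so \(A_r\) is anti-palindromic. By Corollary~\ref{cor:orbital-S}, each \(A_r\) is a Dyck word satisfying \(\mathsf S\). It is nonempty, since \(|A_r|=2a_r-2\geq4\).

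Hence \(A_r\) lies in the class covered by Theorem~\ref{thm:ASBM}, and \(A_r\) satisfies BM. Concretely, \(A_r\) is a canonical word by the converse part of Lemma~\ref{lem:root-grammar}, with root word \(R_r=(1,3,\ldots,2r+1)\) from Corollary~\ref{cor:orbital-roots}. This root has the form \eqref{eq:classified-root-positive}, namely \(\mathcal J_M(M-1)^aM^b\) with \(M=2r+1\), \(a=0\) and \(b=1\). Its canonical forest is therefore Toeplitz, which is exactly the hypothesis Theorem~\ref{thm:ASBM} uses through the potential formula \eqref{eq:potential}.

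Finally, Theorem~\ref{thm:BM-RS} states that for an anti-palindromic Dyck word, BM is equivalent to RS on the domain \(1\leq k\leq|W|+1\). Applying it to \(W=A_r\) converts BM into RS, which is the claim.

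The only point needing care is that the hypotheses match exactly. Theorem~\ref{thm:BM-RS} is stated for anti-palindromic Dyck words and uses the one cuts \(s_m\) and the terminal sentinels of Definition~\ref{def:zero-coordinates}. These are available for \(A_r\), since it is a nonempty Dyck word of length \(2n\) with \(n=a_r-1\geq2\). So the step I expect to need the most attention is confirming that the classification-based Toeplitz property applies to the specific root word \(R_r\). The check above, \(M=2r+1\), \(a=0\), \(b=1\), \(\mathcal J_{2r+1}=(1,3,\ldots,2r-1)\), settles this.
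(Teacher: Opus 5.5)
Your proof is correct and matches the paper's own argument. Corollaries~\ref{cor:orbital-S} and \ref{cor:orbital-anti} place \(A_r\) under Theorem~\ref{thm:ASBM}, which gives BM, and Theorem~\ref{thm:BM-RS} then converts BM into RS. Your extra check that \(R_r=\mathcal J_{2r+1}(2r+1)\) lies in the classified family, so that its forest is Toeplitz, is correct but redundant, since Theorem~\ref{thm:ASBM} already covers every anti-palindromic Dyck word satisfying \(\mathsf S\).
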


\begin{proof}
Corollaries~\ref{cor:orbital-S} and \ref{cor:orbital-anti} place \(A_r\) in
the domain of Theorem~\ref{thm:ASBM}.  Theorems~\ref{thm:ASBM} and
\ref{thm:BM-RS} give RS.
\end{proof}

\subsection{Transport under \texorpdfstring{\(T\)}{T} and \texorpdfstring{\(T^2\)}{T2}}

\Needspace{5\baselineskip}
The functions $\theta_W$ and $e_W$ have the following elementary properties.
\begin{lemma}
\label{lem:theta-e}
For a Dyck word \(W\), the function \(\theta_W\) is nondecreasing and
\(\theta_W(t)<t\) for \(t\geq1\).  Moreover,
\begin{equation}
\label{eq:e-height}
 e_W(t)=h_W(t)+d_{O_W(t)}.
\end{equation}
The increment of \(e_W\) is one at a zero of \(W\) and is nonpositive at a
one.
\end{lemma}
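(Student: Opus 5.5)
Everything follows from the definitions \(\theta_W(t)=p_{O_W(t)}\) and \(e_W(t)=t-\theta_W(t)\), together with the zero coordinates of Definition~\ref{def:zero-coordinates}.

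\emph{Monotonicity.} The count \(O_W(t)\) is nondecreasing in \(t\), and the zero cuts satisfy \(p_0<p_1<\cdots<p_n\). Hence \(\theta_W\) is nondecreasing.

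\emph{Strict inequality.} I would reuse the argument of Lemma~\ref{lem:T-dyck}. Put \(s=O_W(t)\); then \(\theta_W(t)=p_s=s+q_s\). In a Dyck prefix of length \(t\), the number \(s\) of ones is at most the number \(t-s\) of zeros, so at least \(s\) zeros have occurred. Since \(p_s\) is the cut just after the \(s\)-th zero, this gives \(p_s\leq t\).

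Now suppose \(p_s=t\) with \(t\geq1\). Then the \(s\)-th zero is the last letter of the prefix, and the prefix contains exactly \(s\) zeros. But it also contains \(s\) ones, so its length is \(t=2s\geq p_s+s>p_s\) whenever \(s\geq1\), a contradiction. If \(s=0\), then \(\theta_W(t)=0<t\) directly. This separates the degenerate case \(s=0\) from the rest.

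\emph{Identity \eqref{eq:e-height}.} Write \(s=O_W(t)\). Then \(t=Z_W(t)+s\) and \(h_W(t)=Z_W(t)-s\). Also \(p_s=s+q_s\) and \(d_s=s-q_s\). Substituting,
\[
e_W(t)=t-s-q_s=Z_W(t)-q_s=\bigl(h_W(t)+s\bigr)-(s-d_s)=h_W(t)+d_s.
\]
This is a direct substitution.

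\emph{Increments.} At a zero letter, \(O_W\) is unchanged, so \(\theta_W\) is unchanged and \(e_W\) increases by exactly one.

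At the \(s\)-th one, \(O_W\) goes from \(s-1\) to \(s\), so \(\theta_W\) increases by \(p_s-p_{s-1}=\omega_s\geq1\). The change in \(e_W\) is therefore \(1-\omega_s=-c_s\leq0\).

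The only point needing care is the strictness \(\theta_W(t)<t\). It relies on the observation that equality would make the prefix end with a zero while containing equally many ones as zeros, which is the same argument as in Lemma~\ref{lem:T-dyck}.
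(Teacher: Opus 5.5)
Your monotonicity argument, the substitution proving \eqref{eq:e-height}, and the increment computation (no change of \(\theta_W\) at a zero, a change of \(\omega_s\) at the \(s\)-th one, so \(e_W\) moves by \(1-\omega_s=-c_s\leq0\)) are correct and match the paper's proof. The strictness step, however, fails as written. Assume \(p_s=t\) with \(s\geq1\). You correctly deduce that \(W_{t-1}\) is the \(s\)-th zero and that \(t=2s\). But then all \(s\) ones of the prefix precede that zero, so \(q_s=s\) and \(p_s=2s\). Your inequality \(t=2s\geq p_s+s\) therefore reads \(2s\geq3s\), which is false and is not justified by anything before it. Moreover, that paragraph never uses the Dyck condition, and it must. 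For the non-Dyck word \(10\), with zero cuts defined the same way, one has \(\theta(2)=p_1=2\), so equality genuinely occurs without it.

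The missing step is the one the paper uses, and your closing remark points toward it. Under \(p_s=t\), the prefix \(W[0:t-1)\) has \(s-1\) zeros and \(s\) ones, so \(h_W(t-1)=-1\), contradicting the Dyck property. Alternatively, you can argue directly. The Dyck condition at the cut \(p_s-1\) gives \(q_s\leq s-1\), and the Dyck condition at the cut \(t\) gives \(Z_W(t)\geq O_W(t)=s\). Hence, for \(s\geq1\),
\[
 \theta_W(t)=p_s=s+q_s\leq 2s-1<2s\leq Z_W(t)+O_W(t)=t .
\]
With either replacement, together with your separate treatment of \(s=0\), the proof is complete.
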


\begin{proof}
The function \(O_W\) is nondecreasing and the zero cuts are increasing.  The
Dyck inequality gives \(\theta_W(t)\leq t\).  Equality at positive \(t\)
would force the preceding height to be \(-1\), as in Lemma
\ref{lem:T-dyck}.  If \(m=O_W(t)\), then
\[
 e_W(t)=t-p_m=Z_W(t)-q_m=h_W(t)+m-q_m.
\]
This is \eqref{eq:e-height}.  The increment statement follows directly from
the definition of \(\theta_W\).
\end{proof}

\Needspace{5\baselineskip}
The records of an image under $T$ have an explicit description.
\begin{theorem}
\label{thm:T-records}
Let \(W\) be a Dyck word of length \(2n\), let \(Z=T(W)\), and let
\(p'_k,d'_k\) be its zero data, with \(p'_0=q'_0=d'_0=0\).  Then
\begin{equation}
\label{eq:T-record-set}
 \Rec(h_Z)=\{0\}\cup
 \{p'_k:1\leq k\leq2n+1,\ k-1\in\FP(e_W)\},
 \qquad d'_k=e_W(k-1)+1
\end{equation}
for \(1\leq k\leq2n+1\).
With \(\pi(k)=\theta_W(k-1)\),
\begin{equation}
\label{eq:theta-parent-transport}
 \theta_Z(p'_k)=p'_{\pi(k)}
 \qquad(1\leq k\leq2n+1).
\end{equation}
If
\[
 \pi(0)=E(0)=0,
 \qquad E(k)=e_Z(p'_k)\quad(1\leq k\leq2n+1),
\]
then
\begin{equation}
\label{eq:E-id-pi2}
 E(k)=k-\pi^2(k),
\end{equation}
and
\begin{equation}
\label{eq:FP-eZ}
 \FP(e_Z)=\{0\}\cup
 \{p'_k:k\in\FP(E),\ 1\leq k\leq2n+1\}.
\end{equation}
For every \(1\leq k\leq2n+1\), one also has the four-head formula
\begin{equation}
\label{eq:four-head}
 E(k)=1+h_W(k-1)+2h_W(\pi(k))+h_W(\pi^2(k)).
\end{equation}
When \(\pi(k)=0\), its last two height terms vanish and the formula reduces
to \(E(k)=k=1+h_W(k-1)\).
\end{theorem}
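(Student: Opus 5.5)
The plan is to derive every identity from the closed form of the zero data of \(Z=T(W)\) in Lemma~\ref{lem:zero-data}, namely \(q'_k=\theta_W(k-1)=\pi(k)\) and \(p'_k=k+\pi(k)\) for \(1\leq k\leq2n+1\). These formulas also hold at \(k=0\) with the conventions \(\pi(0)=0\) and \(p'_0=0\), so I will use them uniformly on \(0\leq k\leq2n+1\). The depth formula is then immediate:
\[
 d'_k=k-q'_k=1+(k-1)-\theta_W(k-1)=1+e_W(k-1).
\]
For the record set I will use the observation recorded before Definition~\ref{def:zero-coordinates}'s uses in Section~\ref{sec:records}: a positive record of a Dyck word occurs immediately after a zero. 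Between consecutive zero cuts the height of \(Z\) only descends, and \(h_Z(p'_k)=d'_k\). Hence \(p'_k\) is a record exactly when \(d'_k\) strictly exceeds \(d'_0=0\) and every earlier \(d'_j\). By the depth formula this is the condition \(k-1\in\FP(e_W)\). The index \(k=1\) is consistent, since \(0\in\FP(e_W)\) and \(d'_1=1\) is a record.

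For the parent transport \eqref{eq:theta-parent-transport}, I note that at the cut \(p'_k\) the number of ones of \(Z\) already read is \(q'_k=\pi(k)\). Therefore
\[
 \theta_Z(p'_k)=p'_{O_Z(p'_k)}=p'_{\pi(k)}.
\]
This is legitimate because \(\pi(k)\leq k-1\) lies in the range where the zero-data formula applies. Then
\[
 E(k)=p'_k-p'_{\pi(k)}=k+\pi(k)-\pi(k)-\pi^2(k),
\]
which is \eqref{eq:E-id-pi2}. For \eqref{eq:FP-eZ}, Lemma~\ref{lem:theta-e} applied to \(Z\) says that \(e_Z\) rises by one at each zero and does not increase at each one. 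So a strict first passage of \(e_Z\) can occur only at a cut immediately after a zero, that is, at some \(p'_k\). Moreover, the running maximum of \(e_Z\) before \(p'_k\) is attained at an earlier zero cut or at \(0\). The first-passage condition at \(p'_k\) therefore reads \(E(k)>E(j)\) for all \(j<k\), with \(E(0)=0\), which is \(k\in\FP(E)\).

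For the four-head formula \eqref{eq:four-head}, I will apply \eqref{eq:e-height} twice, using \(h_W(p_m)=d_m\). Write \(m=O_W(k-1)\), so that \(\pi(k)=p_m\). The first application gives
\[
 k-1-\pi(k)=e_W(k-1)=h_W(k-1)+h_W(\pi(k)).
\]
If \(\pi(k)\geq1\), then \(m\geq1\), and the letter just before \(p_m\) is a zero, so \(h_W(\pi(k)-1)=h_W(\pi(k))-1\). Since \(\pi^2(k)=\theta_W(\pi(k)-1)\), the second application gives
\[
 \pi(k)-1-\pi^2(k)=h_W(\pi(k)-1)+h_W(\pi^2(k))=h_W(\pi(k))-1+h_W(\pi^2(k)).
\]
Adding the two identities and using \(E(k)=k-\pi^2(k)\) yields \eqref{eq:four-head}. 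If instead \(\pi(k)=0\), then \(\pi^2(k)=0\) and \(d_0=0\), so \(E(k)=k=1+h_W(k-1)\).

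I do not expect a conceptual obstacle. The main risk is bookkeeping at the boundaries: the extra index \(k=2n+1\), where \(\theta_W(2n)\) is needed; the convention \(0\in\FP\); and the requirement that records be strict. The step needing the most care is showing that maxima of \(h_Z\) and of \(e_Z\) before \(p'_k\) are attained at zero cuts, including the cut \(0\). I will handle this directly with the monotonicity statements of Lemma~\ref{lem:theta-e}.
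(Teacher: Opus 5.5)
Your proposal is correct and follows the paper's proof essentially step by step. The zero-data formulas of Lemma~\ref{lem:zero-data} give \(d'_k\), \eqref{eq:theta-parent-transport} and \eqref{eq:E-id-pi2}, and the fact that \(h_Z\) and \(e_Z\) do not increase along runs of ones places records and first passages at zero cuts; the only cosmetic difference is in \eqref{eq:four-head}, where you apply \eqref{eq:e-height} twice in \(W\) rather than combining height addition with \(E(k)=d'_k+d'_{\pi(k)}\), which is the same computation (and the comparison with \(d'_0=0\) is automatic because \(e_W\geq0\)).
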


\begin{proof}
Lemma~\ref{lem:zero-data} gives
\[
 d'_k=k-q'_k=e_W(k-1)+1.
\]
Between two consecutive zero cuts, a binary height first decreases through
the intervening ones and then increases by one.  All strict records therefore
occur at zero cuts.  This proves \eqref{eq:T-record-set}.

At \(p'_k\), the number of ones already emitted is \(q'_k=\pi(k)\).  The
definition of \(\theta_Z\) gives \eqref{eq:theta-parent-transport}.  Since
\(p'_m=m+\pi(m)\),
\[
 E(k)=p'_k-p'_{\pi(k)}=k-\pi^2(k),
\]
which is \eqref{eq:E-id-pi2}.  Lemma~\ref{lem:theta-e} applied to \(Z\)
shows that first passages of \(e_Z\) occur at zero cuts.  Restriction to those
cuts gives \eqref{eq:FP-eZ}.

The height-addition identity at the zero cut \(p'_k\) gives
\[
 d'_k=1+h_W(k-1)+h_W(\pi(k)).
\]
The convention \(d'_0=0\) in
Definition~\ref{def:zero-coordinates} makes
\(E(k)=d'_k+d'_{\pi(k)}\) valid also when \(\pi(k)=0\).  If
\(\pi(k)\geq1\), this cut follows a zero,
so \(h_W(\pi(k)-1)=h_W(\pi(k))-1\).  Substitution gives
\eqref{eq:four-head}.  If \(\pi(k)=0\), the initial zero run gives
\(E(k)=k=1+h_W(k-1)\).  Since \(\pi^2(k)=0\) and \(h_W(0)=0\), this is the
same displayed formula.
\end{proof}

\subsection{Projection, compression, and adjacency}

For a nonempty Dyck word \(X\), put
\[
 \mathcal D_\psi(X)=\{y\in\{0,\ldots,|X|\}:O_X(y)<Z(X)\}
\]
and define
\begin{equation}
\label{eq:compression-map}
 \psi_X(y)=p^X_{O_X(y)+1}.
\end{equation}
The word \(X\) satisfies record compression (RC) if \(\psi_X\) maps every record cut in its domain
to a record cut.

For a core \(A\), define \(\mathsf{Cl}_\theta(A)\) by
\begin{equation}
\label{eq:theta-closure}
 m\in\FP(d_A)\Longrightarrow q_m\in\FP(d_A)\cup\{0\}.
\end{equation}
If \(P=0A1\), then
\begin{equation}
\label{eq:phi-theta-shift}
 \psi_P(y)=\theta_A(y-1)+1
 \qquad(y\in\mathcal D_\psi(P),\ y\geq1),
\end{equation}
and therefore
\begin{equation}
\label{eq:RC-Cl-equivalence}
 RC(P)\quad\Longleftrightarrow\quad\mathsf{Cl}_\theta(A).
\end{equation}

Indeed, for every \(y\geq1\) in the displayed domain,
\[
 O_P(y)=O_A(y-1),
 \qquad
 p^P_{m+1}=p^A_m+1,
 \qquad
 h_P(y)=1+h_A(y-1).
\]
These identities prove \eqref{eq:phi-theta-shift}.  The boundary value
\(y=0\) is sent to \(\psi_P(0)=p^P_1=1\), which is a record cut.  If
\(y-1=p^A_m\) is any other record cut, then
\[
 \theta_A(p^A_m)=p^A_{q_m}.
\]
It is sent to a record cut, or to the initial cut, exactly when
\(q_m\in\FP(d_A)\cup\{0\}\).  This proves
\eqref{eq:RC-Cl-equivalence}, including both boundary cuts.

Let \(A\) be a nonempty Dyck word of length \(2n\), and fix the transition
\begin{equation}
\label{eq:transition-PBQ}
 P=0A1,
 \qquad B_T=T(A)=0N,
 \qquad Q=T(B_T)=T^2(A).
\end{equation}
Let \(\mathcal D_Q\) be the set of values \(O_Q(t)\) at record cuts \(t\) of
\(Q\).  The Law~1 head on its state-zero tape satisfies
\begin{equation}
\label{eq:fast-head}
 \operatorname{head}_0(t)=Z_N(t-1)\qquad(t\geq1).
\end{equation}
For \(\lambda\in\mathcal D_Q\), \(\lambda\geq2\), project \(\lambda\) to
\(\operatorname{head}_0(\lambda-1)+3\), and add the two initial values one
and two.  Record-parameter projection (RPP) is the assertion that this ordered projection is exactly the
set of positive record cuts of \(P\).

Put
\[
 V(P)=\max_{0\leq t\leq|P|}h_P(t),
 \qquad
 \rho_h=\min\{t:h_P(t)=h\}\quad(1\leq h\leq V(P)).
\]
Under RPP, denote the corresponding one-count parameter of \(Q\) by
\(\lambda_h\), and put
\[
 \chi_h=h_Q(p^Q_{\lambda_h+1}).
\]
The adjacency property (Adj) is the assertion
\begin{equation}
\label{eq:Adj}
 \chi_{h+1}=\chi_h+1
 \quad\Longleftrightarrow\quad
 O_P(\rho_{h+1})=O_P(\rho_h).
\end{equation}
This assertion is quantified by \(1\leq h<V(P)\).

\Needspace{5\baselineskip}
The three record statements are equivalent to the following conditions on the
maps $\pi$ and $E$.
\begin{lemma}
\label{lem:compressed-records}
Let \(A\) be a nonempty Dyck word of length \(2n\), form the transition
\eqref{eq:transition-PBQ}, and use \(\pi,E\) from \eqref{eq:pi-E} for \(A\).
Then
\begin{enumerate}[label=\textup{(\roman*)}]
\item
\[
 \mathcal D_Q=\{0\}\cup
 \{p'_{\pi(k)}(B_T):k\in\FP(E)\cap\{1,\ldots,2n+1\}\},
\]
and the corresponding one-count parameter is
\[
 \lambda(k)=\pi(k)+\pi^2(k).
\]
\item The word \(Q\) satisfies RC if and only if
\[
 k\in\FP(E)\Longrightarrow\pi(k)\in\FP(E)\cup\{0\}.
\]
\item For every resulting parameter \(\lambda\geq2\),
\[
 \operatorname{head}_0(\lambda-1)+3=\pi(k)+1.
\]
This projection is strictly increasing.  The parameters \(\lambda=0,1\) always belong
to \(\mathcal D_Q\) and supply the padding values one and two.
\item RPP is equivalent to
\begin{equation}
\label{eq:RPP-compressed}
 \pi\bigl(\FP(E)\cap\{0,\ldots,2n+1\}\bigr)=\Rec(h_A).
\end{equation}
\end{enumerate}
\end{lemma}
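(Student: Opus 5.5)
The plan is to apply Theorem~\ref{thm:T-records} twice: first to \(W=A\), giving \(B_T=T(A)\) with zero data \(p'_k\), and then to \(W=B_T\), giving \(Q=T(B_T)\). All four items then become statements about the maps \(\pi\) and \(E\) of \(A\).

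\textbf{Item (i).} Apply \eqref{eq:T-record-set} to \(W=B_T\). The positive record cuts of \(Q\) are the zero cuts \(p^Q_j\) with \(j-1\in\FP(e_{B_T})\). By \eqref{eq:FP-eZ}, applied to \(W=A\), these are \(j-1=p'_k\) with \(k\in\FP(E)\), together with \(j-1=0\). At the zero cut \(p^Q_j\), the one count is \(q^Q_j=\theta_{B_T}(j-1)\) by \eqref{eq:T-zero-data}. For \(j-1=p'_k\), this equals \(p'_{\pi(k)}\) by \eqref{eq:theta-parent-transport}, and \(p'_{\pi(k)}=\pi(k)+\pi^2(k)\) because \(p'_m=m+\pi(m)\). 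This gives \(\lambda(k)\). The case \(j-1=0\) and the initial cut give the value \(0\).

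\textbf{Item (ii).} Use \eqref{eq:RC-Cl-equivalence} with \(Q=0\,\mathcal C(B_T)1\), which reduces RC to \(\mathsf{Cl}_\theta\) of the core. It is simpler to work on \(Q\) directly. By \eqref{eq:compression-map}, \(\psi_Q\) sends the record cut with one count \(\lambda\) to \(p^Q_{\lambda+1}\). That cut is a record exactly when \(\lambda\in\FP(e_{B_T})\), again by \eqref{eq:T-record-set}. With \(\lambda=p'_{\pi(k)}\), \eqref{eq:FP-eZ} says this holds exactly when \(\pi(k)\in\FP(E)\cup\{0\}\). The boundary value \(\lambda=0\) maps to \(p^Q_1\), which is always a record.

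\textbf{Item (iii).} This is a direct Law~1 head count. Lemma~\ref{lem:normal-form} gives \(B_T=0N\) with \(T(A)=\Interleave(P,A,0)\). So \(\operatorname{head}_0\) on \(N\) counts zeros of \(N\), and zeros of \(B_T\) correspond to reads from \(P=0A1\). For \(\lambda=\pi(k)+\pi^2(k)=p'_{\pi(k)}\), the prefix of \(N\) of length \(\lambda-2\) ends just before the \(\pi(k)\)-th zero of \(B_T\). This prefix contains \(\pi(k)-1\) zeros of \(B_T\) after the initial one, so \(Z_N(\lambda-2)=\pi(k)-2\) and \(\operatorname{head}_0(\lambda-1)+3=\pi(k)+1\). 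The offsets \(+3\) and \(-1\) must be checked carefully against Law~1's tape \(P_r[2:)\) and the shift \(P\leftrightarrow A\); this is where an off-by-one error is most likely.

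Strict monotonicity does not follow from monotonicity of \(\pi\) alone, since \(\pi\) is only nondecreasing. It follows because distinct \(\lambda\in\mathcal D_Q\) are distinct one counts. Since \(\lambda=p'_{\pi(k)}\) is strictly increasing in \(\pi(k)\), distinct parameters \(\lambda\) have distinct values of \(\pi(k)\). For the padding, \(\lambda=0\) comes from the initial cut. \(\lambda=1\) comes from the cut after the first one of \(Q\), which is a record because \(Q\) starts \(0001\cdots\) and \(1\in\FP(E)\) with \(\pi(1)=0\).

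\textbf{Item (iv).} Using (iii), the projected set is \(\{1,2\}\cup\{\pi(k)+1\}\). Record cuts of \(P\) are \(\{0,1\}\) together with the shifted record cuts \(y+1\) of \(A\), by \(h_P(y)=1+h_A(y-1)\). So RPP becomes \(\pi(\FP(E))=\Rec(h_A)\), where \(0\in\FP(E)\) maps to \(0\). The main obstacle is reconciling the padding values and the index \(\lambda=1\) with the \(k=0\) and \(\pi(k)=0\) cases; I would handle these by a separate check of the initial ramp, where \(E(k)=k\).
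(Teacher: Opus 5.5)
Your route is the paper's: Theorem~\ref{thm:T-records} applied first to \(A\) and then to \(B_T\) gives the one-count set and, through \eqref{eq:FP-eZ}, the RC criterion. The count \(Z_N(\lambda-2)=Z_{B_T}(\lambda)-2=\pi(k)-2\) gives (iii). Item (iv) follows because the positive records of \(P=0A1\) are one plus the records of \(A\). Items (i), (ii), the main formula in (iii), and the strict monotonicity argument are fine.

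\textbf{The padding step for \(\lambda=1\) is wrong, and (iv) depends on it.} Suppose \(A\) begins with \(0^r1\). Then \(B_T=0^{r+1}10\cdots\) and \(Q=0^{r+2}100\cdots\). The cut right after the first one of \(Q\) has height \(r+1<r+2\), so it is not a record. The record with one-count one is the cut after the second zero following that one, at height \(r+3\).

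Your parameter witness also fails. Since \(\pi(1)=0\), one has \(\lambda(1)=\pi(1)+\pi^2(1)=0\), so \(k=1\) yields \(\lambda=0\), not \(\lambda=1\). A correct witness is any \(k\) in the fiber \(\pi^{-1}(p_1)=\{s_1+1,\ldots,s_2\}\), which is nonempty because \(A\) contains a one. There \(\pi(k)=p_1=1\) and \(\pi^2(k)=\pi(1)=0\). Hence \(E(k)=k>s\geq E(s)\) for every \(s<k\), so \(k\in\FP(E)\) and \(\lambda(k)=p'_1=1\).

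\textbf{Consequence for (iv).} The projection is defined only for \(\lambda\geq2\), that is, for \(\pi(k)\geq2\). So your comparison of the projected set with the positive records of \(P\) yields only \(\pi(\FP(E))\cap[2,\infty)=\Rec(h_A)\cap[2,\infty)\). To reach \eqref{eq:RPP-compressed}, you must also show that \(0\) and \(1\) lie in \(\pi(\FP(E))\), since both always lie in \(\Rec(h_A)\). Your remark that \(0\in\FP(E)\) maps to \(0\) handles only the first value. The fiber over \(p_1\) handles the second. Then the padding values \(1,2\) are exactly \(\pi(k)+1\) for \(\pi(k)=0,1\), and the equivalence holds.
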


\begin{proof}
Apply Theorem~\ref{thm:T-records} first to \(A\) and then to \(B_T=T(A)\).
Equations \eqref{eq:theta-parent-transport} and \eqref{eq:FP-eZ} give the
one-count set and its closure criterion.  Since
\(p'_m=m+\pi(m)\), the parameter indexed by \(k\) is
\(\pi(k)+\pi^2(k)\).

In Law~1, the state-zero head moves once for every earlier zero of \(N\),
which proves \eqref{eq:fast-head}.  If \(\lambda=p'_m(B_T)\geq2\) follows a zero,
then
\[
 \operatorname{head}_0(\lambda-1)
 =Z_N(\lambda-2)=Z_{B_T}(\lambda)-2=m-2.
\]
Here \(m=\pi(k)\), so the projection is \(\pi(k)+1\).  Strict increase
follows from strict increase of the zero cuts.

Write the initial factor of \(A\) as \(0^r1\), where \(r\geq1\).  The zero
data formula gives the initial factors
\[
 B_T=0^{r+1}10\cdots,
 \qquad
 Q=0^{r+2}100\cdots.
\]
The initial zero run gives the one-count zero at record cuts of \(Q\), and
the second zero after the first one gives the one-count one.  These two
parameters correspond to the record cuts one and two of \(P\).  The remaining projected
values are \(\pi(k)+1\), while the positive records of \(P=0A1\) are one plus
the records of \(A\).  This proves \eqref{eq:RPP-compressed}.
\end{proof}

\Needspace{5\baselineskip}
Records are transported across the transition.
\begin{theorem}
\label{thm:RPP-RC-Adj}
Let \(A\) be a nonempty Dyck word of length \(2n\) satisfying RS and
\(\mathsf{Cl}_\theta(A)\).  For the
transition \eqref{eq:transition-PBQ}, RPP and Adj hold, and \(Q\) satisfies
RC.  More precisely,
\begin{equation}
\label{eq:Adj-gap}
 \chi_{h+1}-\chi_h\in\{1\}\cup[3,\infty).
\end{equation}
This formula holds for \(1\leq h<V(P)\).
\end{theorem}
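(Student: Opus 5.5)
The plan is to reduce all three assertions to statements about the pair \(\pi,E\) of \eqref{eq:pi-E} for \(A\), using Lemma~\ref{lem:compressed-records}. RS then connects them to records of \(h_A\), and \(\mathsf{Cl}_\theta(A)\) supplies the closure needed for RC.

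\emph{RPP.} By Lemma~\ref{lem:compressed-records}(iv), it suffices to prove \eqref{eq:RPP-compressed}. RS gives \(\pi(\FP(E)\cap\{1,\ldots,2n+1\})\subseteq\Rec(h_A)\), and \(\pi(0)=0\) is the initial record. For the reverse inclusion, fix a record cut \(p_m\) of \(A\). The fiber \(\mathcal K_m\) of \(\pi\) above \(p_m\) is a nonempty block of consecutive \(k\), and \(\pi(k)=p_m\) there. RS says that every such \(k\) lies in \(\FP(E)\), so \(p_m\) lies in the image. This gives \eqref{eq:RPP-compressed} and hence RPP.

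\emph{RC.} By Lemma~\ref{lem:compressed-records}(ii), I must show that \(k\in\FP(E)\) implies \(\pi(k)\in\FP(E)\cup\{0\}\).
\begin{enumerate}[label=(\alph*)]
\item By RS, \(\pi(k)=p_m\) is a record of \(h_A\), so \(m\in\FP(d_A)\).
\item Since \(\pi(p_m)=p_{O_A(p_m-1)}=p_{q_m}\) (the last letter before \(p_m\) is a zero), \(\mathsf{Cl}_\theta(A)\) gives that \(p_{q_m}\) is a record cut or \(0\).
\item If \(p_{q_m}=0\), I need an extra check at \(k'=p_m\): if \(p_m\in\FP(E)\) there is nothing to prove; otherwise I need another argument, for instance using the initial ramp \(E(k)=k\).
\item If \(p_{q_m}\) is a positive record, RS applied at \(k'=p_m\) yields \(p_m\in\FP(E)\).
\end{enumerate}
Note that RS must be invoked at the argument \(k'=p_m\) itself, which is in the domain \(1\leq k'\leq2n+1\).

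\emph{Adj and \eqref{eq:Adj-gap}.} This is the main obstacle. By part (i) of the lemma, the record parameter attached to \(\rho_h\) is \(\lambda_h=\pi(k_h)+\pi^2(k_h)\), where \(k_h\) is the corresponding first passage of \(E\). I would express \(\chi_h\) through the four-head formula \eqref{eq:four-head} applied to \(B_T\). Concretely, \(\chi_h=h_Q(p^Q_{\lambda_h+1})\) is a height at a zero cut of \(Q\), so Theorem~\ref{thm:T-records} for \(B_T\) writes it as \(1+e_{B_T}(\lambda_h)\). Applying \eqref{eq:e-height} and Theorem~\ref{thm:T-records} again for \(A\) then writes it through \(h_A\) at \(k-1\), \(\pi(k)\) and \(\pi^2(k)\).

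Consecutive records \(\rho_h,\rho_{h+1}\) of \(P\) fall into two cases.
\begin{enumerate}[label=(\alph*)]
\item They lie in the same zero run, so \(O_P\) is equal at both. Then \(\pi\) is unchanged at the corresponding cut of \(A\), \(k\) advances by one along a ramp of \(E\), and the formula increases by exactly one.
\item They are separated by ones. Then \(O_P\) increases, so at least one factor \(1^a0\) intervenes in \(A\). Each head term then jumps, and I would show that the total increment is at least \(3\), never \(2\) or less.
\end{enumerate}
For case (b), I would use that records occur only at zero cuts and recover to a new maximum. This forces the first term to rise by at least one. The term \(2h_A(\pi(k))\) also rises by at least \(2\), because \(\pi(k)\) moves from one record to the next record by RS/RPP, and \(h_A\) is strictly larger at a later record. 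Since \(E(k)=d'_k+d'_{\pi(k)}\) increments are controlled by these terms, this gives \(\chi_{h+1}-\chi_h\geq3\). Case (a) gives exactly \(1\), which proves both \eqref{eq:Adj-gap} and the equivalence \eqref{eq:Adj}.

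The delicate point is the boundary: the padding parameters \(\lambda=0,1\) corresponding to \(h=1,2\), and the case \(\pi(k)=0\). I would check these directly from the initial factors \(B_T=0^{r+1}10\cdots\) and \(Q=0^{r+2}100\cdots\) of Lemma~\ref{lem:compressed-records}.
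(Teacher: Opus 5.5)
Your RPP and RC steps follow the paper's proof. In RC, subcase (c) needs no separate argument. By the definition of \(\FP\), \(0\in\Rec(h_A)\), so RS applied at \(k'=p_m\) gives \(p_m\in\FP(E)\) exactly as in subcase (d).

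The gap is in case (b) of the adjacency step, and it has two parts.

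\emph{The argument of the four-head formula.} Since \(\lambda_h=\pi(k_h)+\pi^2(k_h)=p'_{\pi(k_h)}\), one gets \(\chi_h=1+e_{B_T}(p'_{\pi(k_h)})=E(\widehat m_h)+1\), where \(\widehat m_h=\pi(k_h)=\rho_h-1\). So the three heads are \(\widehat m_h-1\), \(\pi(\widehat m_h)\) and \(\pi^2(\widehat m_h)\), not \(k_h-1\), \(\pi(k_h)\) and \(\pi^2(k_h)\). Your case (a) implicitly uses \(\widehat m_h\). In case (b), however, you justify the middle term by saying that \(\pi(k)\) passes from one record to the next ``by RS/RPP''. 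That is a statement about \(\pi(k_h)=\widehat m_h\). Write \(\widehat m_h=p_m\). The fact that \(\pi(\widehat m_h)=p_{q_m}\) is a record or zero is exactly \(\mathsf{Cl}_\theta(A)\), not RS or RPP.

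\emph{The third head term.} This is the decisive omission: you never control \(h_A(\pi^2(\widehat m_h))\). The map \(\pi^2\) is nondecreasing, but \(h_A\) is not monotone. Nothing in your argument prevents this term from dropping, in which case \(1+2\epsilon_2+\epsilon_3\) could be \(2\) or less. That would break \eqref{eq:Adj-gap} and the reverse direction of \eqref{eq:Adj}. Your remark that the increments are ``controlled by these terms'' does not exclude this. Your claim that each head term jumps is also unsupported for the third term, and the argument only needs it not to decrease.

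The missing step is a second application of the closure. If \(\pi(\widehat m_h)=p_{m'}\) with \(m'\in\FP(d_A)\), then \(\pi^2(\widehat m_h)=p_{q_{m'}}\) is again a record or zero. If \(\pi(\widehat m_h)=0\), then \(\pi^2(\widehat m_h)=0\). Along increasing positions, record heights increase strictly and \(h_A(0)=0\) is minimal. Hence \(\epsilon_2\geq1\) and \(\epsilon_3\geq0\), which gives \(\chi_{h+1}-\chi_h=1+2\epsilon_2+\epsilon_3\geq3\). This is how the paper closes the argument.
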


\begin{proof}
RS gives both inclusions in \eqref{eq:RPP-compressed}.  For surjectivity,
take a record cut \(p_m\) of \(A\).  Use the one cuts and terminal sentinels
of Definition~\ref{def:zero-coordinates}.  Then
\[
 \pi^{-1}(p_m)\cap\{1,\ldots,2n+1\}
 =\{s_m+1,\ldots,s_{m+1}\}.
\]
This set is nonempty, including when \(m=n\), and contains a \(k\).  The reverse implication in RS gives
\(k\in\FP(E)\).  Lemma~\ref{lem:compressed-records} now proves RPP with its
order and padding.

If \(k\in\FP(E)\), RS gives \(\pi(k)\in\Rec(h_A)\).  If this value is
positive, write it as \(p_m\) with \(m\in\FP(d_A)\).  Then
\[
 \pi^2(k)=\theta_A(p_m-1)=p_{q_m}.
\]
The closure \eqref{eq:theta-closure} makes this another record or zero.  The
reverse implication in RS, applied to \(\pi(k)\), gives
\(\pi(k)\in\FP(E)\).  Lemma~\ref{lem:compressed-records} proves RC for \(Q\).

For adjacency, set
\[
 \widehat m_h=\rho_h-1.
\]
The RPP correspondence gives
\(\lambda_h=p'_{\widehat m_h}(B_T)\).  Therefore
\[
\begin{aligned}
 \chi_h
 &=p'_{\widehat m_h}+1-
   \theta_{B_T}(p'_{\widehat m_h})\\
 &=p'_{\widehat m_h}+1-p'_{\pi(\widehat m_h)}
  =E(\widehat m_h)+1.
\end{aligned}
\]
Thus the exact height identity is
\begin{equation*}
 \chi_h=E(\widehat m_h)+1.
\end{equation*}
If \(O_P(\rho_{h+1})=O_P(\rho_h)\), first-passage minimality forces
\(\rho_{h+1}=\rho_h+1\).  The two values of \(\pi\), and hence of
\(\pi^2\), are equal.  Equation \eqref{eq:E-id-pi2} gives
\(\chi_{h+1}-\chi_h=1\).

Otherwise \(\pi(\widehat m_{h+1})>\pi(\widehat m_h)\).  Since \(P\) begins
with \(00\), this case has \(h\geq2\).  First-passage minimality gives
\[
 h_A(\widehat m_h-1)=h-2,
 \qquad
 h_A(\widehat m_{h+1}-1)=h-1.
\]
The free-head contribution in \eqref{eq:four-head} increases by one.  The
closure \eqref{eq:theta-closure} shows that
\(\pi(\widehat m_h)\), \(\pi(\widehat m_{h+1})\), and their \(\pi\)-images
are record cuts of \(A\), or zero.  Their heights are nondecreasing and the
first pair increases by at least one.  Put
\[
 \epsilon_2=h_A(\pi(\widehat m_{h+1}))
             -h_A(\pi(\widehat m_h))\geq1,
 \qquad
 \epsilon_3=h_A(\pi^2(\widehat m_{h+1}))
             -h_A(\pi^2(\widehat m_h))\geq0.
\]
The four-head formula now gives
\[
 \chi_{h+1}-\chi_h=1+2\epsilon_2+\epsilon_3\geq3.
\]
The initial values are
\[
 \rho_1=1,
 \qquad \rho_2=2,
 \qquad \chi_1=1,
 \qquad \chi_2=2.
\]
Because \(P\) begins with \(00\), the second branch cannot occur at \(h=1\),
and these initial values give the first branch there.  At \(h=2\), the lower
argument may satisfy \(\pi(\widehat m_2)=0\).  The unified
formula \eqref{eq:four-head} covers precisely this boundary.  Equations
\eqref{eq:Adj} and \eqref{eq:Adj-gap} follow.
\end{proof}

\Needspace{5\baselineskip}
The three record statements hold along the whole orbit.
\begin{corollary}
\label{cor:orbital-record-transport}
For every \(r\geq0\), the transition \(P_r\to P_{r+1}\) satisfies RPP and
Adj, while \(P_r\) satisfies RC.
\end{corollary}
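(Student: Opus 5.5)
The plan is to apply Theorem~\ref{thm:RPP-RC-Adj} to the core \(A=A_r\). By Lemma~\ref{lem:normal-form}, \(P=0A_r1=P_r\), \(B_T=T(A_r)=0N_r\) and \(Q=T^2(A_r)=P_{r+1}\). So the transition \eqref{eq:transition-PBQ} built from \(A_r\) is exactly the orbital transition \(P_r\to P_{r+1}\). The hypotheses to check are that \(A_r\) is a nonempty Dyck word satisfying RS and \(\mathsf{Cl}_\theta(A_r)\).

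\begin{enumerate}
\item Nonemptiness and the Dyck property come from Corollary~\ref{cor:orbital-S}.
\item RS is Corollary~\ref{cor:orbital-RS}.
\item Once these hold, Theorem~\ref{thm:RPP-RC-Adj} gives RPP and Adj for the transition. It also gives RC for \(Q=P_{r+1}\), and hence RC for \(P_r\) whenever \(r\geq1\).
\item RC for \(P_0\) is settled from the running example: the compression values \((1,1,1)\) land on the record cut \(1\). Alternatively, \eqref{eq:RC-Cl-equivalence} reduces it to \(\mathsf{Cl}_\theta(A_0)\), and this is immediate for \(A_0=0101\).
\end{enumerate}

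The remaining hypothesis is \(\mathsf{Cl}_\theta(A_r)\), and I expect it to be the main obstacle. By \eqref{eq:RC-Cl-equivalence} it is equivalent to RC for \(P_r\), so I will prove both by induction on \(r\).

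For the base case, \(A_0=0101\) has \(d=(0,1,1)\) and \(q=(0,0,1)\). The only positive first passage is \(m=1\), which has \(q_1=0\), so \(\mathsf{Cl}_\theta(A_0)\) holds.

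For the inductive step, assume \(\mathsf{Cl}_\theta(A_r)\). Together with RS for \(A_r\), Theorem~\ref{thm:RPP-RC-Adj} yields RC for \(Q=P_{r+1}=0A_{r+1}1\). By \eqref{eq:RC-Cl-equivalence}, RC for \(P_{r+1}\) is exactly \(\mathsf{Cl}_\theta(A_{r+1})\). This closes the induction.

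A subtle point is that \eqref{eq:RC-Cl-equivalence} requires \(P=0A1\) with \(A\) the core. This holds at level \(r+1\) because \(A_{r+1}=P_{r+1}[1:|P_{r+1}|-1)\) by definition, and \(P_{r+1}\) begins with \(0\) and ends with \(1\) by (AP) and Lemma~\ref{lem:terminal-counts}. Care is also needed over the domain \(\mathcal D_\psi\) and the boundary cut \(y=0\), but both were already handled in the derivation of \eqref{eq:RC-Cl-equivalence}.

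Putting the steps together: the induction gives \(\mathsf{Cl}_\theta(A_r)\) and RC for \(P_r\) at every \(r\geq0\). Theorem~\ref{thm:RPP-RC-Adj}, applied to each \(A_r\), then gives RPP and Adj for every transition \(P_r\to P_{r+1}\), which is the full statement.
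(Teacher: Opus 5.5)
Your proposal is correct and follows essentially the same route as the paper. Both run an induction on \(\mathsf{Cl}_\theta(A_r)\): the base case is checked directly on \(A_0=0101\), RS comes from Corollary~\ref{cor:orbital-RS}, and Theorem~\ref{thm:RPP-RC-Adj} gives RPP, Adj and RC of \(P_{r+1}\), after which the equivalence \eqref{eq:RC-Cl-equivalence} closes the induction and also gives RC of \(P_0\).
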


\begin{proof}
The core \(A_0=0101\) satisfies \(\mathsf{Cl}_\theta\) by direct evaluation
of its two zero cuts.  Corollary~\ref{cor:orbital-RS} supplies RS at every
level.  Theorem~\ref{thm:RPP-RC-Adj} gives RPP, RC of the next envelope, and
Adj.  Equivalence \eqref{eq:RC-Cl-equivalence} supplies
\(\mathsf{Cl}_\theta(A_{r+1})\), which closes the induction.
\end{proof}

\section{Fibers and amplitudes}
\label{sec:fibers}

Record transport now supplies the ordered fibers.

\subsection{Fibers and Toeplitz labels}
\label{sec:fiber-map}

For a nonempty Dyck word \(P\), define its first record cuts by
\[
 \rho_{P,h}=\min\{t:h_P(t)=h\}
 \qquad(0\leq h\leq V(P)).
\]
Every \(\rho_{P,h}\) belongs to the domain \(\mathcal D_\psi(P)\) of
\eqref{eq:compression-map}.  For \(h=0\), one has
\(\rho_{P,0}=0\) and \(O_P(0)=0<Z(P)\).  For \(h>0\),
\[
 O_P(\rho_{P,h})=Z_P(\rho_{P,h})-h
 \leq Z(P)-h<Z(P).
\]

\Needspace{5\baselineskip}
The record heights are organized by the following fiber map.
\begin{definition}
For \(0\leq h\leq V(P)\), put
\begin{equation*}
 g_P(h)=h_P\bigl(\psi_P(\rho_{P,h})\bigr).
\end{equation*}
Define
\[
 I(P)=\Img(g_P),
 \qquad
 \operatorname{Fib}_P(u)=\{h:g_P(h)=u\}.
\]
Since \(\psi_P(0)=p^P_1=1\), one has \(g_P(0)=1\), and hence \(1\in I(P)\).
For the orbital envelope \(P_j\), write
\[
 V_j=V(P_j),
 \qquad g_j=g_{P_j},
 \qquad I_j=I(P_j),
 \qquad \operatorname{Fib}_j(u)=\operatorname{Fib}_{P_j}(u).
\]
The components of \(I_j\) are its maximal intervals of consecutive integers,
and \(\Comp(I_j)\) denotes their set.
The component containing one is denoted by \(\mathcal S_L=[1,L]\) and is
termed special.  Every other component of length \(L\) is denoted by
\(\mathcal O_L\) and is termed ordinary.
\end{definition}

The RC property proved in Corollary~\ref{cor:orbital-record-transport} shows
that every value of \(g_j\) is a record height.  More generally, if
\[
 I(P)=\{u_0<u_1<\cdots<u_{m-1}\},
\]
put
\[
 f_P=(|\operatorname{Fib}_P(u_0)|,\ldots,
 |\operatorname{Fib}_P(u_{m-1})|).
\]
On the orbit, if
\[
 I_j=\{u_0<u_1<\cdots<u_{m_j-1}\},
\]
define the ordered fiber-size word
\begin{equation*}
 f_j=(f_{j,0},\ldots,f_{j,m_j-1}),
 \qquad f_{j,i}=|\operatorname{Fib}_j(u_i)|.
\end{equation*}

For \(L\geq1\) and \(1\leq s\leq L\), put
\begin{equation}
\label{eq:Pi}
 \Pi(L)_s=
 \begin{cases}
 2s+1,&s\leq\lfloor L/2\rfloor,\\
 2(L-s+1),&s>\lfloor L/2\rfloor.
 \end{cases}
\end{equation}
Thus \(\Pi(L)\) is a permutation of \(2,3,\ldots,L+1\).  Define
\begin{equation*}
 \Sigma(1)=(3),
 \qquad
 \Sigma(L)=(L+2,2L-2,2L-4,\ldots,2)\quad(L\geq2).
\end{equation*}

Let \(A\) be a nonempty anti-palindromic Dyck word of length \(2n\) satisfying
\(\mathsf S\).  Retain
its coordinates \(p_m,\omega_m,c_m,q_m,d_m\).  Put
\[
 \varrho=d_n,
 \qquad D=\max_m d_m,
 \qquad r_h=\min\{m:d_m=h\}.
\]
For \(h\geq1\), the first-passage vertex \(r_h\) is a leaf.  Edge-leaf
duality gives
\begin{equation*}
 \alpha_h=\omega_{n+1-r_h}.
\end{equation*}

Put \(B_T=T(A)\) and \(Q=T^2(A)\).  Let \(p'_k\) denote the zero cuts of
\(B_T\).  Use \(\pi,E\) from \eqref{eq:pi-E}.

\Needspace{5\baselineskip}
Two applications of $T$ act on the fiber map as follows.
\begin{lemma}
\label{lem:double-T-fibers}
Let \(A\) be a nonempty Dyck word of length \(2n\), put
\(B_T=T(A)\) and \(Q=T^2(A)\), and define \(\pi,E\) from \(A\).
For \(k=0\), or for
\(k\in\FP(E)\cap\{1,\ldots,2n+1\}\), the word \(Q\) has a positive record
of height \(E(k)+1\), and
\begin{equation*}
 g_Q(E(k)+1)=E(\pi(k))+1.
\end{equation*}
In addition, \(g_Q(0)=1\).
\end{lemma}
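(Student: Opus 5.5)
The plan is to apply Theorem~\ref{thm:T-records} twice, first to \(A\) (giving \(B_T=T(A)\)) and then to \(B_T\) (giving \(Q=T(B_T)\)), and to read off both the record cut of \(Q\) and its compression image from the zero data of \(Q\). Write \(p''_j,q''_j,d''_j\) for the zero data of \(Q\). The word \(B_T\) has length \(2(2n+1)\), so the second application covers \(1\leq j\leq 4n+3\). Lemma~\ref{lem:zero-data} for \(B_T\) then gives
\[
 q''_j=\theta_{B_T}(j-1),
 \qquad
 d''_j=e_{B_T}(j-1)+1 .
\]

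\emph{Step 1: the record.} Fix \(k=0\) or \(k\in\FP(E)\cap\{1,\ldots,2n+1\}\). By \eqref{eq:FP-eZ} applied to \(Z=B_T\), the cut \(p'_k\) lies in \(\FP(e_{B_T})\); for \(k=0\) this is the element \(0=p'_0\). Set \(j=p'_k+1\). The record set \eqref{eq:T-record-set}, now for \(Q=T(B_T)\), shows that \(p''_j\) is a positive record cut of \(Q\), of height
\[
 d''_j=e_{B_T}(p'_k)+1=E(k)+1 ,
\]
where \(E(k)=e_{B_T}(p'_k)\) by the definition preceding \eqref{eq:E-id-pi2}, and \(E(0)=0=e_{B_T}(0)\). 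A record cut is by definition the first cut reaching its height, so \(\rho_{Q,E(k)+1}=p''_j\).

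\emph{Step 2: the compression.} By \eqref{eq:compression-map}, \(\psi_Q(p''_j)=p''_{q''_j+1}\). The transport identity \eqref{eq:theta-parent-transport} gives
\[
 q''_j=\theta_{B_T}(p'_k)=p'_{\pi(k)} ,
\]
and for \(k=0\) both sides vanish. Hence
\[
 g_Q(E(k)+1)=h_Q\bigl(p''_{p'_{\pi(k)}+1}\bigr)=d''_{p'_{\pi(k)}+1}=e_{B_T}(p'_{\pi(k)})+1=E(\pi(k))+1 .
\]
Here we use \(0\leq\pi(k)\leq 2n+1\), so that \(E(\pi(k))\) is defined, with the convention \(E(0)=0\) when \(\pi(k)=0\). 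We must also check that \(p''_j\) lies in \(\mathcal D_\psi(Q)\). This follows from the general remark that every first record cut lies in that domain; concretely, \(q''_j+1\leq 4n+3=Z(Q)\).

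\emph{Step 3: the value at zero.} Finally, \(g_Q(0)=h_Q(\psi_Q(0))=h_Q(p''_1)=1\), since \(Q\) begins with a zero.

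The only delicate point is index bookkeeping rather than any genuine difficulty. One must confirm that the one count at the zero cut \(p''_j\) is \(\theta_{B_T}(j-1)\) with the shift \(j-1=p'_k\), and that the boundary cases \(k=0\) and \(\pi(k)=0\) are covered by the conventions \(p'_0=0\) and \(E(0)=0\). I would verify these boundary instances explicitly against the initial factors \(B_T=0^{r+1}10\cdots\) and \(Q=0^{r+2}100\cdots\) from Lemma~\ref{lem:compressed-records}.
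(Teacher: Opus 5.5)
Your proposal is correct and follows essentially the same route as the paper. Both apply Theorem~\ref{thm:T-records} first to $A$ and then to $B_T$, place the record of $Q$ at the zero cut after $p'_k$ with height $e_{B_T}(p'_k)+1=E(k)+1$, and use $\theta_{B_T}(p'_k)=p'_{\pi(k)}$ to evaluate the height of the compressed cut as $E(\pi(k))+1$; your planned cross-check against the initial factors of $B_T$ and $Q$ is unnecessary, since the conventions $p'_0=0$ and $E(0)=0$ already cover the boundary cases.
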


\begin{proof}
By Theorem~\ref{thm:T-records}, the first passages of \(e_{B_T}\) are zero
and the cuts \(p'_k\) with
\(k\in\FP(E)\cap\{1,\ldots,2n+1\}\).  The associated record of
\(Q=T(B_T)\) has height \(e_{B_T}(p'_k)+1=E(k)+1\).

The number of ones before that zero of \(Q\) is
\(\theta_{B_T}(p'_k)=p'_{\pi(k)}\).  Definition
\eqref{eq:compression-map} sends the record to the zero cut of rank
\(p'_{\pi(k)}+1\).  Its height is
\[
 p'_{\pi(k)}+1-\theta_{B_T}(p'_{\pi(k)})
 =p'_{\pi(k)}+1-p'_{\pi^2(k)}
 =E(\pi(k))+1.
\]
All terms are zero at \(k=0\), except the added one.  Finally,
\(\psi_Q(0)\) is the first zero cut of \(Q\), which has height one.
\end{proof}

Use the one cuts and sentinels of Definition~\ref{def:zero-coordinates} for
\(A\).
The fiber of \(\pi\) over \(p_r\), restricted to
\(\{1,\ldots,2n+1\}\), is
\begin{equation*}
 \mathcal K_r=\{s_r+1,\ldots,s_{r+1}\}.
\end{equation*}
Anti-palindromicity gives
\begin{equation*}
 |\mathcal K_r|=p_{n+1-r}-p_{n-r}
 \qquad(0\leq r\leq n).
\end{equation*}
In particular,
\begin{equation}
\label{eq:K-boundaries}
 |\mathcal K_0|=\varrho+1,
 \qquad
 |\mathcal K_{r_h}|=\omega_{n+1-r_h}=\alpha_h\quad(h\geq1).
\end{equation}

\Needspace{5\baselineskip}
The ordered fiber sizes read the Toeplitz profile.
\begin{theorem}
\label{thm:fiber-toeplitz}
Let \(A\) be a nonempty anti-palindromic Dyck word of length \(2n\) satisfying
\(\mathsf S\), let \(\varrho,D\), and
\((\alpha_1,\ldots,\alpha_D)\) be its root count, maximum depth, and Toeplitz
profile, and put \(Q=T^2(A)\).  Assume that \(A\) satisfies RS and that \(Q\)
satisfies RC.  The
ordered fiber-size word of \(Q\) is
\begin{equation*}
 f_Q=(\varrho+3,\alpha_1,\ldots,\alpha_D).
\end{equation*}
\end{theorem}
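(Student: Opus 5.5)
We compute the fiber map of \(Q\) through Lemma~\ref{lem:double-T-fibers}. By that lemma and Theorem~\ref{thm:T-records}, the record heights of \(Q\) are \(0\) and \(E(k)+1\) for \(k\in\{0\}\cup\FP(E)\cap\{1,\ldots,2n+1\}\), listed in increasing order of \(k\). Moreover \(g_Q(E(k)+1)=E(\pi(k))+1\). Since \(E\) is strictly increasing along first passages, the fiber of a value \(u=E(k')+1\) is the set of record heights \(E(k)+1\) with \(k\in\FP(E)\) and \(E(\pi(k))=E(k')\). By RC of \(Q\), Lemma~\ref{lem:compressed-records}(ii) gives \(\pi(k)\in\FP(E)\cup\{0\}\), and \(E\) is injective on \(\FP(E)\). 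Hence the fiber over \(E(k')+1\) is in bijection with \(\{k\in\FP(E):\pi(k)=k'\}\), plus the extra points \(h=0\) and \(h=1\) (from \(k=0\)) in the fiber over \(1\). So the fiber sizes are counts of first passages of \(E\) inside the \(\pi\)-fibers.

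First I identify which \(k'\) occur as values of \(\pi\) on first passages. These are \(0\) and the cuts \(\pi(k)\in\Rec(h_A)\) given by RS. Every such value is a zero cut \(p_m\) with \(m\in\FP(d)\), that is, \(m=r_h\) for \(h=1,\ldots,D\), together with \(p_0=0\). RS also requires \(k'=p_{r_h}\in\FP(E)\) when \(k'>0\), which is consistent. The fiber of \(\pi\) over \(p_m\) within \(\{1,\ldots,2n+1\}\) is \(\mathcal K_m\). By the BM/RS analysis in the proof of Theorem~\ref{thm:BM-RS}, when \(m\in\FP(d)\) the whole ramp \(\mathcal K_m\) consists of first passages, and when \(m\notin\FP(d)\) none of it does. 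Thus the count over \(p_{r_h}\) is \(|\mathcal K_{r_h}|=\alpha_h\) by \eqref{eq:K-boundaries}. Over \(0\), the count is \(|\mathcal K_0|=\varrho+1\), to which we add the heights \(0\) and \(1\), giving \(\varrho+3\).

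Next I fix the ordering. The fiber values \(E(p_{r_h})+1\) increase with \(h\), because \(p_{r_h}\) increases and all of them lie in \(\FP(E)\). The special value \(1=E(0)+1\) is the smallest. Hence the ordered list is \((\varrho+3,\alpha_1,\ldots,\alpha_D)\).

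I expect the bookkeeping at the bottom to be the main obstacle. One must check that \(h=1\) really corresponds to \(k=0\) with \(g_Q=1\), and that the heights \(E(k)+1\) for \(k\in\mathcal K_0\) are \(k+1\geq2\), so they do not collide with \(h=0,1\). One must also check that \(g_Q(0)=1\) is counted exactly once, and that no \(k'=p_{r_h}\) coincides with \(0\). I would verify these directly from \(E(k)=k\) on \(\mathcal K_0\) and the running example \(f_1=(4,2)\), where \(\varrho=1\) and \(\alpha=(2)\).
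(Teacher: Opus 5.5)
Your proposal is correct and follows the paper's proof essentially step for step. RS splits the positive first passages of $E$ into the ramps $\mathcal K_{r_h}$, Lemma~\ref{lem:double-T-fibers} makes each ramp a single fiber, RC orders the values $E(p_{r_h})+1$, and \eqref{eq:K-boundaries} together with the extra points $k=0$ and $h=0$ gives the sizes. One small attribution slip: the membership $p_{r_h}\in\FP(E)$ comes from RC through Lemma~\ref{lem:compressed-records}(ii), not from RS, but you had already invoked RC for exactly this, so the argument is intact.
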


\begin{proof}
RS gives the disjoint partition
\begin{equation}
\label{eq:FP-partition}
 \FP(E)\cap\{1,\ldots,2n+1\}
 =\bigsqcup_{h=0}^{D}\mathcal K_{r_h}.
\end{equation}
By Lemma~\ref{lem:double-T-fibers}, all parameters in one set
\(\mathcal K_{r_h}\) have the same image under \(g_Q\).  Conversely, RS
shows that there are no other positive record parameters.  RC makes
\(\pi(k)=p_{r_h}\) another first-passage parameter or zero.  Since the cuts
\(p_{r_h}\) increase with \(h\), Lemma~\ref{lem:compressed-records}(ii) and
RC show that the values \(E(p_{r_h})+1\) increase strictly.  The fibers are therefore obtained in the order displayed in
\eqref{eq:FP-partition}.

For \(h\geq1\), equation \eqref{eq:K-boundaries} gives size \(\alpha_h\).
The first fiber contains the \(\varrho+1\) positive indices in
\(\mathcal K_0\).  It also contains the parameter \(k=0\) in Lemma
\ref{lem:double-T-fibers} and the domain value \(h=0\) of \(g_Q\).  Its size
is therefore \(\varrho+3\).  No other boundary term occurs.
\end{proof}

\subsection{Ordered fibers and components}

\Needspace{5\baselineskip}
The folded suspension factors are exactly the ordinary profiles.
\begin{lemma}
\label{lem:ordinary-fold}
For every \(L\geq1\),
\begin{equation*}
 \mathcal A_{L+1}=\Pi(L).
\end{equation*}
\end{lemma}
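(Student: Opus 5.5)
The identity is a direct index computation from the definitions, with no structural input needed. By definition, $\mathcal A_{L+1}=\rev(\sigma_{L+1}(1),\ldots,\sigma_{L+1}(L))$. Its $s$-th entry, for $1\leq s\leq L$, is therefore $\sigma_{L+1}(i)$ with $i=L+1-s$, and this $i$ ranges over $1\leq i\leq L$, so $\sigma_{L+1}(i)$ is always defined. I will compare this entry with $\Pi(L)_s$ from \eqref{eq:Pi}, case by case, using the two branches of \eqref{eq:sigma-gap} at degree $L+1$.

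The branch condition $2i\leq L+1$ becomes $2(L+1-s)\leq L+1$, that is, $2s\geq L+1$. For integer $s$ this says $s\geq\lceil (L+1)/2\rceil=\lfloor L/2\rfloor+1$, i.e.\ $s>\lfloor L/2\rfloor$. This is the only point needing care: I will check it separately for $L$ even and $L$ odd. In this branch the entry is $\sigma_{L+1}(i)=2i=2(L-s+1)$, which is exactly the second clause of $\Pi(L)_s$.

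In the complementary branch $2i>L+1$, equivalently $s\leq\lfloor L/2\rfloor$, the entry is $2(L+1-i)+1=2s+1$. This is the first clause of $\Pi(L)_s$.

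The two index ranges match the two clauses of \eqref{eq:Pi} exactly, so the words agree entrywise. I will also note the edge case $L=1$: there $\lfloor L/2\rfloor=0$, and both sides equal $(2)$, since $\sigma_2(1)=2$. No step is a genuine obstacle; the only thing to get right is the floor/ceiling conversion at the threshold $s=\lfloor L/2\rfloor$ versus $\lfloor L/2\rfloor+1$.
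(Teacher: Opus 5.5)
Your proposal is correct and matches the paper's proof: both read the $s$-th entry of $\mathcal A_{L+1}$ as $\sigma_{L+1}(L+1-s)$ and match the two branches of \eqref{eq:sigma-gap} with those of \eqref{eq:Pi}. Your check that $2s\geq L+1$ is equivalent to $s>\lfloor L/2\rfloor$ is the detail the paper leaves implicit. The paper also notes that the omitted entry is $\sigma_{L+1}(L+1)=1$, so the remaining entries permute $2,\ldots,L+1$; this is used later, not in the identity itself.
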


\begin{proof}
The \(s\)-th entry of \(\mathcal A_{L+1}\) is
\(\sigma_{L+1}(L+1-s)\).  The two branches of
\eqref{eq:sigma-gap} give exactly the two branches of \eqref{eq:Pi}.
The omitted last entry of \(\sigma_{L+1}\) is one, so the remaining entries
are a permutation of \(2,\ldots,L+1\).
\end{proof}

\Needspace{5\baselineskip}
The fiber words obey an ordered substitution.
\begin{theorem}
\label{thm:FW}
For every \(j\geq0\),
\begin{equation}
\label{eq:FW}
 f_{j+1}
 =\Sigma(f_{j,0}-1)\,
  \Pi(f_{j,1})\cdots\Pi(f_{j,m_j-1}).
\end{equation}
The juxtaposition is ordered concatenation.  The initial word is
\begin{equation}
\label{eq:f0}
 f_0=(3).
\end{equation}
\end{theorem}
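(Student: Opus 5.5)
The plan is to combine Theorem~\ref{thm:fiber-toeplitz} with Theorem~\ref{thm:toeplitz-suspension}, using the orbital root words of Corollary~\ref{cor:orbital-roots}. The base case \eqref{eq:f0} is the running-example computation \(f_0=(3)\). For the induction we need a closed description of \(f_j\).

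Applying Theorem~\ref{thm:fiber-toeplitz} at level \(r\) to \(A_r\) gives
\[
 f_{r+1}=(\varrho_r+3,\alpha^{(r)}_1,\ldots,\alpha^{(r)}_{D_r}),
\]
where \(\alpha^{(r)}\) is the Toeplitz profile of \(A_r\) and \(\varrho_r=r+1\). The hypotheses of that theorem are available. \(A_r\) is anti-palindromic and satisfies \(\mathsf S\) by Corollaries~\ref{cor:orbital-S} and \ref{cor:orbital-anti}. It satisfies RS by Corollary~\ref{cor:orbital-RS}. RC of \(Q=T^2(A_r)=P_{r+1}\) comes from Corollary~\ref{cor:orbital-record-transport}. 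Since \(A_0\) has \(\varrho_0=1\), \(D_0=1\) and \(\alpha^{(0)}=(2)\), this gives \(f_1=(4,2)\), in agreement with the running example.

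For the inductive step, fix \(j\geq1\) and write \(f_j=(j+3,\alpha^{(j-1)})\), so that \(f_{j,0}-1=j+2\) and the tail consists of the labels \(\alpha^{(j-1)}_h\). Since \(A_j=\Lam(A_{j-1})\) has root word \(\Gamma(R_{j-1})\), Theorem~\ref{thm:toeplitz-suspension} with \(R=R_{j-1}=(1,3,\ldots,2j-1)\) gives
\[
 \alpha^{(j)}=(2j+2,2j,\ldots,4,2)\prod_h\mathcal A_{\alpha^{(j-1)}_h+1}.
\]
Lemma~\ref{lem:ordinary-fold} turns each block \(\mathcal A_{L+1}\) into \(\Pi(L)\), so the product becomes \(\Pi(f_{j,1})\cdots\Pi(f_{j,m_j-1})\). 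Prepending \(\varrho_j+3=j+4\) gives
\[
 f_{j+1}=(j+4,2j+2,2j,\ldots,2)\,\Pi(f_{j,1})\cdots.
\]
The head block \((j+4,2j+2,\ldots,2)\) is exactly \(\Sigma(j+2)=(L+2,2L-2,\ldots,2)\) with \(L=j+2\), since \(2L-2=2j+2\).

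The case \(j=0\) has to be checked separately, because \(f_0\) is not of the form \((\varrho+3,\ldots)\). Here \(\Sigma(f_{0,0}-1)=\Sigma(2)=(4,2)=f_1\) and there is no tail, which matches. For \(j=1\) the formula gives \(f_2=\Sigma(3)\Pi(2)=(5,4,2)(3,2)\). This agrees with \(\alpha^{(1)}=(4,2)\,\mathcal A_3=(4,2,3,2)\), since \(\mathcal A_3=\rev(\sigma_3(1),\sigma_3(2))=(3,2)\).

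I expect the main obstacle to be bookkeeping rather than substance. The indices of the \(\alpha\)-profile must match the fiber positions exactly: the first fiber must absorb the boundary parameters with nothing left over, and \(m_j-1=D_{j-1}\). Both follow from the ordering argument inside Theorem~\ref{thm:fiber-toeplitz}, so I would simply cite it. I would also confirm \(\varrho_j=j+1\) from Corollary~\ref{cor:orbital-roots}.
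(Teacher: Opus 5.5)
Your proposal is correct and follows essentially the same route as the paper. Both arguments handle \(j=0\) directly as \(f_1=(4,2)=\Sigma(2)\). For \(j\geq1\), Theorem~\ref{thm:fiber-toeplitz} writes \(f_{j+1}\) as \(j+4\) followed by the Toeplitz profile of \(A_j\), Theorem~\ref{thm:toeplitz-suspension} with the orbital roots of Corollary~\ref{cor:orbital-roots} supplies the prefix \((2j+2,\ldots,4,2)\) that completes \(\Sigma(j+2)\), and Lemma~\ref{lem:ordinary-fold} turns the remaining blocks into \(\Pi(f_{j,h})\). You leave implicit, as the paper does, that the forest of \(A_{j-1}\) is Toeplitz, which the theorem asserting that every canonical anti-\(\mathsf S\) forest is Toeplitz supplies.
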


\begin{proof}
The three record heights \(0,1,2\) of \(P_0=001011\) all map to height one,
which proves \eqref{eq:f0}.  The forest of \(A_0\) has root \((1)\) and
Toeplitz profile \((2)\).  Theorem~\ref{thm:fiber-toeplitz} gives
\(f_1=(4,2)=\Sigma(2)\).

For \(j\geq1\), Theorem~\ref{thm:fiber-toeplitz} writes \(f_{j+1}\) as its
first term followed by the Toeplitz profile of \(A_j\).  Apply
Theorem~\ref{thm:toeplitz-suspension} to \(A_{j-1}\).  Corollary
\ref{cor:orbital-roots} shows that the root prefix in the suspended profile
is
\[
 (2j+2,2j,\ldots,4,2).
\]
Write the Toeplitz profile of \(A_{j-1}\) as
\((\alpha^{(j-1)}_1,\ldots,\alpha^{(j-1)}_{D_{j-1}})\).  The preceding
fiber-Toeplitz correspondence gives
\[
 m_j=D_{j-1}+1,
 \qquad
 f_{j,h}=\alpha^{(j-1)}_h
 \quad(1\leq h\leq D_{j-1}).
\]
Since
\[
 f_{j,0}-1=j+2
 \qquad\text{and}\qquad
 f_{j+1,0}=j+4,
\]
the new first term followed by this prefix is
\(\Sigma(f_{j,0}-1)\).  Every remaining suspended factor is
\[
 \mathcal A_{\alpha^{(j-1)}_h+1}
 =\Pi(\alpha^{(j-1)}_h)=\Pi(f_{j,h})
\]
by Lemma~\ref{lem:ordinary-fold}.  The suspension orders these factors by
increasing \(h\), which proves \eqref{eq:FW}.
\end{proof}

For a finite subset \(I\subset\mathbb Z\), let
\(\operatorname{CompLengths}(I)\) be the ordered word of the lengths of its
maximal consecutive intervals, read from left to right.

\Needspace{5\baselineskip}
Adjacency matches the fiber factorization with the component structure.
\begin{lemma}
\label{lem:AdjFC}
Fix \(j\geq0\).  Assume RC for \(P_j\) and \(P_{j+1}\), together with RPP
and Adj for the transition \(P_j\to P_{j+1}\).  Then
\begin{equation}
\label{eq:FC}
 \operatorname{CompLengths}(I_{j+1})
 =(f_{j,0}-1,f_{j,1},\ldots,f_{j,m_j-1}).
\end{equation}
The order is the increasing order of the components of \(I_{j+1}\).
\end{lemma}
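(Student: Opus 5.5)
The plan is to identify the image \(I_{j+1}\) with the set of values \(\chi_h\), read its consecutive runs through Adj, and match each run with a fiber of \(g_j\). By RC for \(P_{j+1}\), every value of \(g_{j+1}\) is a record height of \(P_{j+1}\), and by definition \(g_{j+1}(h')=h_{P_{j+1}}(\psi_{P_{j+1}}(\rho_{P_{j+1},h'}))\). The first step is to use RPP to parametrize the record cuts of \(P_{j+1}\) by their one-count parameters \(\lambda\in\mathcal D_{P_{j+1}}\). Two record cuts with the same one-count are sent by \(\psi_{P_{j+1}}\) to the same zero cut \(p_{\lambda+1}\). Hence \(I_{j+1}\) is exactly the set of heights \(h_{P_{j+1}}(p_{\lambda+1})\), one for each parameter \(\lambda\in\mathcal D_{P_{j+1}}\).

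Under RPP the positive parameters, together with the padding values \(\lambda=0,1\), are in increasing bijection with the positive record heights \(h=1,\ldots,V(P_j)\) of \(P_j\). Under this bijection the corresponding image height is \(\chi_h\). Consequently
\[
 I_{j+1}=\{\chi_1<\chi_2<\cdots<\chi_{V_j}\}.
\]
Strict increase follows from \eqref{eq:Adj-gap}, and I would check the boundary values \(\chi_1=1\) and \(\chi_2=2\) against \(g_{j+1}(0)=1\). The components of \(I_{j+1}\) are therefore the maximal runs of indices \(h\) with \(\chi_{h+1}=\chi_h+1\). Gaps of size at least \(3\), by \eqref{eq:Adj-gap}, separate distinct components.

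Next, apply Adj: \(\chi_{h+1}=\chi_h+1\) exactly when \(O_{P_j}(\rho_{h+1})=O_{P_j}(\rho_h)\). By \eqref{eq:compression-map} this is exactly \(\psi_{P_j}(\rho_{h+1})=\psi_{P_j}(\rho_h)\). The maximal runs of \(h\in\{1,\ldots,V_j\}\) are thus the maximal runs on which \(h\mapsto\psi_{P_j}(\rho_{P_j,h})\) is constant. This map is nondecreasing, since \(O_{P_j}\) is monotone and the \(\rho_h\) increase. Its runs are therefore exactly the nonempty sets \(\operatorname{Fib}_j(u)\cap\{1,\ldots,V_j\}\), listed in increasing order of \(u\). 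Here I use that distinct zero cuts have distinct heights whenever both are records, which holds by RC for \(P_j\), so equal \(\psi\)-values and equal \(g_j\)-values coincide. The fibers of \(g_j\) are thus intervals of heights, ordered by \(u_0<u_1<\cdots\).

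Finally, compare lengths. The fiber \(\operatorname{Fib}_j(u_0)\) with \(u_0=1\) contains the height \(h=0\), since \(g_j(0)=1\). Height \(0\) is not among the indices \(1,\ldots,V_j\) in the list of \(\chi\)'s, so the first component has length \(f_{j,0}-1\). Every other fiber lies inside \(\{1,\ldots,V_j\}\) and gives a component of length exactly \(f_{j,i}\), which yields \eqref{eq:FC}.

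I expect the main obstacle to be the boundary bookkeeping. First, the padding parameters \(\lambda=0,1\) must be shown to produce \(\chi_1,\chi_2\) consistently with the general formula. Second, I must show that no record parameter of \(P_{j+1}\) other than those coming from RPP contributes to \(I_{j+1}\), and that equal \(\psi\)-images are exactly equal one-counts \(O_{P_j}\), which needs \(\mathcal D_\psi\) to contain all \(\rho_h\). I would handle these using the explicit initial factors \(0^{r+2}100\cdots\) from the proof of Lemma~\ref{lem:compressed-records}.
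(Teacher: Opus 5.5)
Your proposal is correct and follows essentially the same route as the paper. It identifies \(I_{j+1}=\{\chi_1<\cdots<\chi_{V_j}\}\) through RPP and RC for \(P_{j+1}\), uses Adj together with RC for \(P_j\) to match the consecutive runs of the \(\chi_h\) with the fibers of \(g_j\) on positive heights, and then removes \(h=0\) from the first fiber. Two small tightenings: strict increase of the \(\chi_h\) already follows from RC for \(P_{j+1}\), because the cuts \(p^{P_{j+1}}_{\lambda_h+1}\) increase and are strict records, so you need not invoke \eqref{eq:Adj-gap}, which is not among the lemma's hypotheses; and you should note that the first restricted fiber is nonempty because \(g_j(1)=g_j(0)=1\).
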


\begin{proof}
Write
\[
 \rho_h=\rho_{P_j,h},
 \qquad \mu_h=O_{P_j}(\rho_h)
 \qquad(0\leq h\leq V_j).
\]
By definition,
\[
 g_j(h)=h_{P_j}(p^{P_j}_{\mu_h+1}).
\]
The sequence \(\mu_h\) is nondecreasing.  RC makes every displayed cut a
strict record.  Hence
\begin{equation}
\label{eq:parent-fiber-adjacency}
 g_j(h+1)=g_j(h)
 \quad\Longleftrightarrow\quad
 \mu_{h+1}=\mu_h.
\end{equation}

RPP gives distinct one-count parameters \(\lambda_h\) of \(P_{j+1}\), in increasing order, for
the positive records \(h=1,\ldots,V_j\) of \(P_j\).  Put
\[
 \chi_h=h_{P_{j+1}}(p^{P_{j+1}}_{\lambda_h+1}).
\]
RC for \(P_{j+1}\) makes these strict record heights.  RPP is exhaustive, so
\begin{equation}
\label{eq:child-image}
 I_{j+1}=\{\chi_1<\chi_2<\cdots<\chi_{V_j}\}.
\end{equation}
Indeed, \(g_{j+1}(0)=1=\chi_1\), so the height-zero domain point adds no
value to this image.
Adj and \eqref{eq:parent-fiber-adjacency} give
\[
 \chi_{h+1}=\chi_h+1
 \quad\Longleftrightarrow\quad
 g_j(h+1)=g_j(h).
\]
Thus the maximal consecutive intervals in \eqref{eq:child-image} correspond,
in the same order, to the maximal intervals on which \(g_j\) is constant.
These are the ordered fibers of \(g_j\), after restriction to positive
heights.

Every \(P_j\) begins with \(00\).  Therefore \(\rho_0=0\), \(\rho_1=1\), and
\[
 g_j(0)=g_j(1)=1.
\]
Restricting to positive heights removes exactly \(h=0\) from the first fiber
and changes no other fiber.  This gives \eqref{eq:FC}.  The proof has not used
Theorem~\ref{thm:FW}, the multiplicity theorem below, or the binomial
amplitude.
\end{proof}

\subsection{ECO rules}

Ordered fibers give the ECO rules, which in turn give the binomial amplitude.

\Needspace{5\baselineskip}
Every component carries the same ordered multiplicities.
\begin{theorem}

For every \(j\geq0\) and every ordinary component
\(\mathcal O_L\subseteq I_j\), the ordered fiber sizes on that component are
\(\Pi(L)\).  In particular,
\begin{equation}
\label{eq:UM}
 \multiset{|\operatorname{Fib}_j(u)|:u\in\mathcal O_L}
 =\multiset{2,3,\ldots,L+1}.
\end{equation}
The special component of length \(L\) has ordered profile \(\Sigma(L)\).
\end{theorem}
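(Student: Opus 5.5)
The plan is an induction on \(j\) that combines Lemma~\ref{lem:AdjFC} with Theorem~\ref{thm:FW}.

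For every \(j\), the hypotheses of Lemma~\ref{lem:AdjFC} hold by Corollary~\ref{cor:orbital-record-transport}. That lemma says the components of \(I_{j+1}\), read in increasing order, have lengths \((f_{j,0}-1,f_{j,1},\ldots,f_{j,m_j-1})\). Its proof also shows that the fibers of \(g_{j+1}\) appear in increasing order of their values, since the values increase along \(I_{j+1}\). Theorem~\ref{thm:FW} says the ordered fiber-size word \(f_{j+1}\) is \(\Sigma(f_{j,0}-1)\,\Pi(f_{j,1})\cdots\Pi(f_{j,m_j-1})\).

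The factor \(\Sigma(L)\) has length \(L\) for every \(L\geq1\): \(\Sigma(1)=(3)\), and for \(L\geq2\), \(\Sigma(L)\) is \(L+2\) followed by the \(L-1\) even numbers \(2L-2,\ldots,2\). The factor \(\Pi(L)\) also has length \(L\). So the factorization in Theorem~\ref{thm:FW} cuts \(f_{j+1}\) into consecutive blocks whose lengths are exactly the component lengths from Lemma~\ref{lem:AdjFC}, in the same left-to-right order. The fibers of \(I_{j+1}\) are listed in increasing order of \(u\). Hence the \(i\)-th block of \(f_{j+1}\) is exactly the list of fiber sizes over the \(i\)-th component of \(I_{j+1}\).

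The first component contains \(1=g_{j+1}(0)\), so it is the special component \(\mathcal S_L\) with \(L=f_{j,0}-1\), and its profile is \(\Sigma(L)\). Every later component is ordinary, of length \(L=f_{j,i}\), and its profile is \(\Pi(L)\). Equation \eqref{eq:UM} then follows because \(\Pi(L)\) is a permutation of \(2,\ldots,L+1\), as noted after \eqref{eq:Pi}.

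The base case is \(I_0\). By \(f_0=(3)\), \(I_0=\{1\}\) consists only of a special component of length one, and its fiber size is \(3=\Sigma(1)\). There is no ordinary component, so the ordinary claim is vacuous for \(j=0\). Because each level follows directly from the two cited results, no genuine induction hypothesis is needed beyond applying them at level \(j-1\) for \(j\geq1\).

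The main point needing care is that "ordered fiber sizes on a component" means the fibers listed by increasing \(u\), and that this matches the concatenation order in Theorem~\ref{thm:FW}. I would justify this from the proof of Lemma~\ref{lem:AdjFC}. There, \(I_{j+1}=\{\chi_1<\cdots<\chi_{V_j}\}\), and each fiber of \(g_j\) over positive heights maps to a maximal run of consecutive \(\chi\)'s. The proof of Theorem~\ref{thm:fiber-toeplitz} shows the fiber values \(E(p_{r_h})+1\) increase strictly with the block index. Together these say that the two orderings, by increasing \(u\) and by position in the factorization, coincide.

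A second check is that the first component really has length \(f_{j,0}-1\geq1\). For \(j\geq1\) this follows from \(f_{j,0}=j+3\), and for \(j=0\) from \(f_{0,0}=3\), giving length \(2\) at level one, as in \(f_1=(4,2)=\Sigma(2)\).
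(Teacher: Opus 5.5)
Your proposal is correct and is essentially the paper's proof: you match the factor lengths in \eqref{eq:FW} with the ordered component lengths from Lemma~\ref{lem:AdjFC}, assign the first block $\Sigma(f_{j,0}-1)$ to the special component containing $1$ and the blocks $\Pi(f_{j,i})$ to the ordinary components, and then read off \eqref{eq:UM} from the fact that $\Pi(L)$ is a permutation of $2,\ldots,L+1$. Your extra checks only make explicit what the paper leaves implicit: that $\Sigma(L)$ and $\Pi(L)$ have length $L$, and that $f_{j+1}$ lists fibers by increasing $u$ (which in fact holds by definition).
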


\begin{proof}
The base image is \(I_0=\{1\}\), and
\(f_0=(3)=\Sigma(1)\).  Thus the assertion for the special component holds
at \(j=0\), while there is no ordinary component.

The factor lengths on the right side of \eqref{eq:FW} are
\[
 f_{j,0}-1,f_{j,1},\ldots,f_{j,m_j-1}.
\]
Lemma~\ref{lem:AdjFC} identifies them, in the same order, with the component
lengths of \(I_{j+1}\).  Therefore the factorization in
\eqref{eq:FW} is exactly the factorization of the ordered fiber word by its
components.  The first factor is \(\Sigma(L)\) and belongs to the unique
component containing one.  Each later factor of length \(L\) is \(\Pi(L)\)
and belongs to an ordinary component.  Lemma~\ref{lem:ordinary-fold} gives
\eqref{eq:UM}.  This proves both assertions by induction.
\end{proof}

The component succession rules are consequently
\begin{equation}
\label{eq:OECO}
 \mathcal O_L\longmapsto
 \mathcal O_2+\mathcal O_3+\cdots+\mathcal O_{L+1}
\end{equation}
and
\begin{equation}
\label{eq:SECO}
 \mathcal S_L\longmapsto
 \mathcal S_{L+1}+\mathcal O_2+\mathcal O_4+\cdots+\mathcal O_{2L-2}.
\end{equation}
To prove the rules, consider first an ordinary component \(\mathcal O_L\).
Its fiber profile \(\Pi(L)\) is a permutation of
\(2,3,\ldots,L+1\).  The component correspondence therefore creates one
ordinary component of each of these lengths, which is \eqref{eq:OECO}.

The profile of \(\mathcal S_L\) is \(\Sigma(L)\).  In the component
correspondence, one is subtracted from its first fiber size and from no other
fiber size.  The resulting ordered component lengths are
\[
 L+1,2L-2,2L-4,\ldots,2.
\]
The first is the new special component, and the remaining components are
ordinary.  This proves \eqref{eq:SECO}.  The ordinary sum is empty for
\(L=1\).  Since \(I_0\) consists of the single component \(\mathcal S_1\),
induction gives exactly one special component in every generation, with
\(I_{n+1}\) containing \(\mathcal S_{n+2}\).

\subsection{Generating functions}

At generation \(n\), consider the components of \(I_{n+1}\).  Its special
component is \(\mathcal S_{n+2}\).  Let \(o_{n,k}\) be the number of ordinary
components \(\mathcal O_k\), and put
\[
 \mathscr F_n(z)=\sum_{k\geq2}o_{n,k}z^{k-2},
 \qquad \mathscr F_0(z)=0.
\]
An ordinary component \(\mathcal O_k\) contributes
\[
 1+z+\cdots+z^{k-1}=\frac{1-z^k}{1-z}
\]
to the next ordinary-component polynomial.  The special component
\(\mathcal S_{n+2}\) contributes
\[
 1+z^2+\cdots+z^{2n}=\frac{1-z^{2n+2}}{1-z^2}.
\]
Summing these contributions gives
\begin{equation}
\label{eq:ECO-recurrence}
 \mathscr F_{n+1}(z)
 =\frac{\mathscr F_n(1)-z^2\mathscr F_n(z)}{1-z}
 +\frac{1-z^{2n+2}}{1-z^2}.
\end{equation}
Let
\[
 \mathscr F(t,z)=\sum_{n\geq0}\mathscr F_n(z)t^n,
 \qquad G(t)=\mathscr F(t,1).
\]
Summing \eqref{eq:ECO-recurrence} gives
\begin{equation}
\label{eq:ECO-kernel}
 (1-z+tz^2)\mathscr F(t,z)
 =tG(t)+\frac{t(1-z)}{(1-t)(1-tz^2)}.
\end{equation}

Let
\[
 C(t)=\frac{1-\sqrt{1-4t}}{2t},
 \qquad C(t)=1+tC(t)^2.
\]
This is the generating function of the Catalan numbers, OEIS A000108
\cite{OEIS}.
The substitution \(z=C(t)\) cancels the kernel.  The Catalan identity gives
\[
 1-C(t)=-tC(t)^2,
 \qquad
 1-tC(t)^2=2-C(t).
\]
Equation \eqref{eq:ECO-kernel} therefore yields
\[
 G(t)=\frac{tC(t)^2}{(1-t)(2-C(t))}.
\]
If
\[
 \kappa_n=\#\Comp(I_{n+1}),
 \qquad
 \mathscr K(t)=\sum_{n\geq0}\kappa_nt^n=\frac1{1-t}+G(t),
\]
then adding \(1/(1-t)\) gives
\begin{equation*}
 \mathscr K(t)=\frac1{(1-t)(2-C(t))}.
\end{equation*}

\Needspace{5\baselineskip}
The number of components equals the preceding arch height.
\begin{lemma}

For every \(r\geq0\),
\begin{equation}
\label{eq:a-V}
 \kappa_{r+1}=V_r.
\end{equation}
\end{lemma}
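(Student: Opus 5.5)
The plan is to prove \eqref{eq:a-V} by counting elements of images rather than by working with the generating function. By definition, \(\kappa_{r+1}=\#\Comp(I_{r+2})\). So I will identify the number of components of \(I_{r+2}\) with the number of elements of \(I_{r+1}\), and then identify the number of elements of \(I_{r+1}\) with the number of positive record heights of \(P_r\), which is \(V_r\).

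\emph{First step.} Apply Lemma~\ref{lem:AdjFC} with \(j=r+1\). Its hypotheses (RC for \(P_{r+1}\) and \(P_{r+2}\), RPP and Adj for \(P_{r+1}\to P_{r+2}\)) are supplied for every level by Corollary~\ref{cor:orbital-record-transport}. Equation \eqref{eq:FC} then says that \(\operatorname{CompLengths}(I_{r+2})\) is a word with exactly one entry for each fiber of \(g_{r+1}\). Hence
\[
 \#\Comp(I_{r+2})=m_{r+1}=|I_{r+1}|.
\]
The only care needed is to check that no fiber is lost. The modification \(f_{j,0}\mapsto f_{j,0}-1\) removes only the domain point \(h=0\). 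The first fiber also contains \(h=1\), because \(g_{r+1}(0)=g_{r+1}(1)=1\), so this entry stays positive and still counts as a component.

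\emph{Second step.} Apply the same lemma with \(j=r\), now using the intermediate identity \eqref{eq:child-image}. It gives
\[
 I_{r+1}=\{\chi_1<\chi_2<\cdots<\chi_{V_r}\},
\]
where the \(\chi_h\) are strictly increasing. They are distinct because RPP produces distinct, increasing one-count parameters \(\lambda_h\), and RC for \(P_{r+1}\) turns these into strictly increasing record heights. The same identity notes that \(g_{r+1}(0)=1=\chi_1\), so the height-zero point adds no new value. Therefore \(|I_{r+1}|=V_r\), and combining the two steps gives \(\kappa_{r+1}=V_r\).

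\emph{Main obstacle and check.} The main risk is an index or boundary slip: \(\kappa_n\) is defined via \(I_{n+1}\), and the two uses of Lemma~\ref{lem:AdjFC} sit at consecutive levels. As a sanity check at \(r=0\): \(f_1=(4,2)\) gives \(I_1\) with two elements, which matches \(V_0=2\). It is also consistent that \(f_2=\Sigma(3)\Pi(2)\) yields two components of \(I_2\). If the padding values one and two in RPP caused any ambiguity at low levels, I would verify the case \(r=0\) directly from the running example. All higher levels would then follow uniformly from Corollary~\ref{cor:orbital-record-transport}.
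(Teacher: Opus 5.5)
Your proposal is correct and follows the paper's proof: Lemma~\ref{lem:AdjFC} at level \(r+1\) gives \(\#\Comp(I_{r+2})=|I_{r+1}|\), and the RPP/RC bijection, which is the identity \eqref{eq:child-image} at level \(r\), gives \(|I_{r+1}|=V_r\). The paper re-derives that second step inside its proof instead of citing the intermediate identity. Your check that the first component length \(f_{r+1,0}-1\) stays positive is a harmless detail the paper leaves implicit.
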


\begin{proof}
Lemma~\ref{lem:AdjFC} gives one component of \(I_{r+2}\) for each element of
\(I_{r+1}\), so
\[
 \#\Comp(I_{r+2})=|I_{r+1}|.
\]
For a record cut \(t\) of \(P_{r+1}\), the value of its fiber map depends only
on \(\lambda=O_{P_{r+1}}(t)\), because
\[
 \psi_{P_{r+1}}(t)=p^{P_{r+1}}_{\lambda+1}.
\]
RC makes the map
\(\lambda\mapsto h_{P_{r+1}}(p^{P_{r+1}}_{\lambda+1})\) strictly increasing on the
distinct one-count parameters supplied by RPP.  RPP is ordered and
exhaustive, and supplies exactly one such parameter for each of the \(V_r\)
positive record heights of \(P_r\).  The height-zero parameter gives
\(g_{r+1}(0)=g_{r+1}(1)=1\) and adds no image value.  Thus these parameters
are in bijection with all elements of \(I_{r+1}\), so
\(|I_{r+1}|=V_r\), which proves
\eqref{eq:a-V}.
\end{proof}

For \(r\geq1\),
\begin{equation}
\label{eq:W-a-difference}
 W_r=V_r-V_{r-1}=\kappa_{r+1}-\kappa_r.
\end{equation}
Extend the difference sequence at \(r=0\) by the artificial value
\(\kappa_1-\kappa_0=1\).  Its generating function is
\begin{align*}
\mathscr W(t)
&=\sum_{r\geq0}(\kappa_{r+1}-\kappa_r)t^r\\
&=\frac{(1-t)\mathscr K(t)-1}{t}
 =\frac{C(t)^2}{2-C(t)}
 =\frac{C(t)}{1-2tC(t)}.
\end{align*}
Equivalently,
\begin{equation}
\label{eq:marked-tree-equation}
 \mathscr W=C+2tC\mathscr W.
\end{equation}

\Needspace{5\baselineskip}
The amplitude increments are binomial.
\begin{theorem}
\label{thm:royal}
For every \(r\geq1\),
\begin{equation*}
 W_r=\binom{2r+1}{r}.
\end{equation*}
\end{theorem}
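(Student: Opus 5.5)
The plan is to extract coefficients from the generating function $\mathscr W(t)=C(t)/(1-2tC(t))$ obtained just above, and then match them with $W_r$ through \eqref{eq:W-a-difference}. Since $W_r=\kappa_{r+1}-\kappa_r$ for $r\geq1$ is exactly the coefficient of $t^r$ in $\mathscr W$, it suffices to show
\[
 [t^r]\,\frac{C(t)}{1-2tC(t)}=\binom{2r+1}{r}\qquad(r\geq1).
\]
The artificial value at $r=0$ equals $1=\binom10$, so the identity would then hold for all $r\geq0$.

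To extract coefficients, I will first put $\mathscr W$ in closed form. Put $s=\sqrt{1-4t}$, so that $tC=(1-s)/2$ and hence $1-2tC=s$. This gives
\[
 \mathscr W(t)=\frac{C(t)}{\sqrt{1-4t}}.
\]
The classical identity $C(t)^k/\sqrt{1-4t}=\sum_{n\geq0}\binom{2n+k}{n}t^n$ with $k=1$ then gives $[t^r]\mathscr W=\binom{2r+1}{r}$.

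I do not want to rely on that identity as a black box, so I will prove the $k=1$ case directly. One route uses $\frac1{\sqrt{1-4t}}=\sum\binom{2n}nt^n$ together with $C(t)/\sqrt{1-4t}=\bigl(\frac1{\sqrt{1-4t}}-1\bigr)/(2t)$. This last step holds because $C/s=(1-s)/(2ts)=(1/s-1)/(2t)$. The coefficient of $t^r$ is then $\frac12\binom{2r+2}{r+1}=\binom{2r+1}{r}$. This is a two-line computation and avoids Lagrange inversion entirely.

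For the consequences stated in part (ii) of the main theorem, I will proceed as follows. Summing $W_j$ from the base value $V_0=2$, read off the running example, gives $V_r=2+\sum_{j=1}^r\binom{2j+1}{j}$. The asymptotic $\binom{2j+1}{j}\sim 2\cdot4^j/\sqrt{\pi j}$, combined with a geometric-sum comparison (ratio $4$, so the sum is $\tfrac43$ of the last term), yields $\frac{8}{3\sqrt\pi}4^r/\sqrt r$.

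The main obstacle is not the algebra. It is confirming that the indexing conventions line up. Specifically, I must check that $\mathscr K$ really counts $\kappa_n=\#\Comp(I_{n+1})$ starting at $n=0$ with $\kappa_0=1$ (the single special component $\mathcal S_1$ of $I_1$, consistent with $f_0=(3)$ giving $I_1=[1,2]$). I must also check that \eqref{eq:a-V} gives $\kappa_1=V_0=2$, so that the artificial difference $\kappa_1-\kappa_0$ is indeed $1$. I would verify these initial values directly from $P_0$ and $P_1$, using $V_1=V_0+\binom31=5$ against the record cuts $(0,1,2,3,6,7)$ of $P_1$ (final height $5$) as a sanity check.
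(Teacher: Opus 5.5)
Your proof is correct. It differs from the paper's only in how the coefficients of $\mathscr W=C/(1-2tC)$ are extracted.

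\textbf{The paper's route.} The paper rewrites the generating function as $\mathscr W=C+2tC\mathscr W$ and reads it combinatorially. In this reading $\mathscr W$ counts full binary trees with one marked vertex: the mark is at the root, or else it lies in the left or the right subtree. A full binary tree with $r$ internal vertices has $2r+1$ vertices, so $[t^r]\mathscr W=(2r+1)C_r=\binom{2r+1}{r}$.

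\textbf{Your route.} You observe that $1-2tC=\sqrt{1-4t}$. Hence $\mathscr W=C/\sqrt{1-4t}=\bigl((1-4t)^{-1/2}-1\bigr)/(2t)$, and you read off $\tfrac12\binom{2r+2}{r+1}=\binom{2r+1}{r}$.

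\textbf{Comparison.} Your argument is purely algebraic and needs only the binomial series. It also gives a closed form that expresses $W_r$ as half a central binomial coefficient, from which $W_r\sim 2\cdot4^r/\sqrt{\pi r}$ follows at once. The paper's route instead needs the symbolic decomposition of marked full binary trees. In exchange it supplies the combinatorial interpretation of the amplitudes that the abstract advertises.

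\textbf{Initial values.} Your checks agree with the paper: $\kappa_0=1$, $\kappa_1=V_0=2$, and $V_1=5$ from the record cuts of $P_1$. There is one small slip in labels. $I_1=\{1,2\}$ is the special component $\mathcal S_2$, not $\mathcal S_1$; the component $\mathcal S_1$ is $I_0$. This does not affect $\kappa_0=1$.
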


\begin{proof}
The Catalan series \(C\) counts full binary trees by their number of internal
vertices.  Equation \eqref{eq:marked-tree-equation} counts a full binary tree
with one arbitrary vertex marked.  The term \(C\) corresponds to a marked
root.  Otherwise, the mark lies in the left or right subtree, which gives
\(2tC\mathscr W\).  A full binary tree with \(r\) internal vertices has
\(2r+1\) vertices.  Hence
\[
 [t^r]\mathscr W
 =(2r+1)C_r
 =(2r+1)\frac1{r+1}\binom{2r}{r}
 =\binom{2r+1}{r}.
\]
For \(r\geq1\), this coefficient is \(W_r\) by
\eqref{eq:W-a-difference}.  The artificial coefficient at \(r=0\) is not an
amplitude assertion.
\end{proof}

\section{Asymptotics}
\label{sec:asymptotics}

The exact binomial amplitudes obtained in Section~\ref{sec:fibers} yield the
error estimates for \(\Qt(n)\).

\subsection{Rate of convergence}

Since \(V_0=2\), Theorem~\ref{thm:royal} gives
\begin{equation}
\label{eq:V-sum}
 V_r=2+\sum_{j=1}^r\binom{2j+1}{j}.
\end{equation}

\Needspace{5\baselineskip}
The amplitude grows at an explicit rate.
\begin{lemma}
\label{lem:V-asymptotic}
As \(r\to\infty\),
\begin{equation}
\label{eq:V-asymptotic}
 V_r\sim\frac{8}{3\sqrt\pi}\frac{4^r}{\sqrt r}.
\end{equation}
\end{lemma}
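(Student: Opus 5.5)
The plan is to combine the exact sum \eqref{eq:V-sum} with Stirling's formula for the central binomial coefficient and a summation of a geometric-type series. First, rewrite each term as
\[
 \binom{2j+1}{j}=\frac{2j+1}{j+1}\binom{2j}{j},
\]
and invoke the classical estimate \(\binom{2j}{j}\sim 4^j/\sqrt{\pi j}\). This gives
\[
 \binom{2j+1}{j}\sim\frac{2\cdot4^j}{\sqrt{\pi j}}\qquad(j\to\infty).
\]

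Second, I sum these terms. The summand \(x_j=2\cdot4^j/\sqrt{\pi j}\) satisfies \(x_{j-1}/x_j\to1/4\), so the tail behaves like a geometric series with ratio \(1/4\). Formally, I would apply the Stolz--Ces\`aro theorem to \(V_r\) and \(y_r=\frac{8}{3\sqrt\pi}4^r/\sqrt r\). The sequence \(y_r\) is eventually increasing and unbounded, and
\[
 y_r-y_{r-1}=\frac{8}{3\sqrt\pi}\,\frac{4^r}{\sqrt r}\Bigl(1-\tfrac14\sqrt{r/(r-1)}\Bigr)\sim\frac{8}{3\sqrt\pi}\cdot\frac34\cdot\frac{4^r}{\sqrt r}=\frac{2\cdot4^r}{\sqrt{\pi r}}.
\]
Hence \((V_r-V_{r-1})/(y_r-y_{r-1})=W_r/(y_r-y_{r-1})\to1\), and Stolz--Ces\`aro yields \(V_r/y_r\to1\). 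The constant \(2\) and the finitely many initial terms are absorbed because \(y_r\to\infty\).

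An alternative to Stolz--Ces\`aro is to split the sum at \(j=r-\lfloor\sqrt r\rfloor\). On the upper range one uses \(\sqrt{j}=\sqrt r(1+o(1))\) uniformly, and the geometric sum is \(\sum_{i\ge0}4^{-i}=4/3\). The lower range is \(O(4^{r-\sqrt r})=o(4^r/\sqrt r)\). Either route gives the factor \(\frac43\cdot2/\sqrt\pi=\frac{8}{3\sqrt\pi}\).

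The only step requiring care is the uniformity in the summation, which is handled cleanly by Stolz--Ces\`aro, so I expect no genuine obstacle. The central-binomial asymptotic is standard and can be cited, or derived from Stirling's formula \(m!\sim\sqrt{2\pi m}(m/e)^m\).
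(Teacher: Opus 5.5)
Your proof is correct, and it differs from the paper's only in how the summation is justified. The paper shares your first step, $w_r=\binom{2r+1}{r}\sim 2\cdot4^r/\sqrt{\pi r}$. It then writes $V_r/w_r$ as $2/w_r$ plus a backward sum of the ratios $w_{r-j}/w_r$, whose termwise limits are $4^{-j}$. The exact ratio $w_{m-1}/w_m=(m+1)/(2(2m+1))\le 1/3$ supplies the majorant $3^{-j}$, so dominated convergence gives $V_r/w_r\to\sum_{j\ge0}4^{-j}=\frac43$.

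Your Stolz--Ces\`aro route needs no uniform bound at all. It uses only the asymptotics of $W_r$ and of $y_r-y_{r-1}$, but it presupposes the constant $\frac{8}{3\sqrt\pi}$, whereas the backward sum produces $\frac43$ directly.

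The paper's template also carries over to Lemma~\ref{lem:layer-CLT}. There the summands $\binom{2i+1}{k_r(z)}$ depend on $r$ through $k_r(z)$, so $B_{r,k_r(z)}$ is not a partial sum of a fixed sequence and Stolz--Ces\`aro would not apply directly. Your alternative split at $j=r-\lfloor\sqrt r\rfloor$ is essentially the paper's argument, with a cruder tail estimate in place of the dominating geometric series.
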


\begin{proof}
Put \(w_r=\binom{2r+1}{r}\).  Stirling's formula gives
\[
 w_r\sim\frac{2\,4^r}{\sqrt{\pi r}}.
\]
For every fixed \(j\geq0\),
\[
 \frac{w_{r-j}}{w_r}\longrightarrow4^{-j}.
\]
Indeed, for \(m\geq1\),
\[
 \frac{w_{m-1}}{w_m}=\frac{m+1}{2(2m+1)}\leq\frac13.
\]
Extend \(w_m\) by zero for \(m<0\).  Then
\(0\leq w_{r-j}/w_r\leq3^{-j}\) for every \(j\geq0\), so dominated
convergence in the backward sum, together with \(2/w_r\to0\), yields
\[
 \frac{V_r}{w_r}\longrightarrow\sum_{j\geq0}4^{-j}=\frac43.
\]
Multiplying the two asymptotics proves \eqref{eq:V-asymptotic}.
\end{proof}

Put
\begin{equation*}
 \varsigma(m)=\mathfrak a(m)+\mathfrak b(m)-m.
\end{equation*}
Equations \eqref{eq:ZP} and \eqref{eq:ZN} give
\begin{equation}
\label{eq:drift-values}
 \varsigma(m)=r+2\quad(u_r\leq m\leq v_r),
\end{equation}
and
\begin{equation*}
 \varsigma(m)\in\{r+2,r+3\}
 \quad(v_r\leq m\leq u_{r+1}).
\end{equation*}
The reconstructions are
\begin{equation}
\label{eq:Qt-even-reconstruction}
 \Qt(2m)=m-1+\varsigma(m)+\delta(m)
\end{equation}
and
\begin{equation}
\label{eq:Qt-odd-reconstruction}
 \Qt(2m-1)=m-1+\varsigma(m)-\delta(m).
\end{equation}

\Needspace{5\baselineskip}
The error term has optimal order.
\begin{theorem}
\label{thm:rate}
One has
\begin{equation}
\label{eq:rate}
 \Qt(n)=\frac n2+O\!\left(\frac{n}{\sqrt{\log n}}\right).
\end{equation}
The error is not \(o(n/\sqrt{\log n})\).  In particular,
\begin{equation}
\label{eq:limit-half}
 \lim_{n\to\infty}\frac{\Qt(n)}n=\frac12.
\end{equation}
\end{theorem}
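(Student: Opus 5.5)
The plan is to reduce everything to the two reconstructions \eqref{eq:Qt-even-reconstruction} and \eqref{eq:Qt-odd-reconstruction}. For an index \(m\) in the \(r\)-th block \([u_r,u_{r+1})\), these give
\[
 \Qt(2m)-m=\varsigma(m)-1+\delta(m),
 \qquad
 \Qt(2m-1)-m=\varsigma(m)-1-\delta(m).
\]
By \eqref{eq:drift-values}, \(\varsigma(m)\le r+3\). Since \(2m\) and \(2m-1\) differ from \(n\) by at most one, it suffices to show \(|\delta(m)|=O(m/\sqrt{\log m})\). The drift \(r+3\) is negligible, because \(m\ge u_r\asymp 4^r\) gives \(r=O(\log m)\).

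For the upper bound, split according to the type of arch. On a positive arch, \(0\le\delta(u_r+t)=h_{P_r}(t)\le V_r\). On a negative arch, \eqref{eq:negative-dictionary} gives
\[
 0\le-\delta\le\max_t h_{N_r}(t)+1\le 2V_r-1
\]
by Lemma~\ref{lem:law1-height}. In both cases \(|\delta(m)|\le 2V_r\) for \(m\in[u_r,u_{r+1})\). Lemma~\ref{lem:V-asymptotic} then gives \(V_r\asymp 4^r/\sqrt r\). Since \(u_r=2a_r-r-2\asymp 4^r\) and \(m\ge u_r\), one has \(\log m\asymp r\) and \(4^r\ll m\). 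Hence
\[
 |\delta(m)|\ll \frac{4^r}{\sqrt r}\ll\frac{m}{\sqrt{\log m}},
\]
which proves \eqref{eq:rate}. Dividing by \(n\) and letting \(n\to\infty\) gives \eqref{eq:limit-half}.

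For optimality, take \(m_r=u_r+\rho_{P_r,V_r}\), the cut where the positive arch reaches its maximum. There \(\delta(m_r)=V_r\) and \(m_r\le v_r=4a_r-r-2\asymp 4^r\). So
\[
 \Qt(2m_r)-m_r\ge V_r\gg \frac{4^r}{\sqrt r}\gg \frac{m_r}{\sqrt{\log m_r}},
\]
with \(\log m_r\sim r\log4\). The ratio of the error to \(n/\sqrt{\log n}\) along \(n=2m_r\) is therefore bounded below by a positive constant, so the error is not \(o(n/\sqrt{\log n})\). Explicitly, \(V_r/(2m_r)\cdot\sqrt{\log_2(2m_r)}\) is at least about \(\frac{8}{3\sqrt\pi}\cdot\frac{3}{16}\cdot\sqrt2/2>0\), using \(v_r\sim\frac{8}{3}4^{r}\cdot\)const. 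Only positivity of this constant is needed, not its value.

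The main obstacle is the negative arches. There \(\delta\) is not directly a height of \(P_r\), and the estimate relies entirely on the domination bound \(\max h_{N_r}\le 2V_r-2\) from Lemma~\ref{lem:law1-height} together with the dictionary \eqref{eq:negative-dictionary}. I would check the boundary indices \(t=0\) and \(t=|N_r|\) carefully. I would also check the few initial indices \(m<u_0=4\), which are absorbed into the \(O\)-constant. The rest is bookkeeping of \(u_r,v_r\asymp4^r\).
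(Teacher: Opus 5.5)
Your proof is correct, and both halves take a different and more elementary route than the paper.

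\textbf{Upper bound.} On the negative arches the paper invokes Theorem~\ref{thm:negative-domination}, the ratio comparison \(M_r^-\le M_r^+\), which reduces each negative arch to the preceding positive one. You instead use the absolute bound \(\max_t h_{N_r}(t)\le 2V_r-2\), already contained in Lemma~\ref{lem:law1-height}. For the \(O\)-estimate this is all that is needed, and it is shorter. It even loses nothing in the constant if you use \(m\ge v_r\) rather than \(m\ge u_r\) on the negative arch. Since \(v_r-2u_r=r+2>0\), one has \(2V_r/v_r\le V_r/u_r\), which is the bound the paper ultimately uses for the limsup. What the domination theorem gives beyond this is a comparison with the true \(M_r^+\) rather than with \(V_r/u_r\). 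That reduces the open question of the exact limsup to the positive arches alone.

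\textbf{Non-\(o\) assertion.} The paper uses the Gaussian layer subsequences of Theorem~\ref{thm:layer-subsequences}. These rest on the layer central limit lemma and give exact limits \(F(z)/(3\sqrt{2\pi})\), which are needed for part (iv) of the main theorem. Your first-summit subsequence needs only \(\delta(m_r)=V_r\), \(m_r\le v_r\), and Lemma~\ref{lem:V-asymptotic}, and that suffices for Theorem~\ref{thm:rate}. Using \(\tau_r=a_r+1-V_r\), one finds \(m_r\sim 3a_r\), so your subsequence has limit \(1/(3\sqrt{2\pi})\). This is exactly the first-summit benchmark that Proposition~\ref{prop:layer-enhancement} improves upon.

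\textbf{A slip in the explicit constant.} Your numerical constant is too large by a factor of two. Since \(v_r\sim\frac{32}{3}4^r\), the bound \(m_r\le v_r\) gives \(\frac{1}{4\sqrt{2\pi}}\). It does not give \(\frac{8}{3\sqrt\pi}\cdot\frac{3}{16}\cdot\frac{\sqrt2}{2}=\frac{1}{2\sqrt{2\pi}}\). That value coincides with the paper's upper bound for the limsup, and it is false along your subsequence, whose limit is \(\frac{1}{3\sqrt{2\pi}}\). As you note, only positivity is used, so the argument stands; correct or remove the numerical remark.
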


The upper estimate is proved after the negative domination theorem below.
The lower-order assertion follows from the Gaussian subsequences in
Theorem~\ref{thm:layer-subsequences}.

\subsection{Positive and negative arches}

Let \(A\) be an orbital core of half-length \(n\).  Put
\[
 D=\max_{0\leq m\leq n}d_m,
 \qquad r_D=\min\{m:d_m=D\}.
\]
For the specific core \(A_r\), write
\[
 D_r=\max_{0\leq m\leq n_r}d_m(A_r),
 \qquad
 r_{D_r}=\min\{m:d_m(A_r)=D_r\}.
\]

\Needspace{5\baselineskip}
The first maximal depth sits at the center, and the center is preserved.
\begin{lemma}
\label{lem:first-peak-transport}
Assume that \(A\) satisfies RS and is anti-palindromic.  If \(r_D=n/2\), then
the core \(A'=\Lam(A)\), of half-length \(n'=4n+2\), satisfies
\[
 r'_{D'}=\frac{n'}2.
\]
\end{lemma}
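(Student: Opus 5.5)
We follow the first maximal depth through the two applications of \(T\) that produce \(A'\), using Theorem~\ref{thm:T-records} at each step.

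\emph{First step: depth in \(B_T=T(A)\).} By \eqref{eq:T-record-set}, the zero data of \(B_T\) satisfy \(d'_k=e_A(k-1)+1\). By \eqref{eq:e-height}, \(e_A(t)=h_A(t)+d_{O_A(t)}\), so the maximum of \(d'\) is governed by the joint maximum of \(h_A(t)\) and of the depth of the current one-count. The first zero cut at which \(h_A\) attains \(D\) is \(p_{r_D}\), with \(r_D=n/2\), and there \(O_A=q_{r_D}\). Anti-palindromicity makes \(h_A\) symmetric about the cut \(n\). Condition \(r_D=n/2\) places \(p_{r_D}\) on a specific side of the centre: reflection sends the \(m\)-th zero to the \((n+1-m)\)-th one, so \(p_{n/2}\) sits just before the middle.

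\emph{Second step: depth in \(Q=T^2(A)\).} Apply Theorem~\ref{thm:T-records} again, now to \(B_T\). The depths of \(Q\) at its zero cuts \(p''_k\) equal \(e_{B_T}(k-1)+1\). On the zero cuts of \(B_T\) we have \(e_{B_T}(p'_k)=E(k)=k-\pi^2(k)\), and the four-head formula \eqref{eq:four-head} gives
\[
 E(k)=1+h_A(k-1)+2h_A(\pi(k))+h_A(\pi^2(k)).
\]
Record synchronization (Corollary~\ref{cor:orbital-RS}, or more precisely RS via Theorems~\ref{thm:ASBM} and~\ref{thm:BM-RS}) says that the first passages of \(E\) are exactly the ramps \(\mathcal K_{r_h}\) above record cuts. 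The global maximum of \(E\) is therefore first reached at the start of the ramp \(\mathcal K_{r_D}=\{s_{r_D}+1,\ldots,s_{r_D+1}\}\) when that ramp is entered from a record. More precisely, the first passage of the final maximum value is the first point of the last record ramp; by BM with the potential \eqref{eq:potential}, the maximal value is \(2n+1-\underline b\), attained at the end of \(\mathcal K_{r_D}\). Using \(s_m=2n+1-p_{n+1-m}\) and \(r_D=n/2\), I expect this first maximizing \(k\) to be \(k^*=n+1\), the central index of the domain \(\{1,\ldots,2n+1\}\). Translating back, the first maximal depth of \(Q\) occurs at zero-rank \(k^*+\pi(k^*)\) of \(B_T\), and then at the corresponding rank of \(Q\). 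Stripping the envelope (\(\Lam\) removes the first zero) shifts the rank by one. The target is the rank \(n'/2=2n+1\) in \(A'\).

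\emph{Main obstacle.} The hard part is to show that the first maximum of \(E\), and then of \(e_{B_T}\), is attained exactly at the centre, and not merely somewhere within a ramp of length \(\alpha_D\). I would handle this in three moves. First, I would use the self-conjugacy \eqref{eq:self-conjugate} together with \(r_D=n/2\) to compute \(s_{r_D}\) and \(\pi\) on the central indices explicitly. Second, I would check that the arithmetic \(p''_{k}=k+\theta_{B_T}(k-1)\) evaluated at the central \(k\) returns \(2n+2\) zeros before the first maximal cut. Third, I would rule out an earlier attainment of \(D'\). For this, it suffices that the depth maximum is a strict first passage, which RS provides: values before the ramp \(\mathcal K_{r_D}\) are at most \(2n+1-\underline b_{r_D-1}\), strictly less than the ramp's top. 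The two envelope shifts must be handled carefully, and the off-by-one bookkeeping there is where the argument is most likely to need adjustment.
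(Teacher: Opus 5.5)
Your framework is the right one: \(d^{A'}\) is read off from \(e_{B_T}\), Theorem~\ref{thm:T-records} reduces first passages of \(e_{B_T}\) to those of \(E\), RS identifies \(\FP(E)\) with the ramps \(\mathcal K_{r_h}\), and \(s_m=2n+1-p_{n+1-m}\) converts one cuts to zero cuts. But you put the first maximum of \(E\) in the wrong place, and locating it is the whole content of the lemma.

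On a ramp \(\mathcal K_m\) the value \(\pi^2(k)=p_{q_m}\) is constant, so \(E\) rises by exactly one per step. RS makes every point of the last ramp \(\mathcal K_{r_D}\) a first passage, and no first passage follows it. Hence the maximum of \(E\) is first attained at the \emph{last} point \(k^*=s_{r_D+1}\) of that ramp. It is not attained first at the ramp's first point, and not at \(k=n+1\); your two sentences on this point also contradict each other. Since \(p_{r_D}=2r_D-D=n-D\), one has \(k^*=n+1+D\).

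In the running example \(A_0=0101\), the maximum \(E=4\) is first reached at \(k=4\). By contrast, \(k=n+1=3\) gives \(E(3)=3\) and \(p'_3=4\neq5=n'/2\). So the ``main obstacle'' you describe, pinning the maximum somewhere inside a ramp of length \(\alpha_D\), does not arise. The centre that matters is the cut \(2n+1\) in the domain of \(e_{B_T}\), not the middle of the domain of \(E\).

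What is missing is the short computation that actually uses the hypothesis. You have \(d^{A'}_m=e_{B_T}(m)\) exactly for \(0\leq m\leq4n+2\), because the \(+1\) in \(d''_{m+1}=e_{B_T}(m)+1\) is cancelled by removing the initial zero of \(Q\); there is no residual shift to track. You also have \(\FP(e_{B_T})=\{0\}\cup\{p'_k:k\in\FP(E)\}\) with \(p'_k=k+\pi(k)\). Together these place the first maximal depth of \(A'\) at
\[
 r'_{D'}=p'_{k^*}=s_{r_D+1}+\pi(s_{r_D+1})=s_{r_D+1}+p_{r_D}=2n+1+p_{r_D}-p_{n-r_D}.
\]
The assumption \(r_D=n/2\) enters only here. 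It makes \(n-r_D=r_D\), so the two cuts cancel and \(r'_{D'}=2n+1=n'/2\). Your plan never writes this identity. The steps you propose instead, evaluating \(\pi\) and \(p''_k\) at the central index \(n+1\), would produce the wrong rank.
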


\begin{proof}
Put \(B_T=T(A)\), and use \(\pi,E\) from \eqref{eq:pi-E}.  The gap formula
for \(\mathcal C(B_T)=A'\) gives
\[
 d_m^{A'}=e_{B_T}(m)
 \qquad(0\leq m\leq4n+2).
\]
Theorem~\ref{thm:T-records} gives
\[
 \FP(e_{B_T})=\{0\}\cup\{p'_k:k\in\FP(E)\},
 \qquad p'_k=k+\pi(k).
\]
RS says that the parameters producing first passages are exactly those for
which \(\pi(k)\) is a record of \(A\).

Use the one cuts \(s_j\) from Definition~\ref{def:zero-coordinates}.  The
largest parameter in the fiber above the
last record \(p_{r_D}\) is \(s_{r_D+1}\).  The first attainment of the maximal
value of \(e_{B_T}\) is therefore
\[
 r'_{D'}=p'_{s_{r_D+1}}=s_{r_D+1}+p_{r_D}.
\]
Anti-palindromicity gives
\[
 s_j=2n+1-p_{n+1-j}.
\]
Hence
\[
 r'_{D'}=2n+1+p_{r_D}-p_{n-r_D}.
\]
If \(r_D=n/2\), the last two cuts coincide, and the result is
\(2n+1=n'/2\).  All terminal parameters are included through the sentinels.
\end{proof}

\Needspace{5\baselineskip}
The first summit of every arch has an exact position.
\begin{theorem}

For every \(r\geq0\), the core \(A_r=P_r[1:|P_r|-1)\), whose half-length is
\(n_r=a_r-1\), satisfies
\begin{equation}
\label{eq:depth-bisection}
 2r_{D_r}=a_r-1.
\end{equation}
Consequently, with
\[
 \tau_r=\min\{t:h_{P_r}(t)=V_r\},
\]
one has
\begin{equation*}
 \tau_r=a_r+1-V_r.
\end{equation*}
\end{theorem}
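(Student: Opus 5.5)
Induction on \(r\) via Lemma~\ref{lem:first-peak-transport}, then a translation from depths of the core to heights of the envelope.

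\emph{Base case.} For \(A_0=0101\) one has \(n_0=2=a_0-1\). The zero cuts are \(p=(0,1,3)\) and \(d=(0,1,1)\), so \(D_0=1\) and \(r_{D_0}=1\). Hence \(2r_{D_0}=2=a_0-1\).

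\emph{Inductive step.} Assume \(2r_{D_r}=a_r-1=n_r\), that is, \(r_{D_r}=n_r/2\). By Corollaries~\ref{cor:orbital-anti} and~\ref{cor:orbital-RS}, \(A_r\) is anti-palindromic and satisfies RS. Lemma~\ref{lem:normal-form} gives \(A_{r+1}=\Lam(A_r)\). Lemma~\ref{lem:first-peak-transport} then gives \(r_{D_{r+1}}=n_{r+1}/2\) with \(n_{r+1}=4n_r+2\). This is consistent with the length formula, since \(a_{r+1}-1=4a_r-2=4(a_r-1)+2\). So \(2r_{D_{r+1}}=a_{r+1}-1\), which proves \eqref{eq:depth-bisection}.

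\emph{Consequence for \(\tau_r\).} By Lemma~\ref{lem:theta-e}, positive records of a Dyck word occur at zero cuts. The first cut where \(d_m(A_r)=h_{A_r}(p_m)\) attains \(D_r\) is therefore \(p_{r_{D_r}}\), and the first maximum of \(h_{A_r}\) occurs there. Since \(P_r=0A_r1\), we have \(h_{P_r}(t)=1+h_{A_r}(t-1)\) for \(1\le t\le|P_r|-1\), together with \(h_{P_r}(0)=0\) and \(h_{P_r}(|P_r|)=0\). Hence \(V_r=D_r+1\) and \(\tau_r=1+p_{r_{D_r}}\).

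Now write \(p_m=m+q_m\) and \(d_m=m-q_m\), so \(p_m=2m-d_m\). With \(m=r_{D_r}\) this gives
\[
 p_{r_{D_r}}=2r_{D_r}-D_r=(a_r-1)-(V_r-1)=a_r-V_r,
\]
and therefore \(\tau_r=a_r+1-V_r\).

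\emph{Main obstacle.} The real content is in the transport lemma, which is already proved, so the remaining points are bookkeeping. The first is the half-length recursion \(n_{r+1}=4n_r+2\), matching \(n'=4n+2\) in Lemma~\ref{lem:first-peak-transport}; it follows from \(|P_{r+1}|=2a_{r+1}\). The second is checking that \(n_r\) is even so that \(n_r/2\) makes sense. Since \(a_r=(2\cdot4^{r+1}+1)/3\) is odd, \(a_r-1\) is even.
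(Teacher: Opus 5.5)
Your proposal is correct and follows essentially the same route as the paper. The paper likewise checks the base case $A_0=0101$, inducts via Corollaries~\ref{cor:orbital-anti}, \ref{cor:orbital-RS} and Lemma~\ref{lem:first-peak-transport}, and then uses $p_{r_D}=2r_D-D$ with the envelope shift to get $V_r=D_r+1$ and $\tau_r=1+p_{r_{D_r}}$. One small citation point: the fact that positive records of a Dyck word sit at zero cuts is stated at the start of Section~\ref{sec:records}, not in Lemma~\ref{lem:theta-e}, though it is immediate in any case.
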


\begin{proof}
For \(A_0=0101\), one has \(n_0=2\), \(D_0=1\), and \(r_{D_0}=1\).
Corollaries~\ref{cor:orbital-anti} and \ref{cor:orbital-RS}, followed by
Lemma~\ref{lem:first-peak-transport}, prove \eqref{eq:depth-bisection} by
induction.

At the first maximal depth,
\[
 p_{r_D}=r_D+q_{r_D}=2r_D-D=n_r-D.
\]
The envelope \(P_r=0A_r1\) adds one to the position and to the height.
Therefore \(V_r=D+1\) and
\[
 \tau_r=1+p_{r_D}=1+n_r-D=a_r+1-V_r.
\]
\end{proof}

The theorem concerns the first attainment only.  The maximum need not be
attained uniquely.  For example, \(P_0=001011\) attains \(V_0=2\) at cuts two
and four.

For the negative arch and \(1\leq t\leq|N_r|\), put
\[
 D_r^-(t)=h_{N_r}(t)+N_r[t-1].
\]
Equation \eqref{eq:negative-dictionary} gives
\(D_r^-(t)=-\delta(v_r+t)\).

For \(r\geq0\), put
\[
 M_r^+=\max_{0\leq t\leq2a_r}\frac{h_{P_r}(t)}{u_r+t},
 \qquad
 M_r^-=\max_{1\leq t\leq|N_r|}
 \frac{h_{N_r}(t)+N_r[t-1]}{v_r+t}.
\]

\Needspace{5\baselineskip}
The negative arches are dominated by the positive ones.
\begin{theorem}
\label{thm:negative-domination}
For every \(r\geq0\),
\begin{equation}
\label{eq:negative-domination}
 M_r^-\leq M_r^+.
\end{equation}
\end{theorem}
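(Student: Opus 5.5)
The plan is to compare each negative ratio with a suitably chosen positive ratio through the exact Law~1 height identity of Lemma~\ref{lem:law1-height}. First, write \(D_r^-(t)=\max\{h_{N_r}(t-1),h_{N_r}(t)\}\) as in \eqref{eq:negative-dictionary}. Then, for \(t\geq1\), use the head formulas \(\xi_t=Z_{N_r}(t-1)\) and \(\upsilon_t=1+O_{N_r}(t-1)\) to express
\[
 h_{N_r}(t)=h_{P_r}(\xi_t+2)+h_{P_r}(\upsilon_t)-2 .
\]
Since \(\xi_t+\upsilon_t=t\), the negative depth at position \(v_r+t\) is at most the sum of two positive heights at cuts \(x=\xi_t+2\) and \(y=\upsilon_t\) with \(x+y=t+2\). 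The case \(D_r^-(t)=h_{N_r}(t-1)\) is handled identically with \(t-1\), losing at most one unit. That unit is compensated by the \(-2\) in the identity.

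The key inequality then reads
\[
 h_{P_r}(x)+h_{P_r}(y)-1\leq M_r^+\bigl((u_r+x)+(u_r+y)\bigr)=M_r^+(2u_r+t+2),
\]
by the definition of \(M_r^+\) applied separately at \(x\) and \(y\). It therefore suffices to check that \(2u_r+t+2\leq v_r+t\), or at least \(\leq v_r+t\) up to a term absorbed by the slack \(-1\). Since \(v_r=4a_r-r-2\) and \(2u_r=4a_r-2r-4\), we have \(v_r-2u_r=r+2\geq2\). So \(2u_r+t+2\leq v_r+t\) holds exactly, with no slack needed. This gives \(D_r^-(t)\leq M_r^+(v_r+t)\) whenever the maximum in \eqref{eq:negative-dictionary} is \(h_{N_r}(t)\). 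When it is \(h_{N_r}(t-1)\) (i.e.\ \(N_r[t-1]=1\)), apply the identity at \(t-1\). The denominator \(v_r+t\) is then larger than \(v_r+t-1\), so the same bound holds a fortiori.

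I expect the main obstacle to be the boundary cases of the head identity, where one tape is exhausted or \(t\) is small. These are: \(t=1\), where the heads are \((0,1)\) or use the initial state; \(\xi_t+2=2a_r\), where the first head has finished and \(h_{P_r}(2a_r)=0\); and \(\upsilon_t=2a_r-1\). In these cases the cut \(x\) or \(y\) may be \(0\) or \(2a_r\), where the positive height vanishes. That is harmless, since a zero term only helps, provided the ratio bound is applied only to the nonzero term and the nonnegative remainder is discarded. I would treat these cases by direct inspection, using Lemma~\ref{lem:law1-height}'s explicit formula \(h_{N_r}(t)=4a_r-4-t\) on the exhausted branch and comparing it with the positive ratio at the final descent \(h_{P_r}(2a_r-1)=1\).

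If the clean split fails at some small \(r\), I would fall back on verifying \(r=0\) directly from \(P_0=001011\) and \(N_0\). Finally, I would note that the statement uses the maximum over both numerator and position. So pairing each negative position with the larger of the two positive ratios, rather than the same positive position, suffices.
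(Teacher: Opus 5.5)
Your proposal is correct and is essentially the paper's proof: the Law~1 identity $h_{N_r}(s)=h_{P_r}(\xi_s+2)+h_{P_r}(\upsilon_s)-2$ with $\xi_s+\upsilon_s=s$, the termwise bound $h_{P_r}(x)\leq M_r^+(u_r+x)$, and the comparison $2u_r+t+2=v_r+t-r\leq v_r+t$. The only difference is that the paper treats $\max\{h_{N_r}(t-1),h_{N_r}(t)\}$ by noting that the affine bound $M_r^+(2u_r+s+2)-2$ increases in $s$, where you split into cases. The boundary inspections and the separate $r=0$ check you anticipate are unnecessary, because $h_{P_r}(x)\leq M_r^+(u_r+x)$ holds at every cut $0\leq x\leq 2a_r$, endpoints included, since $M_r^+\geq0$ and $u_r>0$.
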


\begin{proof}
After \(s\) Law~1 outputs, let \(\xi_s,\upsilon_s\) be the numbers of consumed
letters.  Exact fit and Lemma~\ref{lem:law1-height} give
\[
 \xi_s+\upsilon_s=s
\]
and
\[
 h_{N_r}(s)=h_{P_r}(\xi_s+2)+h_{P_r}(\upsilon_s)-2.
\]
The definition of \(M_r^+\) gives
\[
 h_{P_r}(x)\leq M_r^+(u_r+x)
 \qquad(0\leq x\leq2a_r).
\]
Thus
\[
 h_{N_r}(s)\leq M_r^+(2u_r+s+2)-2.
\]
Since
\[
 D_r^-(t)=\max\{h_{N_r}(t-1),h_{N_r}(t)\},
\]
the affine upper bound just obtained increases with \(s\).  Its value at
\(s=t\) therefore dominates the bounds at both \(t-1\) and \(t\), and gives
\[
 D_r^-(t)\leq M_r^+(2u_r+t+2)-2.
\]
The identity \(2u_r+2=v_r-r\), together with \(M_r^+\geq0\), yields
\[
 D_r^-(t)\leq M_r^+(v_r+t).
\]
Division by \(v_r+t\) and maximization over the full negative domain prove
\eqref{eq:negative-domination}.
\end{proof}

\begin{proof}[Proof of the upper estimate in Theorem~\ref{thm:rate}]
On a positive arch,
\[
 \frac{|\delta(m)|}{m}\leq M_r^+\leq\frac{V_r}{u_r}.
\]
On the following negative arch, Theorem~\ref{thm:negative-domination} gives
the same upper bound.  By Lemma~\ref{lem:V-asymptotic} and
\(u_r\asymp4^r\), it is \(O(r^{-1/2})\).  Equations
\eqref{eq:drift-values} through \eqref{eq:Qt-odd-reconstruction} add only
\(O(r/m)\).  Since the level-\(r\) indices satisfy \(m\asymp4^r\) and
\(r\asymp\log m\), equation \eqref{eq:rate} follows.  It implies
\eqref{eq:limit-half}.
\end{proof}

\subsection{Orbital layers}

The root word of \(A_r\) is \(R_r=(1,3,\ldots,2r+1)\).  The root grammar
orders its depth layers as
\[
 c=\varphi^{2r+1}(R_r)\varphi^{2r}(R_r)\cdots\varphi(R_r)R_r.
\]
For \(0\leq k\leq2r+1\), define
\begin{equation}
\label{eq:B-layer}
 B_{r,k}=\sum_{i=0}^r\binom{2i+1}{k},
 \qquad
 T_{r,k}=\sum_{\ell=k}^{2r+1}B_{r,\ell}.
\end{equation}
At the upper boundary, set
\begin{equation}
\label{eq:layer-zero-boundary}
 B_{r,2r+2}=T_{r,2r+2}=0.
\end{equation}

\Needspace{5\baselineskip}
The forest layers give exact points on the positive arch.
\begin{lemma}
\label{lem:layer-points}
The number \(B_{r,k}\) is the width of layer \(k\) in the canonical forest.
Moreover,
\begin{equation*}
 d_{T_{r,k}}=B_{r,k}.
\end{equation*}
The positive arch contains the exact point
\begin{equation}
\label{eq:layer-point}
 t_{r,k}=1+2T_{r,k}-B_{r,k},
 \qquad
 h_{P_r}(t_{r,k})=B_{r,k}+1.
\end{equation}
\end{lemma}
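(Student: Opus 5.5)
The plan is to read everything off the root grammar of Lemma~\ref{lem:root-grammar}, applied to the orbital root word \(R_r=(1,3,\ldots,2r+1)\) from Corollary~\ref{cor:orbital-roots}. Here the maximum letter is \(M=2r+1\), and the degree word of \(A_r\) factors as
\[
 c=\varphi^{2r+1}(R_r)\,\varphi^{2r}(R_r)\cdots\varphi(R_r)\,R_r .
\]
In the construction of the canonical forest, the factor \(\varphi^k(R_r)\) consists exactly of the vertices at distance \(k\) from the roots, that is, layer \(k\). Their number is
\[
 |\varphi^k(R_r)|=\sum_{i=0}^r|\varphi^k(2i+1)|=\sum_{i=0}^r\binom{2i+1}{k}=B_{r,k}
\]
by \eqref{eq:phi-binomial-length}. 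This gives the first claim. Since the layers occur as contiguous blocks in decreasing order of \(k\), the vertices of layers \(\ell\geq k\) are exactly the first \(T_{r,k}\) vertices, so the index \(T_{r,k}\) is the last vertex of layer \(k\).

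Next I compute \(q_{T_{r,k}}\). By Definition~\ref{def:zero-coordinates}, \(q_m\) counts the ones before the \(m\)-th zero, and by \eqref{eq:degree-expansion} this equals \(c_1+\cdots+c_m\). The key identity is that the total degree of layer \(\ell\) equals the width of layer \(\ell+1\). This holds because \(|\varphi(w)|=\sum_i w_i\) for every word \(w\), which was already used in the proof of Theorem~\ref{thm:root-classification}, and here gives \(\sum\varphi^\ell(R_r)=|\varphi^{\ell+1}(R_r)|=B_{r,\ell+1}\). Summing over \(\ell\geq k\) gives
\[
 q_{T_{r,k}}=\sum_{\ell=k}^{2r+1}B_{r,\ell+1}=T_{r,k+1}.
\]
The boundary term \(\ell=2r+1\) vanishes by the convention \eqref{eq:layer-zero-boundary}. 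Hence
\[
 d_{T_{r,k}}=T_{r,k}-q_{T_{r,k}}=T_{r,k}-T_{r,k+1}=B_{r,k}.
\]
As a sanity check, at \(k=0\) this gives \(d_n=r+1=\varrho_r\), which agrees with Corollary~\ref{cor:orbital-roots}.

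Finally I convert to a point on the word. The zero cut of index \(m=T_{r,k}\) in \(A_r\) is
\[
 p_m=m+q_m=T_{r,k}+T_{r,k+1}=2T_{r,k}-B_{r,k},
\]
and its height is \(h_{A_r}(p_m)=d_m=B_{r,k}\). Since \(P_r=0A_r1\), every cut shifts by one and every height rises by one. This gives \(t_{r,k}=1+2T_{r,k}-B_{r,k}\) and \(h_{P_r}(t_{r,k})=B_{r,k}+1\), which is \eqref{eq:layer-point}. I also need \(m\geq1\), so that \(p_m\) is a genuine zero cut; this holds because \(B_{r,k}\geq1\) for \(0\leq k\leq2r+1\).

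The main obstacle is the bookkeeping in the first step. I have to justify that the blocks \(\varphi^k(R_r)\) of the grammar are exactly the depth layers of the canonical forest, in the stated order and contiguous. I also need the parent–child pairing \((q_{m-1},q_m]\) to send the total degree of layer \(\ell\) exactly onto layer \(\ell+1\). I would extract both facts from the converse construction in Lemma~\ref{lem:root-grammar}, where the children of a type-\(L\) vertex have types \(\varphi(L)\) and each child precedes its parent. The degree sum identity then gives the second fact directly, without any further tracing of individual vertices.
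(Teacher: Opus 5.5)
Your proposal is correct and follows the paper's proof essentially step for step. Both arguments take the layer widths from \eqref{eq:phi-binomial-length} applied to the grammar blocks \(\varphi^k(R_r)\), get \(q_{T_{r,k}}=T_{r,k+1}\) from the degree-sum identity together with the boundary convention, and obtain \eqref{eq:layer-point} from \(p_m=m+q_m=2m-d_m\) and the envelope shift. Your extra checks (that \(T_{r,k}\geq1\), and the \(k=0\) consistency with \(\varrho_r=r+1\)) are harmless additions.
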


\begin{proof}
Equation \eqref{eq:phi-binomial-length} gives the layer width
\(B_{r,k}\).  After the layers \(2r+1,\ldots,k\) have been read, the number
of degrees consumed is \(T_{r,k}\).  The sum of the degrees in layer
\(\varphi^\ell(R_r)\) equals the length of the next layer.  With the
boundary convention \eqref{eq:layer-zero-boundary},
\[
 q_{T_{r,k}}
 =\sum_{\ell=k}^{2r+1}B_{r,\ell+1}
 =T_{r,k+1}.
\]
Therefore
\[
 d_{T_{r,k}}=T_{r,k}-T_{r,k+1}=B_{r,k}.
\]
A cut after the \(m\)-th zero of a core occurs at
\(p_m=2m-d_m\).  The envelope adds one to its cut and height.  Substitution
of \(m=T_{r,k}\) proves \eqref{eq:layer-point}.
\end{proof}

The core half-length satisfies
\begin{equation}
\label{eq:n-layer-sum}
 n_r=a_r-1
 =\sum_{k=0}^{2r+1}B_{r,k}
 =\sum_{i=0}^r2^{2i+1},
 \qquad u_r=2n_r-r.
\end{equation}

Let \(\Phi\) denote the distribution function of the standard normal law.
For \(z>0\), put
\begin{equation*}
 F(z)=\frac{3e^{-z^2/2}}{2(1+\Phi(-z))}.
\end{equation*}

For fixed \(z>0\) and all sufficiently large \(r\), put
\begin{equation*}
 k_r(z)=\left\lfloor
 \frac{2r+1}{2}+z\frac{\sqrt{2r+1}}2
 \right\rfloor.
\end{equation*}

\Needspace{5\baselineskip}
The layer widths and the tails have Gaussian limits.
\begin{lemma}
\label{lem:layer-CLT}
For every fixed \(z>0\),
\begin{equation}
\label{eq:layer-CLT}
 \frac{B_{r,k_r(z)}}{V_r}\longrightarrow e^{-z^2/2},
 \qquad
 \frac{T_{r,k_r(z)}}{n_r}\longrightarrow\Phi(-z).
\end{equation}
\end{lemma}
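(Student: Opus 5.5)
The plan is to write both quantities in \eqref{eq:layer-CLT} as geometrically weighted sums over the summation index \(i=r-j\) in \eqref{eq:B-layer}. The top few terms carry all the mass, and for each fixed \(j\) the de Moivre--Laplace theorem gives the Gaussian limit of the corresponding term. Dominated convergence in \(j\), exactly as in the proof of Lemma~\ref{lem:V-asymptotic}, then passes the termwise limits through the sum. Throughout put \(w_i=\binom{2i+1}{i}\) and \(k=k_r(z)\).

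For the width, write
\[
 \frac{B_{r,k}}{V_r}=\frac{w_r}{V_r}\sum_{j\geq0}\frac{\binom{2(r-j)+1}{k}}{w_r},
\]
with terms vanishing for \(j>r\). The proof of Lemma~\ref{lem:V-asymptotic} gives \(w_r/V_r\to3/4\), so it suffices to show that the sum tends to \(\tfrac43e^{-z^2/2}\).
\begin{itemize}[leftmargin=2em]
\item Domination: \(\binom{2i+1}{k}\leq w_i\), since \(w_i\) is a central coefficient. Together with \(w_{m-1}/w_m\leq1/3\), this bounds the \(j\)-th term by \(3^{-j}\).
\item Pointwise limit for fixed \(j\): write \(\binom{2i+1}{k}/w_r=(w_i/w_r)\cdot\binom{2i+1}{k}/w_i\) with \(i=r-j\). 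The first factor tends to \(4^{-j}\). The standardized distance of \(k\) from the centre \((2i+1)/2\) is \(\bigl(k-(2i+1)/2\bigr)/(\sqrt{2i+1}/2)=z+O(1/\sqrt r)\), because the floor and the shift by \(j\) are \(O(1)\) against a scale \(\sqrt r\). The local de Moivre--Laplace theorem, which holds uniformly for standardized deviations in compact sets, then gives \(\binom{2i+1}{k}/w_i\to e^{-z^2/2}\).
\end{itemize}
Summing \(4^{-j}e^{-z^2/2}\) over \(j\geq0\) gives \(\tfrac43e^{-z^2/2}\), and multiplying by \(3/4\) yields the first limit.

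For the tail, interchange the two sums in \eqref{eq:B-layer}:
\[
 T_{r,k}=\sum_{i=0}^r2^{2i+1}\,\Pr\bigl[\mathrm{Bin}(2i+1,\tfrac12)\geq k\bigr].
\]
By \eqref{eq:n-layer-sum}, \(n_r=\sum_{i\le r}2^{2i+1}\), so \(T_{r,k}/n_r\) is an average of tail probabilities with weights \(2^{2(r-j)+1}/n_r\). These weights tend to \(\tfrac34 4^{-j}\) and are dominated by \(4^{-j}\), while each probability lies in \([0,1]\). For fixed \(j\), the central limit theorem with the same standardized threshold \(z+O(r^{-1/2})\) sends the \(j\)-th probability to \(\Phi(-z)\). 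Dominated convergence then gives \(T_{r,k}/n_r\to\Phi(-z)\sum_j\tfrac34 4^{-j}=\Phi(-z)\).

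The only delicate point is the local limit step. The integer part in \(k_r(z)\) and the shift by \(j\) must not affect the Gaussian factor, so I would use de Moivre--Laplace in its uniform form on \(|x|\leq z+1\), or verify it directly from Stirling's formula. Everything else is bookkeeping already carried out in Lemma~\ref{lem:V-asymptotic}.
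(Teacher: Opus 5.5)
Your proposal is correct and follows the paper's proof: dominated convergence over $j=r-i$, the local limit theorem for each fixed $j$ in the width, and the central limit theorem with weights $2^{2(r-j)+1}/n_r\leq4^{-j}$ for the tail. The only variation is the majorant for the width: your bound $\binom{2(r-j)+1}{k}\leq w_{r-j}\leq3^{-j}w_r$ holds for every $j$, so it replaces the paper's estimate $\max_k\binom Nk\leq\mathcal L\,2^N/\sqrt N$ and its separate remark that the terms with $j>r/2$ vanish.
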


\begin{proof}
Write \(i=r-j\).  For every fixed \(j\), the standardized displacement of
\(k_r(z)\) from the mean of
\(\operatorname{Bin}(2(r-j)+1,1/2)\) tends to \(z\).  The local central
limit theorem therefore gives
\[
 \binom{2(r-j)+1}{k_r(z)}
 \sim\frac{2\,4^{r-j}}{\sqrt{\pi r}}e^{-z^2/2}.
\]
The standard bound, with an absolute constant \(\mathcal L\),
\[
 \max_k\binom Nk\leq \mathcal L\frac{2^N}{\sqrt N}
\]
gives, after division by \(4^r/\sqrt r\), a summable geometric majorant
\(\mathcal L'4^{-j}\) for \(j\leq r/2\), since
\(r/(r-j)\leq2\).  For fixed \(z>0\) and sufficiently large \(r\), the
terms with \(j>r/2\) vanish because then
\(2(r-j)+1<k_r(z)\).  Dominated convergence in \eqref{eq:B-layer} yields
\[
 B_{r,k_r(z)}
 \sim\frac{8}{3\sqrt\pi}\frac{4^r}{\sqrt r}e^{-z^2/2}.
\]
Lemma~\ref{lem:V-asymptotic} proves the first limit.

For the tail, let \(X_i\sim\operatorname{Bin}(2i+1,1/2)\).  Then
\[
 T_{r,k_r(z)}
 =\sum_{i=0}^r2^{2i+1}\Pr\{X_i\geq k_r(z)\}.
\]
For fixed \(j\), the central limit theorem gives a limiting probability
\(\Phi(-z)\).  The normalized weights satisfy
\[
 \frac{2^{2(r-j)+1}}{n_r}\longrightarrow\frac34\,4^{-j}
\]
 and, for every \(r,j\), are at most \(4^{-j}\), because the top summand
\(2^{2r+1}\) is part of \(n_r\).  After extending the summands by zero for
\(j>r\), a second dominated convergence proves the tail limit.
\end{proof}

\Needspace{5\baselineskip}
A binomial layer asymptotically exceeds the first-summit quotient.
\begin{proposition}
\label{prop:layer-enhancement}
For every fixed \(z>0\),
\begin{equation}
\label{eq:M-layer-lower}
 \liminf_{r\to\infty}
 \frac{M_r^+}{V_r/(3a_r)}\geq F(z).
\end{equation}
Moreover, \(F(1/4)>1\).
\end{proposition}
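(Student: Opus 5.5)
The plan is to evaluate the quotient defining \(M_r^+\) at the exact layer point \(t_{r,k}\) of Lemma~\ref{lem:layer-points} with \(k=k_r(z)\), and then pass to the limit with Lemma~\ref{lem:layer-CLT}. Since \(M_r^+\) is a maximum over all cuts of the positive arch, any single cut gives a lower bound:
\[
 M_r^+\geq\frac{h_{P_r}(t_{r,k})}{u_r+t_{r,k}}
 =\frac{B_{r,k}+1}{u_r+1+2T_{r,k}-B_{r,k}}.
\]
Here \(k=k_r(z)\), which lies in \(\{0,\ldots,2r+1\}\) for large \(r\), so the point is genuinely on the arch.

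To control the denominator, I would use \eqref{eq:n-layer-sum}, which gives \(u_r=2n_r-r=2a_r-2-r\). Lemma~\ref{lem:layer-CLT} gives \(T_{r,k}=n_r(\Phi(-z)+o(1))\). It also gives \(B_{r,k}=O(V_r)\), and \(V_r=O(4^r/\sqrt r)=o(a_r)\) by Lemma~\ref{lem:V-asymptotic}. Therefore
\[
 u_r+t_{r,k}=2a_r\bigl(1+\Phi(-z)+o(1)\bigr).
\]
The first limit in Lemma~\ref{lem:layer-CLT} gives the numerator \(B_{r,k}+1=V_r(e^{-z^2/2}+o(1))\). Dividing by \(V_r/(3a_r)\) then yields
\[
 \frac{M_r^+}{V_r/(3a_r)}\geq\frac{3e^{-z^2/2}+o(1)}{2(1+\Phi(-z))+o(1)}\longrightarrow F(z),
\]
which is \eqref{eq:M-layer-lower}. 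The only care needed is bookkeeping: the lower-order terms \(r\), \(1\), \(B_{r,k}\) are negligible against \(a_r\asymp4^r\), and \(n_r/a_r\to1\).

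For \(F(1/4)>1\), I would reduce to the inequality \(3e^{-1/32}>2(1+\Phi(-1/4))\). I would bound the two sides separately with rigorous elementary estimates rather than numerics.

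On the left, \(e^{-x}\geq1-x\) gives \(e^{-1/32}\geq31/32\), so the left side is at least \(93/32\approx2.906\).

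On the right, \(\Phi(-1/4)=\tfrac12-\int_0^{1/4}\phi\). Since \(\phi\) is decreasing on \([0,1/4]\), the integral is at least \(\tfrac14\phi(1/4)=\tfrac{1}{4\sqrt{2\pi}}e^{-1/32}\). Using \(\sqrt{2\pi}<2.51\) and \(e^{-1/32}\geq31/32\), this is at least about \(0.0965\). Hence \(\Phi(-1/4)\leq0.404\), and the right side is at most \(2.808\).

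The margin is comfortable. The main obstacle is only making these constant estimates airtight with exact rational bounds, for example \(\pi<22/7\), so that the strict inequality is proved rather than observed numerically.
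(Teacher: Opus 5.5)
Your proposal is correct and follows the paper's proof essentially step for step. It uses the same single-cut lower bound at the layer point \(t_{r,k_r(z)}\), the same limits from Lemma~\ref{lem:layer-CLT} together with \(n_r/a_r\to1\), and the same elementary ingredients for \(F(1/4)>1\) (\(e^{-x}\geq1-x\) and monotonicity of \(\phi\) on \([0,1/4]\)). The only cosmetic difference is that the paper rewrites the inequality as \(1-e^{-1/32}<\tfrac23\int_0^{1/4}\phi\) and uses the cruder bound \(\sqrt{2\pi}<3\), which already suffices, so no rational bound on \(\pi\) such as \(22/7\) is needed.
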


\begin{proof}
Equations \eqref{eq:layer-point} and \eqref{eq:n-layer-sum} give
\[
 M_r^+\geq
 \frac{B_{r,k}+1}
 {2n_r-r+1+2T_{r,k}-B_{r,k}}.
\]
Choose \(k=k_r(z)\).  The layer width is \(O(V_r)=o(a_r)\), while
\(n_r/a_r\to1\).  Lemma~\ref{lem:layer-CLT} and
\eqref{eq:layer-point} give explicitly
\[
 \frac{B_{r,k_r(z)}}{V_r}\longrightarrow e^{-z^2/2},
 \qquad
 \frac{u_r+t_{r,k_r(z)}}{a_r}
 \longrightarrow2(1+\Phi(-z)).
\]
Dividing the lower bound by \(V_r/(3a_r)\) therefore gives \(F(z)\), which
proves \eqref{eq:M-layer-lower}.

For the strict inequality, put
\[
 \phi(s)=\frac{e^{-s^2/2}}{\sqrt{2\pi}},
 \qquad \iota=\int_0^{1/4}\phi(s)\,ds.
\]
Since \(\Phi(-1/4)=1/2-\iota\), the inequality \(F(1/4)>1\) is equivalent to
\[
 1-e^{-1/32}<\frac23\iota.
\]
The left side is less than \(1/32\).  Since \(\phi\) decreases on
\([0,1/4]\),
\[
 \frac23\iota>
 \frac{e^{-1/32}}{6\sqrt{2\pi}}
 >\frac{31/32}{18}
 =\frac{31}{576}
 >\frac1{32}.
\]
Only \(e^{-x}>1-x\) and \(\sqrt{2\pi}<3\) were used.
\end{proof}

\subsection{Limsup bounds}

For fixed \(z>0\), define
\begin{equation}
\label{eq:x-r-z}
 x_r(z)=u_r+t_{r,k_r(z)},
 \qquad \widehat n_r(z)=2x_r(z).
\end{equation}

\Needspace{5\baselineskip}
Explicit subsequences realize each Gaussian level.
\begin{theorem}
\label{thm:layer-subsequences}
For every fixed \(z>0\),
\begin{equation*}
 \lim_{r\to\infty}
 \left(\frac{\Qt(\widehat n_r(z))}{\widehat n_r(z)}-\frac12\right)
 \sqrt{\log_2 \widehat n_r(z)}
 =\frac{F(z)}{3\sqrt{2\pi}}.
\end{equation*}
\end{theorem}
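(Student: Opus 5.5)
The plan is to evaluate \(\Qt\) exactly at the even index \(\widehat n_r(z)=2x_r(z)\) and feed in the Gaussian limits of Lemma~\ref{lem:layer-CLT}. Put \(m=x_r(z)=u_r+t_{r,k}\) with \(k=k_r(z)\). Since \(0\leq t_{r,k}\leq 2a_r\), the index \(m\) lies on the positive arch \([u_r,v_r]\). There the reconstruction \eqref{eq:Qt-even-reconstruction} and the drift value \eqref{eq:drift-values} give
\[
 \Qt(2m)=m-1+(r+2)+\delta(m),
 \qquad
 \delta(m)=h_{P_r}(t_{r,k})=B_{r,k}+1,
\]
the last identity by Lemma~\ref{lem:layer-point}. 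Hence
\[
 \frac{\Qt(2m)}{2m}-\frac12=\frac{B_{r,k}+r+2}{2m}.
\]
This is an exact identity, so the only remaining work is asymptotic.

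For the numerator, Lemma~\ref{lem:layer-CLT} gives \(B_{r,k}\sim e^{-z^2/2}V_r\), and \(r+2=o(V_r)\). Lemma~\ref{lem:V-asymptotic} then yields
\[
 B_{r,k}+r+2\sim e^{-z^2/2}\,\frac{8}{3\sqrt\pi}\frac{4^r}{\sqrt r}.
\]
For the denominator, the proof of Proposition~\ref{prop:layer-enhancement} already records \(m/a_r\to2(1+\Phi(-z))\). To rederive it, use \(u_r=2n_r-r\) and \(t_{r,k}=1+2T_{r,k}-B_{r,k}\). We have \(T_{r,k}/n_r\to\Phi(-z)\), \(B_{r,k}=O(V_r)=o(a_r)\), and \(n_r/a_r\to1\). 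Finally \(a_r\sim\frac23 4^{r}\), from \(a_r=(2\cdot4^{r+1}+1)/3\).

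Combining these,
\[
 \frac{\Qt(2m)}{2m}-\frac12\sim
 \frac{e^{-z^2/2}\,\frac{8}{3\sqrt\pi}4^r r^{-1/2}}{4\cdot\frac23 4^r(1+\Phi(-z))}
 =\frac{e^{-z^2/2}}{\sqrt\pi\,(1+\Phi(-z))\sqrt r}.
\]
For the logarithmic factor, \(\log_2\widehat n_r=\log_2(4m)=2r+O(1)\), so \(\sqrt{\log_2\widehat n_r}\sim\sqrt2\,\sqrt r\). The limit is therefore
\[
 \frac{\sqrt2\,e^{-z^2/2}}{\sqrt\pi(1+\Phi(-z))}
 =\frac{2e^{-z^2/2}}{\sqrt{2\pi}(1+\Phi(-z))}
 =\frac{F(z)}{3\sqrt{2\pi}},
\]
because \(F(z)/3=e^{-z^2/2}/(2(1+\Phi(-z)))\). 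I would double-check this last equality of constants carefully, since the factor \(2\) versus \(1/2\) is easy to misplace. If a discrepancy appears, the likely culprit is the normalisation of \(a_r\) against \(n_r\), or the fact that \(\widehat n_r=2m\) rather than \(m\).

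The main obstacle is only the verification of the Gaussian inputs, and these are supplied by Lemma~\ref{lem:layer-CLT}. The one point to watch is that the convergences there hold along the specific integer choice \(k_r(z)\) with its floor. Lemma~\ref{lem:layer-CLT} already uses exactly this \(k_r(z)\), so no further uniformity is needed. Every other step is exact algebra from Lemma~\ref{lem:layer-point}, \eqref{eq:n-layer-sum}, and the reconstruction formulas.
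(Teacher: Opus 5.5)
Your route is the paper's own. You use the exact identity
\[
 \frac{\Qt(2m)}{2m}-\frac12=\frac{B_{r,k}+r+2}{2m}
\]
at \(m=x_r(z)\), then the limits \(B_{r,k}/V_r\to e^{-z^2/2}\) and \(m/a_r\to2(1+\Phi(-z))\). Both of these steps are right.

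The constant bookkeeping is wrong, however, and your last display is false as written. From \(a_r=(2\cdot4^{r+1}+1)/3\) and \(2\cdot4^{r+1}=8\cdot4^r\), one gets \(a_r\sim\frac83\,4^r\), not \(\frac23\,4^r\). For a check, \(a_1=11\). Your intermediate asymptotic is therefore four times too large. The correct one is
\[
 \frac{\Qt(2m)}{2m}-\frac12\sim\frac{e^{-z^2/2}}{4\sqrt\pi\,(1+\Phi(-z))\sqrt r},
\]
which is the paper's \(V_r/a_r\sim1/\sqrt{\pi r}\).

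Your final chain then asserts
\[
 \frac{2e^{-z^2/2}}{\sqrt{2\pi}\,(1+\Phi(-z))}=\frac{F(z)}{3\sqrt{2\pi}}.
\]
This is false. Since \(F(z)/3=e^{-z^2/2}/(2(1+\Phi(-z)))\), the right side equals \(e^{-z^2/2}/(2\sqrt{2\pi}(1+\Phi(-z)))\), which is a quarter of the left side. The two factor-of-four errors cancel, so your stated answer matches the theorem only by accident. The "factor \(2\) versus \(1/2\)" you flagged is exactly this.

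With \(a_r\sim\frac83\,4^r\), multiply by \(\sqrt{\log_2(2m)}\sim\sqrt{2r}\). This gives
\[
 \frac{\sqrt2\,e^{-z^2/2}}{4\sqrt\pi\,(1+\Phi(-z))}
 =\frac{e^{-z^2/2}}{2\sqrt{2\pi}\,(1+\Phi(-z))}
 =\frac{F(z)}{3\sqrt{2\pi}},
\]
and the argument then coincides with the paper's proof. A minor slip: \(\widehat n_r(z)=2m\), not \(4m\). It is harmless, since only \(\log_2\widehat n_r(z)=2r+O(1)\) is used.
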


\begin{proof}
At the point \eqref{eq:x-r-z}, Lemma~\ref{lem:layer-points} and
\eqref{eq:drift-values} give
\[
 \delta(x_r(z))=B_{r,k_r(z)}+1,
 \qquad \varsigma(x_r(z))=r+2.
\]
The even reconstruction is therefore the exact identity
\[
 \frac{\Qt(2x_r(z))}{2x_r(z)}-\frac12
 =\frac{B_{r,k_r(z)}+r+2}{2x_r(z)}.
\]
The layer width is comparable with \(V_r\), while \(r=o(V_r)\).  Equations
\eqref{eq:V-asymptotic}, \eqref{eq:layer-CLT}, and
\eqref{eq:n-layer-sum} give
\[
 \frac{x_r(z)}{a_r}\longrightarrow2(1+\Phi(-z)),
 \qquad
 \frac{B_{r,k_r(z)}}{V_r}\longrightarrow e^{-z^2/2},
\]
and hence
\[
 \frac{B_{r,k_r(z)}+r+2}{2x_r(z)}
 \sim\frac{F(z)V_r}{6a_r}.
\]
Also \(x_r(z)\asymp a_r\) and
\[
 \log_2(2x_r(z))=2r+O(1).
\]
Since \(V_r/a_r\sim1/\sqrt{\pi r}\), the limit is
\(F(z)/(3\sqrt{2\pi})\).
\end{proof}

\Needspace{5\baselineskip}
The normalized limsup is bounded on both sides.
\begin{theorem}
\label{thm:limsup-bounds}
Put
\begin{equation}
\label{eq:layer-constant}
 C_{\mathrm{layer}}
 =\frac1{3\sqrt{2\pi}}\sup_{z>0}F(z).
\end{equation}
Then
\begin{equation}
\label{eq:limsup-chain}
 C_{\mathrm{layer}}
 \leq\limsup_{n\to\infty}
 \left(\frac{\Qt(n)}n-\frac12\right)\sqrt{\log_2 n}
 \leq\limsup_{n\to\infty}
 \left|\frac{\Qt(n)}n-\frac12\right|\sqrt{\log_2 n}
 \leq\frac1{2\sqrt{2\pi}}.
\end{equation}
In particular,
\begin{equation}
\label{eq:layer-strict-lower}
 \limsup_{n\to\infty}
 \left|\frac{\Qt(n)}n-\frac12\right|\sqrt{\log_2 n}
 \geq\frac{F(1/4)}{3\sqrt{2\pi}}
 >\frac1{3\sqrt{2\pi}}.
\end{equation}
\end{theorem}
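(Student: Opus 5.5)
The chain has three inequalities, and I will treat them in order.

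The lower bound comes directly from Theorem~\ref{thm:layer-subsequences}. For each fixed \(z>0\), the even subsequence \(\widehat n_r(z)\) tends to infinity. Along it, the normalized signed error converges to \(F(z)/(3\sqrt{2\pi})\). Hence the signed limsup is at least \(F(z)/(3\sqrt{2\pi})\) for every \(z>0\), and taking the supremum over \(z\) gives \(C_{\mathrm{layer}}\). The middle inequality is trivial, since \(x\leq|x|\). For \eqref{eq:layer-strict-lower}, I specialize to \(z=1/4\) and invoke \(F(1/4)>1\) from Proposition~\ref{prop:layer-enhancement}. This requires only that \(C_{\mathrm{layer}}\geq F(1/4)/(3\sqrt{2\pi})\) and that the signed limsup is dominated by the absolute one.

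The upper bound \(1/(2\sqrt{2\pi})\) is the real content. I will use the reconstructions \eqref{eq:Qt-even-reconstruction} and \eqref{eq:Qt-odd-reconstruction}, which give \(\Qt(n)-n/2=\pm\delta(m)+O(r)\) with \(n\in\{2m-1,2m\}\). Since \(r=O(\log m)\), the term \(O(r)/n\) times \(\sqrt{\log_2 n}\) vanishes, so
\[
 \left|\frac{\Qt(n)}n-\frac12\right|\sqrt{\log_2 n}
 =\frac{|\delta(m)|}{2m}\sqrt{\log_2 n}+o(1).
\]
On a positive arch of level \(r\), I bound \(|\delta(m)|/m\leq M_r^+\). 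On the following negative arch, Theorem~\ref{thm:negative-domination} gives the same bound \(M_r^+\). The crude bound \(M_r^+\leq V_r/u_r\) is too weak, because \(V_r/u_r\sim (4/3)\cdot(1/\sqrt{\pi r})\cdot\ldots\) overshoots. So I need a sharper estimate on \(M_r^+\).

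The plan is to exploit the position of the first summit: \(\tau_r=a_r+1-V_r\), so any height at least \(h\) is reached no earlier than \(\rho_h\). A cleaner route uses the Dyck structure via Lemma~\ref{lem:layer-points}. The quotient \(h_{P_r}(t)/(u_r+t)\) is bounded by \(V_r/(u_r+t)\) with \(u_r\approx 2a_r\), which gives roughly \(V_r/(2a_r)\). Since \(V_r/a_r\sim 1/\sqrt{\pi r}\) and \(\log_2 n\sim 2r\), this yields
\[
 \frac{V_r}{2a_r}\cdot\frac12\cdot\sqrt{2r}\to\frac{1}{2\sqrt{2\pi}}.
\]
Concretely, \(u_r+t\geq u_r\) and \(u_r/a_r\to2\), so \(M_r^+\leq V_r/u_r\sim V_r/(2a_r)\). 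Multiplying by \(\tfrac12\sqrt{\log_2(2m)}\), with \(\log_2(2m)\leq 2r+O(1)\) uniformly for \(m\) in the level-\(r\) arches \(m<u_{r+1}\approx 8a_r\), gives the limit \(1/(2\sqrt{2\pi})\).

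The main obstacle is the bookkeeping of \(\sqrt{\log_2 n}\) across a whole level. The level-\(r\) indices range over \(m\in[u_r,u_{r+1})\), a factor of about \(4\), so \(\log_2 n\) varies by \(O(1)\) only. That is harmless after dividing by \(\sqrt r\). I will verify carefully that Lemma~\ref{lem:V-asymptotic} and \(a_r\sim\tfrac83 4^r\) combine to \(V_r/a_r\sim 1/\sqrt{\pi r}\). I will also check that the negative-arch domination uses denominators \(v_r+t\geq u_r\), so that the same constant applies there.
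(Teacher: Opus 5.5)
Your proof is correct and follows the paper's argument. The lower bounds come from Theorem~\ref{thm:layer-subsequences}: take the supremum over \(z\), and for the strict bound use \(z=1/4\) together with \(F(1/4)>1\). The upper bound comes from the parity reconstructions, Theorem~\ref{thm:negative-domination}, the bound \(M_r^+\leq V_r/u_r\), and \(\log_2 n=2r+O(1)\) uniformly on each level.

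Delete your aside calling \(V_r/u_r\) ``too weak'', because it is mistaken. In fact \(V_r/u_r\sim 1/(2\sqrt{\pi r})\) is exactly sharp enough, and it is the bound that both your concrete argument and the paper end up using.
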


\begin{proof}
Theorem~\ref{thm:layer-subsequences} holds for every fixed \(z>0\).  Taking
the supremum of these lower bounds proves the first inequality in
\eqref{eq:limsup-chain}.  The second follows from \(x\leq |x|\).  Theorem
\ref{thm:layer-subsequences} at \(z=1/4\), together with the exact inequality
\(F(1/4)>1\) in Proposition~\ref{prop:layer-enhancement}, gives
\eqref{eq:layer-strict-lower}.

For the upper bound, define the complete blocks
\[
 J_r=\{2u_r-1,\ldots,2u_{r+1}-2\}.
\]
They partition a tail of the positive integers.  Each \(n\in J_r\) is
uniquely \(2m\) or \(2m-1\), with \(m\in[u_r,u_{r+1})\).  Uniformly on \(J_r\),
\[
 \log_2 n=2r+O(1).
\]
Equations \eqref{eq:Qt-even-reconstruction} and
\eqref{eq:Qt-odd-reconstruction} give
\begin{equation*}
 \left|\frac{\Qt(n)}n-\frac12\right|
 =\frac{|\delta(m)|}{2m}+O\!\left(\frac r m\right)
\end{equation*}
uniformly for both parities.  Therefore
\[
 \max_{n\in J_r}
 \left|\frac{\Qt(n)}n-\frac12\right|
 =\frac12\max(M_r^+,M_r^-)+o(r^{-1/2}).
\]
The definition of \(M_r^+\) includes the shared cut \(v_r\), whereas that of
\(M_r^-\) includes the additional cut \(u_{r+1}\).  The corresponding depths are zero, so
these endpoints do not change the maximum over the half-open block.
Theorem~\ref{thm:negative-domination} and the elementary bound
\(M_r^+\leq V_r/u_r\) give
\[
 \max_{n\in J_r}
 \left|\frac{\Qt(n)}n-\frac12\right|
 \leq\frac{V_r}{2u_r}+o(r^{-1/2}).
\]
Now
\[
 u_r\sim\frac{16}{3}4^r,
 \qquad
 V_r\sim\frac{8}{3\sqrt\pi}\frac{4^r}{\sqrt r}.
\]
Multiplication by \(\sqrt{2r}\) gives
\[
 \frac12\cdot\frac1{2\sqrt{\pi r}}\cdot\sqrt{2r}
 =\frac1{2\sqrt{2\pi}}.
\]
Since the blocks cover a tail, this is the last inequality in
\eqref{eq:limsup-chain}.
\end{proof}

\begin{proof}[Proof of the little-oh assertion in Theorem~\ref{thm:rate}]
Theorem~\ref{thm:layer-subsequences} gives an even subsequence on which
\[
 \left|\frac{\Qt(n)}n-\frac12\right|\sqrt{\log n}
\]
has a positive limit, because \(\log n=(\log 2)\log_2 n\).  Thus the error
cannot be \(o(n/\sqrt{\log n})\).
\end{proof}

\begin{proof}[Proof of the main theorem]
Theorem~\ref{thm:well-definedness} proves item (i), with its limit supplied by
Theorem~\ref{thm:rate}.  Theorem~\ref{thm:royal}, equation
\eqref{eq:V-sum}, and Lemma~\ref{lem:V-asymptotic} prove item (ii).
The two parts of Theorem~\ref{thm:rate} prove item (iii).
Theorems~\ref{thm:layer-subsequences} and \ref{thm:limsup-bounds} prove item
(iv).
\end{proof}

Numerical maximization of the explicit function \(F\) gives a value near
\(1.03785617\) at
\(z\) near \(0.27603\), and hence
\[
 C_{\mathrm{layer}}\approx0.1380149024.
\]
For orientation,
\[
 \frac{F(1/4)}{3\sqrt{2\pi}}\approx0.1379682659,
 \qquad
 \frac1{2\sqrt{2\pi}}\approx0.1994711402.
\]
The exact definition of \(C_{\mathrm{layer}}\) is the supremum in
Theorem~\ref{thm:limsup-bounds}.

It remains open whether
\[
 \limsup_{n\to\infty}
 \left(\frac{\Qt(n)}n-\frac12\right)\sqrt{\log_2 n}
 =C_{\mathrm{layer}}
\]
holds.  No upper bound matching the binomial-layer lower bound is known.  The
proved bound \(1/(2\sqrt{2\pi})\) is uniform but does not match this layer
family asymptotically.

\appendix
\numberwithin{equation}{section}

\section{Breadth-first separators}
\label{app:separators}

This appendix proves Lemma~\ref{lem:separators} with all order and boundary
conventions visible.

In the original depth order, put
\[
 \Xi_{m,t}=\mathbf1_{\{q_{m-1}<t\leq q_m\}}.
\]
Differencing \eqref{eq:incidence-centro} gives
\[
 \Xi_{m,t}=1
 \quad\Longleftrightarrow\quad
 q_{\mathcal R(t)}=\mathcal R(m).
\]
After reversing the vertices, the parent of \(v\) is therefore \(q_v\), with
\(q_v=0\) meaning that \(v\) is a root.  Since \(q_v<v\) and \(q_v\) is
nondecreasing, roots occur first and the children of every parent form a
contiguous interval.  Induction on \(v\) shows that depths are nondecreasing.
Thus \(1,\ldots,n\) is exactly the old breadth-first order.  The number of
children of \(u\) is
\[
 \#\{v:q_v=u\}
 =q_{\mathcal R(u)}-q_{\mathcal R(u)-1}
 =c_{\mathcal R(u)}=\check c_u.
\]

After suspension, denote the separate degree-one root by \(Y_0\).  Put
\[
 Y_u=y_u\quad(u\geq1),
 \qquad Z_1=\text{the child of }Y_0,
 \qquad Z_{u+1}=\zeta_u\quad(u\geq1).
\]
The new roots are
\[
 \operatorname{core}(1),\ldots,\operatorname{core}(\varrho),Y_0.
\]
If \(v_1<\cdots<v_{\check c_u}\) are the old children of \(u\), the ordered child
factor of \(\operatorname{core}(u)\) is
\[
 x_u,\operatorname{core}(v_1),\ldots,\operatorname{core}(v_{\check c_u}),Y_u.
\]
The only child of \(Y_u\) is \(Z_{u+1}\), including \(u=0\).

Let \(\prec\) denote the new breadth-first order and \(\operatorname{rk}\)
its rank.  With \(N_0=\varrho+1\) roots, the first child position of a vertex
\(V\) is
\[
 N_0+1+\sum_{X\prec V}\deg X.
\]
Consequently,
\begin{align*}
 \operatorname{rk}(x_u)
 &=N_0+1+\sum_{X\prec \operatorname{core}(u)}\deg X,\\
 \operatorname{rk}(\operatorname{core}(v_j))
 &=\operatorname{rk}(x_u)+j,\\
 \operatorname{rk}(Y_u)
 &=\operatorname{rk}(x_u)+\check c_u+1,\\
 \operatorname{rk}(Z_{u+1})
 &=N_0+1+\sum_{X\prec Y_u}\deg X.
\end{align*}

Fix \(1\leq u<n\), and put
\[
 p=q_u,
 \qquad p'=q_{u+1}.
\]
The noncore vertices strictly between \(\operatorname{core}(u)\) and \(\operatorname{core}(u+1)\) are
\begin{equation}
\label{eq:x-separators}
 \{x_v:p<v\leq p'\},
\end{equation}
\begin{equation}
\label{eq:y-separators}
 \{Y_v:p\leq v<p'\},
\end{equation}
and
\begin{equation}
\label{eq:z-separators}
 \{Z_w:q_p<w\leq q_{p'}\}.
\end{equation}
These are equalities of ordered sets.  We prove them by strong induction on
\(u\).  The inequalities \(p'\leq u\) ensure that every core boundary used
below has smaller index.

If \(p=p'=0\), the two cores are consecutive roots.  If \(0<p=p'\), they are
consecutive core children of one parent.  All three intervals are empty.

Suppose \(p=0<p'\).  Then \(u=\varrho\), the core \(\operatorname{core}(u+1)\) is the first
nonroot core, \(p'\) is an old root, and \(q_{p'}=0\).  After
\(\operatorname{core}(\varrho)\) comes \(Y_0\).  In the next layer, the root child factors
through \(p'-1\) contain \(x_v,Y_v\), and the factor of \(\operatorname{core}(p')\) begins
with \(x_{p'}\).  The vertices between the two cores are therefore
\[
 x_1,\ldots,x_{p'},
 \qquad
 Y_0,\ldots,Y_{p'-1}.
\]
There is no \(Z\)-separator because \(q_{p'}=q_0=0\).  The vertex \(Z_1\),
which is the child of \(Y_0\), appears after \(\operatorname{core}(u+1)\).  This is the root
sentinel case.

Suppose \(0<p<p'\).  The core \(\operatorname{core}(u)\) is the last core child of \(\operatorname{core}(p)\),
and \(\operatorname{core}(u+1)\) is the first core child of \(\operatorname{core}(p')\).  A parent \(v\) with
\(p<v<p'\) has no old child, because \(q\) is nondecreasing.  The end of the
factor of \(\operatorname{core}(p)\) contributes \(Y_p\).  Each intervening childless parent
contributes \(x_v,Y_v\).  The factor of \(\operatorname{core}(p')\) contributes \(x_{p'}\)
before its first core child.  This proves \eqref{eq:x-separators} and
\eqref{eq:y-separators}.

For the \(Z\)-vertices, concatenate the already proved boundaries between
\(\operatorname{core}(p)\) and \(\operatorname{core}(p')\).  The \(Y\)-indices encountered are exactly
\[
 q_p,q_p+1,\ldots,q_{p'}-1.
\]
Their children are \(Z_{q_p+1},\ldots,Z_{q_{p'}}\), which proves
\eqref{eq:z-separators}.  If \(q_p=0<q_{p'}\), the first such vertex is
\(Y_0\), and its child is exactly \(Z_1\).  Breadth-first concatenation
preserves the increasing index order in all three families.

The first-child rank formula gives
\[
 K_u=N_0+\sum_{X\prec \operatorname{core}(u)}(\deg X-1).
\]
Hence
\[
 K_{u+1}-K_u
 =\sum_{\operatorname{core}(u)\preceq X\prec \operatorname{core}(u+1)}(\deg X-1).
\]
The core \(\operatorname{core}(u)\) contributes \(\check c_u+1\).  Each \(Y\)-separator contributes
zero.  Each \(x\)- or \(Z\)-separator contributes \(-1\).  Put
\(L=c_{u+1}=p'-p\).  There are \(L\) vertices of type \(x\), and the number
of type \(Z\) is
\[
 q_{p'}-q_p=\sum_{v=p+1}^{p'}c_v.
\]
Property \(\mathsf S\) at \(u+1\) says that these degrees are the permutation
\(\varphi(L)\) of \(0,\ldots,L-1\).  Therefore
\[
 q_{p'}-q_p=\binom L2.
\]
It follows that
\[
 K_{u+1}-K_u
 =\check c_u+1-L-\binom L2
 =\check c_u+1-\binom{L+1}{2},
\]
which is \eqref{eq:separator-step}.

Finally, \(\operatorname{core}(1)\) is the first root and \(x_1\) is the first nonroot.  Their
ranks are one and \(\varrho+2\), so \(K_1=\varrho+1\).  If \(L=0\), both
separator intervals are empty and the same formula gives
\(K_{u+1}-K_u=\check c_u+1\).  If \(n=1\), only \(K_1\) occurs.  The step formula is
quantified only for \(u<n\), so no symbol \(q_{n+1}\) is introduced.
The virtual parent \(n+1\) of the depth-first description becomes parent zero
of the roots in breadth-first order.  The vertices \(Y_0\) and \(Z_1\)
realize exactly this sentinel, including the boundary between roots and the
first nonroot core.

\section{Toeplitz bases}
\label{app:toeplitz-bases}

We verify \eqref{eq:base-toeplitz-profiles} by listing all active edges.

For \(R=0^b\), the degree word is \(c=0^b\), and \(D=b\).  There are no real
active edges.  The virtual edges \(t\to b+1\), \(1\leq t\leq b\), have spans
\(b+1-t\) and labels \(\omega_t=1\).  They cover every span from one through \(b\)
once, giving profile \(1^b\).

For \(R=0^a1^b\),
\[
 c=0^{a+b}1^b,
 \qquad n=a+2b,
 \qquad q_n=b,
 \qquad D=a+b.
\]
Vertices of degree one have only a terminal child, so no real edge is active.
For the \(a\) degree-zero roots, the exact indices, labels, and spans are
\[
 t=b+i,
 \qquad \omega_t=1,
 \qquad h=a+b+1-i
 \qquad(1\leq i\leq a).
\]
For the \(b\) degree-one roots they are
\[
 t=a+b+j,
 \qquad \omega_t=2,
 \qquad h=b+1-j
 \qquad(1\leq j\leq b).
\]
Thus the first family has spans \(b+1,\ldots,a+b\), and the second has spans
\(1,\ldots,b\).  The
profile is \(2^b1^a\).  The first interval is empty when \(a=0\).

For \(R=1^a2^b\),
\[
 c=0^{a+b}(1,0)^b1^a2^b,
 \qquad n=2a+4b,
 \qquad q_n=a+3b,
 \qquad D=a+2b.
\]
The virtual edges of the degree-one roots have exact data
\[
 t=a+3b+i,
 \qquad \omega_t=2,
 \qquad h=a+b+1-i
 \qquad(1\leq i\leq a),
\]
and those of the degree-two roots have
\[
 t=2a+3b+j,
 \qquad \omega_t=3,
 \qquad h=b+1-j
 \qquad(1\leq j\leq b).
\]

The only real active edges are the first children of the \(b\) degree-two
vertices.  For
\[
 m=2a+3b+j\qquad(1\leq j\leq b),
\]
one has
\[
 q_{m-1}=a+b+2j-2,
 \qquad
 t=q_{m-1}+1=a+b+2j-1.
\]
It has label two and span \(a+2b-j+1\).  These spans are exactly
\(a+b+1,\ldots,a+2b\).  The second child is terminal.  Thus the complete
profile is \(3^b2^{a+b}\).

\section{Boundary cases}

This appendix collects the boundary identities that connect the different
coordinate systems.

\begin{enumerate}[label=\textup{(\roman*)}]
\item The arch words use half-open increment intervals
\([u_r,v_r)\) and \([v_r,u_{r+1})\).  Their height profiles include both
terminal cuts.  Equation \eqref{eq:negative-dictionary} is quantified by
\(1\leq t\leq|N_r|\), so the letter \(N_r[t-1]\) is always defined.

\item The negative boundary values are
\[
 D_r^-(1)=1,
 \qquad D_r^-(|N_r|)=0.
\]
The first follows from \(N_r[0]=0\).  The second follows from the terminal
letters \(11\) and the last two heights zero and \(-1\).

\item Definition~\ref{def:zero-coordinates} fixes
\(p_0=q_0=d_0=0\) and the terminal sentinels
\(p_{n+1}=s_{n+1}=2n+1\).  The latter include the last point
\(k=2n+1\) in the terminal ramp \(\mathcal K_n\), but do not define an extra
gap.

\item The reflection is extended by
\[
 \mathcal R(0)=n+1,
 \qquad \mathcal R(n+1)=0,
 \qquad \mathcal M_{0,t}=\mathcal M_{i,n+1}=0.
\]
Virtual edges are exactly \(t\to n+1\) with \(q_n<t\leq n\).  A terminal
child \(t=q_m\) is not active.

\item For an orbital transition, the one-count parameter zero is obtained
from the first \(k\) with \(\pi(k)=0\).  The one-count parameter one is obtained from the
nonempty fiber with \(\pi(k)=p_1=1\).  RS makes both parameters first
passages of \(E\).  Since \(\pi(1)=0\), their values
\(\pi(k)+\pi^2(k)\) are zero and one.  They produce the RPP padding record
cuts one and two.

\item The first fiber in Theorem~\ref{thm:fiber-toeplitz} has
\(\varrho+1\) positive parameters in \(\mathcal K_0\).  The parameter
\(k=0\) and the domain height \(h=0\) add exactly two further elements.
No later fiber receives either addition.

\item In Lemma~\ref{lem:AdjFC}, \(P_j\) begins with \(00\), so
\(g_j(0)=g_j(1)=1\).  Passing from all heights to positive heights removes
only \(h=0\) from the first fiber.  This is the unique subtraction in
\eqref{eq:FC}.

\item The smallest folded profiles are
\[
 \Pi(1)=(2),
 \qquad \Sigma(1)=(3),
 \qquad f_0=(3).
\]
The ordinary multiplicity statement at level zero is vacuous.
\end{enumerate}

\section*{Funding}

This research did not receive any specific grant from funding agencies in the
public, commercial, or not-for-profit sectors.

\section*{Declaration of competing interest}

The author declares that he has no known competing financial interests or
personal relationships that could have appeared to influence the work
reported in this paper.

\section*{Declaration of generative AI and AI-assisted technologies in the
manuscript preparation process}

During the preparation of this work, the author used OpenAI ChatGPT 5.5 and 5.6 to assist with language editing, manuscript organization, and exploratory computations. The author reviewed and edited the resulting material and takes full responsibility for the content of the article.

\end{document}